\newtheorem{thm}{Theorem}[section]
\newtheorem{lemma}[thm]{Lemma}
\newtheorem{corollary}[thm]{Corollary}
\newtheorem{prop}[thm]{Proposition}
\theoremstyle{remark}
\newtheorem{definition}[thm]{Definition}
\newtheorem{exa}[thm]{Example}
\def\RR{\mathbb{R}}
\def\CC{\mathbb{C}}
\def\FF{\mathbb{F}}
\def\x{\mathcal{X}}
\def\cM{\mathcal M}
\newcommand{\re}{Re}
\newcommand{\im}{Im}
\newcommand{\nchess}{NCHes}
\newcommand{\Hm}{\mathrm{H}}
\newcommand{\Proj}{P}
\newcommand{\RRx}{\RR\langle x \rangle}
\newcommand{\CCx}{\CC\langle x \rangle}
\newcommand{\FFx}{\FF\langle x \rangle}
\newcommand{\RRNx}{\RR\langle x, x^T \rangle}
\newcommand{\CCNx}{\CC\langle x, x^T \rangle}
\newcommand{\FFNx}{\FF\langle x, x^T
\rangle}
\newcommand{\summ}[2]{\underset{#1}{\overset{#2}{\sum}}}
\newcommand{\eeq}{\notag}
\newcommand{\ncm}{\,}
\newcommand{\gtupn}{(\RR^{n\times n}_{sym})^g}
\newcommand{\h}{\gamma}
\newcommand{\dird}{D}
\newcommand{\Symm}{\mathrm{Symm}}
\newcommand{\Comm}{\mathrm{Comm}}
\newcommand{\Subs}{\mathrm{Subs}}
\newcommand{\NS}{\mathrm{NS}}
\newcommand{\num}{m}
\newcommand{\lap}{\mathrm{Lap}}
\numberwithin{equation}{subsection}
\begin{document}

\title{Noncommutative Harmonic and Subharmonic Polynomials and other Noncommutative Partial Differential Equations}
\author{Christopher S. Nelson}

\maketitle

\begin{abstract}
Solutions to Laplace's equation are called harmonic functions.
Harmonic functions arise in many applications, such as physics and the theory
of stochastic processes.  Of interest classically are harmonic polynomials,
which have a simple classification.
Further, the work of Reznick, building on the work of others, namely Sylvester,
Clifford, Rosanes, Gundelfinger, Cartan, Maass and Helgason, has led
to a classification of all polynomial solutions to a differential equation of
the form
\[q\left(\frac{\partial}{\partial x_1}, \ldots, \frac{\partial}{\partial
x_g}\right)y = 0,\]
where $q$ is a homogeneous polynomial over an algebraically closed field.

The definition of harmonicity can be extended to the space of
polynomials in free variables using the concept of a noncommutative Laplacian.
Given a positive integer $\ell$, the $\ell$-Laplacian of a noncommutative
(abbreviated NC) polynomial $p$ in the direction $h$ is defined to be
$$ \lap_{\ell}[p,h] := \sum_{i=1}^g \frac{d^{\ell}}{dt^{\ell}} p(x_1, \ldots,
x_i + th, \ldots, x_g).$$
A NC polynomial $p$ is said to be $\ell$-harmonic if $\lap_{\ell}[p,h] = 0$.
When $\ell = 2$, then the $\ell$-Laplacian is simply called the Laplacian and a
$\ell$-harmonic NC polynomial is simply called harmonic.
More generally, the concept of a constant coefficient differential equation can
be extended to the space of NC polynomials via the
NC directional derivative.

The main contribution of this paper is to show that a NC polynomial is
$\ell$-harmonic if and only if it is a linear
combination of NC polynomials of the form
\begin{align}
\notag
\prod_{i=1}^d \sum_{j=1}^g a_{ij}x_i,
\end{align}
such that for any $\ell$ integers $1 \leq k_1 < k_2 < \ldots < k_{\ell} \leq d$,
the coefficients $a_{ij}$ satisfy
\[\sum_{j=1}^G \prod_{i=1}^{\ell} a_{k_i j} = 0.\]
This result is analogous to the classification of polynomial solutions to a
differential equation of the form $q(\partial/\partial x_1, \ldots,
\partial/\partial x_g)y = 0$ given by Reznick.
Additionally, this paper proves new results about NC ``subharmonic''
polynomials.

This work extends results of Helton, McAllaster, and Hernandez
on NC harmonic and subharmonic polynomials, which classified
solutions for the $\ell = 2$ case in two noncommuting variables $x_1$ and $x_2$.

\end{abstract}

\newpage

\section{Introduction}

The introduction is divided into three subsections.  $\S$ \ref{sect:definitions} presents the basic definitions for the theory of non-commutative polynomials.  $\S$ \ref{sect:classification1} presents the main results of this paper.  $\S$ \ref{sect:topics} discusses the motivation for the concepts studied in this paper.

\subsection{Definitions}
\label{sect:definitions}

\subsubsection{Noncommutative Polynomials}
\label{subsub:NCPolys}
A noncommutative monomial $m$ of degree $d$\index{degree} on the free variables
$x = (x_1,\,\ldots,\,x_g)$ is a product $x_{a_1}x_{a_2}\cdots x_{a_d}$
corresponding to a unique sequence $\{a_i\}_{i=1}^d$ of
nonnegative integers, $1\le a_i\le g$.  The set of all
monomials in $(x_1,\,\ldots,\,x_g)$ is denoted as $\cM$\index{M@$\cM$}.
The length of a monomial $m$ is denoted as $|m|$.

The space of noncommutative, or NC, polynomials \index{noncommutative polynomial} $p(x) = p(x_1,\ldots,x_g)$
with coefficients in a field $\FF$ is denoted $\FFx$. \index{Fax@$\FFx$}\index{Rax@$\RRx$|see{$\FFx$}}
\index{Cax@$\CCx$|see{$\FFx$}}  Throughout this paper, $\FF$ will be thought of as either $\RR$ or $\CC$. An element $p(x) \in \FFx$ is expressed as
\begin{equation}
p(x)  = \summ{m\,\in\, \cM}{}A_{m}\,m
\eeq\end{equation}
for scalars $A_m \in \FF$.
An example of a NC polynomial is \
$$p(x) = p(x_1,x_2)
 = x_1^2\ncm x_2\ncm x_1 + x_1\ncm x_2\ncm x_1^2 + x_1
\ncm x_2 - x_2\ncm x_1 + 7.$$ \
In commutative variables, this would be equivalent to $2x_1^{3}x_2 + 7 \in \FF[x]$.
Special care will be taken throughout
this paper to
distinguish between spaces
of NC polynomials $\FFx$ and spaces of polynomials in commutative variables $\FF[x]$\index{Fbx@$\FF[x]$}
\index{Cbx@$\mathbb{C}[x]$|see{$\FF[x]$}}
\index{Rbx@$\mathbb{R}[x]$|see{$\FF[x]$}}.

\subsubsection{Degree of a NC Polynomial}

A NC monomial
\[m = x_{a_1}\ldots x_{a_D}\]
 is said to have
\textbf{degree $d$ in $x_i$} \index{degree!in a variable $x_i$}
 if precisely $d$ of the $x_{a_j}$ are equal to $x_i$. This is denoted
$\deg_i(m) = d_i$.\index{degip@$\deg_i(p)$|see{degree in a variable $x_i$}}
For a NC polynomial
\[p(x_1, \ldots, x_d) = \sum_{|m| \leq D} A_{m}m,\]
$\deg_i(p)$ is defined as
\[\deg_i(p) := \max_{A_m \neq 0} \{ \deg_i (m) \}.\]
A NC polynomial $p(x)$ is said to be \textbf{homogeneous of degree $d$ in $x_i$} if
 $p$ is a sum of terms with degree $d$ in $x_i$.

\subsubsection{Transpose of a NC Polynomial}

The \textbf{transpose}\index{transpose!of a polynomial in $\RRx$} of a polynomial $p \in \FFx$ is defined so that it is compatible with matrix transposition.
This paper for the
most part will consider
 polynomials in \textbf{symmetric variables}. \index{variable!symmetric}
 This means that each variable $x_i$
satisfies $x_i^T = x_i$.
The space $\FFNx$ will denote the space where $x_i \neq x_i^T$ and will be studied in $\S$ \ref{sect:ncvars}.
Transposes of monomials in symmetric variables are defined to be
\[(x_{a_1}\ncm \ldots\ncm x_{a_{d}})^T
:=
x_{a_{d}}\ncm\ldots\ncm x_{a_1}.\]
The transpose of a polynomial $p$, denoted $p^T$, is therefore defined by
$p(x)  = \summ{m\,\in\, \cM}{}A_{m}\,m^T,$
and has the following properties:
\begin{quote}
(1) $(p^T)^T = p$ \\
(2) $(p_1 + p_2)^T = p_1^T + p_2^T$ \\
(3) $(\alpha p)^T = \alpha p^T$ \qquad\qquad ($\alpha \in
\RR$)\\
(4) $(p_1\ncm p_2)^T = p_2^T\ncm p_1^T$.
\end{quote}
\textbf{Symmetric (or self-adjoint) polynomials} \index{symmetric!polynomial}
are those that are equal to their
transposes, i.e. those which satisfy $p(x) = p^T(x).$

\subsubsection{Evaluating Noncommutative Polynomials}
\label{sec:openG2}

Let $\gtupn$ denote the set of $g$-tuples
$(X_1,\ldots,X_g)$ of real symmetric $n\times n$ matrices.
One can evaluate a
polynomial $p(x)=p(x_1,\ldots,x_g) \in \RRx$
at a
tuple $X=(X_1,\dots,X_g)\in(\RR^{n\times n}_{sym})^g$.
In this
case, $p(X)$ is also an $n\times n$ matrix and
the involution on $\RRx$
that was introduced earlier is compatible
with matrix transposition, i.e.,
$$
p^T(X)=p(X)^T,
$$
where $p(X)^T$ denotes the transpose of the  matrix  $ p(X)$.
When $X\in \gtupn$ is substituted into $p$,
the constant term  $p(0)$ of $p(x)$  becomes $p(0) I_n$.
For example,
   $$p(x)= 3+x_1^2+5x_2^3  \  \ \implies \ \  p(X)= 3 I_n+X_1^2+5X_2^3.$$

A symmetric polynomial $p \in \RRx$ is {\bf matrix positive}
if $p(X)$ is a positive-semidefinite matrix for each tuple
$X=(X_1,\dots,X_g)\in (\RR^{n\times n}_{sym})^g$. A symmetric polynomial
$p$ is matrix positive if and only if
$p$ is a finite sum of squares
\[ p = \sum_{i} p_i^Tp_i,\]
for some polynomials $p_i \in \RRx$ (See \cite{H02}).
This paper continuously emphasizes that, unless otherwise noted,
$x_1, x_2, \ldots, x_n$ stand for variables and
$X_1, X_2, \ldots, X_n$ stand for matrices (usually symmetric).

\subsubsection{Noncommutative Differentiation}

Define $h$ to be an indeterminate direction parameter.
For some variable $x_i$, define $\dird[p(x_1,\ldots,x_g),x_i,h]$
to be
\begin{equation}
\dird[p(x_1,\ldots,x_g), x_i,h]:= \frac{d}{dt}[p(x_1, \ldots, (x_i+th),
\ldots, x_g)]_{|_{t=0}}.
\end{equation}
This polynomial in $x = (x_1, \ldots, x_g)$ and $h$ is called the \textbf{directional derivative} \index{directional derviative}
\index{D@$\dird[p,q,h]$|see{directional derivative}}
of
$p = p(x_1,\ldots,x_g)$
in $x_i$ in the direction $h$.
Note that the directional derivative
operator is linear in  $h$.
When all variables commute, this directional
derivative is equal to
$h\displaystyle\frac{\partial p}{\partial x_i}(x)$.
For a detailed formal definition
of this directional derivative see \cite{HMV06},
and
for more examples see \cite{CHSY03}.
\begin{exa} The directional derivative of $p = x_1^2\ncm x_2$ is

\begin{tabularx}{\linewidth}{X}
$\dird\left[x_1^2\ncm x_2,x_1,h\right] = \frac{d}{dt}\left[(x_1+th)^2x_2\right]_{|_{t=0}}$\\
\quad\qquad\qquad\qquad$= \frac{d}{dt}\left[x_1^2\ncm x_2+th\ncm x_1\ncm
x_2+tx_1\ncm h\ncm x_2+t^2h^2\ncm x_2\right]_{|_{t=0}}$\\
\quad\qquad\qquad\qquad$
= \left[h\ncm x_1\ncm x_2 + x_1\ncm h\ncm x_2 + 2th^2\ncm
x_2\right]_{|_{t=0}}$\\
\quad\qquad\qquad\qquad$= h\ncm x_1\ncm x_2 + x_1\ncm h\ncm x_2$.\\
\end{tabularx}
\end{exa}\qed

\

As in this example, in general the directional derivative of $p$ on $x_i$
in the direction $h$ is simply the sum of all possible terms produced from $p$ by replacing
one instance of \mbox{$x_i$ with $h$.}

\begin{lemma}
The directional derivative
of  NC polynomials is linear,
\begin{equation}
\dird[a\,p + b\,q,x_i,h]
= a\,\dird[p,x_i,h] + b\,\dird[q,x_i,h]
\eeq\end{equation}
and respects transposes
\begin{equation}
\dird\left[p^T,x_i,h\right]
= \dird[p,x_i,h]^T.
\eeq\end{equation}
\end{lemma}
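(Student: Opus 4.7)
The plan is to handle the two claims separately, in each case reducing to the defining expression
\[\dird[p,x_i,h] = \tfrac{d}{dt}\bigl[p(x_1,\ldots,x_i+th,\ldots,x_g)\bigr]_{t=0}\]
and exploiting the fact that both polynomial substitution and the formal derivative at $t=0$ are linear operations on $\FFx$.

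For linearity in $p$, I would first observe that for each fixed scalar $t$ the substitution $p\mapsto p(x_1,\ldots,x_i+th,\ldots,x_g)$ is linear in $p$: on a monomial this follows from the binomial-style expansion (as in the example computation before the lemma), and it then extends to $\FFx$ by definition. The map $\tfrac{d}{dt}(\cdot)|_{t=0}$ is likewise a linear operation on polynomials in $t$ with coefficients in $\FF\langle x,h\rangle$. The composition of these two linear maps is the directional derivative, so linearity in $p$ follows at once.

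For the transpose identity, the key observation is that $x_i$ is a symmetric variable and $h$ is understood as a symmetric direction, so $(x_i+th)^T = x_i+th$. Combining this with the product rule $(p_1p_2)^T = p_2^Tp_1^T$ and linearity of transpose yields the monomial-level identity
\[p(x_1,\ldots,x_i+th,\ldots,x_g)^T \;=\; p^T(x_1,\ldots,x_i+th,\ldots,x_g),\]
which is an equality of polynomials in the scalar $t$ with coefficients in $\FF\langle x,h\rangle$. Applying $\tfrac{d}{dt}(\cdot)|_{t=0}$ to both sides and using that the transpose acts only on the $x$ and $h$ entries of each coefficient (and so commutes with $d/dt$), one obtains $\dird[p,x_i,h]^T = \dird[p^T,x_i,h]$.

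There is no real obstacle here; both parts are formal consequences of the definitions. The only point worth care is justifying that the formal $t$-derivative commutes with transpose, which is immediate because if one writes the expansion $p(x_1,\ldots,x_i+th,\ldots,x_g) = \sum_k t^k q_k(x,h)$ with $q_k\in\FF\langle x,h\rangle$, then the transpose of the left side is $\sum_k t^k q_k^T(x,h)$, and extracting the coefficient of $t^1$ yields exactly $\dird[p,x_i,h]^T$.
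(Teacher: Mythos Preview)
Your proposal is correct; the paper's own proof consists solely of the word ``Straightforward,'' so your argument is a valid and natural elaboration of exactly the verification the paper leaves to the reader.
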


\proof \ Straighforward.

\qed

Define $\dird[p, ax_i + bx_j, h]$ to be
\[\dird[p, ax_i + bx_j, h] = a\dird[p, x_i, h] + b\dird[p, x_j, h],\]
and define $\dird[p, x_{a_1}\ldots x_{a_k}, h]$ to be
the $k^{th}$ derivative
\[\dird[p, x_i^k, h] = \overbrace{\dird[\dird[\ldots\dird}^{k\ \mathrm{times}}[p, x_{a_1}, h], x_{a_2}, h], \ldots, x_{a_k}, h].\]
For any commutative polynomial $q \in \FF[x]$, the derivative
$\dird[p, q, h]$ is defined in the obvious way. The polynomial
$q$ here is called
the \textbf{full symbol} \index{full symbol} and is analogous to the full symbol
in the commutative case.
\begin{exa}If $p = x_1^3 + x_1x_2 \in \FFx$ and $q = x_1^2 + 2x_2 \in \FF[x]$,
 then
 \begin{align}
 \dird[p, q, h] &=  \dird\left[x_1^3 + x_1 x_2,x_1^2 + 2x_2, h\right]\notag \\
 &= \dird\left[\dird\left[x_1^3 + x_1x_2, x_1,h\right],x_1,h\right] + 2\dird\left[x_1^3 + x_1x_2, x_2,h\right]\notag\\
 &=2x_1h^2 + 2hx_1h + 2h^2x_1 + 2x_1h.\notag
 \end{align}
 \qed
\end{exa}
If $q$ is homogeneous of degree $\ell$, then
when all variables commute,
$\dird[p,q,h]$ is simply equal to
\begin{align}
\label{eq:RefOnCommCollapseDeriv}
\dird[p,q,h] & = h^{\ell}q\left(\frac{\partial}{\partial x_1}, \ldots
\frac{\partial}{\partial x_g}\right)p = h^{\ell} q(\nabla)p,
\end{align}
\index{$\nabla$}
\index{qgradient@$q(\nabla)$}
where $q(\nabla)$ is the differential operator given by evaluating $q$ at the
$g$-tuple $(\partial/\partial x_1, \ldots, \partial/\partial x_g)$.
If $q \in \FF[x]$ is any polynomial, then a differential operator $\dird[\cdot, q, h]$
is called a \textbf{constant-coefficient} NC differential operator.
The \textbf{order} \index{order of a differential operator}
of $\dird[\cdot, q, h]$ is the degree of
$q$, and $\dird[\cdot, q, h]$ is of \textbf{homogeneous order}
if $q$ is homogeneous.

\subsection{Main Results}
\label{sect:classification1}

\subsubsection{Non-Commutative Harmonic and Subharmonic Polynomials}

This paper is primarily interested in the concept of NC harmonic and subharmonic polynomials.
The notion of a NC Laplacian, NC
harmonic, and NC subharmonic polynomials was first introduced
in \cite{HMH09}.
The \textbf{Laplacian}\index{Laplacian} of a NC polynomial $p$ is
\begin{align}
\lap[p, h] &:= \dird\left[p, \summ{i=1}{g} x_i^2, h\right] \\
&=
\summ{i=1}{g}\frac{d^2}{dt^2}
[p(x_1,\ldots,(x_i+th),\ldots,x_g)]_{|_{t=0}}.
\end{align}
When the variables commute,
$\lap[p,h]$ is
$h^2
\Delta \bigl[p \bigr]
$, the standard Laplacian on $\RR^n$
\[
\Delta \bigl[p \bigr] := \summ{i=1}{g}\frac{\partial^2 p}{\partial x_i^2}.
\]

A NC  polynomial is called {\bf harmonic} \index{harmonic}
if its Laplacian is zero. A NC polynomial is called
 {\bf subharmonic} \index{subharmonic}
if its Laplacian\index{Laplacian} is matrix positive---or equivalently, if
its Laplacian is a finite sum of squares $\sum_i P_i^T P_i$ (see \cite{H02}). A
subharmonic
polynomial is called
{\bf purely subharmonic} \index{subharmonic!purely subharmonic}
 if it is not harmonic, i.e.,
if its Laplacian is nonzero and
matrix positive.

For a discussion of the motivation behind studying NC harmonic and subharmonic
polynomials, see $\S$ \ref{sect:topics}.

\subsubsection{$\ell$-Harmonic Polynomials}
\label{subsub:ClassIntro}
The
\textbf{$\ell$-Laplacian},\index{lLaplacian@$\ell$-Laplacian}
\index{Lapl@$\lap_{\ell}$|see{$\ell$-Laplacian}} denoted $\lap_{\ell}[p,h]$ is
defined by
\[ \lap_{\ell}[p,h] = \dird\left[p, \sum_{i=1}^g x_i^{\ell}, h\right].\]
A NC polynomial $p \in \FFx$ is
\textbf{$\ell$-harmonic}\index{lharmonic@$\ell$-harmonic} if $\lap_{\ell}[p,h] =
0.$
In the
special case that $\ell = 2$, a
NC $2$-harmonic polynomial is simply said to be
NC harmonic\index{harmonic}, and the NC $2$-Laplacian is the NC
Laplacian.\index{Laplacian} \index{Lap@$\lap$|see{Laplacian}}

When the variables commute,
$\lap_{\ell}[p,h]$ is
$h^{\ell}
\Delta_{\ell} \bigl[p \bigr]
$, the \textbf{$\ell$-Laplacian} on
$\RR^n$\index{lLaplacian@$\ell$-Laplacian!in
$\RR [x]$}\index{1cl@$\Delta_{\ell}$|see{$\ell$-Laplacian in $\RR [x]$}}
\[
\Delta_{\ell} \bigl[p \bigr] := \summ{i=1}{g}\frac{\partial^{\ell} p}{\partial
x_i^2}.
\]
In the commutative case, all homogeneous degree $d$, $\ell$-harmonic polynomials
in $\CC[x]$ are sums of  polynomials of the form
\begin{equation}
\label{eq:prevprovedcomm}
(a_1 x_1 + \ldots + a_g x_g)^d \end{equation}
where either $d < \ell$, or $d \geq \ell$ and $a_1^{\ell} + \ldots + a_g^{\ell}
= 0$.
This result follows from a paper of Reznick (see \cite{R94}), which extends
the work of others, namely Sylvester, Clifford, Rosanes, Gundelfinger, Cartan,
Maass and Helgason.  Note that the $\ell$-harmonic case is only a special case
of what Reznick proved.

The main theorem of this dissertation, Theorem \ref{thm:main}, classifies all
$\ell$-harmonic polynomials in a similar way to (\ref{eq:prevprovedcomm}) with
the
exception that instead of using pure powers of linear forms, the noncommutative
classification uses permutations of independent products of powers of linear
forms.

\begin{thm}
\label{thm:main}
Let $x = (x_1, \ldots, x_g)$ and let $\ell$ be a positive integer.
\begin{enumerate}
\item
\label{item:lowdegmain}
Every polynomial in $\CCx$ of degree less than $\ell$ is $\ell$-harmonic.
\item \label{item:middledegmain}If $g \geq d $, then every NC,
homogeneous degree $d$, $\ell$-harmonic polynomial in $\CCx$ is a sum of
polynomials of the form
\begin{equation}
\label{eq:mainthm}
\prod_{i=1}^d \sum_{j=1}^g a_{ij}x_j,
\end{equation}
such that $\sum_{j=1}^g a_{k_1 j} \ldots a_{k_{\ell} j} = 0$ for each $1 \leq
k_1 < \ldots < k_{\ell} \leq d$.

\item \label{item:highdegmain}If $d > \max\{\ell,g\}$, then
the space $\CCx$ can be viewed as a subspace of $\CC \langle x_1, \ldots, x_d
\rangle$.  Therefore,
every NC, homogeneous degree $d$, $\ell$-harmonic polynomial in $\CCx \subset
\CC\langle x_1, \ldots, x_d\rangle$ is a sum of polynomials of the form
\[\prod_{i=1}^d \sum_{j=1}^d a_{ij}x_j, \]
such that $\sum_{j=1}^d a_{k_1 j} \ldots a_{k_{\ell} j} = 0$ for each $1 \leq
k_1 \leq \ldots \leq k_{\ell} \leq d$.
\end{enumerate}
\end{thm}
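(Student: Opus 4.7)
My plan follows the three-part structure of the statement. Item (\ref{item:lowdegmain}) is immediate: $\lap_\ell = \dird[\cdot, \sum_i x_i^\ell, h]$ is a constant-coefficient NC differential operator of order $\ell$, so it annihilates every polynomial of $x$-degree strictly less than $\ell$. For the forward direction of item (\ref{item:middledegmain}) --- that every product of the stated form is $\ell$-harmonic --- I would compute $\lap_\ell\bigl[\prod_{i=1}^d L_i, h\bigr]$ by iterating the product rule for $\dird$. A single application of $\dird[\cdot, x_k, h]$ to $\prod_i L_i$ replaces one factor $L_i$ by $a_{ik} h$; iterating $\ell$ times with the same variable $x_k$ and then grouping ordered $\ell$-tuples of distinct factor indices by their underlying unordered subset $T = \{i_1 < \cdots < i_\ell\}$ yields, after summing over $k$,
\[
\lap_\ell\Bigl[\prod_{i=1}^d L_i, h\Bigr] = \ell! \sum_{\substack{T \subset [d] \\ |T| = \ell}} \Bigl(\sum_{k=1}^g \prod_{i \in T} a_{ik}\Bigr)\, M_T,
\]
where $M_T$ is the monomial obtained from $\prod_i L_i$ by substituting $h$ for each $L_i$ with $i \in T$; the stated hypotheses kill every inner sum.

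For the converse of item (\ref{item:middledegmain}) I would identify the space of homogeneous degree-$d$ polynomials in $g$ noncommuting variables with $(\CC^g)^{\otimes d}$ via the coefficient tensor. Under this identification, $\ell$-harmonicity becomes the vanishing of every ``partial trace'' $\sum_{k=1}^g A(b \cup (T \mapsto k)) \in (\CC^g)^{\otimes (d-\ell)}$, one for each $\ell$-subset $T \subset [d]$ and each $b \colon T^c \to [g]$; and the proposed products correspond exactly to rank-one tensors $a_1 \otimes \cdots \otimes a_d$ satisfying $\sum_{k=1}^g \prod_{i \in T} a_{i,k} = 0$ for every such $T$. The task is to show that the linear span of these ``valid'' rank-one tensors fills the entire $\ell$-harmonic subspace $H_d$.

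I would attack the span equality by induction on $d$. The base case $d = \ell$ is concrete: the standard basis tensors $e_{a(1)} \otimes \cdots \otimes e_{a(d)}$ with non-constant index sequence $(a(1),\ldots,a(d))$ are automatically valid and account for the $(g^d - g)$-dimensional off-diagonal part of $H_d$, while symmetric products $u^{\otimes \ell}$ with $\sum_k u_k^\ell = 0$ supply the remaining $(g-1)$-dimensional diagonal piece, using algebraic closure of $\CC$ to realize any prescribed $(v_1,\ldots,v_g)$ with $\sum_k v_k = 0$ as $(u_1^\ell,\ldots,u_g^\ell)$. For the inductive step I would exploit that $\dird[\cdot, x_i, h]$ commutes with $\lap_\ell$, so directional derivatives of an $\ell$-harmonic polynomial of degree $d$ are $\ell$-harmonic of degree $d-1$. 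The main obstacle --- and in my view the heart of the theorem --- is reconstructing a valid rank-one decomposition of $p$ from those of its derivatives $\dird[p, x_i, h]$: there is no clean NC antiderivative, and a naive ``write $p = \sum_i x_i q_i$ and apply induction to the $q_i$'' strategy introduces side conditions (on the coefficients of $x_i$ within the inductive factors $L_j^{(i)}$) that are not automatically met. Overcoming this will likely require an explicit spanning construction parametrized by families of scalars on the coefficient variety, combined with a dimension count against $\dim H_d = g^d - \dim \mathrm{image}(\lap_\ell)$.

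Finally, item (\ref{item:highdegmain}) follows from item (\ref{item:middledegmain}) by the embedding $\CCx \hookrightarrow \CC\langle x_1, \ldots, x_d\rangle$. Derivatives with respect to the new variables $x_{g+1}, \ldots, x_d$ vanish on polynomials not involving them, so $p \in \CCx$ is $\ell$-harmonic in $\CCx$ if and only if it is $\ell$-harmonic in the larger ring, where the hypothesis ``$g \geq d$'' now holds trivially; item (\ref{item:middledegmain}) applied to the larger ring supplies the decomposition in linear forms over the enlarged variable set.
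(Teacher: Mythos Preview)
Your treatment of items (\ref{item:lowdegmain}) and (\ref{item:highdegmain}), and of the forward direction of item (\ref{item:middledegmain}), is fine and matches the paper. The gap is exactly where you flag it: the inductive step for the converse of item (\ref{item:middledegmain}). You correctly identify that writing $p = \sum_i x_i q_i$ and applying induction to the $q_i$ fails---the $q_i$ are not individually $\ell$-harmonic, and even when they are, their rank-one decompositions need not assemble into one for $p$. Your proposed remedy (an explicit spanning family plus a dimension count) is not a proof; it is a hope, and you do not supply either the spanning family or the dimension formula for $H_d$.

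The paper's argument avoids this obstacle entirely and does not proceed by induction on $d$ via derivatives. Instead it runs an algorithmic reduction in two stages. First, fixing a variable $x_i$ with $\deg_i(p) = d_* > \ell$, it factors out $x_i^{d_*}$ using a permutation-basis lemma (Lemma~\ref{lemma:neighbor}/Proposition~\ref{thm:neighbor}), obtaining $p = \sum_j \sigma_j[x_i^{d_*} q_j] + r$ with each $q_j$ $\ell$-harmonic of lower degree; it then replaces $x_i^{d_*}$ by an explicit symmetrized $\ell$-harmonic $s_{j,k}$ of the same leading $x_i^{d_*}$ term (built from the commutative $\Hm$ construction of Proposition~\ref{thm:classificationcomm}), so that $s_{j,k} q_{j,k}$ is an \emph{independent product} of $\ell$-harmonics and the difference has strictly smaller $\deg_i$. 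Iterating over all variables drives every $\deg_i$ down to at most $\ell$. Second, a further reduction (Proposition~\ref{prop:oddsandevens} and step~\ref{step:A2} of Proposition~\ref{prop:main}) reduces to ``fully degree $\ell$'' polynomials, which are spanned by permutations of products $(x_{i_1}^\ell - x_{j_1}^\ell)\cdots(x_{i_d}^\ell - x_{j_d}^\ell)$ (Proposition~\ref{prop:keytechnicalstep}). The upshot is that every $\ell$-harmonic is a sum of permutations of independent products of \emph{symmetrized} $\ell$-harmonics; the final passage to products of linear forms then invokes Reznick's commutative classification via the $\Symm$/$\Comm$ correspondence of Theorem~\ref{thm:parta}. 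The key ideas you are missing are the independent-product structure and the use of auxiliary correction terms $s_{j,k}$ to peel off high powers---neither of which is visible from the tensor/partial-trace reformulation.
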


\proof \
This proof requires several results and will be completed
in $\S$
\ref{sect:classification}.

The following are a few examples illustrating the kinds of polynomials
which are
of the form (\ref{eq:mainthm}).

\begin{exa}
\label{exa:prod}

The noncommutative polynomials
\[q = (x_1+ix_2)^2
\qquad
and
\qquad
 = (x_3 + ix_4)^2\]
are harmonic since
$1^2 + i^2 = 0.$
Further,
\[qr = (x_1+ix_2)^2(x_3 + ix_4)^2 = \prod_{i=1}^4 \sum_{j=1}^4 a_{ij}x_j\]
is harmonic since for each $1 \leq k_1, k_2 \leq 4$,
\[\sum_{j=1}^4 a_{k_1j}a_{k_2j} = 0,\]
where
\[
a_{11} = a_{21} = 1 \quad \mathrm{and} \quad a_{31} = a_{41} = 0,
\]\[
a_{12} = a_{22} = i \quad \mathrm{and} \quad a_{32}=a_{42} = 0,
\]\[
a_{13} = a_{23} = 0 \quad \mathrm{and} \quad a_{33} = a_{43} = 1,
\]\[
a_{14} = a_{24} = 0 \quad \mathrm{and} \quad a_{34} = a_{44} = i.
\]
\qed
\end{exa}

\begin{exa}
\label{exa:dummy1}
Consider the NC polynomial
\[r(x_1,x_2,x_3) = x_1^2x_2^2 - x_2^2x_1^2 + x_2^2x_3^3 - x_3^2x_2^2 +
x_3^2x_1^2 - x_1^2x_3^2.\]
This polynomial is harmonic. At first glance it may not be apparent why $r$ is
of the same form as (\ref{eq:mainthm}). One even notices
that when the variables commute, $r = 0$.

By considering $r$ as an element of $\CC\langle x_1,\ldots, x_4\rangle$, one
sees
\begin{align}
r(x) =&\ \frac{1}{2}(x_2^2 - x_1^2)(x_3^2 - x_4^2) - \frac{1}{2}(x_3^2 -
x_4^2)(x_2^2 - x_1^2)\\
&
+ \frac{1}{2}(x_1^2 - x_4^2)(x_2^2 - x_3^2) - \frac{1}{2}(x_2^2 - x_3^2)(x_1^2 -
x_4^2) \notag\\
&+ \frac{1}{2}(x_3^2 - x_1^2)(x_4^2 - x_2^2) - \frac{1}{2}(x_4^2 - x_2^2)(x_3^2
- x_1^2)
\notag\end{align}
Each $(x_i^2 - x_j^2)$ is equal to
\begin{align}
x_i^2 - x_j^2 &= \mathrm{Re}(x_i + ix_j)^2\\
\notag &= \frac{1}{2}(x_i + ix_j)^2
+ \frac{1}{2}(x_i - ix_j)^2.
\end{align}

This polynomial $r$
belongs to a special
class of polynomials which is defined and studied in $\S$
\ref{sect:fullydeg2}.\qed
\end{exa}

\subsubsection{Polynomials in Nonsymmetric Variables}
A polynomial $p \in \FFNx$\index{Faxs@$\FFNx$}
\index{Raxs@$\RRNx$|see{$\FFNx$}}
\index{Caxs@$\CCNx$|see{$\FFNx$}} is called a polynomial in nonsymmetric
variables.  In this case, $x_i \neq x_i^T$\index{variable!nonsymmetric} for each
$x_i$.  One can define notions of NC harmonic, $\ell$-harmonic, and subharmonic
in this setting.  In $\S$ \ref{sect:ncvars}, Theorem \ref{thm:main} is extended
to the nonsymmetric variable case.

\subsubsection{Subharmonic Polynomials}

\label{sub:othertopics}
\index{subharmonic}
It is proven in \cite{HMH09} that
all homogeneous degree $2g$ subharmonic polynomials $p \in \RRx$ are of the form
\begin{align}
\label{eq:subharmclassif}
p = \sum_i^{finite} c_i \h_i(x)^T\h_i(x) + H
\end{align}
for some $c_i \in \RR$, for some NC, harmonic, homogeneous degree $d$
polynomials $\h_i$, and for some NC, harmonic, homogeneous degree $2d$
polynomial $H \in \RRx$.  All
such polynomials $p$ of the form (\ref{eq:subharmclassif}) are not necessarily
subharmonic (for example, take all of the $c_i$ to be negative).  When the $c_i$
are all positive, then $p$ is a sum of squares of harmonic polynomials plus a
harmonic polynomial.  Proposition \ref{prop:2varsub} proves that all NC,
homogeneous, subharmonic polynomials in two variables are of this form.  Example
\ref{exa:countersubSOS} proves, however, that there exist NC subharmonic
polynomials in more than two variables which are not finite sums of squares of
harmonic polynomials plus a harmonic polynomial.

A NC polynomial $p$ is \textbf{bounded below}\index{bounded below} if there
exists a constant $C$ such that $p + C$ is matrix positive.

\begin{thm}\label{thm:bddbelow}Let $d$ be a positive integer.  Let $p \in \RRx$
or $\RRNx$ be a NC, homogeneous degree $2d$, subharmonic polynomial which is
bounded below. Then $p$ is of the form
\[p = \sum_{i}^{finite} \h_i^T\h_i, \]
where each $\h_i \in \RRx$ is a homogeneous degree $d$ harmonic polynomial.
\end{thm}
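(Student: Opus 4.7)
The plan is to first upgrade the bounded-below hypothesis to matrix positivity of $p$, extract a sum-of-squares representation, and then argue that the summands can be taken harmonic.

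First, because $p$ is homogeneous of degree $2d \geq 2$, for any symmetric tuple $X$ and scalar $t > 0$ one has $p(tX) = t^{2d} p(X)$. If $p(X)$ had a negative eigenvalue in some direction $v$, then $\langle p(tX)v, v\rangle = t^{2d}\langle p(X)v, v\rangle$ would be unbounded below as $t \to \infty$, contradicting the existence of a constant $C$ with $p + C$ matrix positive. Hence $p$ is matrix positive, and Helton's theorem from \cite{H02} gives $p = \sum_{i} f_i^T f_i$ for some $f_i \in \RRx$; matching degrees allows us to assume each $f_i$ is homogeneous of degree $d$.

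Next, I would analyze how the Laplacian interacts with a single square. A direct Leibniz-rule computation yields
\[
\lap[f_i^T f_i, h] = \lap[f_i, h]^T f_i + f_i^T \lap[f_i, h] + 2\sum_{j=1}^g \dird[f_i, x_j, h]^T \dird[f_i, x_j, h],
\]
so the last double sum is automatically a sum of squares. When each $f_i$ is harmonic the first two cross terms vanish and the conclusion is immediate, so the task is to rewrite the decomposition so that each $f_i$ is harmonic.

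To achieve this I would introduce a direct-sum decomposition of the space of homogeneous degree $d$ NC polynomials into the harmonic subspace and a complement (playing the role of the classical Fischer decomposition), writing $f_i = \h_i + g_i$ with $\h_i$ harmonic. Using the explicit classification of harmonics from Theorem \ref{thm:main} to describe the complement, I would then show that the mixed and non-harmonic contributions $\h_i^T g_i + g_i^T \h_i + g_i^T g_i$ appearing in $p = \sum_i(\h_i + g_i)^T(\h_i + g_i)$ must collectively vanish. The main obstacle is establishing precisely this vanishing: the subharmonicity alone is insufficient, since Example \ref{exa:countersubSOS} exhibits subharmonic polynomials in more than two variables that are not sums of squares of harmonic polynomials plus a harmonic polynomial, so it is essential to combine subharmonicity with the matrix positivity upgrade obtained from the bounded-below hypothesis. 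Demonstrating that these two positivity conditions together force the non-harmonic content of every sum-of-squares representation to collapse into harmonic squares is the technical heart of the proof.
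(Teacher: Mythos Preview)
Your first paragraph is fine and matches the paper: homogeneity plus bounded below forces $p$ itself to be matrix positive, hence a sum of squares of homogeneous degree $d$ polynomials.

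The gap is in the second half. You correctly identify that the goal is to show the non-harmonic content vanishes, but you do not supply a mechanism; you explicitly flag ``establishing precisely this vanishing'' as the unresolved technical heart. The Laplacian product rule and a Fischer-type splitting $f_i = \h_i + g_i$ do not by themselves give you any handle on the cross terms $\h_i^T g_i + g_i^T \h_i + g_i^T g_i$, and invoking the classification of harmonics (Theorem \ref{thm:main}) is a detour that does not obviously help.

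The missing ingredient is the structure theorem for subharmonic polynomials, Proposition \ref{thm:mainsub} (Theorem 2 of \cite{HMH09}): any homogeneous degree $2d$ subharmonic $p$ can be written as $p = H(x)^T A H(x)$ where $H(x)$ is a vector whose entries form a basis for the degree $d$ harmonics. Combined with the uniqueness of the Gram matrix representation (Lemma \ref{lemma:goodtechnical}), this is decisive: your sum-of-squares decomposition gives $p = V(x)^T M V(x)$ with $M \succeq 0$ for a full degree-$d$ basis $V$, while Proposition \ref{thm:mainsub} says the unique such $M$ is supported on the harmonic block, i.e.\ $M = \begin{pmatrix} A & 0 \\ 0 & 0 \end{pmatrix}$. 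Hence $A \succeq 0$, and $p = (\sqrt{A}\,H(x))^T(\sqrt{A}\,H(x))$ is a sum of squares of harmonics. This bypasses any need to analyze individual $f_i$ or their Laplacians.
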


\begin{proof}
The proof is given in \S \ref{subsub:pfOfBddBelow}.
\end{proof}

\subsection{Related Topics and Motivation}
\label{sect:topics}

This paper follows a burdeogening line of work  
on noncommutative polynomials and rational functions
(see \cite{H02},  \cite{HM04}, \cite{HMV06},  \cite{BB08}, 
\cite{BK10}, \cite{KV09}).
More specifically, this paper extends the results of Helton, McAllister, and
Hernandez
in \cite{HMH09}.  Related to this topic is the work of Popescu on free
holomorphic
and free pluriharmonic functions (see \cite{P05}, \cite{P06}, \cite{P08a},
\cite{P08b},
\cite{P09}).  Popescu's work uses a different definition of derivative, hence
his notion of a noncommutative harmonic
function is different from the one presented here.

This article would come
under the general heading of ``free analysis",
since the setting is a noncommutative algebra whose generators
are ``free" of relations. 
The largest and oldest (initiated by Voiculescu)
 branch of the subject
is free probability. The interested reader is referred to the web
site \cite{SV06}
of American Institute of Mathematics, which in particular gives the
findings of the AIM workshop in 2006 on free analysis.

\bigskip

\noindent
{\bf Noncommutative Convexity} \
The noncommutative Hessian is defined as:
\begin{equation*}
\nchess[p(x_1,\ldots,x_g), \{ x_1,\eta_1\},\ldots, \{ x_g,\eta_g\}] :=
\frac{d^2}{dt^2}[p(x_1+t\eta_1,\ldots,x_g+t_g\eta_g)]_{|_{t=0}}. \\
\end{equation*}
Note that this is composed of several independent direction parameters
$\eta_i$. If $p$ is a polynomial, then its Hessian is a polynomial
in $x$ and $\eta$ which is homogeneous of degree 2 in $\eta$.

A NC polynomial is considered {\bf convex} wherever  its
Hessian is matrix positive.  A NC polynomial $p = p(x_1,\ldots,x_d)$ is {\bf
geometrically convex}
 if and only if, for every $X, Y \in \gtupn  $, the polynomial
\begin{equation}
\frac{1}{2}\bigl(p(X) + p(Y)\bigr) - p\biggl(\frac{X + Y}{2}\biggr)
\eeq\end{equation}
is positive semidefinite.
It is proved in \cite{HM98}
that convexity is equivalent to geometric convexity.
A crucial fact regarding these polynomials
is that they are
all of degree two or less (see \cite{HM04}).
Some excellent papers on noncommutative convexity are
\cite{HT06} and \cite{Han97}.

The commutative analog of this ``directional''
Hessian is the quadratic function
\begin{equation}
H\bigl(p\bigr)
\begin{pmatrix} \eta_1 \\ \vdots \\ \eta_g
\end{pmatrix} \cdot
\begin{pmatrix} \eta_1 \\ \vdots \\ \eta_g
\end{pmatrix}
\end{equation}
where $H\bigl(p \bigr)$ is the Hessian matrix:
\begin{equation}
\begin{pmatrix}
\frac{\partial^2p}{\partial x_1x_1} &\cdots &\frac{\partial^2p}{\partial x_1x_g}
\\
\vdots & \ddots & \vdots \\
\frac{\partial^2p}{\partial x_gx_1} &\cdots &\frac{\partial^2p}{\partial x_gx_g}
\end{pmatrix}.
\end{equation}
If this Hessian is positive
semidefinite  for all points
 $(x_1,\ldots,x_g)$,
then $f$ is said to be convex.

Classically the Laplacian
%
 is the trace of the Hessian.  When the Laplacian is nonnegative, $f$ is subharmonic, so all convex polynomials must also be subharmonic.
Similarly, all NC convex polynomials 
must also NC subharmonic.  

\bigskip

\noindent
{\bf Noncommutative Algebra in Engineering} \
Inequalities involving polynomials in matrices and their
inverses and other associated optimization problems have become
important in engineering, for an exposition see \cite{dOHMP}.
When such polynomials are matrix convex, any
local minima are automatically global minima.
Inequalities involving such matrix polynomials
may also be analyzed well using interior point numerical methods.
In the last few
years, proposed approaches in the field of
optimization and control theory based on linear matrix
inequalities and semidefinite programming have become
important and promising since they present
a general framework which can be
can be used for
a large set of problems.
When they are convex, matrix inequality
optimization problems are well behaved and interior point
methods provide efficient algorithms which are effective on
moderate-sized problems.
Unfortunately, the class of matrix-convex
NC polynomials  is very small. As already mentioned,
matrix convex polynomials are all of degree two or less \cite{HM04}.

The original reason for studying noncommutative polynomial
solutions to partial differential inequalities
 was to analyze conditions similar to
convexity which are not as restrictive, in the hopes of finding much
broader classes of polynomials which still retained "nice" properties (i.e. subharmonic polynomials).

\section{Basic Facts about Derivatives and Permutations of Noncommutative Polynomials}
\label{sect:basicfacts}
This section proves some useful results on differentiation of NC polynomials.  These results will be used later to prove Theorem \ref{thm:main}.

\subsection{Permutations of Polynomials}

\subsubsection{Definition}
\label{sub:actionsofSd}

For a positive integer $d$, let \index{symmetric!group of degree $d$}\index{Sd@$S_d$|see{symmetric group of degree $d$}}$S_d$ denote the \textbf{symmetric group
of degree $d$} (that is, the group of permutations on $d$ elements).  An element $\sigma \in S_d$
is a bijection of $\{1, \ldots, d\}$ onto itself, mapping $1, 2, \ldots, d$ onto $\sigma(1), \sigma(2), \ldots, \sigma(d)$ respectively.
This
may be presented as
\begin{align}
\sigma = \left(\begin{array}{cccc}
1&2&\ldots&d\\
\sigma(1)&\sigma(2)&\ldots&\sigma(d)
\end{array}\right).
\end{align}
Alternately, a permutation $\sigma \in S_d$ may
be represented as a product of
cycles
\[\tau = (a_1\ a_2\ \ldots a_k), \]
where the cycle $\tau$ is defined by
\begin{align}
\label{eq:cycle}
\tau(a_1) &= a_2\\
\notag \tau(a_2) &=a_3\\
\notag \vdots\\
\notag \tau(a_{d-1}) & = a_d\\
\notag \tau(a_d) & = a_1
\end{align}
and $\tau(k) = k$ for all other $k$.

Given a monomial $m=x_{a_{1}}
 x_{a_{2}}\ldots x_{a_{d}}\in \FFx$ of degree $d$
and a permutation $\sigma \in S_d$ define $\sigma[m]$ to be
\begin{equation}
\label{def:action}
 \sigma \left[x_{\alpha_{1}}
 x_{\alpha_{2}}\ldots x_{\alpha_{d}}\right] :=
x_{\alpha_{\sigma(1)}}x_{\alpha_{\sigma(2)}}\ldots x_{\alpha_{\sigma(d)}}.
\end{equation}
For a general homogeneous degree $d$ polynomial $p$ of the form
\[ p  = \sum_{|m|=d}A_{m}\,m\]
define $\sigma[p]$ to be
\[\sigma[p] := \sum_{|m|=d}A_{m}\,\sigma[m].\]
 The polynomial $\sigma[p]$ is referred to as a
 \textbf{permutation of $p$} \index{permutation of a polynomial}
 \index{$\sigma[p]$|see{permutation of a polynomial}}.

For clarification, note that later on---for example, in $\S$
\ref{sect:fullydeg2}---we use a permuation $\tau \in S_d$ to look at monomials
\[ x_{\tau(1)}^{\ell} \ldots x_{\tau(d)}^{\ell}.\]
The permutation action defined in (\ref{def:action}) applies a permutation to
the subscripts of the $a_1, \ldots, a_d$, not to the subscripts of the
variables $x_1, \ldots, x_g$.

\begin{exa}Let $\sigma \in S_4$ be
\[\sigma = \left(\begin{array}{cccc}
1&2&3&4\\
4&2&3&1
\end{array}\right) = (1\ 4),\]
and let $p \in \FFx$ be
\[p = x_1x_2x_3x_4 + x_1^4.\]
One sees that $\sigma[p]$
is equal to \
$
\sigma[p] = x_{4}x_2x_3x_1 + x_1^4.
$
\qed
\end{exa}

For any $d$ and any $\sigma \in S_d$, define $\sigma[0]$ to be $0$.   With this convention, permutation as defined in (\ref{def:action}) defines an action of $S_d$ on the subspace of $\FFx$ spanned by all NC, homogeneous
degree $d$ polynomials.

\begin{prop}
 \label{prop:permOfProdLin}
Let $p \in \FFx$ be a homogeneous degree $d$ polynomial
of the form
\[ p = \prod_{i=1}^d \sum_{j=1}^g a_{ij} x_j. \]
Let $\sigma \in S_d$.
Then $\sigma[p]$ is equal to
\[ \sigma[p] = \prod_{i=1}^d \sum_{j=1}^g a_{\sigma(i)j} x_j.\]
\end{prop}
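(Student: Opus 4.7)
My plan is to verify the identity by expanding both sides as sums of monomials and matching coefficients via a change of indexing.

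First I would write $p$ explicitly as a sum of degree $d$ monomials by distributing the product:
\[ p = \sum_{\alpha_1, \ldots, \alpha_d = 1}^{g} \left(\prod_{i=1}^d a_{i\alpha_i}\right) x_{\alpha_1} x_{\alpha_2} \cdots x_{\alpha_d}. \]
Applying the definition of $\sigma[\cdot]$ from (\ref{def:action}) term by term and using linearity of $\sigma[\cdot]$ on the span of degree $d$ monomials, I get
\[ \sigma[p] = \sum_{\alpha_1, \ldots, \alpha_d = 1}^{g} \left(\prod_{i=1}^d a_{i\alpha_i}\right) x_{\alpha_{\sigma(1)}} x_{\alpha_{\sigma(2)}} \cdots x_{\alpha_{\sigma(d)}}. \]

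Next I would expand the claimed right-hand side in the same way:
\[ \prod_{i=1}^d \sum_{j=1}^g a_{\sigma(i)j} x_j = \sum_{\beta_1, \ldots, \beta_d = 1}^{g} \left(\prod_{i=1}^d a_{\sigma(i)\beta_i}\right) x_{\beta_1} x_{\beta_2} \cdots x_{\beta_d}. \]
To match the two expressions, I would perform the change of variables $\beta_i := \alpha_{\sigma(i)}$ in the expression for $\sigma[p]$; since $\sigma$ is a bijection on $\{1,\ldots,d\}$, this is a bijective reparametrization of the summation indices. Under this substitution $\alpha_j = \beta_{\sigma^{-1}(j)}$, so relabeling the index $i \mapsto \sigma(j)$ in the product gives
\[ \prod_{i=1}^d a_{i\alpha_i} = \prod_{i=1}^d a_{i, \beta_{\sigma^{-1}(i)}} = \prod_{j=1}^d a_{\sigma(j)\beta_j}. \]

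Substituting back yields exactly the expanded form of the right-hand side, completing the proof. The argument is essentially bookkeeping, and the only subtle point is the index substitution $\beta_i = \alpha_{\sigma(i)}$; I would state that explicitly to make clear why the factor $\prod_i a_{i\alpha_i}$ becomes $\prod_j a_{\sigma(j)\beta_j}$ rather than, say, $\prod_j a_{\sigma^{-1}(j)\beta_j}$. No substantial obstacle arises, since the proposition is essentially the definition of $\sigma[p]$ applied to a fully expanded product of linear forms.
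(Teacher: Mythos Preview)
Your proof is correct and follows essentially the same approach as the paper: both expand the product into a sum of monomials, apply the definition of $\sigma[\cdot]$, and then perform the bijective reindexing $\beta_i = \alpha_{\sigma(i)}$ (the paper writes it as $j_{\sigma(i)} = k_i$) to match the expanded right-hand side. The only cosmetic difference is that the paper first rewrites the scalar product $\prod_i a_{i\alpha_i}$ as $\prod_i a_{\sigma(i)\alpha_{\sigma(i)}}$ before substituting, whereas you substitute first and then relabel the product index.
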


\begin{proof}
 We have
\begin{align}
\notag
  p &= \sum_{j_1=1}^g \ldots \sum_{j_d = 1}^g \left[
(a_{1j_1}x_{j_1})(a_{2j_2}x_{j_2}) \ldots (a_{dj_d}x_{j_g})
\right]\\
\notag
&= \sum_{j_1=1}^g \ldots \sum_{j_d = 1}^g \left(a_{1j_1} \ldots a_{dj_d}\right)
x_{j_1} \ldots x_{j_d}.
\end{align}
By definition, $\sigma[p]$ is equal to
\begin{align}
 \notag \sigma[p]
& = \sum_{j_1=1}^g \ldots \sum_{j_d = 1}^g \left(a_{1j_1} \ldots a_{dj_d}\right)
x_{j_{\sigma(1)}} \ldots x_{j_{\sigma(d)}}.
\end{align}
The product of scalars
$ a_{1j_1} \ldots a_{dj_d}$
is equal to
\[ a_{1j_1} \ldots a_{dj_d} = a_{\sigma(1)j_{\sigma(1)}} \ldots
a_{\sigma(d)j_{\sigma(d)}}.\]
Making the substitution $j_{\sigma(i)} = k_i$ gives
\begin{align}
 \notag \sigma[p]
& = \sum_{j_1=1}^g \ldots \sum_{j_d = 1}^g \left(a_{\sigma(1)j_{\sigma(1)}}
\ldots a_{\sigma(d)j_{\sigma(d)}}\right)
x_{j_{\sigma(1)}} \ldots x_{j_{\sigma(d)}}\\
\notag
&= \sum_{k_1=1}^g \ldots \sum_{k_d = 1}^g
\left(a_{\sigma(1)k_1}
\ldots a_{\sigma(d)k_d}\right)
x_{k_1} \ldots x_{k_d}
\\
\notag
&= \prod_{i=1}^d \sum_{j=1}^g a_{\sigma(i)j} x_j.
\end{align}
\end{proof}

\subsubsection{Symmetrization}
  For each homogeneous degree $d$
  polynomial $p \in \FFx$, define the \textbf{symmetrization}\index{symmetrization}
  \index{Symm@$\Symm$|see{symmetrization}} of $p$ by
  \begin{equation}
  \Symm[p] := \frac{1}{d!} \sum_{\sigma \in S_d} \sigma [p].
  \end{equation}
  Define $\Symm[0]$ to be $0$.
  For a general polynomial $p$ equal to
  \[p = \sum_{i=0}^{d} p_i(x) \in \FFx,\]
  where each $p_i$ is either zero or homogeneous of degree $i$,
  define $\Symm[p]$ to be
  \[\Symm[p] := \sum_{i=0}^{d} \Symm[p_i].\]
  A polynomial $p \in \FFx$ is said to be \textbf{symmetrized}\index{symmetrization!symmetrized polynomial} if $p = \Symm[p].$

  \begin{exa}If $p(x_1, x_2) = x_1x_2 + x_2x_1$ and $q(x_1,x_2) = x_1x_1x_2x_2$ then
  \[\Symm[p(x_1,x_2)] = p(x_1, x_2)\]
  so $p(x_1,x_2)$ is symmetrized. On the other hand,
  \begin{align}\Symm[q(x_1, x_2)] =& \frac{1}{6}(x_1x_1x_2x_2 + x_1x_2x_1x_2 + x_2x_1x_1x_2\\
  \notag &+ x_1x_2x_2x_1 + x_2x_1x_2x_1 + x_2x_2x_1x_1)\end{align}
  which shows $q(x_1, x_2)$ is not symmetrized.
  \qed\end{exa}

  Note further that for any homogeneous degree $d$ polynomial $r \in \FFx$,
  \begin{align}
  \Symm\left[\Symm[r]\right] &= \frac{1}{(d!)^2}\sum_{\sigma \in S_d} \sigma \left[\sum_{\tau \in S_d} \tau[r]\right]\notag\\
  & =
  \frac{1}{(d!)^2} \sum_{\sigma \in S_d}\sum_{\tau \in S_d} \sigma\tau[r]. \notag \end{align}
  For each $\sigma$, let $\tau = \sigma^{-1}\omega$. Then
  \begin{align}
   \Symm\left[\Symm[r]\right]&= \frac{1}{(d!)^2} \sum_{\sigma \in S_d}\sum_{\omega \in S_d} \sigma\sigma^{-1}\omega[r] \\
    &= \frac{d!}{(d!)^2}\sum_{\omega \in S_d} \omega[r] = \Symm[r]. \notag
    \end{align} In other words,
  the symmetrization of a polynomial is symmetrized.

Here are some propositions which prove other properties of $\Symm[]$.

\begin{prop}
 \label{prop:symmOfSymm}
A homogeneous degree $d$ NC polynomial $p \in \FFx$ is symmetrized if and only
if $\sigma[p] = p$ for
each $\sigma \in S_d$.
\end{prop}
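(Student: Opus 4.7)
The plan is to prove the two directions separately. The ``if'' direction (that $\sigma[p] = p$ for all $\sigma$ implies $p$ is symmetrized) is immediate from the definition: under this hypothesis,
\[ \Symm[p] = \frac{1}{d!}\sum_{\sigma \in S_d} \sigma[p] = \frac{1}{d!}\sum_{\sigma \in S_d} p = p. \]

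For the ``only if'' direction, suppose $p = \Symm[p]$ and fix an arbitrary $\sigma \in S_d$. I would apply $\sigma$ to both sides of the identity $p = \frac{1}{d!}\sum_{\tau \in S_d} \tau[p]$, using linearity of the $S_d$-action (which is immediate from (\ref{def:action})) to distribute $\sigma$ through the sum. This yields
\[ \sigma[p] = \frac{1}{d!} \sum_{\tau \in S_d} \sigma[\tau[p]]. \]
The key step is the composition identity $\sigma[\tau[p]] = (\tau\sigma)[p]$, which one verifies directly on a monomial $x_{a_1}\ldots x_{a_d}$ by tracking subscripts through (\ref{def:action}): both sides produce $x_{a_{\tau(\sigma(1))}}\ldots x_{a_{\tau(\sigma(d))}}$. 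With this in hand, the substitution $\omega = \tau\sigma$ (a bijection of $S_d$, since left multiplication by $\sigma$ permutes the group) reindexes the sum to $\frac{1}{d!}\sum_{\omega \in S_d} \omega[p] = \Symm[p] = p$, which is exactly the invariance trick used in the calculation of $\Symm[\Symm[r]] = \Symm[r]$ just above.

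The only piece of real work is verifying the composition identity on monomials, which is a short index-chasing exercise rather than a substantive obstacle; everything else is one-line application of the definition and a reindexing of the sum over a finite group.
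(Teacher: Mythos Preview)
Your proof is correct and follows essentially the same approach as the paper: both directions are handled identically, with the ``only if'' direction using linearity to push $\sigma$ inside the sum and then reindexing over $S_d$. You are in fact slightly more careful than the paper in explicitly verifying the composition identity $\sigma[\tau[p]] = (\tau\sigma)[p]$ on monomials; one tiny slip is that the map $\tau \mapsto \tau\sigma$ is \emph{right} multiplication by $\sigma$, not left, but this does not affect the argument since either translation is a bijection of $S_d$.
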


\begin{proof}
 If $\sigma[p] = p$ for each $\sigma \in S_d$, then
\[ \Symm[p] = \frac{1}{d!} \sum_{\sigma \in S_d} \sigma[p] = \frac{1}{d!}
\sum_{\sigma \in S_d} p = p.\]
If $p$ is symmetrized, then $p = \Symm[p]$ and for each $\sigma \in S_d$,
\begin{align}
 \notag
\sigma[p] &= \sigma[\Symm[p]] = \sigma \left[ \frac{1}{d!} \sum_{\tau \in S_d}
\tau[p] \right]\\
\notag
&= \frac{1}{d!} \sum_{\tau \in S_d} \sigma\tau[p]
\ = \ \frac{1}{d!} \sum_{\omega \in S_d} \omega[p]
\\
\notag
&= \Symm[p] = p.
\end{align}

\end{proof}

\begin{prop}
\label{prop:symmOfPower}
Let $p = (b_1x_1 + \ldots + b_dx_d)^d \in \FFx$ be a homogeneous degree $d$
polynomial.
Then $\Symm[p] = p$.
\end{prop}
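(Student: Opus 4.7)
The plan is to write $p$ as a product of $d$ identical linear forms and invoke the two preceding propositions. Specifically, set $L = b_1x_1 + \cdots + b_dx_d$ so that
\[ p = \prod_{i=1}^d L = \prod_{i=1}^d \sum_{j=1}^d a_{ij}x_j, \]
where the coefficients $a_{ij} := b_j$ are independent of $i$.

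Next I would apply Proposition \ref{prop:permOfProdLin} to compute $\sigma[p]$ for an arbitrary $\sigma \in S_d$. The proposition gives
\[ \sigma[p] = \prod_{i=1}^d \sum_{j=1}^d a_{\sigma(i)j} x_j. \]
Because $a_{ij}$ does not depend on its first index, $a_{\sigma(i)j} = b_j = a_{ij}$, so the right-hand side is literally $\prod_{i=1}^d L = p$. Thus $\sigma[p] = p$ for every $\sigma \in S_d$.

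Finally, I would invoke Proposition \ref{prop:symmOfSymm}: a homogeneous degree $d$ polynomial is fixed by every $\sigma \in S_d$ if and only if it is symmetrized. Since we have just verified the left condition, we conclude $\Symm[p] = p$, completing the proof.

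There is no real obstacle here; the result is essentially a one-line corollary of Proposition \ref{prop:permOfProdLin} combined with the characterization in Proposition \ref{prop:symmOfSymm}. The only thing worth emphasizing is that the coefficient matrix $(a_{ij})$ associated with $p$ has constant rows, which is precisely what makes every permutation act trivially.
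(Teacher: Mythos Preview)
Your proof is correct and follows essentially the same approach as the paper: write $p$ as a product of linear forms with $a_{ij}=b_j$ independent of $i$, apply Proposition~\ref{prop:permOfProdLin} to see that every $\sigma\in S_d$ fixes $p$, and conclude. The only cosmetic difference is that you explicitly cite Proposition~\ref{prop:symmOfSymm} for the final step, whereas the paper leaves that implicit.
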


\begin{proof}
 We can express $p$ as
\[ p = \prod_{i=1}^d \sum_{j=1}^g a_{ij} x_j,\]
where $a_{ij} = b_j$ for all $i$.
By Proposition \ref{prop:permOfProdLin}, for each $\sigma \in S_d$, we have
\[ \sigma[p] = \prod_{i=1}^d \sum_{j=1}^g a_{\sigma(i)j} x_j = \prod_{i=1}^d
\sum_{j=1}^g a_{ij} x_j,\]
since $a_{\sigma(i)j} = a_{ij} = b_j$.
\end{proof}

  Define the \textbf{commutative collapse}\index{commutative collapse}
  \index{Comm@$\Comm$|see{commutative collapse}} of $p \in \FFx$, denoted $\Comm[p]$, as the
  projection of $p$ onto $\FF[x]$ given by allowing the variables
  $x_i$ to commute.

  \begin{exa}If $p(x_1, x_2) = x_1x_2 + x_2x_1 \in \FFx$, then
  $\Comm[p] = 2x_1x_2 \in \FF[x].$\qed
  \end{exa}

\subsubsection{Independent Products}

   Let $p_1, p_2, \ldots, p_n \in \FFx$. The product $p_1 p_2 \ldots p_n$ is an
   \textbf{independent product of $p_1, \ldots, p_n$}
  \index{independent product}
  if $p_1, p_2, \ldots, p_n$ all depend on different variables.  Formally, $p_1, p_2, \ldots, p_n$ form an independent product if whenever $\deg_i(p_j) > 0$ for some $x_i$ and $p_j$, then $\deg_i(p_k) = 0$ for all $k \neq j$.

  As a convention, if a polynomial $p$ written as
  \[p = p_1p_2 \ldots p_n\]
  is called an independent product, what is meant is that it is an independent product of $p_1, p_2, \ldots, p_n$. Further, $p = p_1$ can be considered as an independent product of $p_1$ itself.

   \begin{exa}
  Let $p_1 = x_1^2 - x_2^2$ and $p_2 = x_3$.
  Then $p_1p_2 = (x_1^2 - x_2^2)x_3$ is an independent product since $p_1$ only depends on the variables $x_1$ and $x_2$, and $p_2$ only depends on the variable $x_3$.\qed
  \end{exa}

\subsection{Product Rules for NC Derivatives}
This subsection proves some product rules for directional derivatives
of NC polynomials.
\subsubsection{Product Rule for First Derivatives}
The following appears as Lemma 2.1 in \cite{HMH09}, but
the proof is included here for the convenience of the reader.

\begin{lemma}
\label{lem:prodrule}
The product rule for the  directional derivative of
NC polynomials
is
\begin{equation}
\label{eq:DProd}
\dird[p_1\ncm {p_2},x_i,h]
= \dird[{p_1},x_i,h]\ncm {p_2}\, +\, {p_1}\ncm \dird[{p_2},x_i,h].
\eeq\end{equation}
\end{lemma}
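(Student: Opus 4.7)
The plan is to reduce the identity to a single application of the ordinary one-variable Leibniz rule, since the directional derivative $\dird[\cdot,x_i,h]$ is defined via $\frac{d}{dt}$ evaluated at $t=0$, where $t$ is a scalar parameter. The key observation is that although $p_1, p_2$ live in the noncommutative algebra $\FFx$ (extended by the indeterminate $h$), the derivative $\frac{d}{dt}$ acts on polynomial-valued functions of the scalar $t$, and the classical Leibniz rule holds for any bilinear product of $t$-dependent quantities — including noncommutative multiplication — precisely because its proof only uses bilinearity and the limit definition, not commutativity.

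Concretely, I would first write
\[ \dird[p_1 p_2, x_i, h] = \frac{d}{dt}\Bigl[ p_1(x_1,\ldots,x_i+th,\ldots,x_g)\, p_2(x_1,\ldots,x_i+th,\ldots,x_g) \Bigr]_{t=0}. \]
Setting $P_j(t) := p_j(x_1,\ldots,x_i+th,\ldots,x_g)$ for $j=1,2$, both $P_1(t)$ and $P_2(t)$ are polynomials in $t$ with coefficients in $\FF\langle x_1,\ldots,x_g,h\rangle$. Applying the scalar Leibniz rule to $P_1(t) P_2(t)$ yields
\[ \frac{d}{dt}\bigl[P_1(t)P_2(t)\bigr] = P_1'(t)\,P_2(t) + P_1(t)\,P_2'(t), \]
and evaluating at $t=0$ uses $P_j(0) = p_j$ and $P_j'(0) = \dird[p_j, x_i, h]$, which gives exactly (\ref{eq:DProd}).

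If one wanted a more elementary, purely algebraic argument avoiding the scalar calculus step, the alternative is to argue by linearity (reducing to the case where $p_1$ and $p_2$ are monomials, which is justified by the linearity lemma stated just before) and then use the combinatorial description observed in the text: $\dird[m, x_i, h]$ of a monomial $m$ is the sum over all positions in $m$ where an $x_i$ appears, with that $x_i$ replaced by $h$. For a monomial of the form $m = m_1 m_2$, each replaceable $x_i$ lies either in the $m_1$ factor or in the $m_2$ factor, and partitioning the sum accordingly gives $\dird[m_1, x_i, h]\,m_2 + m_1\,\dird[m_2, x_i, h]$.

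The main (minor) obstacle is purely notational: one must be careful that in the noncommutative setting the order of the two factors in each summand is preserved exactly as in $p_1 p_2$, with no reversal. Both proof routes handle this automatically — the scalar Leibniz rule does not permute its factors, and the combinatorial argument simply partitions positions within a fixed monomial. Beyond this bookkeeping the statement is essentially immediate, which is why the authors describe the proof as straightforward.
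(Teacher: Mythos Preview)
Your proposal is correct. Your second (``more elementary'') route is exactly the paper's proof: it first establishes the identity for a product of monomials $m_1m_2$ by partitioning the replacement positions into those lying in $m_1$ and those in $m_2$, and then extends to general $p_1p_2$ by expanding each factor as a sum of monomials and using linearity. Your first route via the scalar Leibniz rule for $P_1(t)P_2(t)$ is a valid and slightly slicker alternative the paper does not take; it trades the explicit monomial bookkeeping for the observation that $\frac{d}{dt}$ on $\FF\langle x,h\rangle[t]$ is a derivation with respect to any bilinear product, which immediately handles arbitrary $p_1,p_2$ without the separate linearity step.
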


\proof \ \
The directional derivative $\dird[m,x_i,h]$
of a product $m=m_1m_2$ of noncommutative monomials
$m_1$ and $m_2$ is the sum of terms produced by
replacing one instance of $x_i$ in $m$ by $h$.
This sum can be divided into two parts:

$\mu_1$, the sum of terms whose $h$ lie in the first $|m_1|$ letters,
\ {i.e.} \ \ $ \dird[m_1,x_i,h]m_2$

$\mu_2$, the sum of terms whose $h$ lie in the last $|m_2 |$ letters,
\ {i.e.} \
$\ m_1 \dird[m_2,x_i,h].
$

\noindent
Therefore
\begin{equation}
\dird[m_1m_2,x_i,h] = \dird[m_1,x_i,h]m_2 + m_1\dird[m_2,x_i,h].
\eeq\end{equation}

This product rule extends to the product
of any two NC polynomials $p_1$ and $p_2$ as follows.
\begin{align}
\dird[p_1p_2, x_i,h] &=\dird\left[\left(\sum_{m_1 \in \mathcal{M}} A_{m_1}m_1\right)\left(\sum_{m_2 \in \mathcal{M}} A_{m_2}m_2 \right), x_i, h \right] \notag\\
&= \sum_{m_1 \in \mathcal{M}}\sum_{m_2 \in \mathcal{M}} A_{m_1}A_{m_2} \dird[m_1m_2, x_i, h]\notag\\
&= \sum_{m_1 \in \mathcal{M}}\sum_{m_2 \in \mathcal{M}} A_{m_1}A_{m_2} \dird[m_1, x_i, h]m_2 \notag\\
&+ \sum_{m_1 \in \mathcal{M}}\sum_{m_2 \in \mathcal{M}} A_{m_1}A_{m_2} m_1\dird[m_2, x_i, h]
\notag\\
&=\left(\sum_{m_1 \in \mathcal{M}} \dird[A_{m_1}m_1, x_i, h]\right)\left(\sum_{m_2 \in \mathcal{M}}A_{m_2}m_2\right)\\
 \notag &+ \left(\sum_{m_1 \in \mathcal{M}}A_{m_1}m_1\right)\left(\sum_{m_2 \in \mathcal{M}}\dird[A_{m_2}m_2, x_i, h]\right)
\notag\\
&=\dird[p_1,x_i,h]p_2 + p_1\dird[p_2,x_i,h].
\notag
\end{align}\qed

\subsubsection{Chain Rule for Noncommutative Polynomials}

\begin{definition}
Let $A \in \mathbb{F}^{g\times g}$ be a $g \times g$ matrix.  Define $Ax$ to be
\[Ax =  \left(\sum_{j=1}^g a_{1j}x_j,
\sum_{j=2}^g a_{2j}x_j,
\ldots, \sum_{j=1}^g a_{gj}x_j\right).\]
\end{definition}

The following proposition proves the chain rule for noncommutative
polynomials in the special case that $y = Ax,$ where $A$ is a
linear transformation.

\begin{prop}
\label{thm:chainrule}
Let $p \in \FFx$ and $A \in \FF^{g \times g}$.
Then,
\[\dird\left[p(Ax), x_j, h\right](x) =  \dird\left[p, \sum_{i=1}^g a_{ij}x_i, h\right](Ax),\]
where $\dird[p, x_i, h](Ax)$ denotes plugging $Ax$ into the NC polynomial
$\dird[p, x_i, h](x)$.
\end{prop}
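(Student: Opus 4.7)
The plan is to reduce to monomials by linearity, then unwind both sides using the product rule (Lemma \ref{lem:prodrule}) together with the fact that $\dird$ on a linear form is immediate to compute.

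First I would observe that both sides of the claimed identity are linear in $p$: the left-hand side because $\dird[\cdot, x_j, h]$ is linear, and the right-hand side because $\dird[p, \sum_i a_{ij} x_i, h]$ is defined by $\sum_i a_{ij} \dird[p, x_i, h]$, which is linear in $p$, and evaluation at $Ax$ is also linear in $p$. So it suffices to prove the identity when $p = x_{a_1} x_{a_2} \cdots x_{a_d}$ is a monomial.

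Next, let $y_i := \sum_{k=1}^g a_{ik} x_k$ denote the $i$th component of $Ax$, so that $p(Ax) = y_{a_1} y_{a_2} \cdots y_{a_d}$. A direct application of the definition of $\dird$ gives $\dird[y_i, x_j, h] = a_{ij} h$. By iterating the product rule from Lemma \ref{lem:prodrule},
\[
\dird[p(Ax), x_j, h] = \sum_{r=1}^d y_{a_1} \cdots y_{a_{r-1}} \, \dird[y_{a_r}, x_j, h] \, y_{a_{r+1}} \cdots y_{a_d} = \sum_{r=1}^d a_{a_r j}\, y_{a_1} \cdots y_{a_{r-1}}\, h\, y_{a_{r+1}} \cdots y_{a_d}.
\]
On the other hand, the right-hand side unfolds as
\[
\dird\left[p, \sum_{i=1}^g a_{ij} x_i, h\right](Ax) = \sum_{i=1}^g a_{ij}\, \dird[p, x_i, h](Ax),
\]
and $\dird[p, x_i, h](x) = \sum_{r : a_r = i} x_{a_1} \cdots x_{a_{r-1}}\, h\, x_{a_{r+1}} \cdots x_{a_d}$, which after substituting $Ax$ for $x$ becomes the same expression with each $x_{a_s}$ replaced by $y_{a_s}$. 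Interchanging the sum over $i$ with the sum over positions $r$ of the monomial, and using that for each $r$ only $i = a_r$ contributes, gives exactly the formula obtained above for the left-hand side.

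The argument is essentially bookkeeping, and no step looks obstructive; the only mild care needed is the observation that $h$ is an indeterminate independent of $x$, so substituting $Ax$ for $x$ leaves the $h$ factors untouched, which is what makes the two sums match term by term.
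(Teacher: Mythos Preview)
Your proof is correct and takes essentially the same approach as the paper: both reduce via linearity and then use the product rule (Lemma~\ref{lem:prodrule}) to handle products. The paper frames this as an induction on degree (base case: linear $p$; inductive step: the identity for $p$ and $q$ implies it for $pq$), whereas you go straight to monomials and unroll the iterated product rule explicitly, but the content is the same.
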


\begin{proof}Consider the case where
 $p = \displaystyle\sum_{i=1}^g b_i x_i + c$.

In this case, $\dird[p, x_i, h] = b_i$ for each $i$. Further, $p(Ax)$ is equal to
\[p(Ax) = \sum_{i=1}^g b_i\left(\sum_{k=1}^g a_{ik}x_k\right) + c = \sum_{k=1}^g \left(\sum_{i=1}^g b_ia_{ik}\right)x_k + c.\]
Differentiating gives
\[\dird\left[p(Ax), x_j, h\right](x) = \sum_{i=1}^g b_ia_{ij} =  \dird\left[p, \sum_{i=1}^g a_{ij} x_i, h\right](Ax).\]

Next, consider the case where $p, q \in \FFx$ are any two polynomials for which the proposition holds. Then,
\begin{align}
\dird\left[p(Ax)q(Ax), x_j, h\right](x) &= \dird\left[p(Ax), x_j, h\right](x)q(Ax) + p(Ax)\dird\left[q(Ax), x_j, h\right](x)
\notag\\
&= \dird\left[p, \sum_{i=1}^g a_{ij}x_i, h\right](Ax)q(Ax) \notag\\
&+ p(Ax)\dird\left[q(x), \sum_{i=1}^g a_{ij}x_i, h\right](Ax)\notag \\
&= \sum_{i=1}^g a_{ij}\dird\left[p, x_i, h\right](Ax)q(Ax) \notag\\
&+ \sum_{i=1}^g a_{ij}p(Ax)\dird[q(x), x_i, h](Ax)\notag \\
&= \dird\left[pq, \sum_{i=1}^g a_{ij} x_i, h\right](Ax).\notag
\end{align}

The proposition holds for all polynomials degree $d \leq 1$,
and
any polynomial of degree $d > 1$
is simply a sum of products of polynomials
in smaller degree. The result therefore follows by induction and by linearity of the NC directional derivative.
\end{proof}

\begin{corollary}
\label{cor:chainrule}
Let $A \in \mathbb{F}^{g\times g}$ be a $g \times g$ matrix,
let $p \in \mathbb{F}\langle x \rangle$ be a NC polynomial, and let $q \in \mathbb{F}[x]$ be a commutative polynomial.
Then
\[\dird[p(Ax), q(x), h](x) = \dird\left[p, q\left(A^T x\right), h\right](Ax).\]
\end{corollary}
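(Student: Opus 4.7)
My plan is to reduce to the case where $q$ is a commutative monomial by linearity, then induct on its degree $k$, iterating Proposition \ref{thm:chainrule} at each step.

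Since $\dird[p, \cdot, h]$ is linear in its commutative full-symbol slot and the substitution $q \mapsto q(A^T x)$ is linear on $\FF[x]$, it suffices to verify the identity for monomials $q = x_{j_1} \cdots x_{j_k}$, which I would do by induction on $k$. The base case $k=1$ reads $\dird[p(Ax), x_j, h](x) = \dird[p, (A^T x)_j, h](Ax)$ and is exactly Proposition \ref{thm:chainrule}, since $(A^T x)_j = \sum_i a_{ij} x_i$.

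For the inductive step, write $q = q' \cdot x_{j_k}$ with $q' = x_{j_1} \cdots x_{j_{k-1}}$. By the iterated definition of $\dird$ for commutative full symbols,
$$\dird[p(Ax), q, h](x) = \dird\bigl[\dird[p(Ax), q', h](x),\, x_{j_k},\, h\bigr](x).$$
The induction hypothesis rewrites the inner derivative as $F(Ax)$, where $F := \dird[p, q'(A^T x), h]$ is an element of $\FF\langle x, h\rangle$. A second application of Proposition \ref{thm:chainrule} to $F$ (treating $h$ as an inert extra variable on which $A$ acts trivially) gives
$$\dird[F(Ax), x_{j_k}, h](x) = \dird[F, (A^T x)_{j_k}, h](Ax).$$
Finally, the identity $\dird[F, (A^T x)_{j_k}, h] = \dird[p, q'(A^T x)\cdot (A^T x)_{j_k}, h] = \dird[p, q(A^T x), h]$ follows from the iterated-derivative definition of $\dird$ for commutative full symbols (which gives $\dird[\dird[p, r, h], s, h] = \dird[p, rs, h]$), together with the factorization $q(A^T x) = q'(A^T x)\cdot (A^T x)_{j_k}$ inherited from $q = q' \cdot x_{j_k}$.

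The main step to watch is the second application of Proposition \ref{thm:chainrule}: the inner derivative produces a polynomial in $\FFx$ (with $h$ as an additional inert variable) that has already been evaluated at $Ax$, so one must re-invoke the one-variable chain rule to push the outer $\dird[\cdot, x_{j_k}, h]$ through this substitution. Once this is organized carefully, the remainder is routine bookkeeping.
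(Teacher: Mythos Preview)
Your proposal is correct and follows essentially the same approach as the paper's proof: both reduce to monomials by linearity, induct on the degree of $q$, invoke Proposition \ref{thm:chainrule} for the base case, and in the inductive step peel off the last variable and reapply Proposition \ref{thm:chainrule} to the inner derivative viewed as a polynomial evaluated at $Ax$. Your explicit mention that $h$ must be treated as an inert extra variable when reapplying the chain rule is a useful clarification that the paper leaves implicit.
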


\begin{proof}For $q$ of degree $\ell = 1$ this follows
from Proposition \ref{thm:chainrule}.
Inductively assume the corollary is true for $\deg(q) < \ell$.
By linearity, it suffices to consider monomials $q = x_{b_1} \ldots x_{b_{\ell}}$.
Applying Proposition \ref{thm:chainrule} to $p(Ax)$ gives
\begin{align}
\dird[p(Ax), q, h] &= \dird\left[\dird[p(Ax), x_{b_1}\ldots x_{b_{\ell-1}}, h], x_{b_{\ell}}, h\right]
\\&= \dird\left[\dird\left[p, \left(x_{b_1}\ldots x_{b_{\ell-1}}\right)\left(A^Tx\right), h\right](Ax), x_{b_{\ell}}, h\right](x)
\notag \\&= \dird\left[\dird\left[p, \left(x_{b_1}\ldots x_{b_{{\ell}-1}}\right)\left(A^Tx\right), h\right](x), x_{b_{\ell}}\left(A^Tx\right), h\right](Ax)
\notag\\
\notag
&=\dird\left[p, \left(x_{b_1}\ldots x_{b_{\ell}}\right)\left(A^{T}x\right), h\right](Ax)\\&= \dird\left[p, q\left(A^Tx\right), h\right](Ax).\notag
\end{align}
\end{proof}

\subsubsection{Product Rule for Independent Products}

\begin{prop}
\label{prop:ProdSuff}
Let $x = (x_1, \ldots, x_g)$, let $\ell$ be a positive
integer, and let $p_1, \ldots, p_k \in \FFx$
be NC polynomials such that $p_1 \ldots p_k$
is an independent product.

\begin{enumerate}
\item
\label{item:ellprodrule}
Fix $x_i$.
Taking the derivative with respect to the full symbol $x_i^{\ell}$ may be done by the following product rule:
\begin{align}
\label{eq:productell}
\dird\left[p_1p_2\ldots p_k, x_i^{\ell}, h\right]
&= \dird\left[p_1, x_i^{\ell},h\right]p_2\ldots p_k \\
\notag &+p_1\dird\left[p_2,x_i^{\ell},h\right]\ldots p_k\\
&\vdots
\notag\\
\notag
&+ p_1p_2\ldots \dird\left[p_k, x_i^{\ell},h\right].
\end{align}

\item
\label{item:elllapprodrule}Taking the $\ell$-Laplacian may be done by the following product rule:
\begin{align}
\label{eq:productelllap}
\lap_{\ell}\left[p_1p_2\ldots p_k, h\right]
&= \lap_{\ell}[p_1, h]p_2\ldots p_k \\
\notag &+p_1\lap_{\ell}[p_2,h]\ldots p_k\\
&\vdots
\notag\\
\notag
&+ p_1p_2\ldots \lap_{\ell}[p_k, h].
\end{align}

\item
\label{item:ellcor}
If each $p_i$ is $\ell$-harmonic, then so is $p_1p_2 \ldots p_k$.
\end{enumerate}
\end{prop}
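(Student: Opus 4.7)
The plan is to prove part (\ref{item:ellprodrule}) by induction on $\ell$, using the first-order product rule (Lemma \ref{lem:prodrule}) as both the base case and the engine of the inductive step. Once part (\ref{item:ellprodrule}) is in hand, part (\ref{item:elllapprodrule}) follows by summing over $i$ and interchanging summations, and part (\ref{item:ellcor}) is then immediate.

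For the base case $\ell = 1$, one first extends Lemma \ref{lem:prodrule} from two factors to $k$ factors by a routine induction on $k$. For the inductive step, assume (\ref{eq:productell}) holds for $\ell - 1$ and write
\[ \dird[p_1 \ldots p_k, x_i^\ell, h] = \dird\bigl[\dird[p_1 \ldots p_k, x_i^{\ell-1}, h], x_i, h\bigr]. \]
Substituting the inductive hypothesis on the inner derivative and then applying the first-order product rule to differentiate each resulting summand produces a double sum indexed by pairs $(j, j')$. The diagonal contributions $j' = j$ collapse via $\dird[\dird[p_j, x_i^{\ell-1}, h], x_i, h] = \dird[p_j, x_i^{\ell}, h]$ into exactly the right-hand side of (\ref{eq:productell}); the off-diagonal contributions $j' \neq j$ carry a product of the form $\dird[p_{j'}, x_i, h] \cdot \dird[p_j, x_i^{\ell-1}, h]$.

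The main---and essentially only---obstacle is showing that these off-diagonal cross terms vanish, which is precisely where the independence hypothesis is used. Because $p_1, \ldots, p_k$ depend on pairwise-disjoint sets of variables, any given variable $x_i$ appears in at most one factor $p_m$. If no factor contains $x_i$, both sides of (\ref{eq:productell}) are trivially zero. Otherwise $x_i$ appears in a unique factor $p_m$, so for every $j \neq m$ we have $\dird[p_j, x_i, h] = 0$, and hence $\dird[p_j, x_i^{\ell-1}, h] = 0$ as well (since $\ell - 1 \geq 1$ in the inductive step). Thus in any off-diagonal $(j, j')$ cross term, at least one of the two factors $\dird[p_{j'}, x_i, h]$ or $\dird[p_j, x_i^{\ell-1}, h]$ vanishes, killing the term and leaving only the diagonal sum.

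For part (\ref{item:elllapprodrule}), summing (\ref{eq:productell}) over $i = 1, \ldots, g$ and interchanging the finite sums reassembles $\sum_{i=1}^g \dird[p_j, x_i^\ell, h]$ into $\lap_\ell[p_j, h]$, yielding (\ref{eq:productelllap}). Part (\ref{item:ellcor}) then follows instantly: if each $\lap_\ell[p_j, h] = 0$, every summand on the right of (\ref{eq:productelllap}) is zero, so $\lap_\ell[p_1 \ldots p_k, h] = 0$.
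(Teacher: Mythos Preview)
Your proposal is correct and follows essentially the same approach as the paper: induction on $\ell$, with the base case given by the first-order product rule and the inductive step driven by the vanishing of cross terms due to independence. The only cosmetic difference is that the paper first reduces without loss of generality to the case $k=2$ and then runs the induction on $\ell$, whereas you keep general $k$ throughout and handle the $k$-factor first-order product rule as part of the base case; the substance of the argument is the same.
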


\begin{proof}Items (\ref{item:elllapprodrule}) and (\ref{item:ellcor}) are straightforward corollaries to (\ref{item:ellprodrule}).

To prove (\ref{item:ellprodrule}), assume without loss of generality that $k=2$,
that $p_2$ doesn't depend on the variables $x_1, \ldots, x_r$, and that $p_1$ doesn't depend on the variables $x_{r+1}, \ldots, x_g$.
Proceed by induction on $\ell$. The case where $\ell = 1$
follows by Lemma \ref{lem:prodrule}.
Next assume the proposition for order less than $\ell$.
Then,
\[\dird\left[p_1p_2, x_i^{\ell}, h\right] = \dird\left[\dird\left[p_1, x_i^{\ell-1}, h\right]p_2 + p_1\dird\left[p_2, x_i^{\ell-1}, h\right], x_i, h\right].\]
Since $p_2$ doesn't depend on the variables $x_1, \ldots, x_r$,
 it follows that for all $i \leq r$ and for all $L$,
\[\dird\left[\dird\left[p_2,x_i^L,h\right], x_i, h\right] = 0.\]
Similarly, for $i > k$ and for all $L$,
\[\dird\left[\dird\left[p_1, x_i^L, h\right], x_i, h\right] = 0.\]
If $i \leq r$, then
\begin{align}
\dird\left[p_1p_2,  x_i^{\ell}, h\right] &= \dird\left[\dird\left[p_1, x_i^{\ell-1}, h\right]p_2 + p_1\dird\left[p_2, x_i^{\ell-1}, h\right], x_i, h\right]\\
&= \dird\left[\dird\left[p_1, x_i^{\ell-1}, h\right]p_2, x_i, h\right] \notag\\
& = \dird\left[\dird\left[p_1, x_i^{\ell - 1}, h\right], x_i, h\right]p_2 + \dird\left[p_1, x_i^{\ell-1}\right]\dird\left[p_2, x_i, h\right]\notag\\
&=\dird\left[p_1, x_i^{\ell},h\right]p_2
\\ \notag
&= \dird\left[p_1,  x_i^{\ell}, h\right]p_2
+ p_1\dird\left[p_2,  x_i^{\ell}, h\right]\notag.
\end{align}
A similar expression holds for $i > r$.
\end{proof}

\subsection{Derivatives of Permutations}
\label{subsect:derivation}

The NC directional derivative is well behaved under the permutation defined in (\ref{def:action}).

\subsubsection{Properties of $\Comm$ and $\Symm$}
\begin{prop}
\label{thm:commiffsymm}
If $p \in \FFx$, then $\Comm[p] = 0$ if and only if $\Symm[p] = 0$.
\end{prop}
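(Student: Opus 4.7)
The plan is to reduce to the homogeneous case and then show that both $\Comm$ and $\Symm$ measure exactly the same information, namely the sum of coefficients on each $S_d$-orbit of monomials.

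First I would reduce to the homogeneous case. Since $\Comm$ preserves total degree and $\Symm$ is defined component-wise on homogeneous pieces, for $p = \sum_{d\geq 0} p_d$ one has $\Comm[p]=\sum_d \Comm[p_d]$ and $\Symm[p] = \sum_d \Symm[p_d]$, with $\Comm[p_d]$ and $\Symm[p_d]$ both homogeneous of degree $d$. Since different degrees live in linearly independent subspaces, it suffices to prove the claim when $p$ is homogeneous of degree $d$.

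Next I would use the crucial observation that two NC monomials $m_1, m_2$ of degree $d$ satisfy $\Comm[m_1] = \Comm[m_2]$ if and only if there exists $\sigma \in S_d$ with $\sigma[m_1] = m_2$. In other words, the $S_d$-orbits on the set of degree $d$ monomials in $\cM$ are precisely the fibers of $\Comm$. Write $p = \sum_m A_m m$ and group monomials into orbits $O$. Then on the one hand, $\Comm[p] = \sum_{O} \left(\sum_{m\in O} A_m\right)\Comm[m_O]$ where $m_O$ is any representative of $O$, and the commutative monomials $\Comm[m_O]$ are distinct across different orbits and hence linearly independent in $\FF[x]$. So $\Comm[p]=0$ if and only if $\sum_{m\in O}A_m = 0$ for every orbit $O$.

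On the other hand, for any monomial $m$ in an orbit $O$, the polynomial $\Symm[m] = \frac{1}{d!}\sum_{\sigma\in S_d}\sigma[m]$ is supported on $O$, and by the orbit-stabilizer theorem each $m'\in O$ appears with equal coefficient $1/|O|$. Hence $\Symm[m]$ depends only on the orbit $O$; call it $u_O$. Then
\[
\Symm[p] \;=\; \sum_m A_m \Symm[m] \;=\; \sum_O \left(\sum_{m\in O} A_m\right) u_O.
\]
Since the $u_O$ have pairwise disjoint supports (as monomials), they are linearly independent in $\FFx$, so $\Symm[p]=0$ if and only if $\sum_{m\in O}A_m=0$ for every orbit $O$ — the identical condition. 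This proves the equivalence.

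The only mild obstacle is being precise about the orbit-stabilizer bookkeeping that makes $\Symm[m]$ depend only on the orbit of $m$; beyond that, the proof is a direct comparison of the two vanishing criteria on orbit sums.
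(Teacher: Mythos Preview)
Your proof is correct and follows essentially the same approach as the paper: both arguments reduce the question to showing that $\Comm[p]=0$ and $\Symm[p]=0$ are each equivalent to the vanishing of the coefficient sum over every $S_d$-orbit of monomials. The paper phrases this by fixing a monomial $m$ and writing $p=\sum_{\sigma\in S_d}A_\sigma\,\sigma[m]$, then computing $d!\,\Symm[p]=\bigl(\sum_\sigma A_\sigma\bigr)\bigl(\sum_\omega \omega[m]\bigr)$ via the substitution $\tau=\omega\sigma^{-1}$, whereas you use orbit-stabilizer language to reach the same conclusion; the content is identical.
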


\begin{proof}
Each NC monomial $m = x_{\alpha_{1}}x_{\alpha_{2}}\ldots x_{\alpha_{d}}$ has the same commutative
collapse as another NC monomial $x_{\beta_{1}}x_{\beta_{2}}\ldots x_{\beta_{d}}$ if and only if
$x_{\beta_{1}}x_{\beta_{2}}\ldots x_{\beta_{d}} = \sigma [x_{\alpha_{1}}x_{\alpha_{2}}\ldots x_{\alpha_{d}}]$
for some $\sigma \in S_d$.
This is because the set of commutative
polynomials
can be thought of as the set of noncommutative polynomials modulo permutation.

First consider a polynomial $p$ of the form
\begin{align}
\label{eq:combmonomials}
p = \sum_{\sigma \in S_d} A_{\sigma}\sigma [m]
\end{align}
where $m$ is some monomial of degree $d$.
Suppose $p$ satisfies
\[\Comm[p] = \left(\sum_{\sigma \in S_d} A_{\sigma}\right)\Comm[m] = 0.\]
Since $\Comm[m] \neq 0$, this is true if and only if $\displaystyle\sum_{\sigma \in S_d} A_{\sigma} = 0.$
Also,
\begin{align}
\label{eq:sigmaNtau}
d! \Symm[p] &= \sum_{\tau \in S_d} \tau \left[\sum_{\sigma \in S_d} A_{\sigma} \sigma [m]\right]\\
&= \sum_{\sigma \in S_d}\sum_{\tau \in S_d} A_{\sigma} \tau \sigma[m]. \notag
\end{align}
For each $\sigma$, set $\tau = \omega \sigma^{-1}$ in (\ref{eq:sigmaNtau}) so that $A_{\sigma}\tau\sigma[m] = A_{\sigma}\omega[m]$.
Then,
\begin{align}
d! \Symm[p]&= \sum_{\sigma \in S_d}\sum_{\omega \in S_d} A_{\sigma} \omega[m]
\\&= \left(\sum_{\sigma \in S_d} A_{\sigma}\right) \left(\sum_{\omega \in S_d}  \omega [m]\right).
\label{eq:thisiszeroiff}
\end{align}
The
expression (\ref{eq:thisiszeroiff}) is zero if
and only if $\sum_{\sigma \in S_d} A_{\sigma} = 0$, that is, if and only if $\Comm[p] = 0$.

In general, all polynomials are
are simply sums of polynomials of the form of (\ref{eq:combmonomials}), so the
proposition follows by linearity of the derivative.
\end{proof}

\begin{prop}
\label{prop:commcommsymm}
If $p \in \FFx$, then $\Comm[p] = \Comm[\Symm[p]]$
\end{prop}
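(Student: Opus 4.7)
The plan is to deduce this directly from Proposition \ref{thm:commiffsymm} together with the idempotency of $\Symm$ that was established in the remark just before Proposition \ref{prop:symmOfSymm}. Both $\Comm$ and $\Symm$ are linear operators on $\FFx$, so formal manipulation is legitimate.

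First I would observe that for any $p \in \FFx$ (after reducing to the homogeneous components via linearity), the polynomial $q := p - \Symm[p]$ satisfies $\Symm[q] = 0$. Indeed, $\Symm[p - \Symm[p]] = \Symm[p] - \Symm[\Symm[p]]$, and the previous computation showed $\Symm[\Symm[p]] = \Symm[p]$, so this difference vanishes.

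Next, I would apply Proposition \ref{thm:commiffsymm} to the polynomial $q$: since $\Symm[q] = 0$, we conclude that $\Comm[q] = 0$, i.e., $\Comm[p - \Symm[p]] = 0$. By linearity of $\Comm$ this rearranges to $\Comm[p] = \Comm[\Symm[p]]$, which is the claim.

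There is no real obstacle here; the only thing to be mildly careful about is that Proposition \ref{thm:commiffsymm} as stated applies directly to any polynomial (not only homogeneous ones), since its proof already reduced the homogeneous-degree-$d$ case and invoked linearity. If one preferred to argue without citing Proposition \ref{thm:commiffsymm}, an equally short direct argument works: for a homogeneous degree-$d$ monomial $m$, every permutation $\sigma[m]$ has the same commutative collapse as $m$, so $\Comm[\sigma[p]] = \Comm[p]$ for each $\sigma \in S_d$, and averaging gives $\Comm[\Symm[p]] = \frac{1}{d!}\sum_{\sigma \in S_d}\Comm[\sigma[p]] = \Comm[p]$; the general case then follows by splitting $p$ into its homogeneous components.
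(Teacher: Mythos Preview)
Your proof is correct and follows essentially the same route as the paper's: you form $q = p - \Symm[p]$, use the idempotency of $\Symm$ to see $\Symm[q]=0$, invoke Proposition~\ref{thm:commiffsymm} to conclude $\Comm[q]=0$, and rearrange by linearity. The alternative direct argument you sketch at the end (using $\Comm[\sigma[p]]=\Comm[p]$ and averaging) is also valid and is arguably more transparent, but the main line of your proof matches the paper's exactly.
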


\begin{proof}
First,
\[\Symm[p - \Symm[p]] = \Symm[p] - \Symm[\Symm[p]] = \Symm[p] - \Symm[p] = 0.\]
Therefore Proposition \ref{thm:commiffsymm} implies that
\[\Comm[p - \Symm[p]] = 0.\]
Adding $\Comm[\Symm[p]]$ to both sides shows that
\[\Comm[p] = \Comm[\Symm[p]].\]
\end{proof}

With these results in mind, for any commutative polynomial $p \in \FF[x]$, define
the \textbf{symmetrization of a polynomial in $\FF[x]$}, or
$\Symm[p] \in \FFx$,
 \index{symmetrization!of a polynomial in $\FF[x]$} as the unique symmetrized NC polynomial for which $\Comm[\Symm[p]] = p.$

\begin{exa}If $p \in \FF[x]$ is \
$p(x_1,x_2) = x_1^4 + 6x_1^2x_2^2$,\
then
\begin{align}
\label{eq:symmexa}\Symm[p] &= x_1x_1x_1x_1 + x_1x_1x_2x_2 + x_1x_2x_1x_2 + x_2x_1x_1x_2\\
&+ x_1x_2x_2x_1 + x_2x_1x_2x_1 + x_2x_2x_1x_1 \in \FFx.\notag
\end{align}
Note also that for the NC polynomial
$q = x_1^4 + 6x_1^2x_2^2 \in \FFx,$
one has $\Symm[p] = \Symm[q]$. In general,
the notation $\Symm[p]$ does not depend on whether
$p$, as written, means a polynomial in $\FF[x]$ or a polynomial in $\FFx$. \qed
\end{exa}

\subsubsection{Derivatives of Permutations}
The following proposition shows that the NC directional derivative operator and the permutation operation defined in (\ref{def:action}) commute.

\begin{prop}
\label{thm:basicactionprop}
Let $p \in \FFx$ be a NC homogeneous degree $d$ polynomial, let $q \in \FF[x]$ be a commutative polynomial, and let $\sigma \in S_d$.
Then,
\[\dird[\sigma[p], q,h] = \sigma[\dird[p,q,h]]. \]
\end{prop}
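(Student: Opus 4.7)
The plan is to reduce to monomials and a single-variable base case, then induct on $\deg(q)$. Since $\dird[p,q,h]$ is linear in $q$, I may assume $q = x_{i_1} x_{i_2} \cdots x_{i_\ell}$ is a monomial and write $\dird[p,q,h]$ as the iterated directional derivative. Likewise, by linearity of both $\dird[\cdot,x_i,h]$ and $\sigma[\cdot]$ in $p$, I may assume $p = m = x_{a_1}x_{a_2}\cdots x_{a_d}$ is a single monomial of degree $d$.

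For the base case $\ell = 1$, the identity to verify is $\dird[\sigma[m], x_i, h] = \sigma[\dird[m, x_i, h]]$. The left side is the sum, over all positions $k$ with $a_{\sigma(k)} = i$, of the monomial obtained from $\sigma[m] = x_{a_{\sigma(1)}} \cdots x_{a_{\sigma(d)}}$ by replacing the letter in slot $k$ with $h$. For the right side, $\dird[m,x_i,h]$ is a sum, over positions $j$ with $a_j = i$, of the monomial $m_j$ obtained by replacing slot $j$ of $m$ with $h$; then $\sigma[m_j]$ has $h$ in slot $k = \sigma^{-1}(j)$ and letter $a_{\sigma(l)}$ in slot $l$ for $l \neq k$. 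The change of summation index $j = \sigma(k)$ matches the two expressions term-by-term. This combinatorial bookkeeping is the only real content; it formalizes the intuition that ``replacing one $x_i$ by $h$'' commutes with ``relabelling positions.''

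For the induction step, assume the statement holds for commutative monomials of degree less than $\ell$, and write $q = q' x_{i_\ell}$. A key observation is that the base case argument above never used the fact that the letters of $m$ come specifically from $\{x_1, \ldots, x_g\}$; it only used that $m$ is a length-$d$ word, with the derivative replacing slots holding $x_i$ by $h$ and $\sigma$ permuting slots. Hence the base case applies equally well to the degree-$d$ polynomial $\dird[p, q', h]$, which is a polynomial in the alphabet $\{x_1, \ldots, x_g, h\}$. Chaining everything:
\begin{align}
\dird[\sigma[p], q, h]
&= \dird\!\left[\dird[\sigma[p], q', h], x_{i_\ell}, h\right] \notag\\
&= \dird\!\left[\sigma\!\left[\dird[p, q', h]\right], x_{i_\ell}, h\right] \quad \text{(inductive hypothesis)} \notag\\
&= \sigma\!\left[\dird\!\left[\dird[p, q', h], x_{i_\ell}, h\right]\right] \quad \text{(base case)} \notag\\
&= \sigma[\dird[p, q, h]]. \notag
\end{align}

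The main obstacle is purely notational: carefully tracking positions through both the derivative (which inserts an $h$ without deleting a slot) and the permutation (which reorders slots) in the base case. Everything else — iterated derivatives, linearity in $q$ and $p$, and the inductive pass — is formal. One minor point to verify for the iteration is that the order in which one differentiates $\dird[\cdot, x_{i_1}, h], \ldots, \dird[\cdot, x_{i_\ell}, h]$ does not matter, so that $\dird[p, q, h]$ is well defined from $q \in \FF[x]$; this follows directly from inspecting the definition of $\dird[\cdot, x_i, h]$ on monomials.
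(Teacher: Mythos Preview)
Your proof is correct and follows essentially the same approach as the paper: reduce to a monomial $p$ and to $q = x_i$, verify the base case by tracking which slot holds $x_i$ before and after applying $\sigma$, then extend to general $q$ by iterating. The paper formalizes the base case with auxiliary notation $\Subs[p,h,j]$ and $\delta_{ij}[p]$ where you argue in words, and it dispatches the extension to general $q$ in one sentence where you write out an explicit induction, but the underlying argument is the same.
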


\begin{proof}It suffices to prove this proposition in the case that $p$ is a monomial
since the directional derivative and the permutation are both linear in $p$.  Also, it suffices to prove this proposition in the case that $q = x_i$. If it is known that for each $p \in \FFx$ and each $x_i$ that
\[ \dird[\sigma[p],x_i,h] = \sigma[\dird[p,x_i,h]],\]
then by repeatedly applying the derivative and by linearity, the result follows for arbitrary $q \in \FF[x]$.

For each monomial $p = x_{\alpha_1} \ldots x_{\alpha_d}$, define
$\Subs[p,h,j]$ to be the monomial produced by substituting $h$ for the $j^{th}$ entry of $p$.
Define $\delta_{ij}[p]$ to be
\begin{align}
\delta_{ij}[x_{\alpha_1}\ldots x_{\alpha_d}] = \left\{ \begin{array}{cc} 1& {\alpha_j} = i\\
0& \mathrm{otherwise}
\end{array}
\right.
\end{align}
The directional derivative with respect to $x_i$ is the sum of terms produced by
replacing one instance of $x_i$ in $p$ by $h$.  Therefore,
\[\dird[p, x_i, h] =
\sum_{j = 1}^d \delta_{ij}[p] \Subs[p, h, j].\]
Now $\dird[\sigma[p], x_i, h]$ is the sum of terms where each instance of $x_i$ in $\sigma[p]$ is replaced by $h$.
Therefore,
\[\dird[\sigma[p], x_i, h] =
\sum_{j = 1}^d \delta_{ij}[\sigma[p]] \Subs[\sigma[p], h, j].\]

The $j^{th}$ entry of $p$ is $x_i$ if and only if the $\sigma(j)^{th}$ entry of $\sigma[p]$ is $x_i$.  Turning this around, the $j^{th}$ entry of $\sigma[p]$ is $x_i$ if and only if the $\sigma^{-1}(j)^{th}$ entry of $p$ is $x_i$.  Therefore
\[\delta_{ij}[\sigma[p]] = \delta_{i\sigma^{-1}(j)}[p].\]

Further, the polynomial $\Subs[\sigma[p],h,j]$ is the monomial.
\[\Subs[\sigma[p],h,j] = x_{\sigma(1)}x_{\sigma(2)}\ldots x_{\sigma(j-1)}hx_{\sigma(j+1)}\ldots x_{\sigma(d)}. \]
This is equal to $\sigma[\Subs[p,h,\sigma^{-1}(j)]]$.

Putting all of this together gives,
\begin{align}
\dird[\sigma[p],x_i,h] &= \sum_{j=1}^d \delta_{i\sigma^{-1}(j)}[p]\sigma[\Subs[p,h,\sigma^{-1}(j)]]
\notag\\
\notag &= \sigma \left[\sum_{k=1}^d \delta_{ik}[p]\Subs[p,h,k]\right]
\\ \notag
&=\sigma[\dird[p,x_i,h]].
\end{align}
\end{proof}

\begin{prop}
\label{prop:DsymEqualsSymD} Let $p \in \FFx$ be a NC homogeneous
degree $d$ polynomial, and $q \in \FF[x]$
be a commutative homogeneous degree $\ell$ polynomial. Then
\begin{equation}\dird[\Symm[p],q, h] = \Symm[\dird[p, q, h]]
= \Symm[h^{\ell}q(\nabla)\Comm[p]].
\end{equation}
\end{prop}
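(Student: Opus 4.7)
The proposition contains two separate equalities, and I would attack them independently. The first, $\dird[\Symm[p],q,h] = \Symm[\dird[p,q,h]]$, is essentially the statement that the derivative commutes with permutation after averaging. The second, $\Symm[\dird[p,q,h]] = \Symm[h^{\ell}q(\nabla)\Comm[p]]$, I would get by computing the commutative collapse of both sides and invoking uniqueness in the definition of $\Symm$ on $\FF[x]$.

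For the first equality, Proposition \ref{thm:basicactionprop} gives $\dird[\sigma[p],q,h] = \sigma[\dird[p,q,h]]$ for every $\sigma \in S_d$. Both the directional derivative $\dird[\cdot, q, h]$ and each $\sigma[\cdot]$ are linear in their polynomial input, so averaging and pulling $\dird[\cdot,q,h]$ through the sum gives
$$\dird[\Symm[p],q,h] \;=\; \frac{1}{d!}\sum_{\sigma \in S_d}\dird[\sigma[p],q,h] \;=\; \frac{1}{d!}\sum_{\sigma \in S_d}\sigma[\dird[p,q,h]] \;=\; \Symm[\dird[p,q,h]].$$
Here I am using that $\dird[p,q,h]$ is homogeneous of total degree $d$ in the combined alphabet $\{x_1,\ldots,x_g,h\}$, since each of the $\ell$ derivatives trades one $x_i$ for one $h$; hence $S_d$ acts on it by permuting its $d$ letter-positions, and $\Symm$ as originally defined is the right averaging operator.

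For the second equality, the key intermediate claim is $\Comm[\dird[p,q,h]] = h^{\ell}q(\nabla)\Comm[p]$. A direct monomial check gives $\Comm[\dird[r,x_i,h]] = h\,(\partial/\partial x_i)\Comm[r]$ for any $r\in\FFx$, and a short induction on $\deg(q)$ promotes this to the full claim; alternatively, one invokes (\ref{eq:RefOnCommCollapseDeriv}) directly after collapsing. Combining with Proposition \ref{prop:commcommsymm} yields
$$\Comm[\Symm[\dird[p,q,h]]] \;=\; \Comm[\dird[p,q,h]] \;=\; h^{\ell}q(\nabla)\Comm[p].$$
Because $\Symm$ of anything is symmetrized, $\Symm[\dird[p,q,h]]$ is a symmetrized NC polynomial whose commutative collapse equals $h^{\ell}q(\nabla)\Comm[p]$. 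The uniqueness clause in the definition of $\Symm$ on $\FF[x]$ therefore forces $\Symm[\dird[p,q,h]] = \Symm[h^{\ell}q(\nabla)\Comm[p]]$, which is the desired identity.

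The only real obstacle is bookkeeping. I must keep straight which $\Symm$ is being invoked (the one on $\FFx$ defined by averaging, or the one on $\FF[x]$ defined as the unique symmetrized inverse of $\Comm$), and verify that each is applied at a consistent total degree. Because $p$ is homogeneous of degree $d$ and each differentiation trades one $x_i$ for one $h$, every object in sight is homogeneous of total degree $d$ in the alphabet $\{x_1,\ldots,x_g,h\}$, which is exactly what is needed for both notions of $\Symm$ to be well-defined and to line up.
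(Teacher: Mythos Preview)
Your proof is correct and follows essentially the same route as the paper: the first equality comes from Proposition~\ref{thm:basicactionprop} by averaging over $S_d$, and the second from $\Comm[\dird[p,q,h]] = h^{\ell}q(\nabla)\Comm[p]$ (equation~(\ref{eq:RefOnCommCollapseDeriv})) together with Proposition~\ref{prop:commcommsymm} and the uniqueness built into the definition of $\Symm$ on $\FF[x]$. The paper compresses your last step into the single line $\Symm[\dird[p,q,h]] = \Symm[\Comm[\dird[p,q,h]]]$, but this is exactly the uniqueness argument you spell out.
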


\begin{proof}
By Propostion \ref{thm:basicactionprop}, \begin{align}
\notag \dird[\Symm[p],q,h] &= \dird\left[\sum_{\sigma \in S_d}\sigma[p],q,h\right] = \sum_{\sigma \in S_d}\sigma[\dird[p,q,h]] = \Symm[\dird[p,q,h]].
\end{align}
Further, by (\ref{eq:RefOnCommCollapseDeriv}),
\[\Comm[\dird[p,q,h]] = h^{\ell}q(\nabla)\Comm[p]. \]
Therefore by Proposition \ref{prop:commcommsymm}
\[\Symm[\dird[p,q,h]] = \Symm[\Comm[\dird[p,q,h]] = \Symm[h^{\ell}q(\nabla)\Comm[p]]. \]
\end{proof}

\begin{prop}Let $p$ and $q$ be as in
Proposition \ref{prop:DsymEqualsSymD},
and let $\sigma \in S_d$ be
a permutation.
Then
$\dird[p, q, h] = 0$ if and only if $\dird[\sigma[p], q, h] = 0$.
\end{prop}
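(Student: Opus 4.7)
The plan is to derive this equivalence as an essentially immediate corollary of Proposition \ref{thm:basicactionprop}, which already establishes that the directional derivative commutes with the permutation action. The only extra ingredient needed is the observation that the permutation action is a linear bijection on the space of homogeneous polynomials, so that zero maps to zero and nothing else does.

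First I would invoke Proposition \ref{thm:basicactionprop} to rewrite
\[ \dird[\sigma[p], q, h] \;=\; \sigma\bigl[\dird[p, q, h]\bigr]. \]
This reduces the claim to showing that, for a homogeneous degree $d$ polynomial $r$ (here $r = \dird[p,q,h]$, with $h$ treated as an additional letter exactly as in the proof of Proposition \ref{thm:basicactionprop}), one has $\sigma[r] = 0$ if and only if $r = 0$.

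Next I would verify this bijectivity. Permutation is linear in $p$, and the two-sided identity $\sigma^{-1}\bigl[\sigma[m]\bigr] = m$ on monomials (which follows directly from the definition \eqref{def:action}, since permuting positions by $\sigma$ and then by $\sigma^{-1}$ returns every letter to its original slot) extends by linearity to arbitrary homogeneous degree $d$ polynomials. Hence $\sigma$ is a linear bijection with inverse $\sigma^{-1}$, so $\sigma[r] = 0$ forces $r = \sigma^{-1}[\sigma[r]] = \sigma^{-1}[0] = 0$, while the reverse implication is trivial from linearity.

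Combining the two observations gives $\dird[\sigma[p],q,h] = 0 \Leftrightarrow \sigma[\dird[p,q,h]] = 0 \Leftrightarrow \dird[p,q,h] = 0$. There is no real obstacle here: the entire content has been packaged into Proposition \ref{thm:basicactionprop}, and the remaining step is the formal observation that an invertible linear operator has trivial kernel.
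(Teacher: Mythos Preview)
Your proof is correct and follows essentially the same approach as the paper: both invoke Proposition \ref{thm:basicactionprop} to commute the derivative with the permutation, then use the invertibility of $\sigma$ (via $\sigma^{-1}$) to obtain the reverse implication. The paper's version is simply more terse, writing the forward direction as the one-line chain $0 = \sigma[0] = \sigma[\dird[p,q,h]] = \dird[\sigma[p],q,h]$ and noting that the converse follows by applying $\sigma^{-1}$.
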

\begin{proof}If $\dird[p,q,h] = 0$, then
\[\dird[p, q, h] = 0 = \sigma[0] =  \sigma[\dird[p, q, h]] = \dird[\sigma [p], q, h].\]
The other direction follows by applying $\sigma^{-1}$ in the same way.
\end{proof}

\subsubsection{Correspondence Between Noncommutative Symmetrized Polynomial Solutions and Commutative Polynomial Solutions of Partial Differential Equations}

One special class of solutions to a NC partial differential equation of homogeneous order
are those solutions which are symmetrized polynomials. The following theorem
shows that in this case there exists a natural one-to-one correspondence
between NC symmetrized polynomial solutions of a partial differential
equation and commutative polynomial solutions to the same
partial differential equation.

\begin{thm}
\label{thm:parta}
Let $p \in \FFx$ be a NC, homogeneous degree $d$ polynomial
and $q \in \FF[x]$ be a commutative homogeneous
degree $\ell$ polynomial.
Then the NC polynomial $\Symm[p]$ satisfies
\[\dird[\Symm[p], q, h] = 0, \]
if and only if
commutative polynomial $\Comm[p]$ satisfies
\[q\left(\frac{\partial}{\partial x_1}, \ldots, \frac{\partial}{\partial x_g}\right)\Comm[p] = q(\nabla)\Comm[p] = 0.\]
\end{thm}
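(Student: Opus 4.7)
The plan is to reduce the theorem to a direct combination of Proposition \ref{prop:DsymEqualsSymD} and Proposition \ref{thm:commiffsymm}, using the fact that $\Symm$ and $\Comm$ are essentially inverses of one another on the subspace of symmetrized NC polynomials.

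First I would invoke Proposition \ref{prop:DsymEqualsSymD} directly: since $p$ is homogeneous of degree $d$ and $q$ is homogeneous of degree $\ell$, we have
\[
\dird[\Symm[p], q, h] \;=\; \Symm\bigl[h^{\ell}\, q(\nabla) \Comm[p]\bigr].
\]
This already bridges the NC side (left) with the commutative side (right), so the remaining task is simply to show that this symmetrization is zero if and only if the commutative expression $q(\nabla)\Comm[p]$ inside is zero.

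Next I would apply Proposition \ref{thm:commiffsymm} to the NC polynomial $r := \Symm\bigl[h^\ell q(\nabla)\Comm[p]\bigr]$. By that proposition, $\Symm[r] = 0$ iff $\Comm[r] = 0$. Since the argument $h^\ell q(\nabla)\Comm[p]$ is already a commutative polynomial (in $x$ and $h$), the definition of $\Symm$ on $\FF[x,h]$ (extended by treating $h$ as an additional commuting indeterminate, exactly as in Proposition \ref{prop:commcommsymm}) gives $\Comm\bigl[\Symm[h^\ell q(\nabla)\Comm[p]]\bigr] = h^\ell q(\nabla)\Comm[p]$. Thus
\[
\dird[\Symm[p], q, h] = 0 \quad\Longleftrightarrow\quad h^\ell q(\nabla)\Comm[p] = 0.
\]
Finally, since $h$ is a free indeterminate that is independent of the $x_i$, the polynomial $h^\ell q(\nabla)\Comm[p]$ vanishes identically if and only if $q(\nabla)\Comm[p] = 0$, completing both implications simultaneously.

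There is no real obstacle; the entire content is packaged inside Propositions \ref{prop:DsymEqualsSymD} and \ref{thm:commiffsymm}. The only mild subtlety I would be careful about in writing this up is making sure the reader sees why $\Symm$ applied to an already-commutative expression behaves as the natural lift $\FF[x] \to \FFx$ (so that $\Comm$ recovers it), and why the extra factor of $h^\ell$ can be stripped off. Both points are immediate from the definitions, so the proof should be only a few lines.
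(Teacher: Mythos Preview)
Your proposal is correct and follows essentially the same route as the paper: both combine Proposition \ref{prop:DsymEqualsSymD} with Proposition \ref{thm:commiffsymm} and then strip off the $h^{\ell}$ factor. The paper applies Proposition \ref{thm:commiffsymm} directly to $\dird[p,q,h]$ (using the first equality in Proposition \ref{prop:DsymEqualsSymD}), whereas you pass through the second equality and then apply it to the already-symmetrized $r$; the extra step ``$\Symm[r]=0$ iff $\Comm[r]=0$'' is slightly redundant since $r$ is symmetrized and you really mean ``$r=0$ iff $\Comm[r]=0$,'' but this is harmless.
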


\begin{proof}
Proposition \ref{thm:commiffsymm} implies that
\[\dird[\Symm[p], q, h]=0 \quad \Longleftrightarrow \quad
\Comm[\dird[p, q, h]] = h^{\ell}q(\nabla)\Comm[p]=0.\]
Since $h \neq 0$, this implies that
\[D[\Symm[p], q, h]=0 \quad \Longleftrightarrow \quad
q(\nabla)\Comm[p] = 0.\]

\end{proof}

\begin{exa}The commutative polynomial $p$
\[p (x_1, x_2, x_3) =  x_1^2x_2^2 - x_1^2x_3^2 - x_2^2x_3^2 +
\frac{1}{3}x_3^4 \]
is harmonic.
The NC polynomial
\begin{align}
\Symm[p] = &\ \frac{1}{6}(x_1x_1x_2x_2
+ x_1x_2x_1x_2
+ x_2x_1x_1x_2
+ x_1x_2x_2x_1
+ x_2x_1x_2x_1 \notag\\
&+ x_2x_2x_1x_1
+ x_1x_1x_3x_3
+ x_1x_3x_1x_3
+ x_3x_1x_1x_3
+ x_1x_3x_3x_1\notag\\
&+ x_3x_1x_3x_1
+ x_3x_3x_1x_1
+ x_2x_2x_3x_3
+ x_2x_3x_2x_3
+ x_3x_2x_2x_3\notag\\
&+ x_2x_3x_3x_2
+ x_3x_2x_3x_2
+ x_3x_3x_2x_2 + 2x_3^2)\notag
\end{align}
is therefore harmonic by
Theorem \ref{thm:parta}.\qed
\end{exa}

\section{Classification of Noncommutative $\ell$-Harmonic Polynomials}
\label{sect:classification}

Recall that $p \in \FFx$ is  called \textbf{$\ell$-harmonic}\index{lharmonic@$\ell$-harmonic} if \index{lLaplacian@$\ell$-Laplacian}
\[
\lap_{\ell}[p,h] := \dird\left[p, \sum_{i=1}^g x_i^{\ell}, h\right] =  0.
\]
In the
special case that $\ell = 2$, such a polynomial is simply said to be
harmonic\index{harmonic}, and the differential
operator $\dird[p, \sum_{i=1}^g x_i^2, h] = \lap[p, h]$
is called the noncommutative Laplacian\index{Laplacian}.
The main result of this section is the proof of Theorem \ref{thm:main}, which classifies
all NC $\ell$-harmonic polynomials in $\CCx$.

\subsection{An Alternate Classification of the Set of Commutative $\ell$-Harmonic Polynomials}

As noted in $\S$ \ref{subsub:ClassIntro}, the set of $\ell$-harmonic polynomials
in $\CC[x]$ is spanned by the set of  polynomials of the form
\[(a_1 x_1 + \ldots + a_g x_g)^d \]
where $a_1^{\ell} + \ldots + a_g^{\ell} = 0$ (see \cite{R94}).
The following
proposition gives an alternate characterization of the set of $\ell$-harmonic commutative polynomials.

\begin{prop}
\label{thm:classificationcomm}
Let $x = (x_1, \ldots, x_g)$,
and let $p \in \FF[x]$
be a degree $d$ commutative polynomial.
Fix a variable $x_i$.
Express $p$ as
\[p = \sum_{r=0}^{d} x_i^{r} p_r \]
where for each $r$ either $p_r = 0$ or $\deg_i(p_r) = 0$.
The polynomial $p \in \FF[x]$ is $\ell$-harmonic
if and only if it equals
\begin{align}\label{eq:comm}
p = \sum_{r=0}^{\ell - 1} \sum_{k=0}^{\lfloor \frac{d}{\ell} \rfloor} \frac{(-1)^k}{(\ell k + r)!} x_i^{\ell k + r} \left(\Delta_{\ell}- \frac{\partial^{\ell}}{\partial x_i^{\ell}}\right)^k [p_{r}].
\end{align}
\end{prop}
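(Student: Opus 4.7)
The plan is to reduce the PDE $\Delta_\ell p = 0$ to a recurrence on the ``slices'' of $p$ obtained by grouping terms according to the power of the fixed variable $x_i$.

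First I would decompose the differential operator as $\Delta_\ell = \frac{\partial^\ell}{\partial x_i^\ell} + M$, where $M := \sum_{j \neq i} \frac{\partial^\ell}{\partial x_j^\ell}$ is an operator that involves no differentiation in $x_i$ and commutes with multiplication by $x_i$. Writing $p = \sum_{r=0}^d x_i^r p_r$ with each $p_r$ independent of $x_i$, one directly computes
\[
\frac{\partial^\ell}{\partial x_i^\ell} p \;=\; \sum_{r \geq \ell} \frac{r!}{(r-\ell)!}\, x_i^{r-\ell}\, p_r \;=\; \sum_{s \geq 0} \frac{(s+\ell)!}{s!}\, x_i^s\, p_{s+\ell},
\]
and $Mp = \sum_r x_i^r\, Mp_r$. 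Setting $\Delta_\ell p = 0$ and matching coefficients of $x_i^s$ (which is legitimate because none of the $p_r$ or $Mp_r$ depend on $x_i$) yields the recurrence
\[
p_{s+\ell} \;=\; -\frac{s!}{(s+\ell)!}\, M p_s \qquad (s \geq 0).
\]

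The second step is to solve this recurrence. Writing $s = \ell(k-1) + r$ with $0 \leq r < \ell$ and iterating, a short telescoping calculation gives
\[
p_{\ell k + r} \;=\; (-1)^k \,\frac{r!}{(\ell k + r)!}\, M^k p_r
\]
for all $k \geq 0$ and $0 \leq r \leq \ell - 1$. Substituting into $p = \sum_{r=0}^{\ell-1}\sum_{k} x_i^{\ell k + r} p_{\ell k + r}$ and recalling that $M = \Delta_\ell - \partial^\ell/\partial x_i^\ell$ produces the claimed formula (\ref{eq:comm}). For the converse, starting from any $p_0, \ldots, p_{\ell-1}$ and defining the higher slices by the formula, the same recurrence holds by construction, so $\Delta_\ell p = 0$.

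I do not expect any serious obstacle here: the argument is a one-variable-at-a-time separation of variables followed by an elementary linear recurrence. The only delicate point is bookkeeping in the factorial constant during the iteration, and making sure that the sum over $k$ truncates correctly at $\lfloor d/\ell \rfloor$ because $\deg_i(p) \leq d$ forces $p_{\ell k + r} = 0$ once $\ell k + r > d$, which is automatic from the formula since $M^k p_r$ has total degree $\deg(p_r) - \ell k$ and vanishes once $\ell k$ exceeds this.
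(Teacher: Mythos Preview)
Your approach is correct and essentially identical to the paper's: both split $\Delta_\ell = \partial_{x_i}^\ell + M$, derive the recurrence $p_{s+\ell} = -\frac{s!}{(s+\ell)!}\, M p_s$ by matching powers of $x_i$, and iterate to a closed form; your handling of the converse (observing that the recurrence \emph{is} the equation $\Delta_\ell p = 0$ read off slice by slice) is marginally cleaner than the paper's direct verification. One minor point: your iterated formula $p_{\ell k+r} = (-1)^k \tfrac{r!}{(\ell k+r)!} M^k p_r$ carries a factor of $r!$ that the displayed formula~(\ref{eq:comm}) omits --- checking the $k=0$ term for $r \geq 2$ shows this is a typo in the statement, not an error on your part.
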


\begin{proof}

To prove necessity, suppose $p$ is $\ell$-harmonic.
For $r > d$, define $p_r = 0$.  With this convention, $p$ is equal to
\[p = \sum_{r=0}^{\infty} x_i^r p_r.\]
Applying $\Delta_{\ell}$ to $p$ gives \begin{align}
\Delta_{\ell} [p] &= \Delta_{\ell}\left[\sum_{r=0}^{\infty} x_i^r p_r\right]
\\
\notag
&=
\sum_{r=1}^{\infty} r(r-1) \ldots (r - \ell + 1) x_i^{r-\ell} [p_{r}]
+ \sum_{r=0}^{\infty} x_i^r\left(\Delta_{\ell}- \frac{\partial^{\ell}}{\partial x_i^{\ell}}\right)[p_r] \\
&= \sum_{r=0}^{\infty} x_i^r\left( (r+\ell)\ldots (r + 1)p_{r+ \ell} +   \left(\Delta_{\ell}- \frac{\partial^{\ell}}{\partial x_i^{\ell}}\right)[p_r]\right)= 0.\notag
\end{align}
This produces a recursion relation
\[p_{r + \ell} = \frac{-1}{(r+\ell)\ldots (r + 1)}\left(\Delta_{\ell}- \frac{\partial^{\ell}}{\partial x_i^{\ell}}\right)p_r.\]
Given some polynomials
$p_{0}, \ldots, p_{\ell - 1}$, the remaining $p_{\ell k +r}$ are given by
\[p_{\ell k + r} = \frac{(-1)^k}{(\ell k + r)!}\left(\Delta_{\ell}- \frac{\partial^{\ell}}{\partial x_i^{\ell}}\right)^k p_{r}\]
for each $k$ and for $0 \leq r < \ell.$
Therefore $p$ is equal to
\[p = \sum_{r=0}^{\ell - 1} \sum_{k=0}^{\infty} \frac{(-1)^k}{(\ell k + r)!} x_i^{\ell k + r} \left(\Delta_{\ell}- \frac{\partial^{\ell}}{\partial x_i^{\ell}}\right)^k p_r.\]
From this (\ref{eq:comm}) follows since $\left(\Delta_{\ell}- \displaystyle\frac{\partial^{\ell}}{\partial x_i^{\ell}}\right)^k p_r$
equals zero for $\ell k > d \geq \deg(p_r)$.

To prove sufficiency, consider $p$ as defined in (\ref{eq:comm}).
When $k = 0$ and $r < \ell$,  the derivative $\displaystyle\frac{\partial^{\ell}}{\partial x_i^{\ell}}x_i^{\ell k + r}$ is equal to
\[\frac{\partial^{\ell}}{\partial x_i^{\ell}}x_i^{r}= 0.\]
 When $k > 0$ and $r < \ell$, the derivative $\displaystyle\frac{\partial^{\ell}p}{\partial x_i^{\ell}}x_i^{\ell k + r}$ is equal to
\[\frac{\partial^{\ell}}{\partial x_i^{\ell}}x_i^{\ell k + r} = (\ell k + r) \ldots (\ell[k-1] + r + 1) x_i^{\ell(k-1) + r}.\]
Therefore $\displaystyle\frac{\partial^{\ell}p}{\partial x_i^{\ell}}$ is equal to
\begin{align}
\label{eq:DerivOnlyXm}
\frac{\partial^{\ell}p}{\partial x_i^{\ell}} &= \frac{\partial^{\ell}}{\partial x_i^{\ell}}\sum_{r=0}^{\ell - 1} \sum_{k=0}^{\infty} \frac{(-1)^k}{(\ell k + r)!} x_i^{\ell k + r} \left(\Delta_{\ell}- \frac{\partial^{\ell}}{\partial x_i^{\ell}}\right)^k [p_{r}]\\
\notag &= \sum_{r=0}^{\ell - 1} \sum_{k=1}^{\infty} \frac{(-1)^k}{(\ell (k-1) + r)!} x_i^{\ell (k-1) + r} \left(\Delta_{\ell}- \frac{\partial^{\ell}}{\partial x_i^{\ell}}\right)^k [p_{r}].
\end{align}
Reindexing (\ref{eq:DerivOnlyXm}) by substituting $k+1$ for $k$ gives
\begin{align}
\label{eq:DerivOnlyXm2}
\frac{\partial^{\ell}p}{\partial x_i^{\ell}} = -\sum_{r=0}^{\ell - 1} \sum_{k=0}^{\infty} \frac{(-1)^k}{(\ell k + r)!} x_i^{\ell k + r} \left(\Delta_{\ell}- \frac{\partial^{\ell}}{\partial x_i^{\ell}}\right)^{k+1} [p_{r}].
\end{align}
The polynomial $\left(\Delta_{\ell}- \displaystyle\frac{\partial^{\ell}}{\partial x_i^{\ell}}\right)[p]$ is equal to
\begin{align}\label{eq:DerivOnRest}
\left(\Delta_{\ell}- \frac{\partial^{\ell}}{\partial x_i^{\ell}}\right)[p] &= \left(\Delta_{\ell}- \frac{\partial^{\ell}}{\partial x_i^{\ell}}\right)\left[\sum_{r=0}^{\ell - 1} \sum_{k=0}^{\infty} \frac{(-1)^k}{(\ell k + r)!} x_i^{\ell k + r} \left(\Delta_{\ell}- \frac{\partial^{\ell}}{\partial x_i^{\ell}}\right)^k [p_{r}]\right]\\
\notag &= \sum_{r=0}^{\ell - 1} \sum_{k=0}^{\infty} \frac{(-1)^k}{(\ell k + r)!} x_i^{\ell k + r} \left(\Delta_{\ell}- \frac{\partial^{\ell}}{\partial x_i^{\ell}}\right)^{k+1} [p_{r}].
\end{align}
Notice that the differential operator
$\left(\Delta_{\ell}- \displaystyle\frac{\partial^{\ell}}{\partial x_i^{\ell}}\right)$ does not act on the variable $x_i$.
Also note that (\ref{eq:DerivOnlyXm2}) is the negative of (\ref{eq:DerivOnRest}), which implies that
\[\Delta_{\ell}[p] = \frac{\partial^{\ell}p}{\partial x_i^{\ell}} + \left(\Delta_{\ell}- \frac{\partial^{\ell}}{\partial x_i^{\ell}}\right)[p] = 0. \]

\end{proof}

\begin{definition}Let $\ell$ be a positive integer. Let $p \in \FF[x]$ be equal to
\[p = \sum_{r=0}^{\ell-1} x_i^r p_r, \]
where each $p_r$ is either equal to $0$ or has degree $0$ in $x_i$.
Define $\Hm[p,x_i,\ell] \in \FF[x]$\index{Hmpxil@$\Hm[p,x_i,\ell]$} to be
\begin{align}\label{eq:commdefinition}
\Hm[p,x_i,\ell] := \sum_{r=0}^{\ell - 1} \sum_{k=0}^{\left\lfloor \frac{\deg(p)}{\ell} \right\rfloor} \frac{(-1)^k}{(\ell k + r)!} x_i^{\ell k + r} \left(\Delta_{\ell}- \frac{\partial^{\ell}}{\partial x_i^{\ell}}\right)^k [p_{r}].
\end{align}
\end{definition}

Note that Proposition \ref{thm:classificationcomm} implies that $\Hm[p,x_i,\ell]$ is harmonic.
The following Lemma will be useful for generating commutative (and noncommutative) $\ell$-harmonic polynomials with desirable properties.

\begin{lemma}
\label{prop:techOnHsubEll}
Let $r$ be an integer with $0 \leq r < \ell$.  Let $p \in \FFx$ be defined by
\begin{align}
\notag
p &= \frac{(-1)^Q (\ell Q + r)!}{Q!(\ell!)^Q}\sum_{r=0}^{\ell - 1} \sum_{k=0}^{\lfloor \frac{d}{\ell} \rfloor} \frac{(-1)^k}{(\ell k + r)!} x_i^{\ell k + r} \left(\Delta_{\ell}- \frac{\partial^{\ell}}{\partial x_i^{\ell}}\right)^k \left[x_{a_1}^{\ell} \ldots x_{a_Q}^{\ell} \right]\\
\notag &= \frac{(-1)^Q (\ell Q + r)!}{Q!(\ell!)^Q}\Hm\left[x_{a_1}^{\ell} \ldots x_{a_Q}^{\ell},x_i,\ell\right],
\end{align}
where $i, a_1, \ldots, a_Q$ are distinct positive integers.
Then $\deg_i(p) = \ell Q + r$ and the coefficient of $x_i^{\ell Q + r}$ in $p$ is
$1$.
\end{lemma}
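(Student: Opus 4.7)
The plan is to compute the coefficient of $x_i^{\ell Q + r}$ in $p$ directly, by identifying the unique summand that carries the top $x_i$-degree and then evaluating the constant-coefficient differential operator that appears there. I will use that $\Delta_{\ell} - \partial^{\ell}/\partial x_i^{\ell} = \sum_{j \neq i} \partial^{\ell}/\partial x_j^{\ell}$ together with the hypothesis that $i, a_1, \ldots, a_Q$ are pairwise distinct.

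First I would note that every summand indexed by $k$ in the defining formula is of the form $x_i^{\ell k + r}$ times $\big(\Delta_{\ell} - \partial^{\ell}/\partial x_i^{\ell}\big)^k[x_{a_1}^{\ell}\cdots x_{a_Q}^{\ell}]$, and the polynomial produced by the operator action is independent of $x_i$ (both the operator and the polynomial it acts on involve only variables other than $x_i$). Consequently each summand is a scalar multiple of $x_i^{\ell k + r}$ times an $x_i$-free polynomial, so it has pure $x_i$-degree $\ell k + r$. The maximum attainable value is $\ell Q + r$, achieved uniquely at $k = Q$. So it remains to show that the $k = Q$ term is nonzero and to compute its coefficient.

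The key computation is
\[
\bigg(\sum_{j \neq i} \frac{\partial^{\ell}}{\partial x_j^{\ell}}\bigg)^{Q}\big[x_{a_1}^{\ell}\cdots x_{a_Q}^{\ell}\big].
\]
Expanding the $Q$-th power multinomially gives a sum over ordered $Q$-tuples $(j_1,\ldots,j_Q)$ of indices distinct from $i$. When $\prod_s \partial^{\ell}/\partial x_{j_s}^{\ell}$ acts on $x_{a_1}^{\ell}\cdots x_{a_Q}^{\ell}$, each of the distinct variables $x_{a_t}$ must be differentiated exactly $\ell$ times for the result to be nonzero; since each $x_{a_t}$ appears to the exact power $\ell$, the surviving tuples are precisely the $Q!$ permutations of $(a_1,\ldots,a_Q)$, and each such permutation contributes a factor of $(\ell!)^Q$. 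The total is therefore $Q!\,(\ell!)^Q$.

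Combining these steps gives the coefficient of $x_i^{\ell Q + r}$ in $p$ as
\[
\frac{(-1)^Q(\ell Q + r)!}{Q!\,(\ell!)^Q}\cdot\frac{(-1)^Q}{(\ell Q + r)!}\cdot Q!\,(\ell!)^Q \;=\; 1,
\]
which is both nonzero (hence $\deg_i(p) = \ell Q + r$) and equal to $1$, as claimed. The only real obstacle is the multinomial bookkeeping in the key computation above, but the distinctness of $i, a_1, \ldots, a_Q$ makes the identification of surviving tuples with permutations of $(a_1, \ldots, a_Q)$ immediate, and the factorials arrange themselves to cancel against the chosen normalization constant.
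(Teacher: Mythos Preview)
Your proof is correct and follows essentially the same approach as the paper: isolate the $k=Q$ summand as the unique one carrying the top $x_i$-degree, expand $\big(\sum_{j\neq i}\partial^{\ell}/\partial x_j^{\ell}\big)^Q$ as a sum over ordered tuples, identify the surviving tuples with the $Q!$ permutations of $(a_1,\ldots,a_Q)$ each contributing $(\ell!)^Q$, and cancel against the normalization constant. The organization and the key computation match the paper's proof almost line for line.
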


\begin{proof}Each polynomial
\[\frac{(-1)^k}{(\ell k + r)!} x_i^{\ell k + r} \left(\Delta_{\ell}- \frac{\partial^{\ell}}{\partial x_i^{\ell}}\right)^k \left[x_{a_1}^{\ell} \ldots x_{a_Q}^{\ell}\right]\]
is either $0$ or degree $\ell Q + r$ with degree $\ell k + r$ in $x_i$.
The only piece of $p$ which can possibly be of degree $\ell Q + r$ in $x_i$ is
\begin{equation}
\label{eq:sufficescoeff}\frac{(-1)^Q (\ell Q + r)!}{Q!(\ell!)^Q}\frac{(-1)^Q}{(\ell Q + r)!} x_i^{\ell Q + r} \left(\Delta_{\ell}- \frac{\partial^{\ell}}{\partial x_i^{\ell}}\right)^Q \left[x_{a_1}^{\ell} \ldots x_{a_Q}^{\ell}\right].
\end{equation}
It therefore suffices to show that (\ref{eq:sufficescoeff}) is equal to $x_i^{\ell Q + r}$.

One sees
\begin{align}
\left(\Delta_{\ell}- \frac{\partial^{\ell}}{\partial x_i^{\ell}}\right)^Q &= \left(\sum_{j_1 \neq i} \frac{\partial^{\ell}}{x_{j_1}^{\ell}}\right)\ldots
\left(\sum_{j_Q \neq i} \frac{\partial^{\ell}}{x_{j_Q}^{\ell}}\right)
\notag\\
\label{eq:brokenoperator}
&= \sum_{j_1 \neq i} \ldots \sum_{j_Q \neq i} \frac{\partial^{\ell}}{x_{j_1}^{\ell}} \ldots \frac{\partial^{\ell}}{x_{j_Q}^{\ell}}.
\end{align}
Since
\[\deg_j(x_{a_1}^{\ell} \ldots x_{a_Q})^{\ell} = \left\{\begin{array}{cc}
\ell& j \in \{a_1, \ldots, a_Q\}\\
0& \mathrm{otherwise}
\end{array} \right.,\]
the only terms of (\ref{eq:brokenoperator}) which don't annihilate $x_{a_1}^{\ell} \ldots x_{a_Q}^{\ell}$ are those where $j_1, \ldots, j_Q$ are equal to $a_1, \ldots, a_Q$ in some order.  Since there are $Q!$ distinct orderings of $a_1, \ldots, a_Q$, one sees
\[\left(\Delta_{\ell}- \frac{\partial^{\ell}}{\partial x_i^{\ell}}\right)^Q x_{a_1}^{\ell} \ldots x_{a_Q}^{\ell} = Q!\frac{\partial^{\ell}}{x_{a_1}^{\ell}} \ldots \frac{\partial^{\ell}}{x_{a_Q}^{\ell}}
x_{a_1}^{\ell}\ldots x_{a_Q}^{\ell} = Q! (\ell!)^Q. \]
Inserting this into (\ref{eq:sufficescoeff}) gives the result.
\end{proof}

\subsection{Simple Cases of Non-Commutative $\ell$-Harmonic Polynomials}

First consider a couple of basic cases.
\subsubsection{Degree Bounded by $\ell$}

It is straightforward to classify the set of $\ell$-harmonic polynomials
which are homogeneous of degree bounded by $\ell$.

\begin{prop}
\label{prop:deg2harm}
Let $x = (x_1, \ldots, x_g)$, and let $\ell$ be
a positive integer.
\begin{enumerate}

\item \label{item:lowdeglharm}All homogeneous NC polynomials of degree $d < \ell$
are $\ell$-harmonic.

\item \label{item:elldeglharm}
The following is a basis over $\FF$ for the set of NC $\ell$-harmonic,
homogeneous degree $\ell$
polynomials:
\begin{equation}
\label{eq:basisd2homharm}
\mathcal{B} = \{x_k^{\ell} - x_1^{\ell}:\ 2 \leq k\leq g \} \cup \{m \in \mathcal{M}:\ |m| = \ell,\ m \neq x_i^{\ell},\ i = 1, \ldots, g\}.
\end{equation}
\end{enumerate}
\end{prop}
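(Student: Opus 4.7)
The plan is to prove the two items essentially by direct inspection of how $\lap_\ell$ acts on monomials of degree $\leq \ell$.

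For item (\ref{item:lowdeglharm}), I would argue that if $p$ is homogeneous of degree $d < \ell$, then for each $i$ the polynomial $p(x_1, \ldots, x_i + th, \ldots, x_g)$ is a polynomial in $t$ of degree at most $d$ (expanding the product rule, each occurrence of $x_i$ can contribute at most one power of $t$). Since $d < \ell$, the $\ell$-th derivative $\frac{d^\ell}{dt^\ell}$ of each summand is identically zero, and therefore $\lap_\ell[p,h] = 0$.

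For item (\ref{item:elldeglharm}), the main observation I would establish first is that on homogeneous degree-$\ell$ monomials $\lap_\ell$ has a very simple action. Namely, for a monomial $m = x_{a_1}\ldots x_{a_\ell}$, the substitution $x_i \mapsto x_i + th$ only modifies the positions $j$ with $a_j = i$. The coefficient of $t^\ell$ in $m(x_1,\ldots,x_i+th,\ldots,x_g)$ is nonzero only when \emph{every} position contributes a factor $th$, which forces $a_1 = a_2 = \cdots = a_\ell = i$. Thus
\[
\lap_\ell[m,h] \;=\; \begin{cases} \ell!\,h^\ell, & m = x_i^\ell \text{ for some } i,\\ 0, & \text{otherwise.}\end{cases}
\]
Consequently, for a general homogeneous degree-$\ell$ polynomial $p = \sum_m A_m m$,
\[
\lap_\ell[p,h] \;=\; \ell!\,h^\ell \sum_{i=1}^g A_{x_i^\ell},
\]
so $p$ is $\ell$-harmonic if and only if $\sum_{i=1}^g A_{x_i^\ell} = 0$.

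This immediately identifies the kernel as a hyperplane of codimension one inside the $g^\ell$-dimensional space of homogeneous degree-$\ell$ NC polynomials, hence of dimension $g^\ell - 1$. It remains to check that the stated set $\mathcal{B}$ is a basis. To do so I would verify (i) spanning: any element of the kernel is a linear combination of the $g^\ell - g$ mixed monomials (which automatically lie in the kernel) plus a combination of pure powers $x_i^\ell$ whose coefficients sum to zero, and the latter are expressible via the $g-1$ differences $x_k^\ell - x_1^\ell$; and (ii) linear independence: the mixed monomials are linearly independent among themselves, they involve no pure powers, and the $g-1$ differences $x_k^\ell - x_1^\ell$ live entirely in the $g$-dimensional span of pure powers where they are manifestly independent. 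A quick dimension count ($|\mathcal{B}| = (g^\ell - g) + (g-1) = g^\ell - 1$) confirms that $\mathcal{B}$ has the correct size.

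There is no real obstacle here; the only thing to be careful about is the noncommutative product-rule computation showing that $\frac{d^\ell}{dt^\ell}$ kills every degree-$\ell$ monomial except a pure $\ell$-th power. The edge case $g = 1$ is automatic: both pieces of $\mathcal{B}$ are empty, and indeed $x_1^\ell$ is not $\ell$-harmonic, so the kernel is trivial.
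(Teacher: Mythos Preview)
Your proposal is correct and follows essentially the same approach as the paper: both argue item (\ref{item:lowdeglharm}) from degree considerations, and for item (\ref{item:elldeglharm}) both compute that $\lap_\ell$ kills every degree-$\ell$ monomial except the pure powers $x_i^\ell$ (on which it yields $\ell!\,h^\ell$), reducing $\ell$-harmonicity to the single linear condition $\sum_i A_{x_i^\ell}=0$. The only cosmetic difference is that the paper verifies spanning by explicitly rewriting $p$ as $\sum_{m\neq x_i^\ell} A_m m + \sum_{k\geq 2} A_{x_k^\ell}(x_k^\ell - x_1^\ell)$, whereas you use a dimension count; both are fine.
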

Note that (\ref{item:lowdeglharm}) proves Theorem \ref{thm:main} (\ref{item:lowdegmain}).

\begin{proof}
\begin{enumerate}
\item
Any polynomial $p$ such that
$\deg_i(p) < \ell$ satisfies $\dird[p,x_i^{\ell}, h] = 0$.
All polynomials $p$ with $\deg(p) < \ell$
satisfy $\deg_i(p) < \ell$ for each $x_i$, and must therefore be $\ell$-harmonic.

\item The set $\mathcal{B}$ in (\ref{eq:basisd2homharm}) is clearly linearly independent.
Note that all of the elements of
\[\{m \in \mathcal{M}:\ |m| = \ell,\ m \neq x_i^{\ell},\ i = 1, \ldots, g\}\]
must satisfy $\lap_{\ell}[p, h] = 0$ since all $m$ in this set have
 $\deg_i(m) < \ell$ for each $x_i$.
 Also, note that for each $k$
\[\lap_{\ell}[x_k^{\ell} - x_1^{\ell}, h] = \ell! h^{\ell} - \ell! h^{\ell} = 0. \]
Therefore, the span $\mathcal{B}$ is contained in the
set of NC $\ell$-harmonic homogeneous degree $\ell$ polynomials.

Conversely, let $p \in \FFx$ be an $\ell$-harmonic homogeneous degree $\ell$ polynomial of the form
\[p =  \sum_{|m|=\ell} A_{m}m.\]
Applying $\lap_{\ell}$ to $p$ gives
\[\lap_{\ell}[p, h] = \ell!\left(\sum_{k=1}^g A_{x_k^{\ell}}\right)h^{\ell} = 0.\]
This implies that $\displaystyle\sum_{k=1}^g A_{x_k^{\ell}} = 0$.
Subtracting $\left(\displaystyle\sum_{k=1}^g A_{x_k^{\ell}}\right)x_1^{\ell}$, i.e. $0$, from $p$ gives
\[p = \sum_{m \not\in \{x_1^{\ell}, \ldots, x_g^{\ell}\}} A_m m + \sum_{k=2}^g A_{x_k^{\ell}} (x_k^{\ell} - x_1^{\ell})\]
which is in the span of $\mathcal{B}$.
\end{enumerate}
\end{proof}

\subsubsection{1-Harmonic Polynomials}

In the case that $\ell = 1$, the $\ell$-Laplacian is equal to
\[\lap_{\ell}[p,h] = \dird\left[p, \sum_{i=1}^g x_i, h \right]. \]

\begin{definition}
Let $p\in \FFx$.  The expression $\dird[p,x_i,x_j]$ is defined to be
\[\dird[p,x_i,x_j]:= \frac{d}{dt} p(x_1, \ldots, x_i + tx_j, \ldots, x_g)|_{t=0}. \]
Essentially, $\dird[p,x_i,x_j]$ is $\dird[p,x_i,h]$ with each $h$ replaced with $x_j$.
\end{definition}

\begin{exa}For example, if $p(x_1, x_2) = x_1 x_1 x_2 x_2 x_1$
then \[ \dird[p, x_1^2, h] = 2hhx_2x_2x_1 + 2hx_1x_2x_2h + 2x_1hx_2x_2h.\]
Substituting $x_1$ in for $h$ gives
\[\dird[p, x_1^2, x_1] = 6x_1x_1x_2x_2x_1.\]
Differentiation and substitution in this example essentially
multiplied the polynomial $p$ by 6.\qed
\end{exa}

\begin{prop}
\label{prop:ellequalsone}
Every NC, homogeneous degree $d$, $1$-harmonic polynomial in $\FFx$ is a sum of polynomials of the form
\begin{equation}
\label{eq:1mainthm}
\prod_{i=1}^d \sum_{j=1}^g a_{ij}x_j,
\end{equation}
such that $\sum_{j=1}^g a_{i j} = 0$ for each $i$.
\end{prop}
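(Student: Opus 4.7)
The plan is to induct on the degree $d$. The base case $d=1$ is immediate: a homogeneous degree one polynomial $\sum_{j=1}^g a_j x_j$ has $\lap_1[p,h] = \left(\sum_j a_j\right)h$, so it is $1$-harmonic precisely when $\sum_j a_j=0$, in which case it is itself a product of one linear form of the required type.

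For the inductive step, I would fix a 1-harmonic homogeneous degree $d$ polynomial $p$ and write it uniquely as $p = \sum_{i=1}^g x_i\, p_i$, where each $p_i$ is homogeneous of degree $d-1$. Applying the product rule from Lemma \ref{lem:prodrule} to each $D[x_i p_i, x_j, h]$ and summing over $j$ yields
\[\lap_1[p,h] \;=\; h\sum_{j=1}^g p_j \;+\; \sum_{i=1}^g x_i\, \lap_1[p_i,h].\]
The key observation is that $\lap_1[p,h]$ is a polynomial in the variables $x_1,\ldots,x_g,h$ and every monomial appearing on the right is classified by its \emph{first letter}: the summand $h\sum_j p_j$ contributes monomials whose first letter is $h$, whereas the summand $x_i\lap_1[p_i,h]$ contributes monomials whose first letter is $x_i$. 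These supports are pairwise disjoint, so $\lap_1[p,h]=0$ forces both $\sum_{j=1}^g p_j = 0$ and $\lap_1[p_i,h]=0$ for every $i$.

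Having extracted these two constraints, I would apply the inductive hypothesis to each $p_i$, writing it as a sum of products $\prod_{l=1}^{d-1} L_{i,k,l}$ of linear forms whose coefficients sum to zero. Then I use the relation $\sum_j p_j=0$, equivalently $p_1 = -\sum_{i\ge 2} p_i$, to rewrite
\[p \;=\; x_1 p_1 + \sum_{i\geq 2} x_i p_i \;=\; \sum_{i\geq 2}(x_i - x_1)\,p_i.\]
Each linear form $x_i-x_1$ has coefficient sum $1-1=0$, and each $p_i$ is already a sum of products of linear forms with coefficient sum zero, so $p$ is a sum of products of $d$ linear forms, each having coefficient sum zero, as claimed.

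There is no substantial obstacle; the only subtlety is the decoupling step, where one must notice that the two pieces of $\lap_1[p,h]$ live in disjoint monomial subspaces (distinguished by whether the leading letter is $h$ or some $x_i$), and the small algebraic trick $p=\sum_{i\geq 2}(x_i-x_1)p_i$ that converts the constraint $\sum_j p_j=0$ into the desired factored form. This argument is a clean forward-only proof; the converse (that every such product is $1$-harmonic) follows immediately from the product rule in Proposition \ref{prop:ProdSuff} and is not needed for the statement.
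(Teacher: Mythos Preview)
Your proof is correct and takes a genuinely different route from the paper's. The paper performs an orthogonal change of variables: it picks $A\in\FF^{g\times g}$ orthogonal with first column $\tfrac{1}{\sqrt{g}}(1,\dots,1)^T$, invokes the chain rule (Corollary~\ref{cor:chainrule}) to transform the $1$-Laplacian into a single directional derivative in $x_1$, and then argues by contradiction that $p(A^Tx)$ must have degree~$0$ in $x_1$; the desired decomposition then falls out because every monomial of $p(A^Tx)$ involves only $x_2,\dots,x_g$, and substituting back expresses $p$ as a sum of products of the remaining columns of $A$, each orthogonal to $(1,\dots,1)^T$.

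Your induction is more elementary and self-contained: it avoids the chain rule and the construction of an orthogonal matrix, relying only on the product rule (Lemma~\ref{lem:prodrule}) and the first-letter decoupling, followed by the identity $p=\sum_{i\ge2}(x_i-x_1)p_i$. A small bonus is that your argument works verbatim over any field $\FF$, whereas the paper's proof implicitly uses the existence of $\sqrt{g}$ and an orthogonal matrix with prescribed first column. On the other hand, the paper's approach illustrates the change-of-variables technique that is reused later (cf.\ \S\,3.7.2), so it fits more naturally into the paper's broader toolkit.
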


\begin{proof}
Let $A \in \FF^{g \times g}$ be an orthogonal matrix such that
\[A\begin{pmatrix}1\\0\\\vdots\\0\end{pmatrix} =  \frac{1}{\sqrt{g}}\begin{pmatrix}1\\1\\\vdots\\1\end{pmatrix}.\]
By Corollary \ref{cor:chainrule}, if $p \in \FFx$ is $1$-harmonic and homogeneous of degree $d$, then
\[\dird\left[p, \sum_{i=1}^g x_i, h \right](x) = \dird[p\left(A^Tx\right), \sqrt{g} x_1, h](Ax) = 0.\]
This implies that $\deg_1\left(p\left(A^Tx\right) \right) = 0$ for the following reason:

Assume $\deg_1\left(p\left(A^Tx\right) \right) = d_1 > 0$. Let $p\left(A^Tx\right)$ be equal to
\[p\left(A^Tx\right) = p' + r, \]
where $p'$ is homogeneous in $x_1$ of degree $\deg(p(A^Tx))$ and where $\deg_1(p') > \deg_1(r)$.
Thus,
\[\dird[p,x_1,h] = \dird[p',x_1,h] + \dird[r,x_1,h] = 0. \]
Since $r$ has a smaller degree in $x_1$ than $p'$, which is homogeneous in $x_1$,
this implies that $\dird[p',x_1,h] = 0.$ Let $p'$ be equal to
\[p' = \sum_{\sigma \in S_d} \sigma\left[x_1^{d_1} q_{\sigma} \right]\]
for some polynomials $q_{\sigma}$ such that $\deg_1\left(q_{\sigma}\right) = 0$.
Therefore,
\begin{align}
\notag
\dird[p',x_1,h] &=
\dird\left[\sum_{\sigma \in S_d} \sigma\left[x_1^{d_1} q_{\sigma}\right] \right]\\
\notag &=
\sum_{\sigma \in S_d} \sigma\left[\dird\left[x_1^{d_1}, x_1, h\right] q_{\sigma} \right]=0.
\end{align}
Note that
\begin{align}
\notag
\dird\left[x_1^{d_1},x_1,h\right] &= \dird\left[\Symm\left[x_1^{d_1}, x_1, h\right]\right]\\
\notag &=\Symm\left[h\frac{\partial}{\partial x_1} x_1^{d_1}, x_1, h\right]\\
\notag&= d_1\Symm\left[hx_1^{d_1-1} \right].
\end{align}
Also,
\[\dird\left[x_1^{d_1},x_1,x_1\right] = d_1\Symm\left[x_1x_1^{d_1-1} \right] = d_1x_1^{d_1}\]
Therefore
\begin{align}
\dird[p\left(A^Tx\right),x_1,x_1] &=
\sum_{\sigma \in S_d} \sigma\left[\dird\left[x_1^{d_1}, x_1, x_1\right] q_{\sigma} \right]
\notag\\
\notag
&= d_1 p' = 0.
\end{align}
This implies that $p' = 0$, which is a contradiction. Therefore $\deg_1\left(p\left(A^Tx\right) \right) = 0$.

Each term of $p\left(A^Tx\right)$ contains no $x_i$. Therefore, each term of $p$ is of the form
\[\prod_{i=1}^d \sum_{j=1}^g a_{ij}x_j, \]
where each vector $\left(a_{i1}, \ldots, a_{ig}\right)^T$ corresponds to a column of $A$ which is not equal to the first column.  Since $A$ is orthogonal, and since the first column of $A$ equals $\displaystyle\frac{1}{\sqrt{g}}(1, \ldots, 1)^T$, this implies that
\[\sum_{j=1}^g a_{ij} = 0.\]
\end{proof}

\subsubsection{The Two-Variable Case}

The $\ell = 2$ case
is the case of NC harmonic polynomials in two variables, which
were classified in \cite{HMH09}. This classification is
repeated in Proposition \ref{prop:2dimensional}.  For more general $\ell$, the
following proposition classifies all NC $\ell$-harmonic polynomials in two
variables.

\begin{prop}
\label{prop:twovargenell}
Let $x = (x_1, x_2)$, and fix positive integers $d$ and $\ell$.
The following is a basis for the space of NC, homogeneous degree $d$, $\ell$-harmonic polynomials in $\CCx$:
\begin{align}
\label{eq:basis2varcase}
\mathcal{A}_d &=
\{m \in \cM:\  |m| = d,\ \deg_1(m), \deg_2(m) < \ell\} \\
\notag
&\cup \{\Symm[\Hm[x_1^rx_2^{d - r}, x_1, \ell]]:\ d - r \geq \ell,\ 0 \leq r < \ell \}.
\end{align}
\end{prop}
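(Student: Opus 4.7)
My plan is to verify three properties of $\mathcal{A}_d$: that each element lies in the kernel of $\lap_\ell$, that the set is linearly independent, and that it spans the kernel.

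For kernel membership, a monomial $m$ with $\deg_1(m), \deg_2(m) < \ell$ satisfies $\dird[m, x_i^\ell, h] = 0$ for both $i$, so $\lap_\ell[m, h] = 0$. For each $\Symm[\Hm[x_1^r x_2^{d-r}, x_1, \ell]]$, Proposition \ref{thm:classificationcomm} guarantees that the commutative polynomial $\Hm[x_1^r x_2^{d-r}, x_1, \ell]$ is $\ell$-harmonic in $\CC[x_1, x_2]$, and Theorem \ref{thm:parta} then yields $\lap_\ell[\Symm[\Hm[x_1^r x_2^{d-r}, x_1, \ell]], h] = 0$.

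For linear independence, observe that $\Hm[x_1^r x_2^{d-r}, x_1, \ell]$ is supported only on the commutative monomials $x_1^{\ell k + r} x_2^{d - \ell k - r}$, so $\Symm[\Hm[x_1^r x_2^{d-r}, x_1, \ell]]$ is supported on NC monomials whose $x_1$-degree is congruent to $r$ modulo $\ell$; distinct values of $r$ therefore yield polynomials with disjoint supports. Moreover, every monomial in $\Symm[\Hm[x_1^r x_2^{d-r}, x_1, \ell]]$ has $\deg_1 \geq \ell$ (for terms with $k \geq 1$) or $\deg_2 = d - r \geq \ell$ (for the $k = 0$ term, using $d - r \geq \ell$), whereas the monomials in the first family have both $\deg_i < \ell$; hence the supports of the two families are disjoint.

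The main step is spanning. Given $p$ homogeneous of degree $d$ and $\ell$-harmonic, write $p = \sum_k p_k$ where $p_k$ collects the monomials of $p$ with $\deg_1 = k$. Then
\[\lap_\ell[p,h] = \sum_{k=0}^{d-\ell}\bigl(\dird[p_{k+\ell}, x_1^\ell, h] + \dird[p_k, x_2^\ell, h]\bigr),\]
where the $k$-summand has $\deg_1 = k$, $\deg_2 = d-k-\ell$, and $\deg_h = \ell$ in its output. Since different values of $k$ occupy different bidegrees, each summand must vanish independently, giving the recurrence $\dird[p_{k+\ell}, x_1^\ell, h] = -\dird[p_k, x_2^\ell, h]$ for $0 \leq k \leq d - \ell$. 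Grouping by residue $r = k \bmod \ell$ decouples $p$ into $\ell$ independent subchains $p^{(r)} = \sum_{k \equiv r \,(\mathrm{mod}\, \ell)} p_k$, each itself $\ell$-harmonic. For $r > d - \ell$ the subchain reduces to $p_r$ with both $\deg_1 < \ell$ and $\deg_2 < \ell$, so it lies in the first family.

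The main obstacle is showing that for $r \leq d - \ell$, $p^{(r)}$ is a scalar multiple of $\Symm[\Hm[x_1^r x_2^{d-r}, x_1, \ell]]$. My approach is: $\Comm[p^{(r)}]$ is a commutative $\ell$-harmonic polynomial whose $x_1$-degrees all lie $\equiv r \pmod \ell$, so by Proposition \ref{thm:classificationcomm} it is a scalar multiple of $\Hm[x_1^r x_2^{d-r}, x_1, \ell]$; pick $c \in \CC$ with $\Comm[p^{(r)} - c\,\Symm[\Hm[x_1^r x_2^{d-r}, x_1, \ell]]] = 0$, and set $q := p^{(r)} - c\,\Symm[\Hm[x_1^r x_2^{d-r}, x_1, \ell]]$. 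It remains to show $q = 0$. A direct coefficient-matching argument (each output monomial in $\dird[m, x_1^\ell, h]$ arises from a unique input monomial $m$ by replacing every $h$ with $x_1$) establishes that $\dird[\cdot, x_1^\ell, h]$ is injective on the space of polynomials homogeneous of fixed $\deg_1 = k$ for each $k \geq \ell$; combined with the chain recurrence for $q$ and the constraint $\Comm[q_k] = 0$ at each level, this forces every $q_k$ to vanish.
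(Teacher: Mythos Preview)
Your approach is genuinely different from the paper's and is a nice idea, but the final step has a gap. The paper proceeds by induction on $d$: it writes $p = x_1 p_1 + x_2 p_2$, deduces that $p_1, p_2$ are $\ell$-harmonic (hence lie in the span of $\mathcal{A}_{d-1}$ by induction), and then analyzes the additional constraint $\dird[p_1, x_1^{\ell-1}, h] + \dird[p_2, x_2^{\ell-1}, h] = 0$ by explicit coefficient matching to pin down the dimension. Your bidegree decomposition and chain recurrence $\dird[q_{k+\ell}, x_1^\ell, h] = -\dird[q_k, x_2^\ell, h]$ is a cleaner organizing principle, and your reduction to showing $q = 0$ when $\Comm[q] = 0$ is sound.

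The gap is in the last sentence. Injectivity of $\dird[\cdot, x_1^\ell, h]$ says only that $q_{k+\ell}$ is \emph{determined} by $q_k$; it does not by itself constrain the starting piece $q_r$. The constraint $\Comm[q_k] = 0$ is a single linear condition on each layer, far too weak on its own (e.g.\ for $\ell = 2$, $d = 6$, $r = 1$ the layer $q_1$ lives in a $6$-dimensional space). What actually forces $q = 0$ is the \emph{solvability} of the recurrence: writing $c_P$ for the coefficient of the monomial with $x_1$ in positions $P$, the recurrence reads $c_{P \cup S} = -c_P$ for every disjoint pair with $|S| = \ell$, and hence $c_{P_1} = c_{P_2}$ whenever $P_1, P_2$ are $k$-subsets of a common $(k+\ell)$-set. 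A connectivity argument (swap one element at a time, using $k + 1 \le k + \ell \le d$) then shows all $c_P$ with $|P| = k$ are equal, i.e.\ each $q_k$ is a scalar multiple of $\Symm[x_1^k x_2^{d-k}]$. Only \emph{then} does $\Comm[q_k] = 0$ kill that scalar. You need to supply this argument; as written, ``injectivity plus $\Comm[q_k] = 0$'' does not close the loop.
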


\begin{proof}The first set in $\mathcal{A}_d$ is either empty or contains
distinct monomials with $\deg_1(m), \deg_2(m) < \ell$.
Next consider the elements of the second set in $\mathcal{A}_d$.
For each $r$, by construction
the polynomial $\Hm\left[x_1^rx_2^{\ell - r}, x_1, \ell\right]$ is a sum of
terms whose degree in $x_1$ is equivalent to $r$ modulo $\ell$.  Further, the
terms of $\Hm\left[x_1^rx_2^{\ell - r}, x_1, \ell\right]$ either have degree in
$x_2$ greater than or equal to $\ell$ or degree in $x_1$ greater than or equal
to $\ell$.  Therefore $A_d$ is a linearly independent set.
Further, Theorem \ref{thm:parta} and Proposition \ref{prop:deg2harm}
(\ref{item:lowdeglharm}) imply that $\mathcal{A}_d$ is contained in the set of
NC, homogeneous degree $d$, $\ell$-harmonic polynomials in $\CCx$.

It now suffices to show that $\mathcal{A}_d$ spans the set of NC, homogeneous
degree $d$, $\ell$-harmonic polynomials in $\CCx$. Consider three cases.

\begin{enumerate}
\item For $d \leq \ell$, this follows by Proposition \ref{prop:deg2harm}.

\item Assume the proposition for all degrees less than or equal to $d = \ell + k$, with $0 \leq k < \ell - 1$. Let $p \in \CCx$ be a homogeneous degree $d+1$, $\ell$-harmonic polynomial.  Then $p$ is equal to
\[p = x_1 p_1(x) + x_2 p_2(x),\]
for some NC, homogeneous degree $d$ polynomials $p_1, p_2 \in \CCx$.  By
repeated application of the Product Rule, one sees
\begin{align}
\notag
\lap_{\ell}[p,h] &=  \sum_{i=1}^2 \sum_{n=0}^{\ell} \binom{\ell}{n} \dird\left[x_1, x_i^{n}, h \right]\dird\left[p_1, x_i^{\ell-n}, h \right]\\
 \notag &+ \sum_{i=1}^2 \sum_{n=0}^{\ell} \binom{\ell}{n}\dird\left[x_2, x_i^{n}, h \right]\dird\left[p_2, x_i^{\ell-n}, h \right]\\
\notag &=x_1 \lap_{\ell}[p_1,h] + x_2\lap_{\ell}[p_2,h] \\
\notag &+ \ell h\dird\left[p_1, x_1^{\ell-1},h\right] + \ell h\dird\left[p_2, x_2^{\ell - 1},h\right].
\end{align}
Since $\lap_{\ell}[p,h] = 0$, this implies that $p_1$ and $p_2$ are $\ell$-harmonic and that
\begin{equation}
\label{eq:laplIdentity}
 \dird\left[p_1, x_1^{\ell-1},h\right] + \dird\left[p_2, x_2^{\ell -
1},h\right] =0.
\end{equation}
By induction, $p_1$ and $p_2$ are in the span of $\mathcal{A}_{d}$.
If $p_1$ contains terms $\alpha m_1$ with $\deg_1(m_1) < \ell - 1$ and $\deg_2(m_1) < \ell$, then
$\deg_1(x_1 m_1), \deg_2(x_1m_1) < \ell$, in which case $x_1m_1$ is in the span
of $\mathcal{A}_d$.
Therefore, assume without loss of generality that $p_1$ contains no such terms. Similarly, assume that $p_2$ contains no terms $m_2$ with $\deg_1(m_2)<\ell$ and $\deg_2(m_2) < \ell - 1$.
Express $p_1$ and $p_2$ as
\begin{align}
p_1 &= \sum_{\substack{\deg_1(m_1) = \ell - 1\\\deg_2(m_1) < \ell}} \alpha_{m,1}
m_1 + \sum_{r=0}^{k} \beta_{r,1} \Symm\left[\Hm\left[x_1^rx_2^{d - r}, x_1,
\ell\right]\right]\notag\\
\notag
p_2 &= \sum_{\substack{\deg_2(m_2) = \ell - 1\\\deg_1(m_1) < \ell}} \alpha_{m,2}
m_2 + \sum_{r=0}^{k} \beta_{r,2} \Symm\left[\Hm\left[x_1^rx_2^{d - r}, x_1,
\ell\right]\right].
\end{align}
Given $r \leq k < \ell - 1$, by the definition of $\Hm[p,x_1,\ell]$,
\[\Hm\left[x_1^rx_2^{d - r}, x_1, \ell\right]= \frac{1}{r!}x_1^rx_2^{d-r} - \frac{1}{(r+\ell)!} x_1^{r+\ell}x_2^{d-r-\ell}. \]
Therefore,
\begin{align}
\label{eq:lapOfP1Part} \frac{\partial^{\ell-1}}{\partial x_1^{\ell-1}}
\Hm\left[x_1^rx_2^{d - r}, x_1, \ell\right] &= - \frac{1}{(r+1)!}
x_1^{r+1}x_2^{d-r-\ell} \\
\frac{\partial^{\ell-1}}{\partial x_2^{\ell-1}} \Hm\left[x_1^rx_2^{d - r}, x_1, \ell\right] &=  \frac{(d-r)!}{r!(d-r-\ell+1)!} x_1^{r}x_2^{d-r-\ell+1}.
\label{eq:lapOfP2Part}
\end{align}
Further, for each monomial $m_1$, with $|m_1| = d$ and $\deg_1(m_1) = \ell - 1$, there exists a permutation $\sigma \in S_d$ such that $m_1 = \sigma\left[x_1^{\ell-1}x_2^{d - \ell + 1} \right]$.  Therefore
\begin{align}
\notag
\dird\left[m_1, x_1^{\ell-1},h \right] &= \sigma\left[\dird\left[x_1^{\ell-1}x_2^{d - \ell + 1}, x_1^{\ell-1},h \right] \right]\\
\notag
&=  (\ell-1)! \sigma\left[h^{\ell-1}x_2^{d - \ell + 1} \right]\\
\notag
&= (\ell-1)!m_1(h,x_2).
\end{align}
Similarly, if $\deg_2(m_2) = \ell - 1$, then $\dird[m_2,x_2^{\ell-1},h] =
(\ell-1)!m_2(x_1,h)$.
Therefore plugging these expressions as well as (\ref{eq:lapOfP1Part})
and (\ref{eq:lapOfP2Part}) into (\ref{eq:laplIdentity}) gives
\begin{align}
\dird\left[p_1, x_1^{\ell-1},h\right] &+ \dird\left[p_2, x_2^{\ell - 1},h\right] =
\sum_{\substack{\deg_1(m_1) = \ell - 1\\\deg_2(m_1) < \ell}}
(\ell-1)!\alpha_{m,1} m_1(h,x_2) \notag\\
&- \sum_{r=0}^{k} \beta_{r,1} \frac{1}{(r+1)!} \Symm\left[h^{\ell-1}x_1^{r+1}x_2^{d-r-\ell}\right]
\notag\\
 &+ \sum_{\substack{\deg_2(m_2) = \ell - 1\\\deg_1(m_2)< \ell}}
(\ell-1)!\alpha_{m,2} m_2(x_1,h)
\notag\\
&+ \sum_{r=0}^{k} \beta_{r,2} \frac{(d-r)!}{r!(d-r-\ell+1)!} \Symm\left[h^{\ell-1}x_1^{r}x_2^{d-r-\ell+1}\right].
\notag
\end{align}

Grouping pieces with the same degree in $x_1$ and in $x_2$ gives
\begin{align}
\sum_{m_1} (\ell-1)!\alpha_{m,1} m_1(h,x_2) +
\beta_{0,2} \frac{d!}{(d+1-\ell)!} \Symm[h^{\ell-1}x_2^{k+1}]&=0
\label{eq:eqs}\\
-\beta_{0,1}
\Symm[h^{\ell-1}x_1x_2^{k}]
+
\beta_{1,2} \frac{(d-1)!}{(d-\ell)!} \Symm[h^{\ell-1}x_1x_2^{k}]&=0
\label{eq:starteqs}
\\
\vdots
\notag
\\
-\beta_{k-1,1} \frac{1}{(k-1)!}
\Symm[h^{\ell-1}x_1^kx_2]
+
\beta_{k,2} \frac{\ell!}{k!} \Symm[h^{\ell-1}x_1^kx_2]&=0
\label{eq:endeqs}\\
-\beta_{k,1} \frac{1}{k!}
\Symm[h^{\ell-1}x_1^{k+1}]
+ \sum_{m_2} (\ell-1)!\alpha_{m,2} m_2(x_1,h)&=0.
\label{eq:lasteq}
\end{align}
This first line (\ref{eq:eqs}) implies that the $\alpha_{m_1,1}$ are all equal and determined by $\beta_{0,2}$ since the coefficients of each monomial in
\[\beta_{0,2} \frac{d!}{(d+1-\ell)!} \Symm[h^{\ell-1}x_2^{k+1}]\]
are all equal. Similarly, the last line (\ref{eq:lasteq}) implies that the $\alpha_{m_2,2}$ are all equal
and determined by $\beta_{k,1}$ since the coefficients of each monomial in
\[-\beta_{k,1} \frac{1}{k!}
\Symm[h^{\ell-1}x_1^{k+1}]\]
are all equal.
Further, the set of $p_1$ and $p_2$ which produce solutions to the system
of equations defined by line (\ref{eq:starteqs}) through (\ref{eq:endeqs}), as
determined by the $\alpha_{m,n}$ and $\beta_{i,j}$, is $(k+1)$-dimensional.  The
span of the second set of
$\mathcal{A}_{d+1}$ as defined in (\ref{eq:basis2varcase}) is also
$(k+1)$-dimensional (in this case with $d+1$ instead of $d$) and is contained in
the set of possible $\ell$-harmonic polynomials of the form $x_1p_1 + x_2p_2$.
Therefore $p$ must be in the span of (\ref{eq:basis2varcase}), which implies
that $\mathcal{A}_{d+1}$ spans the set of NC $\ell$-harmonic, homogeneous degree
$d+1$ polynomials.

\item Now assume the proposition for all degrees less than or equal to $d =
\ell + k$, with $k \geq \ell$.  Let $p \in \FFx$ be a NC, homogeneous degree
$d+1$, $\ell$-harmonic polynomial.  There does not exist a monomial $m$
with $\deg_1(m) < \ell$ and $\deg_2(m) < \ell$ since $d > 2(\ell-1)$, hence $p$
contains no terms in the first set of (\ref{eq:basis2varcase}). As in step 2,
    \begin{align}
    \notag p &= x_1 p_1 + x_2 p_2\\
    &= x_1 \sum_{r=0}^{\ell-1} \beta_{r,1} \Symm\left[\Hm\left[x_1^rx_2^{d-r}, x_1, \ell\right]\right]
    \notag\\
    &+ x_2 \sum_{r=0}^{\ell-1} \beta_{r,2}
\Symm\left[\Hm\left[x_1^rx_2^{d-r},x_1, \ell\right]\right],
    \notag
    \end{align}
with the additional requirement that
\begin{align}
 \label{eq:additionalReq}
&\dird\left[\sum_{r=0}^{\ell-1} \beta_{r,1} \Symm\left[\Hm\left[x_1^rx_2^{d-r},
x_1, \ell\right]\right],x_1^{\ell-1},h\right]\\
\notag
&+
\dird\left[\sum_{r=0}^{\ell-1} \beta_{r,2}
\Symm\left[\Hm\left[x_1^rx_2^{d-r},x_1, \ell\right]\right] , x_2^{\ell-1},h
\right] = 0.
\end{align}

    One sees that for each $r$,
    \[\dird\left[\Symm\left[\Hm\left[x_1^rx_2^{d-r},x_1, \ell\right]\right],x_1^{\ell-1},h\right] \]
    must be $\ell$-harmonic and  have terms with degree in $x_1$ equivalent to
    $r+1$ modulo $\ell$, and
    \[\dird\left[\Symm\left[\Hm\left[x_1^rx_2^{d-r},x_1, \ell\right]\right],x_2^{\ell-1},h\right] \]
    must be $\ell$-harmonic and have terms with
    degree in $x_1$ equivalent to $r$ modulo $\ell$.
    It is also clear by the definition of $\Hm\left[x_1^rx_2^{d-r},x_1,
\ell\right]$ that these derivatives are nonzero.
Since these derivatives are symmetrized and $\ell$-harmonic in $x_1$ and $x_2$
with $\deg(x_1), \deg(x_2)$ specified modulo $\ell$, Theorem \ref{thm:parta} and
Proposition \ref{thm:classificationcomm} imply what they are equal to, up to a
nonzero scalar multiple. Specifically,
    for $0 \leq r < \ell -1$,
    \begin{align}
    \dird\left[\Symm\left[\Hm\left[x_1^rx_2^{d-r},x_1, \ell\right]\right],x_1^{\ell-1},h\right]
    \\
    \notag=C_{r,1}\Symm\left[h^{\ell-1}\Hm\left[x_1^{r+1}x_2^{d-\ell-r},x_1, \ell\right]\right],
    \notag\end{align}
    and
    \begin{align}
    \dird\left[\Symm\left[\Hm\left[x_1^rx_2^{d-r},x_1, \ell\right]\right],x_2^{\ell-1},h\right]
    \\
    \notag=C_{r,2}\Symm\left[h^{\ell-1}\Hm\left[x_1^{r}x_2^{d+1-\ell-r},x_1, \ell\right]\right]
    \notag
    \end{align}
    for some nonzero scalars $C_{r,1}$ and $C_{r,2}$.  For $r = \ell -1$,
    \begin{align}
    \notag
    \dird\left[\Symm\left[\Hm\left[x_1^{\ell-1}x_2^{d+1-\ell},x_1, \ell\right]\right],x_1^{\ell-1},h\right]
    \\
    =C_{\ell-1,1}\Symm\left[h^{\ell-1}\Hm\left[x_2^{d+1-2\ell},x_1, \ell\right]\right],
    \notag
    \end{align}
    and
    \begin{align}
    \dird\left[\Symm\left[\Hm\left[x_1^{\ell-1}x_2^{d+1-\ell},x_1, \ell\right]\right],x_2^{\ell-1},h\right]
    \notag\\
    =C_{\ell-1,2}\Symm\left[h^{\ell-1}\Hm\left[x_1^{\ell-1}x_2^{d+2-2\ell},x_1, \ell\right]\right],
    \notag
    \end{align}
    some nonzero $C_{\ell-1,1}$ and $C_{\ell-1,2}$.
    Grouping terms with the same degree in $x_1$ and $x_2$ modulo
$\ell$ in (\ref{eq:additionalReq}) gives
    \begin{align}
    \notag
    C_{\ell-1,1}\beta_{\ell-1,1}\Symm[h^{\ell-1}\Hm[x_2^{d+1-\ell},x_1, \ell]]
    + C_{0,2}\beta_{0,2}\Symm[h^{\ell-1}\Hm[x_2^{d+1-\ell},x_1, \ell]] &=0\\
    C_{0,1}\beta_{0,1}\Symm[h^{\ell-1}\Hm[x_1x_2^{d-\ell},x_1, \ell]]
    + C_{1,2}\beta_{1,2}\Symm[h^{\ell-1}\Hm[x_1x_2^{d-\ell},x_1, \ell]] &=0
    \notag\\
    \vdots
    \notag\\
    \notag
C_{\ell-2,1}\beta_{\ell-2,1}\Symm[h^{\ell-1}\Hm[x_1^{\ell-1}x_2^{d+2-2\ell},x_1,
\ell]]&\\
    \notag
    +
C_{\ell-1,2}\beta_{\ell-1,2}\Symm[h^{\ell-1}\Hm[x_1^{\ell-1}x_2^{d+2-2\ell},x_1,
\ell]] &=0
    \end{align}
    Therefore the space of suitable $p_1$ and $p_2$ is $(\ell-1)$-dimensional,
    which is the same as the dimension of the span of $\mathcal{A}_{d+1}$.  Therefore $\mathcal{A}_{d+1}$ spans the set of NC $\ell$-harmonic, homogeneous degree $d+1$ polynomials.
\end{enumerate}

\end{proof}

\subsection{Breaking Up $\ell$-Harmonic Polynomials}
\label{sub:breakingup}

This subsection shows how to express NC $\ell$-harmonic polynomials
as a sum of manageable pieces.
The results of this section will be useful in proving Theorem \ref{thm:main}.

\subsubsection{Breaking up by Degree}
\label{subsub:oddsandevens}

Let $p \in \FFx$ be a NC polynomial. Fix a variable $x_i$.
One may express $p$ uniquely as
\[p = \sum_{j=0}^{finite} p_{j} \]
where each $p_j$ is either homogeneous of degree $j$ in $x_i$ or equal to $0$.
Given a positive integer $\ell$, one may group the $p_j$ together by
equivalence classes to get
\[p_{[k]} = \sum_{j \equiv {k}\bmod{\ell}} p_{j},\]
so that
\[p =  p_{[0]} + p_{[1]} + \ldots + p_{[\ell - 1]}.\]

\begin{prop}
\label{prop:oddsandevens}
Let $x = (x_1, \ldots, x_g)$, let $\ell$ be a
positive integer, and fix a variable $x_i$.
If $p \in \FFx$ is $\ell$-harmonic, then for each $k$, the polynomial $p_{[k]}$ is $\ell$-harmonic.
\end{prop}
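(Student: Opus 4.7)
The plan is to observe that the $\ell$-Laplacian respects the grading of $\FFx$ by the residue class modulo $\ell$ of the degree in $x_i$. Once this is established, the conclusion follows because the decomposition $\lap_{\ell}[p,h] = \sum_{k=0}^{\ell-1}\lap_{\ell}[p_{[k]},h]$ becomes a decomposition into pieces living in different residue classes, and a sum in a direct-sum decomposition vanishes only if each piece vanishes.

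First I would unpack the definition: by linearity of the directional derivative in the full symbol,
\[
\lap_{\ell}[p,h] \;=\; \dird\!\left[p, x_i^{\ell}, h\right] \;+\; \sum_{j \ne i} \dird\!\left[p, x_j^{\ell}, h\right].
\]
The key observation is how each summand interacts with the grading in $x_i$. Replacing $\ell$ instances of $x_i$ by $h$ lowers the degree in $x_i$ by exactly $\ell$; replacing $\ell$ instances of $x_j$ ($j\ne i$) by $h$ does not change the degree in $x_i$ at all. So if $p_j$ is homogeneous of degree $j$ in $x_i$, then $\dird[p_j, x_i^{\ell}, h]$ is homogeneous of degree $j-\ell$ in $x_i$, and $\dird[p_j, x_k^{\ell}, h]$ (for $k\ne i$) is homogeneous of degree $j$ in $x_i$. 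In either case, the output has degree in $x_i$ congruent to $j \pmod \ell$.

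Next I would apply this observation summand-by-summand. Writing $p = \sum_j p_j$ with $p_j$ homogeneous of degree $j$ in $x_i$, and using linearity,
\[
\lap_{\ell}[p_{[k]}, h] \;=\; \sum_{j \equiv k \bmod \ell} \lap_{\ell}[p_j, h].
\]
By the previous paragraph every term on the right hand side is a polynomial all of whose $x_i$-homogeneous components have degree congruent to $k \pmod \ell$. Consequently $\lap_{\ell}[p_{[k]}, h]$ itself lies in the $x_i$-graded component consisting of polynomials with every $x_i$-degree congruent to $k$ modulo $\ell$.

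Finally, since $\FFx$ (together with the indeterminate $h$, treated as an additional variable that does not interact with this grading) decomposes as a direct sum over residue classes of the $x_i$-degree modulo $\ell$, the identity
\[
0 \;=\; \lap_{\ell}[p, h] \;=\; \sum_{k=0}^{\ell-1} \lap_{\ell}[p_{[k]}, h]
\]
expresses $0$ as a sum of elements lying in distinct summands of this direct sum decomposition, forcing each $\lap_{\ell}[p_{[k]}, h] = 0$. There is no real obstacle here; the only point requiring care is to verify cleanly that $\dird[\cdot, x_j^{\ell}, h]$ shifts $\deg_i$ by $0$ when $j \ne i$ and by $-\ell$ when $j = i$, which is immediate from the substitution description of the directional derivative given in Section~\ref{sect:definitions}.
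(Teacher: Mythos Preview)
Your proposal is correct and follows essentially the same approach as the paper's proof: both observe that $\dird[\cdot,x_i^{\ell},h]$ shifts $\deg_i$ by $-\ell$ (or annihilates) while $\dird[\cdot,x_j^{\ell},h]$ for $j\neq i$ preserves $\deg_i$ (or annihilates), so $\lap_{\ell}[p_{[k]},h]$ lies in the residue class $k$ mod $\ell$ and the direct-sum decomposition forces each piece to vanish. The only minor stylistic difference is that the paper explicitly notes the ``or annihilates'' case, whereas you fold it implicitly into the direct-sum argument.
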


\begin{proof}
Let $p \in \FFx$.
Given a NC polynomial which is homogeneous in $x_i$,
taking the derivative of a with respect to $x_i^{\ell}$ either annihilates it or decreases its degree in $x_i$ by $\ell$, whereas taking the derivative of that polynomial with respect to $x_j^{\ell}$, for $j \neq i$, either annihilates it or maintains its degree in $x_i$.
Therefore, $\lap_{\ell}[p_{[k]},h]$ is either zero or a sum of terms whose
degree in $x_i$ equals $k$ modulo $\ell$.
Since $\lap_{\ell}[p,h] = \lap_{\ell}\left[\sum_{k=0}^{\ell-1} p_{[k]},h\right]$, for each $k$ the only possible terms of $\lap[p,h]$ which have degree in $x_i$ equivalent to $k$ mod $\ell$ are the terms of $\lap_{\ell}[p_{[k]},h]$.
If $p$ is $\ell$-harmonic, then $\lap_{\ell}[p_{[k]}, h] = 0$ for each $k$.
\end{proof}

\subsubsection{Factoring out $x_i$}

\begin{lemma}
\label{lemma:neighbor}Fix a variable $x_i$ and fix positive integers $d$ and $d_i$, with $d_i \leq d$.
There exist $\sigma_1, \ldots, \sigma_N \in S_d$ (not necessarily unique) such that
the set
\begin{equation}
\label{eq:basisLemmaNeighbor}
\mathcal{B} = \{\sigma_j[x_i^{d_i}m]:\ j \in \{1, \ldots, N\},\ m \in \mathcal{M},\ |m| = d - d_i,\ \deg_i(m) = 0 \}
\end{equation}
is a basis for the set of NC, homogeneous degree $d$ polynomials which are
homogeneous of degree $d_i$ in $x_i$.
\end{lemma}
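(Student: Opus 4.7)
The plan is to exhibit $\mathcal{B}$ as a reparametrization of the natural monomial basis. A basis for the space in question consists of all monomials $\mu = x_{a_1} \cdots x_{a_d}$ of length $d$ with $\deg_i(\mu) = d_i$. Each such $\mu$ is uniquely determined by a pair $(S, m)$, where $S := \{k : a_k = i\} \subseteq \{1, \ldots, d\}$ records the positions of $x_i$ in $\mu$ (so $|S| = d_i$) and $m \in \mathcal{M}$ is the monomial of length $d - d_i$ with $\deg_i(m) = 0$ obtained by deleting the $x_i$'s from $\mu$ while preserving the relative order of the remaining letters. It therefore suffices to realize each $(S,m)$ pair by a single element of $\mathcal{B}$.

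I would set $N := \binom{d}{d_i}$, fix an enumeration $S_1, \ldots, S_N$ of the $d_i$-element subsets of $\{1, \ldots, d\}$, and for each $j$ define $\sigma_j \in S_d$ by prescribing its inverse: $\sigma_j^{-1}$ maps $\{1, \ldots, d_i\}$ order-preservingly onto $S_j$ and maps $\{d_i + 1, \ldots, d\}$ order-preservingly onto $\{1, \ldots, d\} \setminus S_j$. The main verification is then a direct unpacking of (\ref{def:action}): for any $m = x_{b_1} \cdots x_{b_{d - d_i}}$ with $\deg_i(m) = 0$, the $k$-th letter of $\sigma_j[x_i^{d_i} m]$ is $x_i$ exactly when $\sigma_j(k) \in \{1, \ldots, d_i\}$, that is, when $k \in S_j$; in the remaining positions, read left to right, one recovers $x_{b_1}, \ldots, x_{b_{d - d_i}}$ thanks to the order-preservation built into $\sigma_j^{-1}$ on $\{d_i + 1, \ldots, d\}$. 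Thus $(j, m) \mapsto \sigma_j[x_i^{d_i} m]$ is a bijection from the index set of $\mathcal{B}$ onto the set of all monomials of degree $d$ with $\deg_i = d_i$, and $\mathcal{B}$ coincides with the standard monomial basis.

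The only delicate step is distinguishing $\sigma_j$ from $\sigma_j^{-1}$ in this bookkeeping: the action $\sigma[x_{\alpha_1} \cdots x_{\alpha_d}] = x_{\alpha_{\sigma(1)}} \cdots x_{\alpha_{\sigma(d)}}$ forces the positions of $x_i$ in $\sigma_j[x_i^{d_i} m]$ to be read off from $\sigma_j^{-1}(\{1, \ldots, d_i\})$, not from $\sigma_j(\{1, \ldots, d_i\})$. Once this convention is pinned down, the argument reduces to an elementary counting/bijection check and no further obstacle remains.
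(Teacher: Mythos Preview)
Your argument is correct and follows essentially the same route as the paper: both index the permutations by the $\binom{d}{d_i}$ possible position-sets of $x_i$ and then check that $(j,m)\mapsto\sigma_j[x_i^{d_i}m]$ bijects onto the monomial basis. Your treatment is in fact tidier on the one genuinely delicate point you flag---reading the $x_i$-positions off $\sigma_j^{-1}(\{1,\dots,d_i\})$ rather than $\sigma_j(\{1,\dots,d_i\})$---and the added order-preserving choice makes the bijection explicit rather than existential.
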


\begin{proof}Let $I_1, \ldots, I_N$ be the distinct subsets of
of $\{1, \ldots, d\}$ of size $d_i$. Choose $\sigma_1, \ldots, \sigma_N \in S_d$
such that for each $j$, the set
$\{\sigma_j(1), \ldots, \sigma_j(d_i)\}$ is equal to $I_j.$
Let $m \in \mathcal{M}$ be the monomial
\[m = x_{a_1} \ldots x_{a_d},\]
with $\deg_i(m) = d_i$. Let the set
$\{k:\ a_k = i\}$ be equal to $I_j$ for some $j$.
Then $\sigma_j[m]$ is equal to
\begin{align}
\sigma_j[m] &= x_{a_{\sigma_j(1)}}\ldots x_{a_{\sigma_j(d)}}\\
\notag &= x_i^{d_i}x_{a_{\sigma_j(d_i+1)}}\ldots x_{a_{\sigma_j(d)}},
\end{align}
since
\[\{\sigma_j(1), \ldots, \sigma_j(d_i)\} = I_j = \{k:\ a_k = i\}.\]
Let $\overline{m}$ be equal to
\[\overline{m} =  x_{a_{\sigma(d_i+1)}}\ldots x_{a_{\sigma(d)}}\]
so that
\[m = \sigma_j[x_i^{d_i}\overline{m}].\]
Therefore, the set $\mathcal{B}$ from (\ref{eq:basisLemmaNeighbor})
spans the set of homogeneous degree $d$ polynomials which are
homogeneous of degree $d_i$ in $x_i$.

To prove linear independence, it suffices to show that each monomial
$\sigma_j[x_i^{d_i}m]$ is distinct.
Suppose
\[\sigma_{j_1}[x_i^{d_i}m_1] = \sigma_{j_2}[x_i^{d_i}m_2]. \]
The monomial $\sigma_{j_1}[x_i^{d_i}m_1]$ has an $x_i$ at
each entry corresponding to an element of $I_{j_1}$, and the monomial $\sigma_{j_2}[x_i^{d_i}m_2]$
has an $x_i$ at each entry corresponding to an element of $I_{j_2}$. Therefore $j_1 = j_2$.
Further,
\[ x_i^{d_i}m_1 = x_i^{d_i}m_2,\]
which implies that $m_1 = m_2$.
\end{proof}

\begin{prop}
\label{thm:neighbor}
Fix a variable $x_i$, and fix positive integers $d$ and $d_i$, with $d_i \leq d$.
Let $\sigma_1, \ldots, \sigma_N \in S_d$ be
a set of permutations satisfying the conclusions of Lemma \ref{lemma:neighbor}.

\begin{enumerate}
\item
Fix a variable $x_i$.
Let $p\in \FFx$ be homogeneous of
degree $d$ with $\deg_i(p) = d_i$.
The polynomial $p$ may be expressed uniquely as
\begin{equation}
\label{eq:neighboreq}
p = \sum_{j=1}^N \sigma_j[x_i^{d_i}p_j] + r,
\end{equation}
such that the following hold.
\begin{itemize}
\item each polynomial $p_j$ is homogeneous of degree $d - d_i$ with $\deg_i(p_j) = 0$;

\item the polynomial $r$ is homogeneous of degree $d$ with $deg_i(r) < d_i$.
\end{itemize}
\item
If $p$ is $\ell$-harmonic, then each $p_j$ in
(\ref{eq:neighboreq}) is also $\ell$-harmonic.
\end{enumerate}

\end{prop}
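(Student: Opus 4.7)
The plan for part (1) is a direct application of Lemma \ref{lemma:neighbor}. I would first split $p$ by degree in $x_i$: since $\deg_i(p) = d_i$, one may uniquely write $p = p' + r$, where $p'$ is the sum of all monomials of $p$ with degree exactly $d_i$ in $x_i$ and $r$ collects the rest (so $\deg_i(r) < d_i$). The polynomial $p'$ is homogeneous of degree $d$ and homogeneous of degree $d_i$ in $x_i$, so Lemma \ref{lemma:neighbor} expands it uniquely in the basis (\ref{eq:basisLemmaNeighbor}) as $p' = \sum_{j=1}^N \sum_m c_{j,m}\, \sigma_j[x_i^{d_i} m]$. Collecting terms sharing a common $\sigma_j$ and setting $p_j := \sum_m c_{j,m}\, m$ yields the decomposition (\ref{eq:neighboreq}), with uniqueness inherited from the linear independence asserted in Lemma \ref{lemma:neighbor}.

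For part (2), the plan is to apply $\lap_\ell$ to (\ref{eq:neighboreq}) and separate contributions by degree in $x_i$. By Proposition \ref{thm:basicactionprop} the permutation commutes with $\dird$, and by the independent-product rule (Proposition \ref{prop:ProdSuff} item (\ref{item:ellprodrule})), since $x_i^{d_i}$ and $p_j$ depend on disjoint variables one obtains
\begin{align*}
\lap_\ell[\sigma_j[x_i^{d_i} p_j], h]
&= \sigma_j\!\left[\sum_{k=1}^g \bigl(\dird[x_i^{d_i}, x_k^\ell, h]\, p_j + x_i^{d_i}\, \dird[p_j, x_k^\ell, h]\bigr)\right]\\
&= \sigma_j[\dird[x_i^{d_i}, x_i^\ell, h]\, p_j] + \sigma_j[x_i^{d_i}\, \lap_\ell[p_j, h]],
\end{align*}
using $\dird[x_i^{d_i}, x_k^\ell, h] = 0$ for $k \neq i$ and $\dird[p_j, x_i^\ell, h] = 0$. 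The first summand on the right has degree $d_i - \ell$ in $x_i$, while the second has degree exactly $d_i$. Moreover $\deg_i(\lap_\ell[r,h]) \leq \deg_i(r) < d_i$, because differentiating with respect to $x_k^\ell$ for $k \neq i$ preserves $\deg_i$ and differentiating with respect to $x_i^\ell$ strictly decreases it. Setting $\lap_\ell[p, h] = 0$ and extracting the part of degree exactly $d_i$ in $x_i$ therefore yields
\[\sum_{j=1}^N \sigma_j[x_i^{d_i}\, \lap_\ell[p_j, h]] = 0.\]

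The main obstacle is passing from this single identity to the conclusion that each $\lap_\ell[p_j, h]$ vanishes individually. My plan is to treat $h$ as a fresh variable $x_{g+1}$, so that $\lap_\ell[p_j, h]$ becomes a polynomial of degree $d - d_i$ in the enlarged variable set $(x_1, \ldots, \hat{x_i}, \ldots, x_g, h)$ with $\deg_i = 0$. The proof of Lemma \ref{lemma:neighbor} then applies verbatim in this enlarged alphabet to show that $\{\sigma_j[x_i^{d_i} m]\}$, as $j$ ranges over $\{1, \ldots, N\}$ and $m$ over monomials of degree $d - d_i$ in $(x_1, \ldots, \hat{x_i}, \ldots, x_g, h)$ with $\deg_i(m) = 0$, is a set of distinct monomials and hence linearly independent. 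Expanding each $\lap_\ell[p_j, h]$ in this extended basis and invoking linear independence forces every coefficient to vanish, so $\lap_\ell[p_j, h] = 0$ for each $j$, as desired.
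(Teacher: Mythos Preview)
Your proposal is correct and follows essentially the same route as the paper: split off the degree-$d_i$ part in $x_i$, expand it via Lemma~\ref{lemma:neighbor} to get part (1), then apply $\lap_\ell$, isolate the contribution of degree exactly $d_i$ in $x_i$ to obtain $\sum_j \sigma_j[x_i^{d_i}\lap_\ell[p_j,h]]=0$, and invoke the linear independence of Lemma~\ref{lemma:neighbor} (with $h$ adjoined as an extra variable) to conclude each $\lap_\ell[p_j,h]=0$. Your write-up is in fact a bit more explicit than the paper's in citing Propositions~\ref{thm:basicactionprop} and~\ref{prop:ProdSuff} and in spelling out the enlarged-alphabet application of Lemma~\ref{lemma:neighbor}, but the substance is the same.
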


\begin{proof}

\begin{enumerate}
\item
If $p$ is degree $d_i$ in $x_i$ then let $p = q + r$,
where $q$ is homogeneous of degree $d_i$ in $x_i$, and where $\deg_i(r) < d$.
By Lemma \ref{lemma:neighbor},
$q$ is uniquely expressed as
\[q = \sum_{j=1}^N \sum_{m \in \mathcal{M}} A_{m,j}\sigma_j[x_i^{d_i}m], \]
for some scalars $A_{m,j} \in \FF$.
For each $j$, let $p_j = \displaystyle\sum_{m \in \mathcal{M}} A_{m,j}m$. Then
(\ref{eq:neighboreq}) holds.

\item
Suppose $\lap_{\ell}[p,h] = 0$.
Applying $\lap_{\ell}$ to (\ref{eq:neighboreq})
yields
\begin{align}
\label{eq:lapofneighborsum}\lap_{\ell}[p,h]  &= \lap_{\ell}[r,h]  + \dird\left[\sum_{j=1}^N \sigma_j[x_i^{d_i}p_{j}], x_i^{\ell}, h]\right]
\\
\label{eq:thirdterm}
&+ \dird\left[\sum_{j=1}^N \sigma_j [x_i^{d_i}p_j], \sum_{k \neq i} x_k^{\ell}, h\right] = 0.
\end{align}
The first two terms, line (\ref{eq:lapofneighborsum}), have degree less than $d_i$ in $x_i$
whereas the third term, line
(\ref{eq:thirdterm}),
is either $0$ or homogeneous of degree $d_i$ in $x_i$. Therefore (\ref{eq:thirdterm}) must be $0$.

Examining (\ref{eq:thirdterm}) more closely shows that for each $j$,
\begin{align}
\label{eq:EachPieceDiff}
\dird\left[\sigma_j [x_i^{d_i}p_j], \sum_{k \neq i} x_k^{\ell}, h\right] &= \sigma_j\left[x_i^{d_i}\dird\left[p_j, \sum_{k \neq i} x_k^{\ell}, h\right]\right]\\
& = \sigma_j\left[x_i^{d_i}\lap_{\ell}[p_j, h]\right].\notag
\end{align}
Therefore,
\begin{equation}
\label{eq:SimplifNUnique}\sum_{j=1}^N \sigma_j\left[x_i^{d_i}\lap_{\ell}[p_j, h]\right] = 0.
\end{equation}
By Lemma \ref{lemma:neighbor}, the equation
(\ref{eq:SimplifNUnique}) is a unique representation of the zero polynomial. Therefore,
for each $j$, the expression $\lap_{\ell}[p_j, h]$ is equal to zero, or in other words,
each $p_j$ is $\ell$-harmonic.
\end{enumerate}
\end{proof}

\subsection{"Fully Degree $\ell$" Polynomials}
\label{sect:fullydeg2}

A key step to proving Theorem \ref{thm:main} is to prove it
in the special case of polynomials $p \in \FFx$ which are sums of terms which,
for each variable $x_i$,
are homogeneous of either degree $0$
or degree $\ell$ in $x_i$.
Such polynomials are said to be \textbf{fully degree $\ell$} \index{fully degree
$\ell$}.
An equivalent definition is to say that a polynomial is fully degree $\ell$ if it is a linear combination
of permutations of monomials of the form $x_{a_1}^{\ell} \ldots x_{a_d}^{\ell}$,
where $a_1, \ldots, a_d$ are distinct positive integers.

\begin{definition}
Fix positive integers $\ell$ and $d$.
 Let $P_{\ell,d} \subset S_{\ell d}$ \index{Pld@$P_{\ell,d}$} be the
set of permutations defined by
\[ P_{\ell, d} := \left\{ \sigma \in S_{\ell d}:\ \exists
\tau \in S_d,\ \forall (a_1, \ldots, a_d) \in
\mathbb{Z}^d,\ \sigma\left[x_{a_1}^{\ell} \ldots
x_{a_d}^{\ell}\right] = x_{a_{\tau(1)}}^{\ell} \ldots
x_{a_{\tau(d)}}^{\ell}\right\}. \]
\end{definition}

\begin{lemma}
Fix positive integers $\ell$ and $d$.
 The set $P_{\ell,d}$ is a subgroup of $S_{\ell d}$.
\end{lemma}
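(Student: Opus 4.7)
The plan is to verify the three subgroup axioms directly by exhibiting, for each relevant $\sigma \in S_{\ell d}$, a witnessing $\tau \in S_d$ of the defining property. Since the defining condition of $P_{\ell,d}$ says that $\sigma$ permutes the length-$\ell$ blocks of any monomial $x_{a_1}^{\ell}\cdots x_{a_d}^{\ell}$ as whole units, the proof is really a bookkeeping exercise showing that the map $\sigma \mapsto \tau$ respects products and inverses.

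First I would handle the identity: taking $\tau = \mathrm{id}_d$ shows $\mathrm{id}_{\ell d} \in P_{\ell,d}$. Next, for closure under composition, suppose $\sigma_1,\sigma_2 \in P_{\ell,d}$ with witnesses $\tau_1,\tau_2 \in S_d$. I would compute
\[
(\sigma_1\sigma_2)\!\left[x_{a_1}^{\ell}\cdots x_{a_d}^{\ell}\right]
= \sigma_1\!\left[x_{a_{\tau_2(1)}}^{\ell}\cdots x_{a_{\tau_2(d)}}^{\ell}\right],
\]
and, setting $b_i := a_{\tau_2(i)}$, apply $\sigma_1$'s defining property to the monomial $x_{b_1}^{\ell}\cdots x_{b_d}^{\ell}$ to obtain $x_{b_{\tau_1(1)}}^{\ell}\cdots x_{b_{\tau_1(d)}}^{\ell} = x_{a_{\tau_2\tau_1(1)}}^{\ell}\cdots x_{a_{\tau_2\tau_1(d)}}^{\ell}$. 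Hence $\tau_2\tau_1 \in S_d$ witnesses $\sigma_1\sigma_2 \in P_{\ell,d}$.

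For closure under inverses, given $\sigma \in P_{\ell,d}$ with witness $\tau \in S_d$, I would claim that $\tau^{-1}$ witnesses $\sigma^{-1}$. To verify this, apply $\sigma$ to the conjectured identity $\sigma^{-1}\!\left[x_{a_1}^{\ell}\cdots x_{a_d}^{\ell}\right] = x_{a_{\tau^{-1}(1)}}^{\ell}\cdots x_{a_{\tau^{-1}(d)}}^{\ell}$; setting $c_i := a_{\tau^{-1}(i)}$ and using the witness property of $\sigma$ on $x_{c_1}^{\ell}\cdots x_{c_d}^{\ell}$ gives $x_{c_{\tau(1)}}^{\ell}\cdots x_{c_{\tau(d)}}^{\ell} = x_{a_1}^{\ell}\cdots x_{a_d}^{\ell}$, confirming the claim.

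There is no genuine obstacle here: the only subtle point is choosing the correct order of composition ($\tau_2\tau_1$ rather than $\tau_1\tau_2$), which falls out unambiguously once one writes the indices carefully. In fact, the argument shows more: the assignment $\sigma \mapsto \tau$ is a well-defined group antihomomorphism $P_{\ell,d} \to S_d$ (well-defined because the witness $\tau$ is uniquely determined, as seen by plugging in a tuple $(a_1,\ldots,a_d)$ with distinct entries), so $P_{\ell,d}$ is the preimage of a subgroup under a homomorphism, making the subgroup conclusion essentially automatic.
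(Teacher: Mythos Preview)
Your argument is correct and follows essentially the same direct-verification strategy as the paper: you check the three subgroup axioms by manipulating the defining monomial identity, while the paper uses the one-step criterion $\sigma^{-1}\omega \in P_{\ell,d}$ and phrases things in terms of how $\sigma$ permutes the $\ell$-blocks of indices $\{1+\ell(i-1),\ldots,\ell i\}$. These are the same idea in two notations.

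One small correction to your bonus remark. Under the paper's convention $\sigma[x_{\alpha_1}\cdots x_{\alpha_n}] = x_{\alpha_{\sigma(1)}}\cdots x_{\alpha_{\sigma(n)}}$, the operation $m\mapsto\sigma[m]$ is a \emph{right} action, so $(\sigma_1\sigma_2)[m]=\sigma_2\bigl[\sigma_1[m]\bigr]$, not $\sigma_1\bigl[\sigma_2[m]\bigr]$. Tracing your computation with this order gives witness $\tau_1\tau_2$ for $\sigma_1\sigma_2$, so the assignment $\sigma\mapsto\tau$ is a genuine homomorphism rather than an antihomomorphism. This slip is harmless for the subgroup claim itself, since either $\tau_1\tau_2$ or $\tau_2\tau_1$ lies in $S_d$; it only affects your closing observation.
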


\begin{proof}
If $\sigma, \omega \in P_{\ell, d}$,
then consider $\sigma^{-1} \omega$.
We have for all $(a_1, \ldots, a_d) \in \mathbb{Z}^d$
\[ \omega\left[x_{a_1}^{\ell} \ldots
x_{a_d}^{\ell}\right] = x_{a_{\nu(1)}}^{\ell} \ldots x_{a_{\nu(d)}}^{\ell}\]
for some fixed $\nu \in S_d$ and 
  \[ \sigma\left[x_{a_1}^{\ell} \ldots
x_{a_d}^{\ell}\right] = x_{a_{\tau(1)}}^{\ell} \ldots x_{a_{\tau(d)}}^{\ell}\]
for some fixed $\tau \in S_d$.

This implies that for each $i$, $\sigma$ sends $1 + \ell(\tau(i) - 1), \ldots,
\ell + \ell(\tau(i) - 1)$ to  $1 + \ell(i - 1), \ldots,
\ell + \ell(i- 1)$ in some order.

Therefore,  for each $i$, $\sigma^{-1}$ sends $1 + \ell(i - 1), \ldots,
\ell + \ell(i- 1)$ to $1 + \ell(\tau(i) - 1),
\ldots,
\ell + \ell(\tau(i) - 1)$ in some order.

Therefore,
 for each $i$, $\sigma^{-1}$ sends $1 + \ell(\tau^{-1}(i) - 1), \ldots,
\ell + \ell(\tau^{-1}(i)- 1)$ to $1 + \ell(i - 1),
\ldots,
\ell + \ell(i - 1)$ in some order.

Therefore, for each $i$, $\sigma^{-1}\omega $ sends $1 + \ell(\nu(\tau^{-1}(i))
- 1), \ldots,
\ell + \ell(\nu(\tau^{-1}(i))- 1)$ to $1 + \ell(i - 1),
\ldots,
\ell + \ell(i - 1)$ in some order.

Therefore,
\[\sigma^{-1} \omega\left[x_{a_1}^{\ell} \ldots
x_{a_d}^{\ell}\right] = x_{a_{\nu(\tau^{-1}(1))}}^{\ell} \ldots
x_{a_{\nu(\tau^{-1}(d))}}^{\ell}. \]
Therefore $\sigma^{-1} \omega \in P_{\ell,d}$.
\end{proof}

\begin{definition}
Let $d,g$ be positive integers with $d \leq g$.
 Let $S_{d,g} \subset S_{g}$ \index{Sdg@$S_{d,g}$} be the set
\[ S_{d,g}:= \{ \sigma \in S_g:\ \sigma(i) = i\ \forall\ 1 \leq i \leq d\}.\]
\end{definition}

\begin{lemma}
Let $d,g$ be positive integers with $d \leq g$.
Then $S_{d,g}$ is a subgroup of $S_g$.
\end{lemma}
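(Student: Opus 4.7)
The plan is to verify the three subgroup axioms directly from the defining condition that elements of $S_{d,g}$ fix each of $1,2,\ldots,d$. The verification is entirely routine since the defining condition is preserved under composition and inversion of bijections.

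First I would check that $S_{d,g}$ is nonempty by observing that the identity permutation $e \in S_g$ satisfies $e(i) = i$ for every $i$, and in particular for $1 \leq i \leq d$, so $e \in S_{d,g}$. Next, for closure under composition, suppose $\sigma, \tau \in S_{d,g}$ and fix $i$ with $1 \leq i \leq d$. Then $\tau(i) = i$, so $(\sigma\tau)(i) = \sigma(\tau(i)) = \sigma(i) = i$, which shows $\sigma\tau \in S_{d,g}$. Finally, for closure under inverses, suppose $\sigma \in S_{d,g}$ and $1 \leq i \leq d$. Then $\sigma(i) = i$, and applying $\sigma^{-1}$ to both sides gives $\sigma^{-1}(i) = i$, so $\sigma^{-1} \in S_{d,g}$.

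There is no real obstacle here; the lemma is a textbook exercise, and indeed $S_{d,g}$ may be recognized as the pointwise stabilizer of $\{1,\ldots,d\}$ under the natural action of $S_g$ on $\{1,\ldots,g\}$, which is always a subgroup. Equivalently, $S_{d,g}$ is canonically isomorphic to $S_{g-d}$ via restriction to the complementary set $\{d+1,\ldots,g\}$, but this stronger observation is not needed for the statement. A short paragraph executing the three checks above suffices.
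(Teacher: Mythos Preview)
Your proposal is correct and essentially the same as the paper's proof: both verify directly from the defining condition that $S_{d,g}$ is closed under the relevant group operations. The only cosmetic difference is that the paper uses the one-step subgroup test (showing $\sigma^{-1}\tau \in S_{d,g}$ for any $\sigma,\tau \in S_{d,g}$) rather than checking identity, closure, and inverses separately.
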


\begin{proof}
 Let $\sigma, \tau \in S_{d,g}$.
Then $\sigma(i) = \tau(i) = i$ for $1 \leq i \leq d$.
Consequently, $\sigma^{-1}(i) = i$ for $1 \leq i \leq d$.
Therefore, for $1 \leq i \leq d$,
\[ \sigma^{-1} \tau(i) = i.\]
Therefore $\sigma^{-1} \tau \in S_{d,g}$.
\end{proof}

\begin{lemma}
 \label{lemma:fully}
Let $x = (x_1, \ldots, x_g)$, and fix integers $\ell$ and $d$ with $d \leq g$.
Let $\sigma_1 P_{\ell, d}, \ldots, \sigma_M P_{\ell, d}$ be the distinct left
cosets of $P_{\ell, d}$ in $S_{\ell d}$, and let $\tau_1S_{d, g}, \ldots,
\tau_NS_{d, g}$ be the
distinct left cosets of $S_{d, g}$ in $S_g$.
Let $p \in \FFx$ be a homogeneous degree $\ell d$, fully degree $\ell$
polynomial.
Then $p$ can be uniquely expressed as
\begin{equation}
 \label{eq:formOfFullyDegEll}
p = \sum_{i=1}^M \sigma_i \left[  \sum_{j=1}^N A_{i,j}
x_{\tau_j(1)}^{\ell} \ldots x_{\tau_j(d)}^{\ell} \right].
\end{equation}
for some scalars $A_{i,j}$. Further, if $p$ is $\ell$-harmonic, then for each
$i$ the polynomial
\[  \sum_{j=1}^N A_{i,j}x_{\tau_j(1)}^{\ell}
\ldots x_{\tau_j(d)}^{\ell}\]
is also $\ell$-harmonic.
\end{lemma}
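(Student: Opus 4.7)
The plan is to attack the lemma in two stages. First, I will establish (\ref{eq:formOfFullyDegEll}) together with its uniqueness by showing that the $MN$ monomials $\sigma_i[x_{\tau_j(1)}^{\ell} \ldots x_{\tau_j(d)}^{\ell}]$, indexed by $1 \leq i \leq M$ and $1 \leq j \leq N$, form a basis for the space of fully degree $\ell$, homogeneous degree $\ell d$ polynomials in $\FFx$. Second, using Proposition \ref{thm:basicactionprop} to commute $\lap_{\ell}$ with the permutation action together with a substitution of a fresh variable for $h$, I will transfer $\ell$-harmonicity from $p$ to each inner polynomial $q_i := \sum_{j=1}^N A_{i,j} x_{\tau_j(1)}^{\ell} \ldots x_{\tau_j(d)}^{\ell}$.

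For the basis claim, the total number of fully degree $\ell$, degree $\ell d$ monomials in $g$ variables is $\binom{g}{d} \cdot (\ell d)!/(\ell!)^d$, which equals $MN = \frac{(\ell d)!}{d! (\ell!)^d} \cdot \frac{g!}{(g-d)!}$, so it suffices to verify that the $MN$ candidate monomials are pairwise distinct. To do so, I would suppose $\sigma_{i_1}[m_{j_1}] = \sigma_{i_2}[m_{j_2}]$ with $m_j := x_{\tau_j(1)}^{\ell} \ldots x_{\tau_j(d)}^{\ell}$. Comparing the variable sets on both sides forces $\{\tau_{j_1}(1), \ldots, \tau_{j_1}(d)\} = \{\tau_{j_2}(1), \ldots, \tau_{j_2}(d)\}$, so $m_{j_2}$ is a block permutation of $m_{j_1}$; by the very definition of $P_{\ell, d}$ there exists $\rho \in P_{\ell, d}$ with $\rho[m_{j_1}] = m_{j_2}$, giving
\[\sigma_{i_1}[m_{j_1}] \;=\; \sigma_{i_2}[\rho[m_{j_1}]] \;=\; (\sigma_{i_2}\rho)[m_{j_1}].\]
The stabilizer of $m_{j_1}$ in $S_{\ell d}$ consists only of intra-block permutations and is therefore contained in $P_{\ell, d}$, so this yields $\sigma_{i_1} \in \sigma_{i_2} \rho \cdot \mathrm{Stab}(m_{j_1}) \subset \sigma_{i_2} P_{\ell, d}$, forcing $i_1 = i_2$. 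Injectivity of $\sigma_{i_1}[\,\cdot\,]$ then gives $m_{j_1} = m_{j_2}$ and hence $j_1 = j_2$, proving distinctness.

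For the $\ell$-harmonicity transfer, Proposition \ref{thm:basicactionprop} gives $\lap_{\ell}[\sigma_i[q_i], h] = \sigma_i[\lap_{\ell}[q_i, h]]$, so $\lap_{\ell}[p, h] = 0$ becomes $\sum_{i=1}^M \sigma_i[\lap_{\ell}[q_i, h]] = 0$. Each $\lap_{\ell}[q_i, h]$ is a sum of monomials of the form $x_{\tau_j(1)}^{\ell} \ldots h^{\ell} \ldots x_{\tau_j(d)}^{\ell}$, with $h^{\ell}$ replacing one variable block. I would then substitute a fresh variable $x_{g+1}$ for $h$ throughout: the result is an identity $\sum_i \sigma_i[L_i] = 0$ where each $L_i$ is a fully degree $\ell$ polynomial of degree $\ell d$ in the $g+1$ variables $x_1, \ldots, x_{g+1}$. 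Reapplying the uniqueness from the first stage to the $(g+1)$-variable setting (with $\tau$ now ranging over coset representatives of $S_{d, g+1}$ in $S_{g+1}$) forces each $L_i = 0$, whence $\lap_{\ell}[q_i, h] = 0$ by reversing the substitution.

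The main obstacle will be the distinctness part of the first stage: one must verify the containment $\mathrm{Stab}_{S_{\ell d}}(m_j) \subset P_{\ell, d}$ and carefully handle the composition $(\sigma_{i_2} \rho)[m_{j_1}]$ in order to isolate which left coset $\sigma_{i_1}$ belongs to. Once the basis property is in hand, the harmonicity transfer follows essentially formally from the commutation supplied by Proposition \ref{thm:basicactionprop} combined with the substitution trick.
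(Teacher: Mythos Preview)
Your proposal is correct and follows essentially the same route as the paper: the paper also proves that the monomials $\sigma_i[x_{\tau_j(1)}^{\ell}\cdots x_{\tau_j(d)}^{\ell}]$ are pairwise distinct (via the same coset argument you sketch) and then, for the $\ell$-harmonicity, computes $\lap_{\ell}[p,h]=\sum_i \sigma_i[\lap_{\ell}[q_i,h]]$ and invokes the uniqueness of the representation with $h$ playing the role of $x_{g+1}$. Your added counting step, matching $MN$ with $\binom{g}{d}\cdot (\ell d)!/(\ell!)^d$, is a nice tidying of the existence half that the paper handles by an explicit reduction to coset representatives, but the substance is the same.
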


\begin{proof}

\noindent \textbf{Existence of representation}. By definition, a homogeneous
degree $\ell d$, fully degree $\ell$ polynomial is
a linear combination
of monomials of the form
\[ \sigma\left[ x_{\tau(1)}^{\ell} \ldots x_{\tau(d)}^{\ell} \right]\]
for some $\sigma \in S_{\ell d}$ and $\tau \in S_g$.
Such a monomial is therefore of the form
\[ \sigma_i \tilde{\sigma} \left[ x_{\tau (1)}^{\ell} \ldots
x_{\tau (d)}^{\ell} \right]\]
for some $\sigma_i$ and some $\tilde{\sigma} \in P_{\ell,d}$.
Since $\tilde{\sigma} \in P_{\ell, d}$, it must be that
\[\tilde{\sigma} \left[ x_{\tau (1)}^{\ell} \ldots
x_{\tau (d)}^{\ell} \right] = x_{\tau^\prime (1)}^{\ell}
\ldots
x_{\tau^\prime(d)}^{\ell}\]
for some $\tau^\prime \in S_d$.
Further, we can take $\tau^\prime$ to be equal to
$\tau_j \tilde{\tau}$, for some $\tau_j$ and some $\tilde{\tau}
\in S_{d,g}$. Since $\tilde{\tau}(i) = i$ for $1 \leq i \leq d$, we have
\[\sigma\left[ x_{\tau(1)}^{\ell} \ldots x_{\tau(d)}^{\ell} \right]
= \sigma_i\left[ x_{\tau_j(1)}^{\ell} \ldots x_{\tau_j(d)}^{\ell} \right]. \]
Therefore every homogeneous degree $\ell d$, fully degree $\ell$ polynomial is
of the form (\ref{eq:formOfFullyDegEll}).

\noindent \textbf{Uniqueness of representation}. To see uniqueness, suppose
\[ \sigma_{i_1}\left[ x_{\tau_{j_1}(1)}^{\ell} \ldots x_{\tau_{j_1}(d)}^{\ell}
\right] = \sigma_{i_2}\left[ x_{\tau_{j_2}(1)}^{\ell} \ldots
x_{\tau_{j_2}(d)}^{\ell}
\right].\]
Then we have
\[  x_{\tau_{j_1}(1)}^{\ell} \ldots x_{\tau_{j_1}(d)}^{\ell}
= \sigma_{i_1}^{-1} \sigma_{i_2}\left[ x_{\tau_{j_2}(1)}^{\ell} \ldots
x_{\tau_{j_2}(d)}^{\ell}
\right].\]
In this case, the indices $\tau_{j_1}(1), \ldots, \tau_{j_1}(d)$ must be some
permutation of the indicies $\tau_{j_2}(1), \ldots, \tau_{j_2}(d)$ by
definition of the permutation operation on NC monomials.
This implies that $\sigma_{i_1}^{-1} \sigma_{i_2} \in P_{\ell,d}$, which
implies
that $\sigma_{i_1}$ and $\sigma_{i_2}$ are in the same left coset of $
P_{\ell,d}$, which implies, since the $\sigma_i$ were chosen to be in distinct
cosets, that $i_1 = i_2$.
So now we have
\[ x_{\tau_{j_1}(1)}^{\ell} \ldots x_{\tau_{j_1}(d)}^{\ell} =
x_{\tau_{j_2}(1)}^{\ell} \ldots
x_{\tau_{j_2}(d)}. \]
This implies that $\tau_{j_1}(i) = \tau_{j_2}(i)$ for $1 \leq i \leq d$.
Therefore $\tau_{j_1}^{-1} \tau_{j_2}(i) = i$ for  $1 \leq i \leq d$.
Therefore $\tau_{j_1}$ and $\tau_{j_2}$ are in the same left coset of $S_{d,g}$,
so $j_1 = j_2$.
Therefore for each distinct $i$ and $j$ the terms of
(\ref{eq:formOfFullyDegEll}) are distinct monomials, which implies
that the representation given in (\ref{eq:formOfFullyDegEll}) is unique.

\noindent \textbf{$\ell$-harmonicity}.
Finally, suppose $p$ is $\ell$-harmonic.
Then
 \begin{equation}
\label{eq:lapOfFullyEll}
  \lap_{\ell}[p] = \ell! \sum_{i=1}^M \sigma_i \left[ 
\sum_{j=1}^N
A_{i,j}
\left( h^{\ell} x_{\tau_j(2)}^{\ell} \ldots x_{\tau_j(d)}^{\ell} + \ldots
+  x_{\tau_j(1)}^{\ell} \ldots x_{\tau_j(d-1)}^{\ell}h^{\ell}\right)\right] = 0
 \end{equation}
Consider the representation of $\lap_{\ell}[p,h]$ given by
(\ref{eq:formOfFullyDegEll}), with $h$ in the place of $x_{g+1}$.  We get said
representation by grouping together terms with the same $\sigma_i$.
We see that the monomials
\[ \sigma_i[h^{\ell} x_{\tau_j(2)}^{\ell} \ldots x_{\tau_j(d)}^{\ell}],
\ldots,
 \sigma_i[x_{\tau_j(1)}^{\ell} \ldots x_{\tau_j(d-1)}^{\ell}h^{\ell}]\]
over all $\tau_j$
are the only monomials arising from $\sigma_i$ in the equation
(\ref{eq:formOfFullyDegEll}).
Therefore
\begin{align}
\notag
 \lap_{\ell} \left[ \sum_{j=1}^N A_{i,j}x_{\tau_j(1)}^{\ell}
\ldots x_{\tau_j(d)}^{\ell}\right] &=
\\
 \notag
=\ell! \sum_{j=1}^N
A_{i,j}
\left( h^{\ell} x_{\tau_j(2)}^{\ell} \ldots x_{\tau_j(d)}^{\ell} + \ldots
+  x_{\tau_j(1)}^{\ell} \ldots x_{\tau_j(d-1)}^{\ell}h^{\ell}\right) &= 0,
\end{align}
which implies that
\[ \sum_{j=1}^N A_{i,j}x_{\tau_j(1)}^{\ell}
\ldots x_{\tau_j(d)}^{\ell} \]
is $\ell$-harmonic.
\end{proof}

\begin{prop}
\label{prop:keytechnicalstep}Let $x = (x_1, \ldots, x_g)$, let $\ell$ and $d$ be
positive integers, with $\ell \geq 2$ and $g \geq 2 d$. The set
$\mathcal{B}_{g,\ell,d}$ defined by
\[\mathcal{B}_{g,\ell,d} :=
\left\{\sigma\left[\left(x_{\tau(1)}^{\ell} -
x_{\tau(2)}^{\ell}\right)\ldots\left(x_{\tau(2d-1)}^{\ell} -
x_{\tau(2d)}^{\ell}\right) \right]:\ \sigma \in S_{\ell d}, \tau \in S_{g}
\right\} \]
spans the set of NC, $\ell$-harmonic, fully degree $\ell$, homogeneous degree
$\ell d$ polynomials.
\end{prop}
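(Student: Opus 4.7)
The strategy is to verify containment quickly and then reduce the spanning direction, via Lemma~\ref{lemma:fully} and the substitution $y_k := x_k^\ell$, to a combinatorial claim about multilinear $1$-harmonic polynomials. For containment, each factor $x_a^\ell - x_b^\ell$ is $\ell$-harmonic by Proposition~\ref{prop:deg2harm}(\ref{item:elldeglharm}), and since the $d$ factors in $\prod_{k=1}^d(x_{a_{2k-1}}^\ell - x_{a_{2k}}^\ell)$ use pairwise disjoint variables, the product is $\ell$-harmonic by Proposition~\ref{prop:ProdSuff}(\ref{item:ellcor}); permutations preserve $\ell$-harmonicity, homogeneity, and the fully degree $\ell$ property. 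For spanning, apply Lemma~\ref{lemma:fully} to any target polynomial $p$ to get $p = \sum_i \sigma_i[q_i]$ with each $q_i = \sum_j A_{i,j}\, x_{\tau_j(1)}^\ell \cdots x_{\tau_j(d)}^\ell$ itself $\ell$-harmonic; since $\mathcal{B}_{g,\ell,d}$ is $S_{\ell d}$-invariant by construction, it suffices to handle each identity-shape $q_i$. Substituting $y_k := x_k^\ell$ and $h_y := h^\ell$, the product rule from Proposition~\ref{prop:ProdSuff} gives $\lap_\ell[q_i, h] = \ell!\, \lap_1[\tilde q_i, h_y]$, where $\tilde q_i := \sum_j A_{i,j}\, y_{\tau_j(1)} \cdots y_{\tau_j(d)}$ is a multilinear degree-$d$ NC polynomial in the $y$'s that is $1$-harmonic, and the target generators become $\prod_{k=1}^d(y_{a_{2k-1}} - y_{a_{2k}})$ with the $2d$ indices distinct.

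The proposition therefore reduces to the following claim, which I would prove by induction on $d$: for $g \geq 2d$, every multilinear $1$-harmonic NC polynomial of degree $d$ in $y_1, \ldots, y_g$ lies in the span of products $\prod_{k=1}^d(y_{a_{2k-1}} - y_{a_{2k}})$ with $a_1, \ldots, a_{2d}$ distinct. The base $d=1$ is immediate: $\sum_i c_i y_i$ with $\sum_i c_i = 0$ equals $\sum_{i \geq 2}c_i(y_i - y_1)$. For the inductive step, write $\tilde q = \sum_i y_i \tilde q_i$ with $\tilde q_i$ multilinear of degree $d-1$ not involving $y_i$; expanding $\lap_1[\tilde q, h_y]$ and grouping by the first letter yields $\sum_i \tilde q_i = 0$ and that each $\tilde q_i$ is itself $1$-harmonic. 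Eliminating $\tilde q_1 = -\sum_{i\geq 2}\tilde q_i$ gives $\tilde q = \sum_{i \geq 2}(y_i - y_1)\tilde q_i$. Splitting $\tilde q_i = \tilde q_i^{(0)} + \tilde q_i^{(1)}$ according to whether $y_1$ appears, the non-multilinear contributions $-y_1\tilde q_i^{(1)}$ collapse to zero because $\tilde q_1$ is $y_1$-free (forcing $\sum_{i \geq 2}\tilde q_i^{(1)} = 0$), leaving $\tilde q = \sum_{i \geq 2}(y_i - y_1)\tilde q_i^{(0)} + \sum_{i \geq 2} y_i \tilde q_i^{(1)}$. The first sum is an honest independent product in $g - 2 \geq 2(d-1)$ variables disjoint from $\{y_1, y_i\}$, to which the inductive hypothesis applies; the second sum is handled by iterating the same procedure, peeling $y_1$ out via Proposition~\ref{thm:neighbor}.

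\textbf{Main obstacle.} The delicate point is that $\tilde q_i^{(0)}$ is not individually $1$-harmonic, satisfying only $\lap_1[\tilde q_i^{(0)}, h_y] = -\dird[\tilde q_i^{(1)}, y_1, h_y]$, so one must pair it with the matching contribution from $\sum_{i\geq 2} y_i \tilde q_i^{(1)}$ to obtain a genuinely $1$-harmonic combination before invoking the inductive hypothesis. The condition $g \geq 2d$ is exactly what guarantees that at every stage of the recursion there are two fresh variables available to form the outer difference $(y_i - y_1)$ without creating index collisions in the inner $(d-1)$-fold product; should this direct induction prove unwieldy, a parallel route is an $S_g$-representation-theoretic dimension count comparing the span of $\mathcal{B}_{g,\ell,d}$ against the kernel of the $\ell$-Laplacian on fully degree $\ell$ polynomials.
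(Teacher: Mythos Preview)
Your reduction via Lemma~\ref{lemma:fully} and the substitution $y_k := x_k^\ell$ to multilinear $1$-harmonic polynomials is correct and is essentially how the paper proceeds as well. The inductive framework---write $\tilde q = \sum_i y_i \tilde q_i$, read off from the first-letter decomposition of $\lap_1[\tilde q,h_y]$ that $\sum_i \tilde q_i = 0$ and that each $\tilde q_i$ is itself $1$-harmonic---also matches the paper exactly.

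The gap is precisely the one you flag as the ``main obstacle,'' and your proposed repair does not close it. Once you subtract $y_1$, the inner factor $\tilde q_i$ may still contain $y_1$, so $(y_i - y_1)\tilde q_i$ is not an independent product and does not produce a generator of $\mathcal{B}_{g,\ell,d}$. Your split $\tilde q_i = \tilde q_i^{(0)} + \tilde q_i^{(1)}$ then destroys $1$-harmonicity: neither $\sum_{i\ge 2}(y_i-y_1)\tilde q_i^{(0)}$ nor the residual $\sum_{i\ge 2} y_i\tilde q_i^{(1)}$ is $1$-harmonic, so the inductive hypothesis applies to neither piece, and ``iterating the same procedure'' on the second sum does not terminate because you are not reducing degree and not restoring harmonicity. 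The vague ``pairing'' you propose would have to reconstruct a $1$-harmonic combination out of these fragments, and there is no evident mechanism for doing so.

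The paper avoids this entirely by never subtracting a variable that appears in the inner factor. It splits into two cases on whether the \emph{last} variable $x_g$ appears in $p$. In Case~1 ($\deg_g(p)=0$), one writes $p = \sum_{i<g}(x_i^\ell - x_g^\ell)p_i$; here each $p_i$ is genuinely $\ell$-harmonic and free of both $x_i$ and $x_g$, so induction applies directly in $g-2 \ge 2(d-1)$ variables. In Case~2, one peels off $x_g^\ell$ via Proposition~\ref{thm:neighbor}, applies induction to the resulting $\ell$-harmonic cofactors $p_j$, and then uses the fact that each \emph{individual generator} $b_k$ in the inductive expansion of $p_j$ involves only $2(d-1) < g-1$ variables to find a per-generator fresh index $\nu_k$; subtracting $\sum_j\sigma_j[\sum_k\beta_{j,k}(x_g^\ell - x_{\nu_k}^\ell)b_k]$ reduces to Case~1. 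The hypothesis $g\ge 2d$ is used at this per-generator level, not to guarantee a single fresh variable for $p$ itself. Replacing your subtracted variable $y_1$ by such a fresh variable (and handling the two cases) is what makes the induction go through.
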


\begin{proof}
First note that the elements of each $\mathcal{B}_{g,d,\ell}$ are
$\ell$-harmonic by Proposition \ref{prop:ProdSuff}, since each element is a
permuation of an independent product of
symmetrized $\ell$-harmonic polynomials.  Also note that if $b \in
\mathcal{B}_{g,d-1,\ell}$ and $\deg_i(b) = \deg_j(b) = 0$, where $i \neq j$,
then
$(x_i^{\ell} - x_j^{\ell})b \in \mathcal{B}_{g,d,\ell}$.

Proceed by induction on $d$.  For $d=1$, by
Proposition \ref{prop:deg2harm} the only $\ell$-harmonic, fully degree $\ell$
polynomials
 which are homogeneous of degree $\ell$
 are linear combinations of $x_k^{\ell} - x_1^{\ell}$, which are in
$\mathcal{B}_{g,\ell,1}$.
Therefore $\mathcal{B}_{g,\ell,1}$
spans the set of NC, $\ell$-harmonic, fully degree $\ell$, homogeneous degree
$\ell d$ polynomials.

Next, assume for $D< d$ and any $G$ that $\mathcal{B}_{G,D,\ell}$ spans the
set of NC, $\ell$-harmonic, fully degree $\ell$, homogeneous degree
$\ell D$ polynomials in the variables $x_1, \ldots, x_G$.
Suppose $p \in \FFx$ is $\ell$-harmonic, fully degree $\ell$, and homogeneous
of degree $\ell d$.  Since $p$ is fully degree $\ell$, $\deg_g(p)$ is
either $0$ or $\ell$.  Consider both of these cases.

\begin{enumerate}
 \item[Case 1:]  Suppose $\deg_g(p) = 0$.  By Lemma \ref{lemma:fully}, it
suffices to
consider the case where $p$ is of the form
\[ p = \sum_{\tau \in S_{g-1}} A_{\tau} x_{\tau(1)}^{\ell} \ldots
x_{\tau(d)}^{\ell}.\]
Express $p$ as
\[ p = \sum_{i=1}^{g-1} x_i^{\ell} p_i,\]
where the $p_i$ are defined by
\[p_i = \sum_{\tau(1) = i} A_{\tau} x_{\tau(2)}^{\ell} \ldots
x_{\tau(d)}^{\ell}.\]
Note that $\deg_i(p_i) = 0$ for each $i$, and hence $x_i^{\ell} p_i$ is an
independent product.
Taking the $\ell$-Laplacian gives
\begin{equation}
\label{eq:lapEllofP}
  \lap_{\ell}[p,h] = \ell! \sum_{i=1}^{g-1} h^{\ell} p_i +
\sum_{i=1}^{g-1} x_i^{\ell} \lap_{\ell}[p_i,h] = 0.
\end{equation}
For each $i$, the terms of (\ref{eq:lapEllofP}) whose leftmost variables are
$x_i^{\ell}$ are $x_i^{\ell}\lap_{\ell}[p_i,h]$, which must therefore equal $0$.
Therefore each $p_i$ is $\ell$-harmonic, fully degree $\ell$, and
homogeneous
of degree $\ell (d-1)$ and degree $0$ in $x_g$.  Express each $p_i$ as
\[ p_i = \sum_{k=1}^{M} \beta_{i,k} b_k,\]
where $b_1, \ldots, b_k$ are in $\mathcal{B}_{g-1,d-1,\ell}$ and the
$\beta_{i,k}$ are some scalars. Since
$\deg_i(p_i) = 0$, choose the $\beta_{i,k}$ so that if $\deg_i(b_k) > 0$ then
$\beta_{i,k} = 0$.
Also note that the terms of (\ref{eq:lapEllofP}) with $h^{\ell}$ as the
leftmost variables are
\[ \sum_{i=1}^{g-1}  h^{\ell} p_i \]
which must therefore equal zero.
This implies that
\[ \sum_{i=1}^{g-1} x_g^{\ell} p_i  = 0.\]
Therefore
\begin{align}
 \notag
p&= p- \sum_{i=1}^{g-1} x_g^{\ell} p_i \\
\label{eq:sumOfBgdl}
&= \sum_{i=1}^{g-1} \sum_{k=1}^M \beta_{i,k} (x_i^{\ell} - x_g^{\ell}) b_{k}.
\end{align}
Since $\beta_{i,k}$ is nonzero only if $\deg_i(b_k) = 0$, and since
$\deg_g(b_k) = 0$ for all $k$, we see that (\ref{eq:sumOfBgdl}) is in the span
of $\mathcal{B}_{g,d,\ell}$.

 \item[Case 2:]
Suppose $\deg_g(p) = \ell$. Express $p$ as in Proposition
\ref{thm:neighbor} to get
\[ p = \sum_{j=1}^N \sigma_j\left[ x_g^{\ell} p_j \right] + r,\]
where $\deg_g(p_j) = 0$ for each $j$ and $\deg_g(r) < \ell$.  Since $p$ is a
sum of fully degree $\ell$ monomials, we can take $p_j$ and $r$ to be a sum of
fully degree $\ell$ terms.  In particular, this implies that since $\deg_g(r) <
\ell$, that $\deg_g(r) = 0$.  By Proposition \ref{thm:neighbor}, since $p$ is
$\ell$-harmonic, so is each $p_j$.  Since each $p_j$ is $\ell$-harmonic and
degree $0$ in $x_g$, by induction, we may express each $p_j$ as a sum
\[ p_j = \sum_{k=1}^{M} \beta_{j,k} b_k,\]
where $b_1, \ldots, b_k$ are in $\mathcal{B}_{g-1,d-1,\ell}$.
Each element of $\mathcal{B}_{g-1,d-1,\ell}$ has only $2(d-1)$ variables.
Since $g \geq 2d$, for each $b_k$ there exists a variable $x_{\nu_k}$, with $1
\leq \nu_k < g$ such that $\deg_{\nu_k}(b_k) = 0$.  Therefore the polynomial
\[ (x_g^{\ell} - x_{\nu_k}^{\ell})b_k\]
is in $\mathcal{B}_{g,d,\ell}$.
  Therefore the polynomial
\begin{align}
\notag
 p - \sum_{j=1}^N \sigma_j\left[ \sum_{k=1}^M
\beta_{j,k} (x_g^{\ell}-x_{\nu_j}^{\ell}) b_k \right] &=
\sum_{j=1}^N
\sigma_j\left[ \sum_{k=1}^M
\beta_{j,k} x_g^{\ell} b_k \right] + r\\
\notag & -\sum_{j=1}^N \sigma_j\left[
\sum_{k=1}^M
\beta_{j,k} (x_g^{\ell}-x_{\nu_k}^{\ell}) b_k \right] \\
\label{eq:reducedEllHarm}
&=\sum_{j=1}^N \sigma_j\left[ \sum_{k=1}^M
\beta_{j,k} x_{\nu_k}^{\ell} b_k \right] + r
\end{align}
is $\ell$-harmonic and is in the span of $\mathcal{B}_{g,d,\ell}$ if and only
if $p$ is. Also note that (\ref{eq:reducedEllHarm}) has degree $0$ in $x_g$ and
is fully degree $\ell$ and homogeneous of degree $\ell d$.  By Case 1,
(\ref{eq:reducedEllHarm}) is in the span of $\mathcal{B}_{g,d,\ell}$, which
implies that $p$ is in the span of  $\mathcal{B}_{g,d,\ell}$.
\end{enumerate}

\end{proof}

\subsection{A Classification of the Set of Noncommutative $\ell$-Harmonic Polynomials}
\label{sect:genclassification}

The following proposition gives a classification of the set of NC
$\ell$-harmonic polynomials.  From this proposition will follow Theorem
\ref{thm:main}.

\begin{prop}
\label{prop:main}
Let $x = (x_1, \ldots, x_g)$.
Let
$p \in \FFx$ be a NC, homogeneous degree $d$ polynomial, let $\ell \geq 2$ be a positive integer,
and suppose $g \geq d$.
The polynomial $p$ 
is $\ell$-harmonic
if and only if
it is a finite sum
\begin{align}
\label{eq:mainprop}
p = \sum_{k}^{finite} \sigma_k [p_{k,1} \ldots p_{k,\num_k}]
\end{align}
for some $\sigma_k \in S_d$, possibly repeating,
and for some NC, symmetrized, $\ell$-harmonic, homogeneous polynomials
 $p_{k,i} \in \FF\langle x_1, \ldots, x_g\rangle$
 such that for each $k$, the product $p_{k,1} \ldots p_{k,\num_k}$
is homogeneous of degree $\ell d$ and is an independent product.

If desired, one may additionally impose the following technical conditions on the $p_{k, i}$
in (\ref{eq:mainprop}).
\begin{enumerate}
\item For each $i,j,k$,
\[\deg_j\left(p_{k, i}\right) \leq \max\{\deg_j(p), \ell\}.\]

\item
Each polynomial
$p_{k,1} \ldots p_{k, M_k}$
depends on at most $d$ variables.
\end{enumerate}
\end{prop}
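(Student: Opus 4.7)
The plan is to prove both implications, with the ``only if'' direction being substantially harder and handled by a descent on $d$ using the machinery developed in \S\ref{sub:breakingup} and \S\ref{sect:fullydeg2}.

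For the ``if'' direction, I would observe that each summand $\sigma_k[p_{k,1}\cdots p_{k,\num_k}]$ is $\ell$-harmonic: by Proposition \ref{prop:ProdSuff}(\ref{item:ellcor}) the independent product of $\ell$-harmonic polynomials is $\ell$-harmonic, and by the final proposition of \S\ref{subsect:derivation} applying $\sigma_k$ preserves $\ell$-harmonicity. A finite sum of $\ell$-harmonic polynomials is $\ell$-harmonic, so the conclusion is immediate.

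For the ``only if'' direction I would induct on $d$. The bases $d<\ell$ and $d=\ell$ are handled by Proposition \ref{prop:deg2harm}: monomials of degree $<\ell$ are themselves independent products of symmetrized $\ell$-harmonic variables, and the basis of Proposition \ref{prop:deg2harm}(\ref{item:elldeglharm}) consists of monomials $x_{a_1}\cdots x_{a_\ell}$ (with some $a_i$ distinct) and of differences $x_k^\ell-x_1^\ell$, each of which is already a permutation of an independent product of symmetrized $\ell$-harmonic pieces. For the inductive step, I would first apply Proposition \ref{prop:oddsandevens} iteratively (once per variable) to decompose $p$ into $\ell$-harmonic summands in which the degree of every variable lies in a prescribed residue class mod $\ell$, and treat each such summand separately. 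Next I would fix a variable $x_i$ with positive top-degree $d_i$ in that summand and invoke Proposition \ref{thm:neighbor} to write the summand as $\sum_j \sigma_j[x_i^{d_i}p_j]+r'$, where the $p_j$ are $\ell$-harmonic, independent of $x_i$, and of strictly smaller total degree, and where $r'$ has strictly smaller degree in $x_i$.

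Two cases arise for the peeled-off factor $x_i^{d_i}$. When $d_i<\ell$, the factor $x_i^{d_i}$ is itself symmetrized and $\ell$-harmonic, so the inductive hypothesis applied to each $p_j$ (written as a sum of permutations of independent products of symmetrized $\ell$-harmonic pieces in the variables other than $x_i$) produces the required representation by inserting $x_i^{d_i}$ as an additional independent factor and composing with $\sigma_j$. When $d_i\geq\ell$, the power $x_i^{d_i}$ is no longer $\ell$-harmonic, and here I would use Proposition \ref{prop:techOnHsubEll} together with symmetrization: choose fresh variables $x_{\nu_1},\ldots,x_{\nu_Q}$ that do not occur in $p_j$ (available because $g\geq d$), form the commutative $\ell$-harmonic polynomial $\Hm[x_{\nu_1}^{\ell}\cdots x_{\nu_Q}^{\ell},x_i,\ell]$ of degree $d_i$ in $x_i$ with $x_i^{d_i}$-coefficient $1$, and then take $\Symm[\,\cdot\,]$, which by Theorem \ref{thm:parta} remains $\ell$-harmonic. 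Substituting this symmetrized polynomial in place of $x_i^{d_i}$ inside $\sigma_j[\,x_i^{d_i}p_j\,]$ yields a term of the desired form (an independent product of symmetrized $\ell$-harmonic factors, since the replacement involves only $x_i$ and the fresh $x_{\nu_k}$), at the cost of introducing lower-complexity error terms coming from the non-leading parts of the replacement. These errors, together with the residual $r'$, are $\ell$-harmonic and have either strictly smaller degree in $x_i$ or involve only strictly fewer total variables of maximal degree, so a suitable lexicographic induction closes the argument. The two technical side-conditions on degree bounds and on the number of variables occurring in each product are automatic from the construction: the inserted factor $\Symm[\Hm[\cdot,x_i,\ell]]$ has controlled degrees in each variable, and the fresh variables are introduced sparingly.

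The main obstacle will be the case $d_i\geq\ell$: one must choose the fresh variables and the replacement polynomial carefully so that the new error terms are genuinely simpler than the original $p$ (in a well-founded complexity order), and one must verify that the product structure $\sigma_j[\,\Symm[\Hm[\cdot]]\cdot p_j\,]$ is indeed an independent product of symmetrized $\ell$-harmonic factors. The bookkeeping required to keep track of which variables are ``used'' at each stage, and to ensure that the descent terminates, is where the real work of the proof lies; the fully-degree-$\ell$ classification in Proposition \ref{prop:keytechnicalstep} serves as the archetype for this bookkeeping and should be mimicked in the general inductive step.
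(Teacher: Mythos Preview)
Your plan is essentially the paper's proof, and the ``if'' direction and the base cases are handled exactly as you describe. The paper's ``only if'' argument is likewise an induction on $d$ organized around Proposition~\ref{thm:neighbor} and the $\Hm$-replacement trick from Lemma~\ref{prop:techOnHsubEll}, so you have the right skeleton.

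There is, however, one genuine gap. Your case split is $d_i < \ell$ versus $d_i \geq \ell$, and in the latter you rely on replacing $x_i^{d_i}$ by (a scalar multiple of) $\Symm\bigl[\Hm[x_{\nu_1}^\ell\cdots x_{\nu_Q}^\ell,\, x_i,\, \ell]\bigr]$ to produce an error of strictly lower complexity. This works when $d_i > \ell$: the error $(x_i^{d_i} - s)\,q_j$ then has $\deg_i < d_i$, and the fresh variables enter with degree exactly $\ell < d_i$, so the maximal variable-degree drops and a lexicographic descent is well-founded. But when $d_i = \ell$ the replacement polynomial is, up to sign, just $x_i^{\ell} - x_{\nu_1}^{\ell}$, and the error becomes $x_{\nu_1}^{\ell} q_j$: you have merely traded one degree-$\ell$ variable for another. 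No lexicographic measure built from variable-degrees decreases here, and the descent you sketch does not terminate.

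The paper handles this by running three phases rather than two. Phase~1 applies your replacement trick, but only while some variable has degree \emph{strictly greater} than $\ell$; this terminates with every variable-degree at most $\ell$. Phase~2 then uses Proposition~\ref{prop:oddsandevens} and the easy $d_i < \ell$ factoring to strip off all pieces of nonzero residue mod $\ell$. What remains is fully degree $\ell$, and Phase~3 invokes Proposition~\ref{prop:keytechnicalstep} directly as a terminal case --- not as an archetype to be mimicked inside the general step. So your instinct that Proposition~\ref{prop:keytechnicalstep} is relevant is correct, but it must be \emph{applied} at the end; once you split $d_i \geq \ell$ into $d_i > \ell$ (replacement) and the residual fully-degree-$\ell$ situation (Proposition~\ref{prop:keytechnicalstep}), your plan goes through and coincides with the paper's argument.
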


\begin{proof}
Consider a polynomial $p \in \FFx$.  Proceed by induction on $\deg(p)$
to shoe that if $p$ is $\ell$-harmonic then it is of the form of
(\ref{eq:mainprop}).

  For $d \leq \ell$, the elements of the basis
described in Proposition
\ref{prop:deg2harm} are also all of the form of (\ref{eq:mainprop}).  Note
that a polynomial $x_i^k$, with $k < \ell$, is a NC, symmetrized,
$\ell$-harmonic polynomial.

Suppose now that the proposition holds for all degrees less than some
$d > \ell$.
Let $p \in \FFx$ be a NC, $\ell$-harmonic, homogeneous degree $d$ polynomial.
The following steps will prove algorithmically that $p$ is of the form of
(\ref{eq:mainprop}).

\begin{enumerate}
 \item First we reduce the problem of proving $p$ is of the form of
(\ref{eq:mainprop}) to proving that some other polynomial $r$, with $\deg_i(r)
\leq \ell$ for all $i$, is of the form of (\ref{eq:mainprop}).

\begin{enumerate}
\item \label{item:step1} Let $d_* := d$.

\item Let $i:=1$.
\label{step:A0}

\item
\label{step:A1}
At this step, the following conditions hold:
 \begin{align}
 d_* & > \ell \notag\\
 \deg_j(p) & \leq d_*,\ j \geq i\notag\\
 \deg_j(p) & < d_*,\ j < i \notag.
 \end{align}
 Choose $\sigma_1, \ldots, \sigma_N$ which satisfy the conclusions of Lemma \ref{lemma:neighbor}
and express $p$ as
\[p = \sum_{j=1}^N \sigma_j \left[x_i^{d_*}q_j\right] + r,\]
with $\deg_i(r) < d_*$, $\deg_i(q_j)=0$ for each $j$,
and, by Proposition \ref{thm:neighbor}, each $q_j$ being $\ell$-harmonic.
By the induction hypothesis, since $\deg(q_j) = d - d_* < d$, each $q_j$ may be
expressed as a finite sum
\[q_j =  \sum_{k} q_{j,k},\]
where each $q_{j,k}$ is of the form (\ref{eq:mainprop}).  In particular,
each $q_{j,k}$ depends on at most $d-d_*$ variables.
By the division algorithm, let $d_* = \ell Q + r$, where $r$ and $Q$ are integers with
$0 \leq r < \ell$.
For each $q_{j,k}$, choose $Q$ distinct
variables $x_{a_1}, \ldots, x_{a_{Q}}$, each not
equal to $x_i$, such that $\deg_{a_m}(q_{j,k}) = 0$
for each $a_m$.   Define $s_{j,k}$ to
be
\[ s_{j,k} = \frac{(-1)^q (\ell Q + r)!}{Q!
\ell^Q}\Symm\left[\Hm\left[x_i^{r}x_{a_1}^{\ell} \ldots x_{a_Q}^{\ell}, x_i,
\ell\right]\right].\]
This $s_{j,k}$ is, by construction, a symmetrized $\ell$-harmonic polynomial.
By Lemma \ref{prop:techOnHsubEll}, $s_{j,k}$
has $\deg_i(s_{j,k}) = d_*$ with $x_i^{d_*}$ having a coefficient of $1$.
Further note that
since $s_{j,k}$ and $q_{j,k}$ don't have any variables in common,
$s_{j,k}q_{j,k}$ is an independent product of a symmetrized $\ell$-harmonic and
a polynomial of the form (\ref{eq:mainprop}).  Therefore $s_{j,k}q_{j,k}$ is
itself of the form  (\ref{eq:mainprop}).

The polynomial $p$ is equal to
\begin{align}
p &= \sum_{j=1}^N \sum_{k} \sigma_{j}\left[x_i^{d_*}q_{j,k}\right]\\
\notag &= \sum_{j=1}^N \sum_{k} \sigma_{j}\left[\left(x_i^{d_*} - s_{j,k}\right)q_{j,k}\right] + \sum_{j=1}^N \sum_{k} \sigma_{j}\left[s_{j,k}q_{j,k}\right].
\end{align}
Since the polynomial
\[ \sum_{j=1}^N \sum_{k} \sigma_{j}\left[s_{j,k}q_{j,k}\right]\]
is of the form of (\ref{eq:mainprop}),
it follows that $p$ also is if and only if
\[\sum_{j=1}^N \sum_{k} \sigma_{j}\left[\left(x_i^{d_*} - s_{j,k}\right)q_{j,k}\right]\]
is.
Redefine $p$ to be
\[p :=  \sum_{j=1}^N \sum_{k} \sigma_{j}\left[\left(x_i^{d_*} - s_{j,k}\right)q_{j,k}\right].\]
This new $p$ has $\deg_i(p) < d_*$.

\item If $i < g$, redefine $i$ to be
$i:= i+1$. Go to step \ref{step:A1}.

\item If $i = g$ and $d_* > \ell + 1$, redefine $d_* := d_* - 1$ and go to step \ref{step:A0}.

\end{enumerate}

\item
\label{step:A2}
At this step, $\deg_j(p) \leq \ell$ for all $j$.  We will reduce the problem of
showing that $p$ is of the form of (\ref{eq:mainprop}) to showing that some
fully degree $\ell$ polynomial is of the form of (\ref{eq:mainprop}).

\begin{enumerate}
\item Let $i := 1$.

\item At this step, for $j < i$, the terms of $p$ have either degree $0$
or degree $\ell$ in each $x_j$.
Express $p$ as in Proposition \ref{prop:oddsandevens} as
    \[p = \sum_{k=0}^{\ell -1} p_{[k]},\]
    where the terms of each $p_{[k]}$ have degree in $x_i$ equivalent to $k$ modulo $\ell$.
    Since $\deg_i(p) \leq \ell$,
    when $1 \leq k < \ell$, the polynomial $p_{[k]}$ must be homogeneous of degree $k$ in $x_i$. Further,
    by Proposition \ref{prop:oddsandevens},
    each $p_{[k]}$ is $\ell$-harmonic.

For $k = 1, \ldots, \ell -1$, choose $\sigma_1^{(k)}, \ldots, \sigma_N^{(k)}$ which satisfy the conclusions Lemma \ref{lemma:neighbor} and express $p_{[k]}$ as
        \[p_{[k]} = \sum_{j=1}^N
        \sigma_{j}^{(k)}\left[x_i^k f_{k,j}\right], \]
        where, by Proposition \ref{thm:neighbor}, each $f_{k,j}$ is $\ell$-harmonic and degree $0$ in $x_i$.
        By the induction assumption, each $f_{k,j}$
        is of the form (\ref{eq:mainprop}).  Further,
        since $k < \ell$, the polynomial $x_i^k$ is symmetrized and $\ell$-harmonic,
        which implies that $p_{[k]}$
        is of the form of (\ref{eq:mainprop}).
Therefore
        $p$ is of the form of (\ref{eq:mainprop}) if and only if $p_{[0]}$ is.
        Redefine $p := p_{[0]}$.

        \item If $i < g$, redefine $i:= i + 1$ and go to
            step \ref{step:A2}.
\end{enumerate}

            \item At this step,
            $p$ is a sum of terms
            with degree in $x_j$
            equal to $0$ or $\ell$
            for each $j$.
            By definition, $p$ is fully degree $\ell$.
            By Proposition
            \ref{prop:keytechnicalstep},
            $p$ is of the form of
            (\ref{eq:mainprop}).
\end{enumerate}
\end{proof}

\subsubsection{Proof of Theorem \ref{thm:main} (\ref{item:middledegmain})}

\begin{proof}
For $\ell = 1$, the result follows by Proposition \ref{prop:ellequalsone}.
Otherwise, suppose $\ell \geq 2$.
It must be shown that given a homogeneous degree $d$,
$\ell$-harmonic
polynomial $p \in \CCx$, with $g \geq d$, then
$p$ is a sum of polynomials of the form
\begin{equation}
 \label{eq:mainthmMod}
 p = \prod_{i=1}^d \sum_{j=1}^g a_{ij}x_j,
\end{equation}
where for each $1 \leq k_1 < k_2 < \ldots < k_{\ell} \leq d$
we have
\[ \sum_{j=1}^g a_{k_1 j} \ldots a_{k_{\ell} j} = 0.\]
Consider a few cases.

\begin{enumerate}
 \item\label{item:case1} Suppose $p$ is of the form
\[ p = (b_1 x_1 + \ldots + b_gx_g)^d,\]
where $d \geq \ell$ and $b_1^{\ell} + \ldots + b_{g}^{\ell} = 0$.
Then
\[ p = \prod_{i=1}^d \sum_{j=1}^g a_{ij}x_j,\]
with $a_{ij} = b_j$ for all $i,j$ shows that $p$ is of the form of
(\ref{eq:mainthmMod}) since for each $1 \leq k_1 < k_2 < \ldots < k_{\ell} \leq
d$ we have
\[ \sum_{j=1}^g a_{k_1 j} \ldots a_{k_{\ell} j} = \sum_{j=1}^g b_j^{\ell} = 0.\]
\item
\label{item:degEllHigherGen}
If $p$ is a symmetrized $\ell$-harmonic of degree $d \geq \ell$, then
by (\ref{eq:prevprovedcomm}) and by Theorem \ref{thm:parta}, $p$ is a sum of
polynomials of the form
\[ \Symm[(b_1 x_1 + \ldots + b_gx_g)^d] = (b_1 x_1 + \ldots + b_gx_g)^d \in
\CCx.\] By Case \ref{item:case1}, this is a sum of polynomials of the form
(\ref{eq:mainthmMod}).
\item\label{item:oneCanConsider}
A monomial $m = x_{b_1} \ldots x_{b_d}$, with $d < \ell$ is of the form
\[  m = x_{b_1} \ldots x_{b_d} = \prod_{i=1}^d \sum_{j=1}^g a_{ij}x_j\]
by setting $a_{ij} = 1$ if $j = b_i$ and $0$
otherwise.
If $p \in \CCx$ is homogeneous of degree $d < \ell$, 
then this implies that $p$
is a sum
of polynomials of the form
\[  \prod_{i=1}^d \sum_{j=1}^g a_{ij}x_j\]
for some $a_{ij}$.
In this case, since $d < \ell$, there exists no $k_1, \ldots, k_{\ell}$
with $1 \leq k_1 < \ldots < k_{\ell} \leq d$, so therefore a homogeneous degree
$d$ polynomial is of the form (\ref{eq:mainthmMod}) automatically.
\item\label{item:prodOfBig}
Suppose $p_1, p_2\in \CCx$ are homogeneous of degrees $d_1$ and $d_2$
respectively are such that $p_1p_2$ is an independent product, and suppose
\[  p_1 = \prod_{i=1}^{d_1} \sum_{j=1}^g a_{ij}x_j\]
and
\[  p_2 = \prod_{i=1}^{d_2} \sum_{j=1}^g b_{ij}x_j\]
are of the form of (\ref{eq:mainthmMod}).
For $1 \leq i \leq d_2$, define $a_{(i+d_1)j} = b_{ij}$
so that
\[p_1p_2 = \prod_{i=1}^{d_1 + d_2} \sum_{j=1}^g a_{ij}x_j\]

Supppose $1 \leq k_1 < k_2 < \ldots < k_{\ell} \leq d_1 + d_2$.
If $k_{\ell} \leq d_1$, then by assumption
\[ \sum_{j=1}^g a_{k_1 j} \ldots a_{k_{\ell} j} = 0.\]
If $k_1 > d_1$, then
\[ \sum_{j=1}^g a_{k_1 j} \ldots a_{k_{\ell} j} = \sum_{j=1}^g b_{(k_1-d_1) j}
\ldots b_{(k_{\ell}-d_{1}) j} = 0.\]
If $k_1 \leq d_1 < k_{\ell}$, then
consider a product $a_{k_1 j}a_{k_d j}$.  If $a_{k_1 j} \neq 0$, then
$\deg_j(p_1) \neq 0$, so since $p_1p_2$ is an independent product, $\deg_j(p_2)
= 0$ and $a_{k_d j} = b_{(k_d-d_1)j} = 0$.
Similarly, if $a_{k_d j} = b_{(k_d-d_1)j} \neq 0$, then $a_{k_1 j} = 0$.
Therefore, in all cases
\[ \sum_{j=1}^g a_{k_1 j} \ldots a_{k_{\ell} j} = 0.\]
\item\label{item:permOfThing}
If $\sigma \in S_d$ and
$p$ is of the form of (\ref{eq:mainthmMod}), then Proposition
\ref{prop:permOfProdLin} implies that $\sigma[p]$ is as well.
\item
If $p_1, p_2$ are sums of polynomials of the form of (\ref{eq:mainthmMod}), then
by linearity and by Case \ref{item:prodOfBig}, so is $p_1p_2$.
\end{enumerate}

Now, given these cases, by Proposition \ref{prop:main}, every homogeneous
degree $d$, $\ell$-harmonic polynomial $p \in \CCx$ is a sum of polynomials
which are
permutations of independent products
\[ p_1 \ldots p_k\]
where each $p_i$ is a symmetrized $\ell$-harmonic polynomial.
By linearity and by Case \ref{item:permOfThing}, it suffices to consider $p =
p_1 \ldots p_k$.  First, if $\deg(p_i) \geq \ell$, then $p_i$ is of the form of
(\ref{eq:mainthmMod}) by Case \ref{item:degEllHigherGen}.
If $\deg(p_i) < \ell$ then by Case \ref{item:oneCanConsider}, $p_i$ is of the
form of
(\ref{eq:mainthmMod}).  By repeated application of Case \ref{item:prodOfBig},
the product $p_1 \ldots p_k$ is of the form of (\ref{eq:mainthmMod}),
which proves Theorem \ref{thm:main} (\ref{item:middledegmain}).
\end{proof}

\subsection{The Case where $d > g$}
Theorem \ref{thm:main} (\ref{item:highdegmain}) follows as a corollary.

\begin{proof}[Proof of Theorem \ref{thm:main} (\ref{item:highdegmain})]
Suppose $p \in \CC\langle x_1, \ldots, x_g \rangle$ is $\ell$-harmonic, and
suppose $d > g$.
The space $\CC \langle x_1, \ldots, x_g \rangle$ may be viewed as a subspace of
$\CC\langle x_1, \ldots, x_d \rangle$.
Therefore, $p \in \CC\langle x_1, \ldots, x_d \rangle$ is a homogeneous degree
$d$ polynomial, and so Theorem \ref{thm:main} (\ref{item:middledegmain})
applies as long as $p$ is $\ell$-harmonic.

We see
\begin{align}
\lap_{\ell}[p,h] &=
\dird\left[p, \sum_{i=1}^d x_i^{\ell}, h \right] \notag \\
\label{eq:lookattwothings}
&= \dird\left[p, \sum_{i=1}^g x_i^{\ell}, h \right] + \dird\left[p, \sum_{i=g+1}^d x_i^{\ell},h\right].
\end{align}
By assumption, the first term of (\ref{eq:lookattwothings}) is zero.  Also,
since $\deg_i(p) = 0$ for each $i > g$, it follows that the second term of
(\ref{eq:lookattwothings}) is zero.  Therefore $p \in \CC\langle x_1, \ldots,
x_d \rangle$ is $\ell$-harmonic.
The result follows from Theorem \ref{thm:main} (\ref{item:middledegmain}).
\end{proof}

\subsection{Other Selected Cases of Non-Commutative Differential Equations}

\subsubsection{Generalization of Proposition \ref{prop:main} to $q =
\sum_{i=a+1}^g x_i^{\ell}$}

\begin{prop}
\label{prop:genWithZeroes}
Let $p \in \FFx$ be a homogeneous degree $d$ polynomial. Then $p$ satisfies
\begin{equation}
\label{eq:genWithZeroes}
\dird\left[p, \sum_{i=a+1}^{g} x_i^{\ell}, h\right] = 0,
\end{equation}
where $0<a < g$, if and only if $p$ is a finite sum of the form
\begin{equation}
\label{eq:formOfGenZeroes}
\sum_{k} \sigma_k\left[p_{1,k}(x_1, \ldots, x_a)p_{2,k}(x_{a+1}, \ldots, x_g)\right]
\end{equation}
where each $p_{1,k}p_{2,k}$ is homogeneous of degree $d$, where the $\sigma_k \in S_d$ are some permutations, possibly repeating, and where each $p_{2,k}(x_{a+1}, \ldots, x_g)$ is $\ell$-harmonic.
\end{prop}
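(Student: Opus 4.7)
The proposition decouples the inert variables $x_1, \ldots, x_a$ from the active variables $x_{a+1}, \ldots, x_g$ relative to the operator $\dird[\cdot, \sum_{i=a+1}^{g} x_i^{\ell}, h]$, so my plan is to classify monomials of $p$ by the subset $S \subseteq \{1, \ldots, d\}$ of positions occupied by letters from $\{x_1, \ldots, x_a\}$. For each such $S$ I fix a canonical permutation $\sigma_S \in S_d$ that places the first $|S|$ letters of any word $m_1 m_2$ at positions $S$ (in order) and the remaining letters at positions $\{1, \ldots, d\} \setminus S$ (in order). Writing $\mathcal{M}_1$ and $\mathcal{M}_2$ for the free monoids on $x_1, \ldots, x_a$ and $x_{a+1}, \ldots, x_g$ respectively, every monomial of $p$ has a unique expression $\sigma_S[m_1 m_2]$ with $m_1 \in \mathcal{M}_1$, $|m_1| = |S|$, and $m_2 \in \mathcal{M}_2$, $|m_2| = d - |S|$. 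Collecting terms first by $S$ and then by the left factor $m_1$ yields the canonical decomposition
\[p = \sum_{S \subseteq \{1,\ldots,d\}} \sigma_S[q_S], \qquad q_S := \sum_{\substack{m_1 \in \mathcal{M}_1 \\ |m_1| = |S|}} m_1 \cdot r_{S, m_1},\]
for uniquely determined polynomials $r_{S, m_1} \in \FF\langle x_{a+1}, \ldots, x_g\rangle$ of degree $d - |S|$.

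The sufficient direction will be a quick consequence of the tools of $\S\ref{sect:basicfacts}$. By linearity it suffices to handle one term $\sigma_k[p_{1,k} p_{2,k}]$ from (\ref{eq:formOfGenZeroes}). Proposition \ref{thm:basicactionprop} lets me commute $\dird[\cdot, \sum_{i=a+1}^{g} x_i^{\ell}, h]$ past $\sigma_k$, and because $p_{1,k} p_{2,k}$ is an independent product, Proposition \ref{prop:ProdSuff}(\ref{item:ellprodrule}) applies to each summand $x_i^{\ell}$ with $i > a$; the term $\dird[p_{1,k}, x_i^{\ell}, h] p_{2,k}$ vanishes since $p_{1,k}$ is independent of $x_i$. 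Summing over $i = a+1, \ldots, g$ gives $p_{1,k} \cdot \sum_{i=a+1}^{g} \dird[p_{2,k}, x_i^{\ell}, h] = p_{1,k} \cdot \lap_{\ell}[p_{2,k}, h]$, the last equality because the omitted derivatives for $i \leq a$ vanish ($p_{2,k}$ uses only $x_{a+1}, \ldots, x_g$); this is zero by the assumed $\ell$-harmonicity of $p_{2,k}$.

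For necessity, the key observation is that $\dird[\cdot, \sum_{i=a+1}^{g} x_i^{\ell}, h]$ respects the shape decomposition: every monomial appearing in $\dird[\sigma_S[q_S], \sum_{i=a+1}^{g} x_i^{\ell}, h]$ still has its $\{x_1, \ldots, x_a\}$-letters at exactly the positions in $S$, with its $\ell$ $h$-letters occupying $\ell$ positions inside $\{1, \ldots, d\} \setminus S$. Monomials arising from distinct $S$ therefore have disjoint supports in $\FF\langle x, h\rangle$, so (\ref{eq:genWithZeroes}) forces $\dird[\sigma_S[q_S], \sum_{i=a+1}^{g} x_i^{\ell}, h] = 0$ for each $S$ separately; Proposition \ref{thm:basicactionprop} strips off $\sigma_S$ to give $\dird[q_S, \sum_{i=a+1}^{g} x_i^{\ell}, h] = 0$. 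Applying Proposition \ref{prop:ProdSuff}(\ref{item:ellprodrule}) to each independent product $m_1 \cdot r_{S,m_1}$ inside $q_S$ reduces this to
\[\sum_{\substack{m_1 \in \mathcal{M}_1 \\ |m_1| = |S|}} m_1 \cdot \lap_{\ell}[r_{S, m_1}, h] = 0,\]
and since distinct $m_1 \in \mathcal{M}_1$ are linearly independent as left prefixes (the tails $\lap_{\ell}[r_{S, m_1}, h]$ use only $x_{a+1}, \ldots, x_g$ and $h$), each $r_{S, m_1}$ is $\ell$-harmonic. Setting $p_{1, k} := m_1$, $p_{2, k} := r_{S, m_1}$, with outer permutation $\sigma_S$, ranging over pairs $(S, m_1)$ with $r_{S, m_1} \neq 0$, exhibits $p$ in the form (\ref{eq:formOfGenZeroes}).

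The main obstacle will be making the shape-class/disjoint-support argument fully rigorous — that is, verifying that contributions from different $S$ to $\dird[p, \sum_{i=a+1}^{g} x_i^{\ell}, h]$ cannot cancel against one another, and establishing the left-prefix linear independence of $\mathcal{M}_1$ inside $\FF\langle x, h\rangle$. This is purely combinatorial bookkeeping about how the permutation action of $S_d$ rearranges position labels (including those for $h$), and once it is in place the remainder of the proof is a formal consequence of Propositions \ref{thm:basicactionprop} and \ref{prop:ProdSuff}(\ref{item:ellprodrule}).
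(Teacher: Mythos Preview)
Your proposal is correct and follows essentially the same approach as the paper: decompose $p$ via permutations so that the inert-variable content $x_1,\ldots,x_a$ is pulled to the front, then show the remaining factor in $x_{a+1},\ldots,x_g$ must be $\ell$-harmonic (since on it the partial operator coincides with the full $\ell$-Laplacian). The only difference is in execution: the paper peels off one inert variable at a time using Lemma~\ref{lemma:neighbor} and Proposition~\ref{thm:neighbor} iteratively, whereas you perform the decomposition in a single pass indexed by the position subset $S$, which is a bit cleaner and also makes the sufficiency direction explicit (the paper leaves it implicit).
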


\begin{proof}
Let $p$ be equal to
\[p = \sum_{i=0}^d p_i, \]
where each $p_i$ is either $0$ or homogeneous of degree $i$ in $x_1$.
For each $i$, fix permutations $\sigma_1^{(i)}, \ldots, \sigma_{N_i}^{(i)}$ as in Lemma \ref{lemma:neighbor} and express each $p_i$ as
\[p_i = \sum_{j=1}^{N_i} \sigma_j^{(i)}\left[x_1^i p_j^{(i)} \right], \]
where each $p_j^{(i)}$ is degree $0$ in $x_1$.
If $\dird\left[x_1^i, \displaystyle\sum_{k=a+1}^g x_k^{\ell}, h\right]  = 0$, then
\begin{align}
\notag \dird\left[p, \sum_{k=a+1}^g x_k^{\ell}, h\right] &= \sum_{i=1}^d \sum_{j=1}^{N_i} \sigma_j^{(i)} \left[\dird\left[x_1^ip_j^{(i)}, \sum_{k=a+1}^g x_k^{\ell}, h \right]\right]\\
\notag &=\sum_{i=1}^d \sum_{j=1}^{N_i} \sigma_j^{(i)} \left[x_1^i\dird\left[p_j^{(i)},\sum_{k=a+1}^g x_k^{\ell}, h\right] \right] = 0.
\end{align}
By Proposition \ref{thm:neighbor}, since $\dird[p_i, \sum_{k=a+1}^g x_k^{\ell},
h]$ can be uniquely represented with respect to the $\sigma_j^{(i)}$
chosen, this implies that for each $(i,j)$,
\[\dird\left[p_j^{(i)},
\sum_{k=a+1}^g x_k^{\ell}, h\right] = 0.\]
Repeat this process to get
\[p = \sum_{\tau \in S_d} \sum_{\gamma} \tau\left[x_1^{\gamma_1} \ldots x_a^{\gamma_a}p_{\tau,\gamma} \right], \]
where each $\gamma = (\gamma_1, \ldots, \gamma_a)$ is a $a$-tuple of nonnegative integers and where each
$p_{\tau,\gamma}$ doesn't depend on $x_1, \ldots, x_a$ and satisfies
\[\dird\left[p_{\tau,\gamma}, \sum_{k=a+1}^g x_k^{\ell}, h\right] = 0.\]
Further, since $p_{\tau,\gamma}$ has $\deg_i(p_{\tau,\gamma}) = 0$ for $i \leq
a$,
\[\dird\left[p_{\tau,\gamma}, \sum_{k=a+1}^g x_k^{\ell}, h\right] = \dird\left[p_{\tau,\gamma}, \sum_{k=1}^g x_k^{\ell}, h\right] = \lap_{\ell}\left[p_{\tau,\gamma}, h\right] = 0. \]
\end{proof}

\subsubsection{Change of Variables}

For other many other partial differential equations of the form $q(\nabla)p(x) = 0$,
there exists an invertible linear transformation $A$ such that
$q(Ax) = x_1^{\ell} + \ldots + x_g^{\ell}.$

\begin{prop}Let $B = (b_{ij})_{i,j}
\in \FF^{g \times g}$ be an invertible matrix, and let
$q \in \FF[x]$ be equal to
\[q = \sum_{i=1}^g \left(\sum_{j=1}^g b_{ij}x_j\right)^{\ell}.\]
A degree
$d$ homogeneous polynomial $p \in \FFx$
is a solution to
\begin{align}
\dird[p,q,h] = 0
\end{align}
if and only if $p\left(B^Tx\right)$ is $\ell$-harmonic.
\end{prop}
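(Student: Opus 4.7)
The plan is to apply Corollary \ref{cor:chainrule} with $A := B^T$, which has $A^T = B$, and with the test polynomial in the Corollary taken to be $r(x) := \sum_{i=1}^g x_i^{\ell}$ (so that $r$ plays the role of ``$q$'' in the Corollary's statement). For then one computes
\[ r(Bx) = \sum_{i=1}^g \Bigl(\sum_{j=1}^g b_{ij} x_j\Bigr)^{\ell} = q(x),\]
so the Corollary gives, directly,
\[ \lap_{\ell}[p(B^Tx), h](x) = \dird\bigl[p(B^Tx), r(x), h\bigr](x) = \dird\bigl[p, r(Bx), h\bigr](B^Tx) = \dird[p, q, h](B^Tx).\]
This identity is the whole content of the proposition, modulo one observation.

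That observation is that, since $B$ is invertible, so is $B^T$, and therefore the substitution $x \mapsto B^Tx$ defines a bijection of the space $\FFx$ onto itself (extended to polynomials in the auxiliary direction $h$ in the trivial way). Consequently, $\dird[p,q,h](B^Tx)$ is the zero polynomial in $(x,h)$ if and only if $\dird[p,q,h](x)$ is. Combining this with the displayed identity yields
\[ \lap_{\ell}[p(B^Tx), h] = 0 \iff \dird[p,q,h] = 0,\]
which is exactly the claim.

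There is no real obstacle here; the entire proof consists of the chain-rule identity plus a one-line remark about invertible substitutions. The only thing to be careful about is the placement of the transpose: the Corollary puts an $A^T$ on the test polynomial, so in order for the $B$ appearing in the statement of the proposition to come out on the ``inside'' of $q$, one must start with $p(B^Tx)$ rather than $p(Bx)$, which is exactly how the proposition is phrased. Everything else is bookkeeping.
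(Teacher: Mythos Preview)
Your proof is correct and is essentially the same as the paper's: both apply Corollary~\ref{cor:chainrule} with $A = B^T$ and the test polynomial $\sum_i x_i^{\ell}$ (the paper writes this as $q(B^{-1}x)$, you write it as $r(x)$, but these are the same object), then finish with the invertibility remark. There is no substantive difference.
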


\begin{proof}First of all, when $Bx$ is substituted for $x$ in the polynomial
$\displaystyle\sum_{i=1}^g x_i^{\ell}$, one gets
\[\sum_{i=1}^g \left(\sum_{j=1}^g b_{ij}x_j\right)^{\ell} = q(x). \]
Therefore,
\[q(B^{-1}x) =  \sum_{i=1}^g x_i^{\ell}.\]
By Corollary \ref{cor:chainrule},
\begin{align}
\lap_{\ell}\left[p\left(B^T\right),h\right] &=
\dird\left[p\left(\left(B^T\right)x\right), q\left(B^{-1}x\right), h\right]\\
 \notag &= \dird\left[p, q\left(B^{-1}Bx\right), h\right]\left(B^Tx\right)
\\ \notag &=\dird[p,q,h]\left(B^Tx\right).
\end{align}
Since $B$ is invertible,
$\dird[p,q,h]\left(B^Tx\right)$ is identically zero
if and only if $\dird[p,q,h]$ is. Therefore, $p\left(B^Th\right)$ is $\ell$-harmonic
if and only if $\dird[p,q,h] = 0$.
\end{proof}

\section{Noncommutative Polynomials in Nonsymmetric Variables}
\label{sect:ncvars}

This section considers NC polynomials in nonsymmetric variables.
A NC polynomial $p$ in \textbf{nonsymmetric variables}\index{variable!nonsymmetric} is a
NC polynomial with the additional condition that $x_i$ and $x_i^T$ are
not equal. The set of NC polynomials in nonsymmetric variables
with coefficients in $\FF$ is denoted as $\FFNx$\index{Faxs@$\FFNx$}.

\subsection{Definitions}

\begin{definition}
Consider the set $\{1, T\}^d$.\index{1Td@$\{1,T\}^d$}.  If $\alpha = (a_1,
\ldots, a_d) \in \{1, T\}^d$ and $m = x_{b_1} \ldots x_{b_d} \in \FFx$ is a
monomial, let $m^{\alpha} \in \FFNx$ be defined to be the monomial
\begin{equation}
\label{eq:defOfMAlpha}
 m^{\alpha} := x_{b_1}^{a_1} x_{b_2}^{a_2} \ldots x_{b_d}^{a_d},
\end{equation}
where for any given $x_i$, the variable $x_i^1 := x_i$ and $x_i^T$ is the
transpose of $x_i$.

If $p = \sum_{m \in \cM} A_m m$ is a general homogeneous degree $d$ polynomial
in $\FFx$, and $\alpha = \in \{1, T\}^d$, define $p^{\alpha}$
\index{palpha@$p^{\alpha}\ \left(\alpha \in \{1,T\}^d\right)$} to be
\[ p^{\alpha}:= \sum_{m \in \cM} A_m m^{\alpha}.\]
\end{definition}

\begin{exa}
 Let $\alpha = (1, T, 1, T)$ and let $p = x_1x_1x_1x_1 + 3 x_2x_1x_1x_2$.
Then $p^{\alpha}$ is equal to
\[ p^{\alpha} = x_1x_1^Tx_1x_1^T + 3 x_2x_1^Tx_1x_2^T.\]
\end{exa}

\begin{definition}Let $\alpha = (a_1, \ldots, a_d) \in \{1, T\}^d$.
Define the set $\NS_{\alpha} \subseteq \FFNx$ to be\index{NSalpha@$\NS_{\alpha}$}
\[\NS_{\alpha} := \mathrm{span}\{ m^{\alpha}:\ m \in \mathcal{M},\ |m| = d\}. \]
Let $\Proj_{\alpha}$ \index{Palpha@$\Proj_{\alpha}$} be defined to be the
projection map from $\FFNx$ onto $\NS_{\alpha}$.
\end{definition}

\begin{exa}
 Let $p \in \FFNx$ be equal to
\[p = x_1x_1x_2^Tx_1 -7 x_2x_2x_1^Tx_1 +2x_1^Tx_2x_2x_2 -2 x_1x_1x_1x_1 +
x_1x_1^Tx_1^Tx_2. \]
Let $\alpha_1, \ldots, \alpha_4 \in \{1, T\}^4$ be defined by
\begin{align}
\notag
\alpha_1 & = (1, 1, T, 1), & \alpha_2 &= (T,1,1,1), &\alpha_3 &=
(1, 1, 1, 1), & \alpha_4 &= (1, T, T, 1). \end{align}
Then
\begin{align}
\notag \Proj_{\alpha_1}[p] &= x_1x_1x_2^Tx_1 -7 x_2x_2x_1^Tx_1,&
\Proj_{\alpha_2}[p]& = 2x_1^Tx_2x_2x_2, \\
\notag \Proj_{\alpha_3}[p] &=-2 x_1x_1x_1x_1,&
\Proj_{\alpha_4}[p] &= x_1x_1^Tx_1^Tx_2. \end{align}
\end{exa}

\begin{exa}
Let $p = x_1x_2x_1x_2 \in \FFx$ and let $\alpha = (T, T, 1, 1), \beta = (1, T,
T, T)$. Then
\[p^{\alpha} = x_1x_2x_1^Tx_2^T\] and
\[p^{\beta} = x_1^Tx_2x_1x_2.\]
Notice that
\[\Proj_{\alpha}[p^{\beta}] = 0 = \Proj_{\beta}[p^{\alpha}],\]
since $p^{\alpha}$ and $p^{\beta}$ have transposes on different entries.\qed
\end{exa}

\begin{definition}
Let $\bigoplus$ denote the direct sum of vector spaces.  That is, if $V_1, \ldots, V_N$ are vector spaces, then\index{direct sum}\index{$\oplus$|see{direct sum}}
\[\bigoplus_{i=1}^N V_i = \sum_{i=1}^N V_i, \]
with the additional condition that each element of $\bigoplus_{i=1}^N V_i$ can be uniquely represented as
a sum $v_1 + \ldots + v_N$, where each $v_i \in V_i$.
\end{definition}

\begin{prop}
\label{prop:directsumalpha}
 \begin{align}
 \FFNx = \bigoplus_{\alpha \in \{1,T\}^d} \NS_{\alpha}.
 \end{align}
 \end{prop}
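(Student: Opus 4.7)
The plan is to prove this by producing an explicit bijection between degree-$d$ monomials in $\FFNx$ and pairs $(m,\alpha)$ with $m\in\cM$, $|m|=d$, and $\alpha\in\{1,T\}^d$. Fix a degree $d$ and consider a monomial $n$ of degree $d$ in $\FFNx$. By definition $n$ is a word $y_1 y_2 \cdots y_d$ where each letter $y_i$ is either some $x_{j_i}$ or some $x_{j_i}^T$. Set $m := x_{j_1}\cdots x_{j_d}\in\cM$ and $\alpha := (a_1,\ldots,a_d)\in\{1,T\}^d$ with $a_i = 1$ when $y_i = x_{j_i}$ and $a_i = T$ when $y_i = x_{j_i}^T$. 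Then $n = m^\alpha$ in the sense of (\ref{eq:defOfMAlpha}), and both $m$ and $\alpha$ are clearly recoverable from the word $n$, since $\FFNx$ treats $x_i$ and $x_i^T$ as distinct letters. Thus the assignment $n \leftrightarrow (m,\alpha)$ is a bijection.

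From this bijection the spanning statement is immediate: every homogeneous degree $d$ element of $\FFNx$ is a finite $\FF$-linear combination of degree $d$ monomials, and grouping the terms according to their transpose pattern $\alpha$ writes it as $\sum_{\alpha\in\{1,T\}^d} p_\alpha$ with $p_\alpha\in \NS_\alpha$. Running over all degrees $d\ge 0$ then gives $\FFNx = \sum_d \sum_{\alpha\in\{1,T\}^d}\NS_\alpha$.

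To upgrade the sum to a direct sum, suppose $\sum_\alpha p_\alpha = 0$ with $p_\alpha \in \NS_\alpha$. Each $p_\alpha$ is a linear combination of monomials of the form $m^\alpha$, all sharing the same transpose pattern $\alpha$. By the bijection, monomials coming from different $\alpha$'s are distinct letters-for-letters in $\FFNx$, and distinct monomials in $\FFNx$ are linearly independent. Hence every coefficient in the combined sum $\sum_\alpha p_\alpha$ must vanish, forcing each $p_\alpha = 0$. Equivalently, the projection maps $\Proj_\alpha$ are well defined on the degree-$d$ piece and satisfy $\sum_\alpha \Proj_\alpha = \id$.

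There is no real obstacle here; the only subtlety is keeping the bookkeeping clean, namely verifying that the pair $(m,\alpha)$ can be read off uniquely from the word $m^\alpha$, which boils down to the fact that in $\FFNx$ each $x_i$ and $x_i^T$ are, by hypothesis, distinct indeterminates. Once this is noted, the decomposition and uniqueness follow at the level of basis monomials and extend to arbitrary polynomials by linearity.
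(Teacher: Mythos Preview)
Your proof is correct and is exactly the straightforward argument the paper has in mind; the paper's own proof consists of the single word ``Straightforward.'' Your explicit bijection between degree-$d$ monomials in $\FFNx$ and pairs $(m,\alpha)$ is the natural way to unpack that word, and your handling of the slight imprecision in the statement (the left side is all of $\FFNx$ while the right side, as written, only covers degree $d$) by summing over all $d$ is appropriate.
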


 \begin{proof}Straightforward
 \end{proof}

 Define a third map $\mathrm{Sx}$ \index{Sx@$\mathrm{Sx}$} from $\FFNx$
 to $\FFx$
 by \begin{equation}
 \mathrm{Sx}[p(x, x^T)] = p(x, x).
 \end{equation}
That is, $\mathrm{Sx}$ sets $x_i^T$ equal to $x_i$ in $p(x, x^T)$ so that $p$
becomes an element of $\FFx$.

\begin{exa}
 Let $p \in \FFNx$ be equal to
\[ p = x_1^Tx_2x_2^Tx_1 - x_1x_2x_2x_1^T + 3x_1^Tx_1x_1x_1.\]
Then $\mathrm{Sx}[p] = 3x_1^4$.

Further, if $\alpha = (T,1,T,1)$, then
$\Proj_{\alpha}[p]$ is equal to
\[ \Proj_{\alpha}[p] = x_1^Tx_2x_2^Tx_1\]
and
\[ \mathrm{Sx}[\Proj_{\alpha}[p]]^{\alpha} = x_1^Tx_2x_2^Tx_1 =
\Proj_{\alpha}[p].\]

In general, for any $\alpha \in \{1,T\}^d$ and homogeneous degree $d$
polynomial $p \in \FFNx$ we have
\[\mathrm{Sx}[\Proj_{\alpha}[p]]^{\alpha}  =
\Proj_{\alpha}[p] \]
since $\Proj_{\alpha}[p] $ only has terms with transposes corresponding to the
transposes of $\alpha$.
\end{exa}

\subsection{Differentiation}

Differentiation in $\FFNx$ is similar to differentiation in the case of symmetric variables.
If $p \in \FFNx$, define $\dird[p(x_1,\ldots,x_g),x_i,h]$
to be
\begin{equation}
\dird[p(x_1,\ldots,x_g), x_i,h]:= \frac{d}{dt}[p(x_1, \ldots, (x_i+th),
\ldots, x_g)]_{|_{t=0}}.
\end{equation}\index{directional derviative}

\begin{exa}Take as an example $p(x_1, x_2,x_3) = x_1^Tx_1x_3 - x_2^Tx_2x_3$. One
sees,
\begin{align}\dird[p(x_1, x_2,x_3), x_1, h] = \frac{d}{dt}((x_1 + th)^T(x_1 +
th)x_3 - x_2^Tx_2x_3)
= h^Tx_1x_3 + x_1^Thx_3
\end{align}
In this specific case, also note that
\[\lap[p(x_1, x_2,x_3), h] = 2h^Thx_3 - 2h^Thx_3 = 0\] meaning $p(x_1, x_2,x_3)$
is harmonic.\qed
\end{exa}

As this example shows, the directional derivative of $p$ with respect to $x_i$
in the direction $h$ is the sum of all possible terms produced from $p$ by replacing
either one instance of $x_i$ with $h$ or one instance of $x_i^T$
with $h^T$.
Given $\alpha$, the previous definitions of $p^{\alpha}$, $P_{\alpha}$,
$\NS_{\alpha}$ and $\mathrm{Sx}$ are extended to the space $\FF\langle x, h
\rangle$ by treating $h$ as if it were another variable $x_{g+1}$.
This gives the following proposition.

 \begin{prop}Let $\alpha \in \{1,T\}^d$, let $p \in \FFNx$, and let $q \in \FF[x]$. Then
 \begin{equation}
\Proj_{\alpha}[\dird[p, q, h]] = \dird[\Proj_{\alpha}[p],q, h].
\end{equation}
 \end{prop}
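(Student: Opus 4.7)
The plan is to prove the identity by reducing to the case of a single monomial $p = m$ and a single variable $q = x_i$, then verifying both sides agree by direct inspection. The key observation is that the directional derivative preserves the transpose signature of each monomial: forming $\dird[m, x_i, h]$ replaces one occurrence of $x_i$ with $h$ or one occurrence of $x_i^T$ with $h^T$ at some position, so the pattern of transposed versus non-transposed positions is identical in $m$ and in each resulting term.

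First, by linearity of $\dird[\cdot, q, h]$ and $\Proj_\alpha$ in $p$, it suffices to establish the identity when $p$ is a single monomial. Next, by iteration (using $\dird[p, q_1 q_2, h] = \dird[\dird[p, q_1, h], q_2, h]$ together with linearity of $\dird[p, \cdot, h]$), it suffices to prove the identity for $q = x_i$ a single variable, and then extend by induction on $\deg(q)$.

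Now suppose $m \in \FFNx$ is a monomial of degree $d$, say $m = y_{b_1} y_{b_2} \cdots y_{b_d}$ with each $y_{b_j} \in \{x_1, x_1^T, \ldots, x_g, x_g^T\}$. Then $m$ lies in exactly one $\NS_\beta$, where $\beta \in \{1, T\}^d$ is the transpose signature of $m$; in particular $\Proj_\alpha[m] = m$ if $\alpha = \beta$ and $\Proj_\alpha[m] = 0$ otherwise. The derivative $\dird[m, x_i, h]$ is the sum, over those indices $j$ with $y_{b_j} \in \{x_i, x_i^T\}$, of the monomial obtained by substituting $h$ for $y_{b_j}$ when $y_{b_j} = x_i$ and $h^T$ for $y_{b_j}$ when $y_{b_j} = x_i^T$. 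Treating $h$ as a new variable $x_{g+1}$, each such term retains the same signature $\beta$ as $m$, since the substitution at position $j$ preserves the $1$-versus-$T$ marker at position $j$ and leaves every other position unchanged. Hence $\dird[m, x_i, h] \in \NS_\beta$, so $\Proj_\alpha[\dird[m, x_i, h]]$ equals $\dird[m, x_i, h]$ when $\alpha = \beta$ and equals $0$ otherwise; in both cases this matches $\dird[\Proj_\alpha[m], x_i, h]$.

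The main step of substance is the signature-preservation observation in the previous paragraph; everything else is routine bookkeeping via linearity and induction on $\deg(q)$. There is no real obstacle, only careful tracking of where transposes appear: once one fixes the convention that $h$ is treated as $x_{g+1}$ when applying $\Proj_\alpha$ on the derivative side, the commutation is immediate at the monomial level and propagates by linearity to all of $\FFNx$.
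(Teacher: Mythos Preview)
Your proof is correct and follows essentially the same approach as the paper: both rest on the key observation that differentiation preserves the transpose signature $\beta$ of each monomial, so $\dird[\Proj_\beta[p],q,h] \in \NS_\beta$ and the projection $\Proj_\alpha$ commutes with $\dird$. The paper states this globally via the direct sum decomposition $p = \sum_\beta \Proj_\beta[p]$ rather than reducing explicitly to a single monomial and a single variable, but the underlying argument is the same.
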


 \begin{proof}
 As discussed, $\dird[\Proj_{\alpha}[p], q, h]$ has transposes on the same entries as $\Proj_{\alpha}[p]$ does.  Therefore
 \[\Proj_{\alpha}[\dird[p,q,h] ] = \Proj_{\alpha}\left[ \sum_{\beta \in
\{1,T\}^d} \dird[\Proj_{\beta}[p], q, h]\right] =
\Proj_{\alpha}\dird[\Proj_{\alpha}[p], q, h] = \dird[\Proj_{\alpha}[p],q,h].\]
 \end{proof}

\subsection{Partial Differential Equations in Nonsymmetric Variables}

The results of $\S$ \ref{sect:classification} can be extended to $\FFNx$ by the 
following proposition.  In particular, there is an extension of Theorem
\ref{thm:main} to $\FFNx$.

\begin{prop}
\label{prop:symm2NS}
Let $p \in \FFNx$ be a NC, homogeneous
degree $d$ polynomial, and let $q \in \FF[x]$ be a commutative, homogeneous, degree $\ell$
polynomial.
\begin{enumerate}
\item \label{item:sumOfAlphaSoln}
The polynomial $p$ satisfies
$\dird[p, q, h] = 0$ if and only if
$\dird[\Proj_{\alpha}[p], q, h] = 0$ for each $\alpha \in \{1,T\}^d$.

\item \label{item:alphaPartIsSymmSoln}
For a given $\alpha \in \{1,T\}^d$, the polynomial $p$ satisfies
$\dird[\Proj_{\alpha}[p], q, h] = 0$ if and only if $\dird[S[P_{\alpha}[p]],q,h] = 0$.
\end{enumerate}
\end{prop}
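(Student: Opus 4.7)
The plan is to reduce both parts to the direct sum decomposition $\FFNx = \bigoplus_{\alpha \in \{1,T\}^d} \NS_{\alpha}$ together with the commutation identity $\Proj_{\alpha}[\dird[p,q,h]] = \dird[\Proj_{\alpha}[p],q,h]$ from the immediately preceding proposition. The underlying reason both statements hold is that differentiation with respect to a commutative full symbol $q$ acts position by position on monomials; any transposes present are simply inherited by whatever $h$'s appear in their place, so the transpose pattern survives differentiation intact.

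For part (\ref{item:sumOfAlphaSoln}), both directions are essentially immediate. If $\dird[p,q,h] = 0$, then for every $\alpha \in \{1,T\}^d$ the preceding proposition gives
\[ \dird[\Proj_{\alpha}[p],q,h] = \Proj_{\alpha}[\dird[p,q,h]] = \Proj_{\alpha}[0] = 0.\]
Conversely, if $\dird[\Proj_{\alpha}[p],q,h] = 0$ for each $\alpha$, then by linearity of $\dird$ together with the decomposition $p = \sum_{\alpha} \Proj_{\alpha}[p]$,
\[\dird[p,q,h] = \sum_{\alpha} \dird[\Proj_{\alpha}[p],q,h] = 0.\]

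For part (\ref{item:alphaPartIsSymmSoln}), set $r = \Proj_{\alpha}[p] \in \NS_{\alpha}$ and $s = \mathrm{Sx}[r] \in \FFx$, so that $r = s^{\alpha}$. The heart of the proof is an equivariance statement: differentiation commutes with the operation of inserting transposes according to $\alpha$. Precisely, define the linear map $T_{\alpha}$ on degree-$d$ monomials in the symbols $x_1, \ldots, x_g, h$ by
\[ T_{\alpha}(y_{b_1}\ldots y_{b_d}) := y_{b_1}^{a_1}\ldots y_{b_d}^{a_d}, \qquad y_{b_k} \in \{x_1,\ldots,x_g,h\},\]
where $\alpha = (a_1,\ldots,a_d)$. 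I would establish
\[ \dird[s^{\alpha},q,h] = T_{\alpha}\bigl(\dird[s,q,h]\bigr)\]
by induction on $\deg(q)$, reducing by linearity to the case $q = x_i$ on a single monomial $m$: then $\dird[m,x_i,h]$ inserts an $h$ at each position where $x_i$ appears in $m$, while $\dird[m^{\alpha},x_i,h]$ does the same up to decorating each inserted $h$ with the transpose $a_k$ dictated by its position, which is exactly the image under $T_{\alpha}$.

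Once the equivariance identity is in place, the map $T_{\alpha}$ sends distinct monomials to distinct monomials and is therefore a linear injection; hence $T_{\alpha}(\dird[s,q,h]) = 0$ if and only if $\dird[s,q,h] = 0$, yielding part (\ref{item:alphaPartIsSymmSoln}). The only step that is genuinely an obstacle (and even then only in the sense of careful bookkeeping rather than any real difficulty) is verifying the equivariance $\dird[s^{\alpha},q,h] = T_{\alpha}(\dird[s,q,h])$; after that, both parts of the proposition reduce to linear algebra and the direct sum decomposition.
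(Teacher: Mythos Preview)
Your proposal is correct and follows essentially the same approach as the paper. The only cosmetic difference is in part (\ref{item:alphaPartIsSymmSoln}): you phrase the key step via the transpose-insertion map $T_{\alpha}: s \mapsto s^{\alpha}$ and its injectivity, whereas the paper phrases it via the transpose-stripping map $\mathrm{Sx}$ and its injectivity on $\NS_{\alpha}$; since these maps are inverse to each other on $\NS_{\alpha}$, the two arguments are dual and equivalent.
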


\begin{proof}
\begin{enumerate}
\item
Given $\alpha \in \{1,T\}^d$, each derivative of the form
\[\dird[\Proj_{\alpha}[p], q, h] = \Proj_{\alpha}[\dird[p, q, h]]\]
lies in the space
$\NS_{\alpha}.$
Since $p$ is equal to
\[p = \sum_{\alpha \in \{1,T\}^d} \Proj_{\alpha}[p],\]
by Proposition \ref{prop:directsumalpha},
the condition that
$\dird[p, q, h] = 0$
is equivalent to
$\dird[\Proj_{\alpha}[p], q, h] = 0$ for all $\alpha \in \{1,T\}^d$.

\item It is straightforward to check that
 \[\dird[\mathrm{Sx}[p], q, h] = \mathrm{Sx}[\dird[p, q, h]],\]
 and that on the set $\NS_{\alpha}$
 the map $\mathrm{Sx}$ is injective.
Therefore
\[ \dird[\Proj_{\alpha}[p],q,h] = \Proj_{\alpha}[\dird[p, q, h]] = 0\] if and only if
\[ \mathrm{Sx} \circ \Proj_{\alpha}[\dird[p,q, h]] = \dird[\mathrm{Sx} \circ \Proj_{\alpha}[p],q, h] = 0.\]
\end{enumerate}
\end{proof}

Here is an extension of Theorem \ref{thm:main} to $\FFNx$.

\begin{corollary}
\label{cor:extMainNS}
Let $x = (x_1, \ldots, x_g)$ and let $\ell$ be a positive integer.
\begin{enumerate}
\item
\label{item:nslowdegmain}
Every polynomial in $\CCNx$ of degree less than $\ell$ is $\ell$-harmonic.
\item \label{item:nsmiddledegmain}If $g \geq d $, then every NC,
homogeneous degree $d$, $\ell$-harmonic polynomial in $\CCNx$ is a sum of
polynomials of the form
\begin{equation}
\label{eq:corNSmainthm}
\prod_{i=1}^d \sum_{j=1}^g a_{ij}x_j^{\alpha_i},
\end{equation}
such that $\sum_{j=1}^g a_{k_1 j} \ldots a_{k_{\ell} j} = 0$ for each $1 \leq
k_1 < \ldots < k_{\ell} \leq d$, and such that $\alpha_i \in \{1,T\}$.

\item \label{item:nshighdegmain} If $d > \max\{\ell,g\}$, then
the space $\CCNx$ can be viewed as a subspace of $\CC \langle x_1, \ldots, x_d,
x_1^T, \ldots, x_d^T \rangle$.  Therefore,
every NC, homogeneous degree $d$, $\ell$-harmonic polynomial in $\CCx \subset
\CC\langle x_1, \ldots, x_d, x_1^T, \ldots, x_d^T \rangle$ is a sum of
polynomials of the form
\[\prod_{i=1}^d \sum_{j=1}^d a_{ij}x_j^{\alpha_i}, \]
such that $\sum_{j=1}^d a_{k_1 j} \ldots a_{k_{\ell} j} = 0$ for each $1 \leq
k_1 \leq \ldots \leq k_{\ell} \leq d$, and such that $\alpha_i \in \{1,T\}$.
\end{enumerate}
\end{corollary}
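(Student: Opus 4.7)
The strategy is to reduce each part to the symmetric case covered by Theorem \ref{thm:main}, using the direct sum decomposition of Proposition \ref{prop:directsumalpha} together with the transfer principle of Proposition \ref{prop:symm2NS}. The one calculation needed to make everything fit is the identity
\[ \Bigl(\prod_{i=1}^d \sum_{j=1}^g a_{ij}\, x_j\Bigr)^\alpha \;=\; \prod_{i=1}^d \sum_{j=1}^g a_{ij}\, x_j^{\alpha_i}, \]
valid for any $\alpha = (\alpha_1, \ldots, \alpha_d) \in \{1,T\}^d$; this follows immediately by expanding the product monomial-by-monomial and applying the definition of $m^\alpha$ from (\ref{eq:defOfMAlpha}). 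In particular, $(-)^\alpha$ preserves the shape of such a product and leaves the scalar coefficients $a_{ij}$ unchanged.

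For part (\ref{item:nsmiddledegmain}), let $p$ be homogeneous of degree $d$, $\ell$-harmonic, with $g \geq d$. Decompose $p = \sum_{\alpha \in \{1,T\}^d} P_\alpha[p]$ via Proposition \ref{prop:directsumalpha}. For each $\alpha$, Proposition \ref{prop:symm2NS} shows that $\mathrm{Sx}[P_\alpha[p]] \in \CCx$ is a homogeneous $\ell$-harmonic polynomial of degree $d$, so Theorem \ref{thm:main}(\ref{item:middledegmain}) writes it as a finite sum of products $\prod_{i=1}^d \sum_{j=1}^g a^{(k,\alpha)}_{ij}\, x_j$ whose coefficients satisfy $\sum_j a^{(k,\alpha)}_{k_1 j} \cdots a^{(k,\alpha)}_{k_\ell j} = 0$ for every $1 \leq k_1 < \cdots < k_\ell \leq d$. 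Since $P_\alpha[p] = \mathrm{Sx}[P_\alpha[p]]^\alpha$ and $(-)^\alpha$ is linear, applying the displayed identity termwise turns each product into one of the form required by (\ref{eq:corNSmainthm}); the orthogonality conditions carry over verbatim because they involve only the scalars. Summing over $\alpha$ produces the desired decomposition of $p$.

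Part (\ref{item:nslowdegmain}) is the same argument with Theorem \ref{thm:main}(\ref{item:lowdegmain}) in place of (\ref{item:middledegmain}): every $\mathrm{Sx}[P_\alpha[p]]$ has degree less than $\ell$ and is therefore $\ell$-harmonic in $\CCx$, so Proposition \ref{prop:symm2NS} gives $\dird[P_\alpha[p], \sum_i x_i^\ell, h] = 0$ for each $\alpha$, hence $\lap_\ell[p,h] = 0$. For part (\ref{item:nshighdegmain}), embed $\CCNx$ as a subspace of $\CC\langle x_1, \ldots, x_d, x_1^T, \ldots, x_d^T \rangle$; the added variables appear with degree zero in $p$ and thus contribute nothing to $\lap_\ell[p,h]$, so applying part (\ref{item:nsmiddledegmain}) in the enlarged variable set yields the claim, exactly as in the proof of Theorem \ref{thm:main}(\ref{item:highdegmain}).

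The only genuinely new content beyond the previously established results is the displayed identity in the first paragraph; this is where the bookkeeping lives, but it is purely formal. Once it is in hand, the remainder is a short diagram-chase combining Propositions \ref{prop:directsumalpha} and \ref{prop:symm2NS} with Theorem \ref{thm:main}, and no substantial obstacle arises.
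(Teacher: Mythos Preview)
Your proof is correct and follows essentially the same route as the paper: decompose $p$ along the direct sum $\bigoplus_\alpha \NS_\alpha$, use Proposition~\ref{prop:symm2NS} to transfer each piece to the symmetric setting, invoke Theorem~\ref{thm:main} there, and push back via $(-)^\alpha$. You are in fact slightly more explicit than the paper, which simply says ``exponentiating them with $\alpha$'' where you spell out the identity $\bigl(\prod_i \sum_j a_{ij}x_j\bigr)^\alpha = \prod_i \sum_j a_{ij}x_j^{\alpha_i}$.
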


\begin{proof}
Let $p \in \CCNx$ be $\ell$-harmonic.
 By Proposition \ref{prop:symm2NS} part \ref{item:sumOfAlphaSoln}, for each
$\alpha$ the polynomial $\Proj_{\alpha}[p]$ is $\ell$-harmonic.
By Proposition \ref{prop:symm2NS} part \ref{item:alphaPartIsSymmSoln}, the
polynomial $\mathrm{Sx}[\Proj_{\alpha}[p]] \in \CCx$ is $\ell$-harmonic for
each $\alpha$.
Since $p$ is equal to
\[ p = \sum_{\alpha \in \{1, T\}^d} \mathrm{Sx}[\Proj_{\alpha}[p]]^\alpha\]
we see $p$ is simply a sum of polynomials of the form $q^{\alpha}$, where $q
\in \CCx$ is $\ell$-harmonic.
Taking
polynomials of the form
(\ref{eq:mainthm}) and exponentiating them
with $\alpha$ gives the corollary.
\end{proof}

\section{Noncommutative Subharmonic Polynomials}
\label{sect:HarmAndSub}

This section discusses NC subharmonic polynomials.
The notion of a NC Laplacian, NC
harmonic, and NC subharmonic polynomials was first introduced
in \cite{HMH09}.
As mentioned previously,
  the Laplacian of a NC polynomial $p$ is
 defined as:
\begin{align}
\lap[p, h] &:= \dird\left[p, \summ{i=1}{g} x_i^2, h\right] \label{eq:dirdysub}\\
&=
\summ{i=1}{g}\frac{d^2}{dt^2}
[p(x_1,\ldots,(x_i+th),\ldots,x_g)]_{|_{t=0}}.
\end{align}
When the variables commute,
$\lap[p,h]$ is
$h^2
\Delta [p]
$
where $\Delta [p]$ is the standard Laplacian on $\RR^n$
\[
\Delta [p] := \summ{i=1}{g}\frac{\partial^{\ell} p}{\partial x_i^{\ell}}.
\]

A NC  polynomial is called {\bf harmonic} \index{harmonic}
if its Laplacian is zero, i.e., if it is $\ell$-harmonic
for $\ell = 2$. A NC polynomial is called
 {\bf subharmonic} \index{subharmonic}
if its Laplacian\index{Laplacian} is matrix positive---or equivalently, if
its Laplacian is a finite sum of squares $\sum_i P_i^T P_i$ (see \cite{H02}). A subharmonic
polynomial is called
{\bf purely subharmonic} \index{subharmonic!purely subharmonic}
 if it is not harmonic, i.e.,
if its Laplacian is nonzero and
matrix positive.

\subsection{Basic Properties and Results}

\subsubsection{Classification of NC Harmonic
Polynomials}

Theorem \ref{thm:main} with $\ell=2$
classifies all NC harmonic polynomials.

\begin{corollary}
\label{cor:main}
Let $x = (x_1, \ldots, x_g)$ and let $\ell$ be a positive integer.
\begin{enumerate}
\item
\label{item:lowdegmain2}
Every polynomial in $\CCx$ of degree less than $2$ is harmonic.
\item \label{item:middledegmain2}If $g \geq d \geq 2$, then every NC, homogeneous degree $d$, harmonic polynomial in $\CCx$ is a sum of polynomials of the form
\begin{equation}
\label{eq:mainthmell2}
\prod_{i=1}^d \sum_{j=1}^g a_{ij}x_j,
\end{equation}
such that $\sum_{j=1}^g a_{k_1 j}a_{k_2 j} = 0$ for each $1 \leq k_1 <  k_2 \leq d$.

\item \label{item:highdegmain2}If $d > \max\{2,g\}$, then
the space $\CCx$ can be viewed as a subspace of $\CC \langle x_1, \ldots, x_d \rangle$.  Therefore,
every NC, homogeneous degree $d$, harmonic polynomial in $\CCx \subset \CC\langle x_1, \ldots, x_d\rangle$ is a sum of polynomials of the form
\[\prod_{i=1}^d \sum_{j=1}^d a_{ij}x_j, \]
such that $\sum_{j=1}^g a_{k_1 j}a_{k_2 j} = 0$ for each $1 \leq k_1 <  k_2 \leq d$.
\end{enumerate}
\end{corollary}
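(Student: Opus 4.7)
The plan is essentially to observe that Corollary \ref{cor:main} is nothing more than the $\ell = 2$ specialization of Theorem \ref{thm:main}. Since Theorem \ref{thm:main} has already been proved in Sections \ref{sect:classification} (the three parts are established respectively by Proposition \ref{prop:deg2harm}(\ref{item:lowdeglharm}), the proof of Theorem \ref{thm:main}(\ref{item:middledegmain}) at the end of Section \ref{sect:genclassification}, and the final subsection handling $d > g$), the corollary follows by direct substitution.

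Concretely, I would handle each of the three items in turn. For item (\ref{item:lowdegmain2}), recall that a NC polynomial $p$ is harmonic iff $p$ is $2$-harmonic, so Theorem \ref{thm:main}(\ref{item:lowdegmain}) with $\ell = 2$ states exactly that every polynomial of degree less than $2$ is harmonic. For item (\ref{item:middledegmain2}), apply Theorem \ref{thm:main}(\ref{item:middledegmain}) with $\ell = 2$: whenever $g \geq d$, a homogeneous degree $d$ harmonic polynomial is a sum of polynomials of the form $\prod_{i=1}^d \sum_{j=1}^g a_{ij}x_j$ with the coefficient identity $\sum_{j=1}^g a_{k_1 j}\cdots a_{k_\ell j}=0$ for each $1\leq k_1 < \cdots < k_\ell \leq d$; specializing $\ell=2$ turns this into the pairwise identity $\sum_{j=1}^g a_{k_1 j}a_{k_2 j}=0$ for $1 \leq k_1 < k_2 \leq d$, which is the claimed form. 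The extra hypothesis $d \geq 2$ in the corollary simply ensures that the pairwise condition is nonvacuous (when $d < 2$ the condition is empty and item (\ref{item:lowdegmain2}) handles the case anyway).

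For item (\ref{item:highdegmain2}), I would invoke Theorem \ref{thm:main}(\ref{item:highdegmain}) with $\ell = 2$. Since $\max\{2, g\} < d$ in the relevant range, viewing $\CCx \subset \CC\langle x_1,\ldots,x_d\rangle$ allows the same argument that extended $g$ to $d$ variables in the general case; the coefficient relation specializes again to $\sum_{j=1}^d a_{k_1 j}a_{k_2 j} = 0$.

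No step is a genuine obstacle here: each clause of the corollary is a word-for-word rewrite of the corresponding clause of Theorem \ref{thm:main} with $\ell = 2$ substituted, and the condition $\sum_{j} a_{k_1 j} \cdots a_{k_\ell j}=0$ collapses cleanly to the pairwise inner-product-like condition $\sum_j a_{k_1 j} a_{k_2 j}=0$. The only mild bookkeeping point is to confirm that the ``harmonic'' terminology used in the corollary agrees with ``$2$-harmonic'' from the theorem, which is asserted in the definition at the start of Section \ref{sect:HarmAndSub}. Thus the full proof consists of a single sentence invoking Theorem \ref{thm:main} with $\ell = 2$.
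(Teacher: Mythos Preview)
Your proposal is correct and matches the paper's approach exactly: the paper presents Corollary \ref{cor:main} immediately after the sentence ``Theorem \ref{thm:main} with $\ell=2$ classifies all NC harmonic polynomials'' and gives no further proof, so the intended argument is precisely the one-line specialization you describe.
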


The two-dimensional case, as stated below, is proven in Theorem 1 of \cite{HMH09}, and is equivalent to Proposition \ref{prop:twovargenell} with $\ell = 2$.

\begin{prop}
\label{prop:2dimensional}
Let $x = (x_1, x_2)$.
The NC harmonic polynomials $p \in \CCx$ of homogeneous degree $d>2$
are exactly the linear combinations of
\begin{equation}
\re((x + i y)^d) \quad \text{and} \quad \im((x + i y)^d),
\eeq\end{equation}
\end{prop}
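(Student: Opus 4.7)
My plan is to derive this two-variable classification as the specialization $\ell = 2$ of Proposition \ref{prop:twovargenell}. First I would read off a basis $\mathcal{A}_d$ from that proposition. For $d > 2$, the first set in $\mathcal{A}_d$ is empty, since no monomial of length $d > 2$ has both $\deg_1, \deg_2 < 2$, and the second set consists of exactly the two elements $\Symm[\Hm[x_2^d, x_1, 2]]$ and $\Symm[\Hm[x_1 x_2^{d-1}, x_1, 2]]$ (both satisfy $d - r \geq 2$ since $d > 2$). Hence the space of NC harmonic homogeneous degree $d$ polynomials in $\CC\langle x_1, x_2 \rangle$ has dimension exactly $2$.

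Next I would exhibit two explicit harmonic polynomials spanning this space. Set $y = x_1 + i x_2$ and $\bar y = x_1 - i x_2$ in $\CCx$. Since $\dird[y, x_2, h] = i\,\dird[y, x_1, h]$, linearity of the derivative gives $\dird[y^k, x_2, h] = i\,\dird[y^k, x_1, h]$ for every $k \geq 0$. Combining this identity with the second-order product rule
\[
\lap[pq, h] \;=\; \lap[p, h]\,q + p\,\lap[q,h] + 2\sum_{j=1}^2 \dird[p, x_j, h]\,\dird[q, x_j, h],
\]
applied with $p = y$ and $q = y^{d-1}$, collapses the cross term via $1 + i^2 = 0$ and yields the recursion $\lap[y^d, h] = y\,\lap[y^{d-1}, h]$. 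Since $\lap[y, h] = 0$, induction gives $\lap[y^d, h] = 0$; the identical computation with $i$ replaced by $-i$ handles $\bar y^d$. Therefore $\re(y^d) = \frac{1}{2}(y^d + \bar y^d)$ and $\im(y^d) = \frac{1}{2i}(y^d - \bar y^d)$ are harmonic as $\CC$-linear combinations of harmonic polynomials.

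Finally, linear independence is immediate from parity: $\re(y^d)$ is supported on monomials whose $x_2$-degree is even (in particular it contains $x_1^d$ with coefficient $1$), while $\im(y^d)$ is supported on monomials whose $x_2$-degree is odd (in particular a nonzero multiple of the symmetrized sum of monomials of the form $x_1^{d-1}x_2$). The two polynomials therefore lie in complementary subspaces of $\CCx$, and combined with the dimension count from the first step they form the required basis. I do not foresee a substantive obstacle: the work consists of the bookkeeping to specialize Proposition \ref{prop:twovargenell} and the short induction verifying that $y^d$ is harmonic.
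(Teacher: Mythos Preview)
Your proposal is correct and follows exactly the route the paper indicates: the paper does not prove Proposition~\ref{prop:2dimensional} directly but cites \cite{HMH09} and remarks that it is equivalent to Proposition~\ref{prop:twovargenell} with $\ell=2$, which is precisely the specialization you carry out. Your dimension count from $\mathcal{A}_d$, the inductive verification that $(x_1+ix_2)^d$ is harmonic via the Laplacian product rule, and the parity argument for linear independence are all sound.
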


\subsubsection{The Laplacian of a Product}

\begin{lemma}[Lemma 2.2 in
\cite{HMH09}]
\label{lem:prodRulap}
The product rule for the Laplacian of NC polynomials is
\begin{equation}
\label{eq:laplacianproductrule}
\lap[p_1 \, p_2,h] = \lap[p_1,h]\, p_2 + p_1\ncm \lap[p_2,h]
     + 2\summ{i=1}{g}\bigl( \dird[p_1,x_i,h]\ncm\dird[p_2,x_i,h] \bigr).
\eeq\end{equation}
As a consequence if $p$ is harmonic, then
\begin{equation}
\label{eq:lapSq}
\lap[p^T \, p,h] =
      2\summ{i=1}{g}\bigl( \dird[p,x_i,h]^T \ncm\dird[p,x_i,h] \bigr).
\eeq\end{equation}
\end{lemma}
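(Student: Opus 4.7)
The plan is to apply the first-order product rule (Lemma \ref{lem:prodrule}) twice, since by definition
\[
\lap[p_1p_2, h] = \sum_{i=1}^g \dird[\dird[p_1p_2, x_i, h], x_i, h].
\]
For each fixed $i$, a first application of Lemma \ref{lem:prodrule} gives
\[
\dird[p_1p_2, x_i, h] = \dird[p_1, x_i, h]\,p_2 + p_1\,\dird[p_2, x_i, h].
\]
Differentiating once more in $x_i$, using linearity of $\dird[\cdot, x_i, h]$ and then applying the product rule to each of the two summands, produces four terms. Two of these are ``pure'' terms $\dird[\dird[p_1,x_i,h],x_i,h]\,p_2$ and $p_1\,\dird[\dird[p_2,x_i,h],x_i,h]$, while the other two are identical cross-terms $\dird[p_1,x_i,h]\,\dird[p_2,x_i,h]$ that combine to give the factor of $2$. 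Summing over $i$ and recognising the pure terms as $\lap[p_1,h]\,p_2$ and $p_1\,\lap[p_2,h]$ yields the asserted formula (\ref{eq:laplacianproductrule}).

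For the consequence (\ref{eq:lapSq}), I would apply (\ref{eq:laplacianproductrule}) with $p_1 = p^T$ and $p_2 = p$. Using that the directional derivative respects transposes (so $\dird[p^T, x_i, h] = \dird[p, x_i, h]^T$) and that transposition commutes with taking second derivatives in the same variable, one has $\lap[p^T, h] = \lap[p, h]^T$. Harmonicity of $p$ then kills the first two terms of (\ref{eq:laplacianproductrule}), and the remaining sum becomes
\[
2\sum_{i=1}^g \dird[p^T, x_i, h]\,\dird[p, x_i, h] = 2\sum_{i=1}^g \dird[p, x_i, h]^T\,\dird[p, x_i, h],
\]
which is exactly (\ref{eq:lapSq}).

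There is no real obstacle here: both parts are direct computations from Lemma \ref{lem:prodrule} and the transpose-compatibility of $\dird$. The only mild bookkeeping point is verifying that the two cross-terms from differentiating $\dird[p_1,x_i,h]\,p_2 + p_1\,\dird[p_2,x_i,h]$ are genuinely equal (as opposed to merely matching after additional rearrangement), which is immediate since the product rule in NC polynomials does not require any commutation.
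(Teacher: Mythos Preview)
Your proposal is correct and matches the paper's proof essentially line for line: the paper also expands $\lap[p_1p_2,h]$ as $\sum_i \dird[\dird[p_1p_2,x_i,h],x_i,h]$, applies Lemma~\ref{lem:prodrule} twice to obtain the two pure terms and the doubled cross-term, and then sums over $i$. The paper does not even spell out the derivation of the consequence~(\ref{eq:lapSq}); your explanation via $\dird[p^T,x_i,h]=\dird[p,x_i,h]^T$ and harmonicity is exactly the intended one-line argument.
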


\proof
\begin{align*}
\label{eq:LapPros}
\lap[p_1 \, p_2,h]
& = \summ{i=1}{g} \dird[\dird[p_1\,p_2,x_i,h],x_i,h] \\
&= \summ{i=1}{g}  \dird[p_1\,\dird[p_2,x_i,h] +
      \dird[p_1,x_i,h]\,p_2, \ x_i,h] \\
&= \summ{i=1}{g} \bigl(p_1\,\dird[\dird[p_2,x_i,h],x_i,h] +
     \dird[\dird[p_1,x_i,h],x_i,h]\,p_2 \\
& \qquad \qquad + 2\dird[p_1,x_i,h],\dird[p_2,x_i,h]\bigr)\\
&= \lap[p_1,h]\, p_2 + p_1\ncm \lap[p_2,h]
     + 2\summ{i=1}{g}\bigl( \dird[p_1,x_i,h]\ncm\dird[p_2,x_i,h] \bigr).
\end{align*}
\qed

\subsection{Classifying Subharmonic Polynomials}

\subsubsection{Preliminary Results}

The results of this subsection extend results
presented in \cite{HMH09}.

\begin{lemma}
\label{lemma:symm}
Let $x = (x_1, \ldots, x_g)$. If $p \in \RRx$ or $\RRNx$
is subharmonic, then $\displaystyle\frac{p + p^T}{2}$ is symmetric and subharmonic and
$\displaystyle\frac{p - p^T}{2}$ is harmonic.
\end{lemma}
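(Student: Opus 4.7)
The plan is to combine three easy ingredients: linearity of the Laplacian, the transpose-compatibility of the NC directional derivative, and the fact that a matrix-positive polynomial must equal its own transpose as a polynomial.

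First I would check the symmetry statement, which is immediate: $\left(\frac{p + p^T}{2}\right)^T = \frac{p^T + p}{2} = \frac{p + p^T}{2}$, and similarly $\left(\frac{p - p^T}{2}\right)^T = -\frac{p - p^T}{2}$. Next I would observe, using the lemma in $\S$\ref{sect:definitions} that says $\dird[p^T, x_i, h] = \dird[p, x_i, h]^T$, together with the definition $\lap[p,h] = \sum_{i=1}^g \dird[\dird[p,x_i,h],x_i,h]$, that
\[
\lap[p^T, h] \;=\; \sum_{i=1}^g \dird[\dird[p^T, x_i, h], x_i, h] \;=\; \sum_{i=1}^g \dird[p, x_i, h]^{TT \cdots} \;=\; \lap[p, h]^T.
\]
By linearity of $\lap$, this gives
\[
\lap\!\left[\tfrac{p + p^T}{2}, h\right] = \tfrac{1}{2}\bigl(\lap[p,h] + \lap[p,h]^T\bigr), \qquad \lap\!\left[\tfrac{p - p^T}{2}, h\right] = \tfrac{1}{2}\bigl(\lap[p,h] - \lap[p,h]^T\bigr).
\]

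The key step is now to argue that $\lap[p,h] = \lap[p,h]^T$ as NC polynomials in $(x,h)$. Since $p$ is subharmonic, $\lap[p,h]$ is matrix positive, so for every tuple of symmetric matrices $(X,H)$ the matrix $\lap[p,h](X,H)$ is positive semidefinite and in particular symmetric. Hence $\lap[p,h](X,H) = \lap[p,h](X,H)^T = \lap[p,h]^T(X,H)$ for all $(X,H) \in (\RR^{n\times n}_{sym})^{g+1}$ and all $n$; since matrix equality on all such tuples forces equality of NC polynomials, we conclude $\lap[p,h] = \lap[p,h]^T$. (In the nonsymmetric-variable setting $\RRNx$, the same reasoning works by evaluating on square matrices with the convention $x_i^T \mapsto X_i^T$, since matrix positivity still forces pointwise symmetry of $\lap[p,h]$.)

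Plugging this back in, $\lap\!\left[\frac{p+p^T}{2}, h\right] = \lap[p,h]$, which is matrix positive by hypothesis, so $\frac{p+p^T}{2}$ is subharmonic; and $\lap\!\left[\frac{p-p^T}{2}, h\right] = 0$, so $\frac{p-p^T}{2}$ is harmonic. The only nonroutine point is the passage from matrix positivity of $\lap[p,h]$ to the polynomial identity $\lap[p,h] = \lap[p,h]^T$, which I would state explicitly as the one substantive step of the argument; everything else is direct manipulation of definitions already in place.
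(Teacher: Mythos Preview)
Your proof is correct and follows essentially the same approach as the paper: both use $\lap[p^T,h]=\lap[p,h]^T$ and the fact that subharmonicity forces $\lap[p,h]=\lap[p,h]^T$ to compute the Laplacians of $(p\pm p^T)/2$. The only cosmetic difference is that the paper deduces $\lap[p,h]=\lap[p,h]^T$ from the sum-of-squares characterization of matrix positivity, whereas you argue it directly from pointwise positive-semidefiniteness plus the fact that equal evaluations on all matrix tuples force polynomial equality; both justifications are valid.
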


\begin{proof}First of all, it is straightforward to show that for any $p \in \RRx$ or $\RRNx$ and any variable $x_i$ that $\dird[p^T, x_i, h] = \dird[p,x_i,h]^T$. This implies in particular that
$\lap[p^T, h] = \lap[p, h]^T.$
The polynomial $p$ being subharmonic implies that
$\lap[p, h]$ is matrix positive, or equivalently, that
$\lap[p, h]$ is a sum of hermitian squares.  In particular,
this implies that $\lap[p, h] = \lap[p,h]^T$.
Therefore,
\begin{align}
\notag
\lap\left[\frac{p + p^T}{2},h \right] &= \frac{\lap[p,h] + \lap[p,h]^T}{2} = \lap[p,h] \succeq 0\\
\notag
\lap\left[\frac{p - p^T}{2},h \right] &= \frac{\lap[p,h] - \lap[p,h]^T}{2} = 0.
\end{align}
\end{proof}

\begin{prop}
\label{SosHarmPlusHarm}
Let $x = (x_1, \ldots, x_g)$. Suppose $p \in \RRx$ or $\RRNx$ is a polynomial of the form
\[p = \sum_i^{finite} R_i^T R_i + H \]
where the polynomials $R_i$ and $H$ are harmonic.
Then $p$ is subharmonic.
\end{prop}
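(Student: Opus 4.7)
The plan is to apply the product rule for the Laplacian (Lemma \ref{lem:prodRulap}) termwise to the decomposition $p = \sum_i R_i^T R_i + H$ and show that each contribution to $\lap[p,h]$ is either zero or a sum of hermitian squares, hence matrix positive.

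First I would use linearity of $\lap[\cdot,h]$ to write
\[
\lap[p,h] = \sum_i \lap[R_i^T R_i, h] + \lap[H,h].
\]
Since $H$ is harmonic, $\lap[H,h] = 0$. For each square term, since $R_i$ is harmonic and $\dird[R_i^T,x_j,h] = \dird[R_i,x_j,h]^T$, the consequence (\ref{eq:lapSq}) of the product rule in Lemma \ref{lem:prodRulap} gives
\[
\lap[R_i^T R_i, h] = 2 \sum_{j=1}^g \dird[R_i, x_j, h]^T \dird[R_i, x_j, h].
\]
Summing over $i$ yields
\[
\lap[p,h] = 2 \sum_i \sum_{j=1}^g \dird[R_i,x_j,h]^T \dird[R_i,x_j,h],
\]
which is manifestly a finite sum of hermitian squares in $\RR\langle x, h \rangle$ (or in $\RR\langle x, x^T, h, h^T\rangle$ in the nonsymmetric case). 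By the characterization of matrix positivity via sums of squares from \cite{H02}, this implies $\lap[p,h]$ is matrix positive, so $p$ is subharmonic by definition.

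There is no real obstacle here; the argument is essentially a direct combination of the linearity of the NC Laplacian, the harmonicity of $H$, and the identity (\ref{eq:lapSq}) from Lemma \ref{lem:prodRulap}. The only point worth remarking on is that the product rule (\ref{eq:laplacianproductrule}) and the transpose-derivative compatibility $\dird[q^T,x_i,h] = \dird[q,x_i,h]^T$ were established in the symmetric variable setting, but both extend verbatim to $\RRNx$ since each $\dird[\cdot,x_i,h]$ replaces one instance of $x_i$ by $h$ (and in the nonsymmetric case also any instance of $x_i^T$ by $h^T$), and this operation is still a derivation compatible with the involution. Thus the same calculation proves the result in both $\RRx$ and $\RRNx$.
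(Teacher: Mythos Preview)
Your proof is correct and follows essentially the same approach as the paper: both compute $\lap[p,h]$ via linearity and the product rule of Lemma \ref{lem:prodRulap}, use harmonicity of $H$ and the $R_i$ to kill all but the cross terms, and conclude that $\lap[p,h] = 2\sum_i\sum_j \dird[R_i,x_j,h]^T\dird[R_i,x_j,h]$ is a sum of squares. The only cosmetic difference is that you invoke the packaged identity (\ref{eq:lapSq}) directly, whereas the paper writes out the full expansion (\ref{eq:laplacianproductrule}) before simplifying; your added remark on why the argument extends to $\RRNx$ is a helpful clarification that the paper leaves implicit.
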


\begin{proof}
This observation is proven in \cite{HMH09}.

Applying the Laplacian to $p$ gives
\begin{align}
\lap\left[\sum_i^{finite} R_i^T R_i + H, h\right] &= \sum_i^{finite}\lap[R_i^T , h]R_i + \sum_i^{finite}R_i^T \lap[R_i, h]
\label{eq:sossubh}\\
 &+\lap[H,h] + 2\sum_i^{finite}\sum_{j=1}^g \dird[R_i, x_j,h]^T \dird[R_i, x_j,h].\notag
\end{align} Since the $R_i$ and $H$ are harmonic, (\ref{eq:sossubh}) simplifies to
\begin{equation}
\lap\left[\sum_i^{finite} R_i^T R_i + H, h\right] = 2\sum_i^{finite}\sum_{j=1}^g \dird[R_i, x_j,h]^T \dird[R_i, x_j,h],
\end{equation}  which is a sum of squares.
\end{proof}

\begin{prop}
\label{prop:noneofodddegree}There are no pure subharmonic, homogeneous polynomials of odd degree.
\end{prop}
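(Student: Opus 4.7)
The plan is to derive a contradiction from the degree parity of the Laplacian. Since we already have a convenient description of the Laplacian (via the definition and Lemma \ref{lem:prodRulap}), the whole proof should reduce to a sign-change argument of the kind that shows, in the commutative case, that a homogeneous polynomial of odd degree cannot be nonnegative on $\RR^n$.

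First I would record the homogeneity structure of the Laplacian. Writing out $\lap[p,h]$ using the definition (\ref{eq:dirdysub}), or equivalently observing that each term of $\dird[p,x_i,h]$ replaces one instance of $x_i$ by $h$ so each term of $\lap[p,h]$ replaces two instances of some $x_i$ by $h$, shows that if $p$ is homogeneous of degree $d$ then $\lap[p,h]$ is homogeneous of degree exactly $2$ in $h$ and homogeneous of degree $d-2$ in the variables $x_1,\ldots,x_g$. In particular, for every $g$-tuple $X = (X_1, \ldots, X_g)$ of symmetric matrices and every symmetric matrix $H$,
\[
\lap[p,h](-X, H) = (-1)^{d-2}\lap[p,h](X, H).
\]

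Next, assume toward contradiction that $p$ is a purely subharmonic, homogeneous polynomial of odd degree $d$. Then $d-2$ is odd, so $\lap[p,h](-X,H) = -\lap[p,h](X,H)$. Since $p$ is subharmonic, $\lap[p,h]$ is matrix positive, so both $\lap[p,h](X,H)$ and $\lap[p,h](-X,H)$ are positive semidefinite matrices. Their sum is the zero matrix, which forces $\lap[p,h](X,H) = 0$. As this holds for every symmetric tuple $(X,H)$ of every matrix size, the polynomial $\lap[p,h]$ vanishes on all symmetric matrix evaluations and is therefore identically zero in $\RRx$ (indeed in $\RRNx$, since the same argument applies verbatim there). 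Hence $p$ is harmonic, contradicting the assumption that $p$ is purely subharmonic.

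There is essentially no obstacle here beyond being careful with what ``homogeneous of degree $d-2$ in $x$'' means for a polynomial that also depends on $h$. The argument is genuinely the noncommutative reflection of the classical parity obstruction for nonnegative forms, and it goes through uniformly in both $\RRx$ and $\RRNx$.
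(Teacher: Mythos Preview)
Your argument is correct. Both you and the paper exploit the same parity obstruction, but the implementations differ. The paper's proof is a single line: since matrix positivity is equivalent to being a sum of squares $\sum_i p_i^Tp_i$ (by \cite{H02}), and any such sum has even total degree, a nonzero $\lap[p,h]$ of odd degree $d$ cannot be a sum of squares. You instead work directly from the definition of matrix positivity, using the sign flip $\lap[p,h](-X,H) = -\lap[p,h](X,H)$ to force $\lap[p,h](X,H) = 0$ for all evaluations, and then invoke that a NC polynomial vanishing on all symmetric matrix tuples of all sizes is identically zero. Your route avoids appealing to the sum-of-squares theorem and is in that sense more self-contained, at the cost of needing the (standard) fact that vanishing on all matrix evaluations implies the polynomial is zero; the paper's route is shorter but leans on \cite{H02}.
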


\begin{proof}This also appears in \cite{HMH09}. If $p$ is of odd degree and $\lap[p, h]$
is nonzero, then $\lap[p, h]$ has odd degree, so it cannot be a sum
of squares.
\end{proof}

\subsubsection{A Subharmonic Polynomial Has Harmonic Neighbors}

We use the following proposition, which restates Theorem 2 of \cite{HMH09}.

\begin{prop}[Theorem 2 of \cite{HMH09}]
\label{thm:mainsub}
Let $\mathcal{A} = \RRx$ or $\RRNx$
Let $x = (x_1, \ldots, x_g)$,
and let $\{\h_1, \ldots, \h_k\}$ be a basis for the
set of homogeneous degree $d$ harmonic polynomials in $\mathcal{A}$.
If $p \in \mathcal{A}$ is subharmonic and homogeneous of degree $2d$, then
it has the form
\begin{equation}\label{eq:repp}
p = H(x)^T A H(x),
\end{equation}
where $H(x)$ is the vector
\[H(x) = \begin{pmatrix} \h_1\\ \vdots \\ \h_k \end{pmatrix} \]
and where $A \in \RR^{k \times k}$.
\end{prop}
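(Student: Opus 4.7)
The plan is to show that every subharmonic polynomial of homogeneous degree $2d$ lies in the subspace $W := \spn\{\h_i^T \h_j : 1 \le i, j \le k\}$; once $p \in W$, the representation $p = H(x)^T A H(x)$ follows by reading off coefficients from the expansion of $p$ in this spanning set.

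First I would establish a sublemma: every harmonic polynomial of homogeneous degree $2d$ already lies in $W$. By Theorem \ref{thm:main}, any such polynomial is a sum of products $\prod_{\mu=1}^{2d}\ell_\mu(x)$ with $\ell_\mu(x) = \sum_j a_{\mu j} x_j$ and $\sum_j a_{k_1 j} a_{k_2 j} = 0$ for every $1 \le k_1 < k_2 \le 2d$. Splitting each product at the midpoint as $\bigl(\prod_{\mu=1}^d \ell_\mu\bigr)\bigl(\prod_{\mu=d+1}^{2d}\ell_\mu\bigr)$, each half inherits the orthogonality conditions on its own $d$ indices and is therefore a homogeneous degree $d$ harmonic polynomial; hence each half lies in $\spn\{\h_m\}$, and their product lies in $\spn\{\h_a \h_b\}$. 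This coincides with $W$ because $\h_m^T$ is also harmonic of degree $d$ (the Laplacian respects transposes) and so a linear combination of the $\h_\ell$'s. The same argument handles $\RRNx$ using Corollary \ref{cor:extMainNS} in place of Theorem \ref{thm:main}.

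Second, by the Laplacian product rule (Lemma \ref{lem:prodRulap}) and harmonicity of each $\h_i$,
\[
\lap[H^T A H, h] = 2 \sum_{r=1}^g \dird[H, x_r, h]^T A\, \dird[H, x_r, h],
\]
so $\lap$ maps $W$ into the linear span of the expressions $\sum_{r=1}^g \dird[\h_i, x_r, h]^T \dird[\h_j, x_r, h]$. The main step is then to show that $\lap[p, h]$ lies in this same span for every subharmonic $p$ of homogeneous degree $2d$. Granted this, one finds a matrix $A$ with $\lap[p - H^T A H, h] = 0$; the residual $p - H^T A H$ is then a harmonic polynomial of degree $2d$, lies in $W$ by the sublemma, and so $p \in W$ can be rewritten as $H^T A' H$ for some matrix $A'$.

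For this main step I would start from an SOS representation $\lap[p, h] = \sum_i Q_i^T Q_i$ with each $Q_i$ homogeneous of bidegree $(d-1, 1)$ in $(x, h)$, and show by polarization and coefficient matching that each $Q_i$ can be written as $\sum_{a, r} c_{i a r}\, \dird[\h_a, x_r, h]$; the coefficient arrays $(c_{i a r})_i$ then assemble into a positive semidefinite matrix $A$ satisfying $\lap[H^T A H, h] = \sum_i Q_i^T Q_i = \lap[p, h]$. The hard part will be establishing this matching: verifying that $\spn\{\dird[\h_a, x_r, h] : 1 \le a \le k,\ 1 \le r \le g\}$ is rich enough to contain every square-root arising in SOS decompositions of elements in the image of $\lap$ on homogeneous degree $2d$ polynomials. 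I expect this to require a Fischer-type orthogonal decomposition of bidegree $(d-1,1)$ polynomials in $(x,h)$, or a careful dimension count tying the image of differentiation on harmonic polynomials to the image of the Laplacian on the full space of homogeneous degree $2d$ polynomials.
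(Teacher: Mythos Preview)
Your proposal has two real gaps, and the paper's argument avoids both by a much more direct route.

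\textbf{The sublemma.} Your argument via Theorem~\ref{thm:main} only works cleanly when $g \ge 2d$. When $g < 2d$ you must invoke part~(\ref{item:highdegmain}), which decomposes the harmonic polynomial using dummy variables $x_{g+1},\dots,x_{2d}$. The halves $\prod_{\mu=1}^d\ell_\mu$ and $\prod_{\mu=d+1}^{2d}\ell_\mu$ are then harmonic in $2d$ variables, not in $g$ variables, so they need not lie in $\spn\{\h_1,\dots,\h_k\}$. The sum of all such products recovers $p$ (so the dummy variables cancel overall), but you have not shown that this cancellation can be arranged half-by-half. There is also a field issue: Theorem~\ref{thm:main} is over $\CC$, while here $\mathcal{A}$ is over $\RR$.

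\textbf{The main step.} You concede this is the hard part and leave it open. The claim that every $Q_i$ in an SOS decomposition of $\lap[p,h]$ lies in $\spn\{\dird[\h_a,x_r,h]\}$ is not justified: that span is a proper subspace of the bidegree-$(d-1,1)$ polynomials whenever $d \ge 2$, and SOS decompositions are far from unique. Even the weaker claim that $\sum_i Q_i^T Q_i$ lies in the span of the expressions $\sum_r \dird[\h_i,x_r,h]^T\dird[\h_j,x_r,h]$ is not obvious without already knowing $p \in W$, which is what you are trying to prove.

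\textbf{What the paper does instead.} The paper's proof is short and uses neither Theorem~\ref{thm:main} nor any SOS coefficient matching. Complete $H(x)$ to a full basis $(H(x),Q(x))$ of degree-$d$ homogeneous polynomials and write $p$ in block form with matrix $\left(\begin{smallmatrix}A&B\\C&D\end{smallmatrix}\right)$. The product rule splits $\lap[p,h]$ into a ``derivative--derivative'' piece and two ``Laplacian--polynomial'' pieces. The key observation concerns the \emph{position} of the two $h$'s in each degree-$2d$ monomial: since $\lap[p,h]$ is a sum of squares of bidegree-$(d-1,1)$ polynomials, every monomial in it has exactly one $h$ among its first $d$ letters and one among its last $d$. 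But the two ``Laplacian--polynomial'' pieces place both $h$'s on the same side, so they must each vanish identically. Since $\lap[H,h]=0$ and the entries of $\lap[Q,h]$ are linearly independent (no nontrivial combination of the $q_i$ is harmonic), Lemma~\ref{lemma:goodtechnical} then forces $B=C=D=0$. This position-of-$h$ argument is the idea you are missing; it replaces both your sublemma and your main step in one stroke.
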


To prove this proposition, consider the following lemma.

\begin{lemma}
\label{lemma:goodtechnical}
Let $x = (x_1, \ldots, x_g)$.
Let $V(x) = (v_1, \ldots, v_r)^T$
be a vector of linearly independent
homogeneous degree $d_r$ NC polynomials, and let  $W(x) = (w_1, \ldots, w_s)^T$ be a vector of linearly independent
homogeneous degree $d_s$ NC polynomials. If
 $A \in \RR^{r \times s}$, then
\[V(x)^T A W(x) = 0\]
if and only if $A = 0$.
\end{lemma}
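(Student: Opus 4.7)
The plan is to reduce the claim to the statement that the multiplication map on homogeneous components of the free algebra is injective on the tensor product, and then apply a standard rank argument. Specifically, write $V(x)^T A W(x) = \sum_{i=1}^r \sum_{j=1}^s A_{ij} v_i w_j$, so the content of the lemma is that the $rs$ products $\{v_i w_j\}$ are linearly independent in $\FFx$ whenever $\{v_i\}$ and $\{w_j\}$ are linearly independent.

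First I would expand in the monomial basis. Write $v_i = \sum_{m \in \cM,\, |m|=d_r} \alpha_{im} m$ and $w_j = \sum_{n \in \cM,\, |n|=d_s} \beta_{jn} n$. The key structural fact about $\FFx$ is that every monomial $\mu$ of length $d_r + d_s$ factors \emph{uniquely} as $\mu = m\,n$ with $|m| = d_r$ and $|n| = d_s$ (just cut after the $d_r$-th letter). Consequently, the product $mn$ as $(m,n)$ ranges over $\cM_{d_r} \times \cM_{d_s}$ enumerates, without repetition, a basis for the homogeneous degree $d_r+d_s$ monomials used. Therefore the coefficient of the monomial $mn$ in $V(x)^T A W(x)$ is exactly
\begin{equation*}
\sum_{i=1}^r \sum_{j=1}^s A_{ij}\,\alpha_{im}\,\beta_{jn},
\end{equation*}
and the hypothesis $V(x)^T A W(x) = 0$ is equivalent to the vanishing of this scalar for every pair $(m,n)$.

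Now I would finish with linear algebra. Form the matrix $M$ whose $(i,m)$ entry is $\alpha_{im}$ and the matrix $N$ whose $(j,n)$ entry is $\beta_{jn}$. The vanishing condition reads $M^T A N = 0$ as a matrix equation. Linear independence of $\{v_1, \ldots, v_r\}$ is exactly the statement that $M$ has linearly independent rows, i.e.\ that $M^T$ is injective as a linear map; similarly $N$ has linearly independent rows, so $N$ is surjective. Injectivity of $M^T$ gives $AN = 0$, and surjectivity of $N$ then forces $A = 0$. The reverse implication, that $A = 0$ implies $V^T A W = 0$, is trivial.

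I do not anticipate a serious obstacle here: the whole argument rests on the unique factorization of a free-algebra monomial into a prefix of fixed length and a complementary suffix, which is immediate from the definition of $\cM$. The only conceptual point worth flagging in the write-up is exactly that uniqueness, since it is what distinguishes the free (noncommutative) setting from the commutative one, where products of basis monomials can coincide and an analogous statement would require more care.
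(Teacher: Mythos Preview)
Your proof is correct and rests on the same key observation as the paper's: a monomial of length $d_r + d_s$ factors uniquely as a length-$d_r$ prefix times a length-$d_s$ suffix. The paper organizes the linear algebra as a three-step reduction (first the case where $V$ and $W$ list all monomials, then where they are full bases via an invertible change of basis, then the general case by extending to a basis and padding $A$ with zeros), whereas you go straight to the coefficient matrices $M$, $N$ and the identity $M^T A N = 0$, then finish with the rank argument; your packaging is a bit more economical but the substance is the same.
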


\begin{proof}First consider the case where $V = M_{d_r}$ is a vector whose entries are all the monomials of degree $d_r$,
and where $W = M_{d_s}$ is a vector whose entries are all the monomials of degree $d_s$.  Let $A \in \RR^{r \times s}$ be such that
$M_r(x)^T A M_s(x) = 0$.  Each NC monomial $m$ of degree $d_r + d_s$ can be uniquely expressed as a product
\[m = m_r m_s, \]
where $m_r$ is a monomial consisting of the first $d_r$ entries of $m$ and
$m_s$ is a monomial consisting of the last $d_s$ entries of $m$.
Let $A_{m_r^T,m_s}$ be the entry of $A$ corresponding to $m_r^T$ on the left and $m_s$ on the right.
Then,
\[M_r(x)^T A M_s(x) = \sum_{|m_r| = d_r} \sum_{|m_s| = d_s} A_{m_r^T,m_s} m_rm_s = 0. \]
This implies that each $A_{m_r^T,m_s} = 0$, or in other words, that $A = 0$.

Next consider the case where the entries of $V$ span the set of all homogeneous degree $d_r$ polynomials, and where the entries of $W$ span the set of all homogeneous degree $d_s$ polynomials.  The elements of $V$ may be expressed uniquely as a linear combination of the elements of $M_r$.  Therefore there exists an invertible matrix $R$ such that
\[RV(x) = M_r(x). \]
Similarly, there exists an invertible matrix $S$ such that
\[SW(x) = M_s(x). \]
Therefore
\[ V(x)^T A W(x) = M_r^T R^T A S M_s = 0 \]
which implies that $R^TAS = 0$. Since $R$ and $S$ are invertible, it follows that $A = 0$.

Finally, consider the generic case.  Let $V'$ be a vector whose entries, together with the entries of $V$, form a basis for the set of homogeneous degree $d_r$ polynomials, and let $W'$ be a vector whose entries, together with the entries of $W$, form a basis for the set of homogeneous degree $d_s$ polynomials. Suppose $V(x)^TAW = 0$.  Then
\[V^T A W = \begin{pmatrix} V\\V' \end{pmatrix}^T \left( \begin{array}{cc}
A&0\\
0&0
\end{array}
\right)
\begin{pmatrix} W\\W' \end{pmatrix} =0,\]
which implies that $A = 0$.
\end{proof}

\begin{definition}Let $V(x) = (v_1, \ldots, v_k)^T$ be a vector of NC polynomials.
Let $\dird[V,x_i,h]$ denote
\[\dird[V,x_i,h]:= (\dird[v_1,x_i,h], \ldots, \dird[v_k,x_i,h])^T, \]
and let $\lap[V,h]$ denote
\[\lap[V,h] = (\lap[v_1,h], \ldots, \lap[v_k,h])^T. \]
\end{definition}

\begin{proof}[Proof of Proposition \ref{thm:mainsub}] \ \
This proof is based on the proof of Theorem 2 of \cite{HMH09}.
Let $Q(x)$ be a vector whose entries together with the entries of $H(x)$ form
a basis for the space of homogeneous degree $d$ polynomials.
Let $p$ be equal to
\[p = \begin{pmatrix}H(x)\\ Q(x) \end{pmatrix}^T
\left(\begin{array}{cc}
A&B\\
C&D
\end{array}
\right)
\begin{pmatrix}H(x)\\ Q(x) \end{pmatrix}. \]
It suffices to prove that $B,C,$ and $D = 0$.

By the product rule for Laplacian of Lemma \ref{lem:prodRulap},
\begin{align}
\label{eq:firstlineneighbor}
\lap[p,h] &= 2\sum_{i=1}^g  \begin{pmatrix}\dird[H(x),x_i,h]\\ \dird[Q(x),x_i,h] \end{pmatrix}^T
\left(\begin{array}{cc}
A&B\\
C&D
\end{array}
\right)
\begin{pmatrix}\dird[H(x),x_i,h]\\ \dird[Q(x),x_i,h] \end{pmatrix}\\
\label{eq:secondlineneighbor}
&+  \begin{pmatrix}\lap[H(x),h]\\ \lap[Q(x),h] \end{pmatrix}^T
\left(\begin{array}{cc}
A&B\\
C&D
\end{array}
\right)
\begin{pmatrix}H(x)\\ Q(x) \end{pmatrix}\\
\label{eq:thirdlineneighbor}
&+   \begin{pmatrix}H(x)\\ Q(x) \end{pmatrix}^T
\left(\begin{array}{cc}
A&B\\
C&D
\end{array}
\right)
\begin{pmatrix}\lap[H(x),h]\\ \lap[Q(x),h] \end{pmatrix}.
\end{align}
Since $p$ is subharmonic, $\lap[p,h]$ must be a sum of squares.  Since $\lap[p,h]$ is homogeneous of
degree $2d$ and homogeneous of degree $2$ in $h$, it must be that $\lap[p,h]$ is a sum of squares of homogeneous degree $d$ polynomials which are homogeneous of degree $1$ in $h$.  Therefore $\lap[p,h]$ is of the form
\begin{equation}
\label{eq:formofsubharmlap}\lap[p,h] = V(x,h)^T Z V(x,h),
\end{equation}
where $V(x,h)$ is a vector of homogeneous degree $d$ polynomials which are homogeneous of degree $1$ in $h$, and where $Z \succeq 0$.

One sees the following:
\begin{itemize}
\item The expression (\ref{eq:firstlineneighbor}) is spanned by terms with one $h$ within the first $d$
entries and one $h$ within the last $d$ entries.
\item The expression (\ref{eq:secondlineneighbor}) is spanned by terms with two $h$ within the first $d$
entries and no $h$ within the last $d$ entries.
\item The expression (\ref{eq:thirdlineneighbor}) is spanned by terms with no $h$ within the first $d$
entries and two $h$ within the last $d$ entries.
\end{itemize}
Since the terms of (\ref{eq:formofsubharmlap}) have one $h$ in the first $d$ entries, the expressions (\ref{eq:secondlineneighbor}) and (\ref{eq:thirdlineneighbor}) are each equal to $0$.
Consider (\ref{eq:secondlineneighbor}). The vector $\lap[H(x),h]$ must be zero since $H(x)$ is spanned by harmonic polynomials.
On the other hand, the entries of $\lap[Q(x),h]$ are linearly independent for the following reason:

Suppose $q_1, \ldots, q_n$ are the entries of $Q(x)$, which are assumed to be linearly independent. Let $\alpha_1, \ldots, \alpha_n$ be scalars such that
\[\alpha_1\lap[q_1,h] + \ldots + \alpha_n \lap[q_n,h] = \lap[\alpha_1 q_1 + \ldots + \alpha_n q_n, h]= 0. \]
This implies that $\alpha_1q_1 + \ldots + \alpha_nq_n$ is harmonic.  However, the elements of $Q(x)$ are linearly independent from the entries of $H(x)$, which span the set of NC harmonic, homogeneous degree $d$ polynomials.  Therefore $\alpha_1q_1 + \ldots + \alpha_nq_n = 0$, which implies, by linear independence, that each $\alpha_i = 0$.

Therefore,
\[\lap[Q(x),h]^T \left(\begin{array}{cc} C&D \end{array}\right)\begin{pmatrix}H(x)\\Q(x)\end{pmatrix}
=0. \]
By Lemma \ref{lemma:goodtechnical}, this implies that $C$ and $D$ are equal to zero.
A similar argument on (\ref{eq:thirdlineneighbor}) proves that $B$ also is equal to zero.
\end{proof}

\subsubsection{Classification of Select Classes of Subharmonic NC Polynomials}

In degree two, the NC subharmonic polynomials have a simple classification.

\begin{prop}
\label{prop:deg2subharm}
Let $x = (x_1, \ldots, x_g)$.
\begin{enumerate}
\item The degree $2$ subharmonic polynomials in $\RRx$ are
precisely those of the form
\begin{equation}
\label{eq:formsubdegree2}
A x_1^2 + H
\end{equation}
for a constant $A \geq 0$ and a NC harmonic polynomial $H \in \RRx$.
\item The degree $2$ subharmonic polynomials in $\RRNx$ are
precisely those of the form
\begin{equation}
\label{eq:formsubdegree2NS}
A x_1^Tx_1 + B(x_1x_1 +x_1^Tx_1^T) + C x_1x_1^T + H
\end{equation}
for a constant matrix
\[\left(
\begin{array}{cc}
A&B\\
B&C
\end{array}\right) \] which is positive semidefinite and a NC harmonic polynomial $H \in \RRNx$.
\end{enumerate}
\end{prop}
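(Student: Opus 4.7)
The plan is to classify degree 2 subharmonic polynomials by computing the Laplacian directly, identifying its coefficients as scalars, and matching against the canonical form. In both parts, all terms of degree $\leq 1$ contribute nothing to the Laplacian (by Proposition \ref{prop:deg2harm} item \ref{item:lowdeglharm} they are harmonic), so they can be absorbed into $H$; thus I restrict attention to the degree-2 homogeneous part of $p$.

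For Part 1, I would first record that $\lap[x_ix_j,h]=0$ whenever $i\neq j$, while $\lap[x_i^2,h]=2h^2$. Writing the degree-2 part of $p\in\RRx$ as $\sum_{i,j}c_{ij}x_ix_j$, linearity of $\lap$ then gives $\lap[p,h]=2h^2\bigl(\sum_i c_{ii}\bigr)$. Since $h$ is evaluated on symmetric matrices, $h^2$ is matrix positive; hence subharmonicity of $p$ is equivalent to $A:=\sum_i c_{ii}\geq 0$. The polynomial $p-Ax_1^2$ then has zero Laplacian, so it is harmonic, yielding the decomposition $p=Ax_1^2+H$. The converse direction is immediate from $\lap[Ax_1^2+H,h]=2Ah^2\succeq 0$.

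For Part 2, the analogous calculation in $\RRNx$ yields the table
\[
\lap[x_i^Tx_j,h]=2\delta_{ij}h^Th,\quad \lap[x_ix_j,h]=2\delta_{ij}h^2,\quad \lap[x_ix_j^T,h]=2\delta_{ij}hh^T,\quad \lap[x_i^Tx_j^T,h]=2\delta_{ij}(h^T)^2.
\]
So for a general degree-2 $p\in\RRNx$ the Laplacian takes the form $2\bigl[ah^Th+bh^2+chh^T+d(h^T)^2\bigr]$ with $a,b,c,d$ equal to the traces of the four coefficient matrices of the $x_i^Tx_j$, $x_ix_j$, $x_ix_j^T$, $x_i^Tx_j^T$ parts of $p$, respectively. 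Letting $V=(h,h^T)^T$, this Laplacian is $2V^T M V$ with $M=\bigl(\begin{smallmatrix}a & d\\ b & c\end{smallmatrix}\bigr)$. Now by Helton's SOS theorem \cite{H02}, $\lap[p,h]$ is matrix positive iff it is a finite sum of hermitian squares; expanding a candidate SOS representation $\sum_i(\alpha_ih+\beta_ih^T)^T(\alpha_ih+\beta_ih^T)$ and equating coefficients forces $d=b$ together with the PSD condition $\bigl(\begin{smallmatrix}a & b\\ b & c\end{smallmatrix}\bigr)\succeq 0$. Setting $A:=a$, $B:=b=d$, $C:=c$, the canonical polynomial $Ax_1^Tx_1+B(x_1x_1+x_1^Tx_1^T)+Cx_1x_1^T$ has the identical Laplacian, so its difference with $p$ is harmonic, which establishes the claimed form. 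The converse is again immediate by evaluating the Laplacian.

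The main technical obstacle is the Gram-matrix step in Part 2: showing that $V^TMV$ is a sum of hermitian squares exactly when $M$ is symmetric and PSD. This rests on two observations: the four monomials $h^Th,\,h^2,\,hh^T,\,(h^T)^2$ are linearly independent in $\RRNx$, so $M$ is uniquely determined by the polynomial $V^TMV$; and a symmetric PSD $M$ admits a Cholesky factorization $M=L^TL$, giving $V^TMV=(LV)^T(LV)$, while conversely any SOS representation produces such a factorization.
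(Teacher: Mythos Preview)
Your proof is correct and follows essentially the same approach as the paper's: compute the Laplacian directly, identify it as a scalar multiple of $h^2$ (Part 1) or as a quadratic form $V^TMV$ in $V=(h,h^T)^T$ (Part 2), and characterize matrix positivity via the sign/PSD condition on the coefficients. In Part 2 you are in fact more careful than the paper in arguing that the off-diagonal entries of the Gram matrix must coincide (via the SOS expansion forcing $b=d$); the paper simply writes the matrix as $\bigl(\begin{smallmatrix}A&B\\B&C\end{smallmatrix}\bigr)$ without justifying that the $(1,1)$ and $(T,T)$ traces agree.
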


\begin{proof}\begin{enumerate}
\item This is proven in part (2a) of Theorem 1 of \cite{HMH09}.

Let $p \in \RRx$ be a degree $2$ polynomial. Express $p$ as
\[p = \sum_{i,j} A_{ij} x_ix_j + L(x),\]
where $L(x)$ is affine linear.
Then $p$ equals
\begin{equation}
\label{eq:deg2subharm}
p = L(x) + \sum_{i \neq j} A_{ij}x_ix_j + \sum_{i=2}^g A_{ii}(x_i^2 - x_1^2) +
\left(\sum_{i=1}^g A_{ii}\right)x_1^2.
\end{equation}
Since the first three terms of (\ref{eq:deg2subharm}) are harmonic,
the Laplacian of $p$ is \[\lap[p, h] = \left(\sum_{i=1}^g A_{ii}\right)h^2\]
which is matrix positive if and only if $A:= \sum_{i=1}^g A_{ii} \geq 0$.

\item
Let $p \in \RRNx$ be a degree $2$ polynomial. Express $p$ as
\[p = \sum_{\alpha \in \{1,T\}^2} \sum_{i,j} A_{ij\alpha} x_ix_j^{\alpha} + L(x),\]
where $L(x)$ is affine linear.
Then $p$ equals
\begin{align}
\label{eq:deg2subharmNS}
p &= L(x) + \sum_{\alpha \in \{1,T\}^2} \sum_{i \neq j} A_{ij\alpha}x_ix_j^{\alpha} + \sum_{\alpha \in \{1,T\}^2}\sum_{i=2}^g A_{ii\alpha}(x_i^2 - x_1^2)^{\alpha} \\
\notag &+ \sum_{\alpha \in \{1,T\}^2} \left(\sum_{i=1}^g A_{ii\alpha}\right)(x_1^2)^{\alpha}.
\end{align}
Since the first three terms of (\ref{eq:deg2subharmNS}) are harmonic,
the Laplacian of $p$ is \[\lap[p] = 2\begin{pmatrix}
h\\h^T
\end{pmatrix}^T
\left(\begin{array}{cc}
A&B\\
B&C
\end{array}\right)
\begin{pmatrix}
h\\h^T
\end{pmatrix}
, \]
where
\[
\left(
\begin{array}{cc}
A&B\\
B&C
\end{array}\right)
:= \left(
\begin{array}{cc}
\sum_{i=1}^g A_{ii(T,1)}&\sum_{i=1}^g A_{ii(1,1)}\\
\sum_{i=1}^g A_{ii(T,T)}&\sum_{i=1}^g A_{ii(1,T)}
\end{array}\right)
 \] is positive semidefinite if and only if $p$ is subharmonic.
\end{enumerate}
\end{proof}

Homogeneous subharmonic polynomials $p \in \RR\langle x_1, x_2 \rangle$
all have a convenient form.

\begin{prop}
\label{prop:2varsub}The homogeneous degree $2d$ subharmonics in
$\RR\langle x_1, x_2 \rangle$
are precisely those of the form
\begin{equation}
\label{eq:sosplusharm}
 p=  \sum_i^{finite}  R_i^T R_i +  H
\end{equation}
where the $R_i$ are harmonic homogeneous degree $d$ polynomials
and where $H$ is a NC harmonic, homogeneous degree $2d$ polynomial.
\end{prop}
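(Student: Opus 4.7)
The plan is as follows. The $(\Leftarrow)$ direction is an immediate application of Proposition \ref{SosHarmPlusHarm}. For the reverse direction, let $p$ be a homogeneous degree $2d$ subharmonic polynomial in $\RR\langle x_1, x_2\rangle$. By Lemma \ref{lemma:symm}, the antisymmetric part $(p - p^T)/2$ is harmonic and can be absorbed into $H$, so I assume $p = p^T$. By Proposition \ref{prop:2dimensional}, the space of homogeneous degree $d$ harmonic polynomials in two variables is spanned by $h_1 := \re((x_1+ix_2)^d)$ and $h_2 := \im((x_1+ix_2)^d)$, both of which are symmetric in $\RR\langle x_1, x_2\rangle$. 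Proposition \ref{thm:mainsub} then gives $p = (h_1,h_2)\,M\,(h_1,h_2)^T$ for some $M \in \RR^{2\times 2}$; uniqueness via Lemma \ref{lemma:goodtechnical} combined with $p = p^T$ forces $M$ to be symmetric.

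The crux of the argument is a noncommutative analogue of the Cauchy--Riemann equations for $h_1, h_2$. Set $u := x_1+ix_2$ and write $\sum_{k=0}^{d-1} u^k h u^{d-1-k} = P + iQ$, where $P, Q \in \RR\langle x_1, x_2, h\rangle$ are symmetric polynomials (symmetry of $P, Q$ follows from a reindexing after transposing each summand). A direct computation using $\dird[u,x_1,h] = h$ and $\dird[u,x_2,h] = ih$ yields
\[ \dird[h_1, x_1, h] = P = \dird[h_2, x_2, h], \qquad \dird[h_1, x_2, h] = -Q = -\dird[h_2, x_1, h]. \]
Applying Lemma \ref{lem:prodRulap} and using $\lap[h_i, h] = 0$, one obtains $\lap[p, h] = 2\sum_{k=1}^{2}\dird[(h_1,h_2)^T,x_k,h]^T\,M\,\dird[(h_1,h_2)^T,x_k,h]$. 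Substituting the identities above, the off-diagonal contributions $M_{12}(PQ + QP)$ from $k=1$ and $-M_{12}(QP + PQ)$ from $k=2$ cancel, leaving
\[ \lap[p, h] = 2(M_{11} + M_{22})(P^T P + Q^T Q). \]
Since $P^T P + Q^T Q$ is a nonzero sum of hermitian squares, matrix positivity of $\lap[p, h]$ forces $M_{11} + M_{22} \geq 0$.

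The same Cauchy--Riemann identities show that $h_1^T h_1 - h_2^T h_2$ and $h_1^T h_2 + h_2^T h_1$ are both harmonic (their Laplacians collapse by the same cancellation). Writing
\[ p = c\,(h_1^T h_1 + h_2^T h_2) + \tfrac{M_{11}-M_{22}}{2}(h_1^T h_1 - h_2^T h_2) + M_{12}(h_1^T h_2 + h_2^T h_1), \]
where $c := (M_{11}+M_{22})/2 \geq 0$, the last two terms are harmonic and may be absorbed into $H$. Setting $R_i := \sqrt{c}\, h_i$, each $R_i$ is a homogeneous degree $d$ harmonic polynomial and $c\,(h_1^T h_1 + h_2^T h_2) = R_1^T R_1 + R_2^T R_2$, which gives the required decomposition.

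The main obstacle is establishing the noncommutative Cauchy--Riemann identities and carrying out the cancellation that reduces $\lap[p,h]$ to $2(M_{11}+M_{22})(P^TP+Q^TQ)$; once these are in hand, the rest of the argument is routine linear algebra.
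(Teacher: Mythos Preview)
Your argument is correct and essentially matches the paper's for $d\neq 2$: both use Proposition~\ref{thm:mainsub} together with Proposition~\ref{prop:2dimensional} to write $p$ as a quadratic form in $\re((x_1+ix_2)^d)$ and $\im((x_1+ix_2)^d)$, then observe that $h_1^Th_1-h_2^Th_2$ and $h_1^Th_2+h_2^Th_1$ are harmonic and reduce to a nonnegative scalar times a sum of squares of harmonics. Your Cauchy--Riemann packaging of those harmonicity observations is a clean touch, but the underlying structure of the proof is the same.

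There is, however, a genuine gap at $d=2$. Proposition~\ref{prop:2dimensional} is stated only for $d>2$, and for good reason: when $d=2$ the space of homogeneous harmonic polynomials in $\RR\langle x_1,x_2\rangle$ is three-dimensional, spanned for instance by $x_1^2-x_2^2$, $x_1x_2+x_2x_1$, and $x_1x_2$ (cf.\ Proposition~\ref{prop:deg2harm}). So your invocation of Proposition~\ref{thm:mainsub} with the two-element ``basis'' $(h_1,h_2)$ is not valid in that case, and the $2\times 2$ matrix $M$ does not capture all homogeneous degree~$4$ subharmonics. The paper singles out $d=2$ as the hard case and handles it by a separate, longer computation: it writes $p$ modulo harmonics as a quadratic form in $(x_1^2-x_2^2,\,x_1x_2+x_2x_1,\,x_1x_2)$, computes the $4\times 4$ representing matrix of $\lap[p,h]$ in a monomial basis, and extracts from positive semidefiniteness of that matrix an inequality (essentially $a^2+c^2\leq bd$ in the paper's notation) which is then used to exhibit an explicit positive semidefinite $3\times 3$ representation of $p$ modulo a harmonic. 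Your Cauchy--Riemann cancellation does not survive the extra harmonic direction, so you will need a genuinely different argument for $d=2$.
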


\begin{proof}
This proposition is Theorem 1 of \cite{HMH09} for $d \neq 2$.  It will be proven
here for all $d$.  The $d \neq 2$ case will be shown here for the convenience of
the reader.

If $p$ is a degree $2$
subharmonic then Proposition \ref{prop:deg2subharm}
is of the form (\ref{eq:sosplusharm}).

If $p$ is of degree $2d$, $d > 2$, then
by Propositions \ref{prop:2dimensional} and \ref{thm:mainsub} it is of the form
\[A_{11}\re((x + i y)^d)\re((x+iy)^d) + A_{12}\re((x + i y)^d)\im((x + i y)^d) \]
\[+ A_{21}\im((x + i y)^d)\re((x + i y)^d) + A_{22} \im((x + i y)^d)\im((x + i y)^d).\]
The polynomials
\[\re((x + i y)^d)\im((x + i y)^d),\quad \im((x + i y)^d)\re((x + i y)^d),\]
\[ and \quad \im((x + i y)^d)\im((x + i y)^d) - \re((x + i y)^d)\re((x+iy)^d)\]
are harmonic, which means $p$ is of the form
\[p = (A_{11} - A_{22})\re((x + i y)^d)\re((x+iy)^d) + H \]
with $H \in \RRx$ harmonic. Therefore
\begin{align}
\lap[p, h] &= (A_{11} - A_{22})\lap[\re((x + i y)^d)\re((x+iy)^d), h]\\
& = 2(A_{11} - A_{22})\sum_{j=1}^g \dird[\re((x + i y)^d), x_j, h]^2
\end{align}
which is matrix positive if and only if $A_{11} - A_{22} \geq 0$.

The $d = 2$ case is more difficult.
A polynomial $p$ is a sum of squares if it can
be expressed in the form $p = V^TAV$ for some vector of
polynomials $V$ and some positive semidefinite constant
matrix $A$.
By Proposition \ref{prop:2dimensional}, the NC harmonic polynomials of homogeneous degree 4 are
linear combinations of
\[\h_1 = \Symm[x_1^4 - 6x_1^2x_2^2 + x_2^4] \quad and \quad \h_2 = \Symm[x_1^3x_2 - x_2^3x_1]\]
by Proposition \ref{prop:2dimensional}.
Express the set of homogeneous degree 4
harmonics $H$ as
\begin{equation}
\label{eq:harmonicsdegree4} H = \begin{pmatrix}
x_1^2 - x_2^2\\ x_1 x_2 + x_2 x_1\\x_1x_2
\end{pmatrix}^T \left( \begin{array}{ccc}
\alpha&\beta&0\\
\beta&-\alpha&0\\
0&0&0 \end{array} \right)\begin{pmatrix}
x_1^2 - x_2^2\\ x_1 x_2 + x_2 x_1\\x_1x_2
\end{pmatrix}
\end{equation}
for arbitrary scalars $\alpha, \beta$ corresponding
to $\h_1$ and $\h_2$ respectively.

Let $p \in \RRx$ be a
NC, degree 4, subharmonic
polynomial. By Lemma \ref{lemma:symm},
$p$ may be expressed as a symmetric polynomial plus
a harmonic polynomial.
Given this and given (\ref{eq:harmonicsdegree4}),
$p$ is equal to
\[p = H_1 + \begin{pmatrix}
x_1^2 - x_2^2\\ x_1 x_2 + x_2 x_1\\x_1x_2
\end{pmatrix}^T \left( \begin{array}{ccc}
0&0&a\\
0&b&c\\
a&c&d \end{array} \right)\begin{pmatrix}
x_1^2 - x_2^2\\ x_1 x_2 + x_2 x_1\\x_1x_2
\end{pmatrix}\]
for some harmonic polynomial $H_1$.
In this case, $\lap[p, h]$ is equal to
\begin{equation}
\label{eq:subtheabove}
\lap[p, h] = \begin{pmatrix}
x_1 h + h x_1\\x_2h + hx_2\\x_1h\\hx_2
\end{pmatrix}^T \left( \begin{array}{cccc}
b&0&c&a\\
0&b&-a&c\\
c&-a&d&0\\
a&c&0&d \end{array} \right)\begin{pmatrix}
x_1 h + h x_1\\x_2h + hx_2\\x_1h\\hx_2
\end{pmatrix}.
\end{equation}
Since $p$ is subharmonic, the polynomial (\ref{eq:subtheabove}) must be a sum of squares,
which implies that the
constant matrix in (\ref{eq:subtheabove}) is positive semidefinite.
Since the diagonal entries of a positive semidefinite matrix are non-negative, $b, d \geq 0$.
Further, if a diagonal entry of a positive semidefinite matrix is $0$, then the row and column entries corresponding to that diagonal entry are also $0$.
Therefore, if either $b$ or $d$ equals zero, then both $a$ and
$c$ are zero, in which case the result is trivial.  Otherwise
\[ \left( \begin{array}{cccc}
\frac{1}{\sqrt{b}}&0&0&0\\
0&\frac{1}{\sqrt{b}}&0&0\\
0&0&\frac{1}{\sqrt{d}}&0\\
0&0&0&\frac{1}{\sqrt{d}}\end{array} \right)\left( \begin{array}{cccc}
b&0&c&a\\
0&b&-a&c\\
c&-a&d&0\\
a&c&0&d \end{array} \right)\left( \begin{array}{cccc}
\frac{1}{\sqrt{b}}&0&0&0\\
0&\frac{1}{\sqrt{b}}&0&0\\
0&0&\frac{1}{\sqrt{d}}&0\\
0&0&0&\frac{1}{\sqrt{d}}\end{array} \right)\]
\begin{equation}
\label{eq:conjmatrix}
=\left( \begin{array}{cccc}
1&0&r \sin(\theta)&r \cos(\theta)\\
0&1&-r \cos(\theta)&r \sin(\theta)\\
r \sin(\theta)&-r \cos(\theta)&1&0\\
r \cos(\theta)&r \sin(\theta)&0&1 \end{array} \right) \succeq 0,
\end{equation}
where
\[r \cos(\theta) = \frac{a}{\sqrt{bd}}
\quad
and
\quad
r \sin(\theta) = \frac{c}{\sqrt{bd}},\]
with $r \geq 0$.
All of the principal minors of (\ref{eq:conjmatrix}) must be positive, so
\[\left|\begin{array}{ccc}
1&0&r \sin(\theta)\\
0&1&-r \cos(\theta)\\
r \sin(\theta)&-r \cos(\theta)&1
 \end{array}\right| = 1 - r^2 \geq 0. \]
Therefore
$0 \leq r \leq 1$.

Therefore $p$ equals
\begin{equation}
\label{eq:involvesMatrixA}p = H + \begin{pmatrix}
x_1^2 - x_2^2\\ x_1 x_2 + x_2 x_1\\x_1x_2
\end{pmatrix}^T
\left( \begin{array}{ccc}
b\cos^2(\theta)&b\cos(\theta)\sin(\theta)&\sqrt{bd}r\cos(\theta)\\
b\cos(\theta)\sin(\theta)&b\sin^2(\theta)&\sqrt{bd}r\sin(\theta)\\
\sqrt{bd}r\cos(\theta)&\sqrt{bd}r\sin(\theta)&d
\end{array} \right)
\begin{pmatrix}
x_1^2 - x_2^2\\ x_1 x_2 + x_2 x_1\\x_1x_2
\end{pmatrix}
\end{equation}
for some NC harmonic polynomial $H \in \RRx$.
The matrix in (\ref{eq:involvesMatrixA}) is equal to
\begin{align} r\left( \begin{array}{ccc}
\sqrt{b}\cos(\theta)&0&0\\
0&\sqrt{b}\sin(\theta)&0\\
0&0&\sqrt{d}\end{array}\right)
\left( \begin{array}{ccc}
1&1&1\\
1&1&1\\
1&1&1 \end{array} \right)
\left( \begin{array}{ccc}
\sqrt{b}\cos(\theta)&0&0\\
0&\sqrt{b}\sin(\theta)&0\\
0&0&\sqrt{d}\end{array}\right)
+\notag\\ (1-r)\left( \begin{array}{ccc}
\sqrt{b}\cos(\theta)&0&0\\
0&\sqrt{b}\sin(\theta)&0\\
0&0&\sqrt{d}\end{array}\right)
\left( \begin{array}{ccc}
1&1&0\\
1&1&0\\
0&0&1
\end{array} \right)
\left( \begin{array}{ccc}
\sqrt{b}\cos(\theta)&0&0\\
0&\sqrt{b}\sin(\theta)&0\\
0&0&\sqrt{d}\end{array}\right)\notag
\end{align}
which is positive semidefinite.
\end{proof}

\begin{exa}
\label{exa:countersubSOS}
In higher dimensions, not all homogeneous subharmonic polynomials are of the form of Proposition \ref{SosHarmPlusHarm}.
For example, consider the polynomial
$p(x)$ defined by
 \[p(x) = (x_1x_2x_2x_1 + x_2x_1x_1x_2) + (x_1x_3x_3x_1 + x_3x_1x_1x_3) - (x_2x_3x_3x_2 + x_3x_2x_2x_3).\]
The Laplacian of $p$ is equal to
\[\lap[p,h] = 4(x_1hhx_1 + hx_1x_1h)\]
which is a sum of squares.

Fix a positive integer $g$. Let $V_g(x)$ be the vector
\[V_g(x) = (x_1^2 - x_2^2, \ldots, x_1^2 - x_g^2, x_1x_2, \ldots, x_gx_{g-1})^T.\]
The entries of $V_g(x)$ form a basis for the set
of degree $2$ homogeneous harmonic polynomials in $\RR\langle x_1, \ldots, x_g \rangle$.
Assume by contradiction that there exists a harmonic polynomial $H \in \RRx$ such that
$p + H$ equals
\[p + H = V_g^T A V_g\]
with $A \succeq 0$.
 Let $\alpha_{k}$ be the diagonal entry of $A$
corresponding to $x_1^2 - x_k^2$. Let $C_{ij}$, where $i < j$, be the $2 \times 2$ block on the diagonal
of $A$
corresponding to $(x_ix_j, x_jx_i)$.
Since $A \succeq 0$, each $\alpha_{k} \geq 0$ and
each $C_{ij} \succeq 0$.

Express $\lap[p, h]$ as
\[\lap[p, h] = W_g(x)^T B W_g(x) \]
where $W_g(x) = (x_1h, hx_1, \ldots, x_gh, hx_g)^T$.
Let $\beta_{i}$ be the block on the diagonal of $B$ corresponding to
$(x_ih, hx_i)$.
Since $p$ is subharmonic, $\beta_i \succeq 0$ for each $i$.

By Lemma \ref{lem:prodRulap},
\[\lap[p,h] = 2\sum_{j=1}^g \dird[V_g(x), x_j,h]^T A \dird[V_g(x), x_j,h]. \]
For a given variable $x_i$, with $i > 1$, the terms of $\lap[p,h]$ with degree two in $x_i$ are
\begin{align}
\label{eq:thetermsdegree2firstversion}
2 \sum_{i < j}\begin{pmatrix}\dird[x_ix_j,x_j,h]\\\dird[x_jx_i,x_j,h]
\end{pmatrix}^T C_{ij} \begin{pmatrix}\dird[x_ix_j,x_j,h]\\\dird[x_jx_i,x_j,h]
\end{pmatrix}\\
\notag + 2 \sum_{i > k}
\begin{pmatrix}\dird[x_kx_i,x_i,h]\\\dird[x_ix_k,x_i,h]
\end{pmatrix}^T C_{ki} \begin{pmatrix}\dird[x_kx_i,x_i,h]\\\dird[x_ix_k,x_i,h]
\end{pmatrix}\\
\notag + 2(\dird[x_1^2 - x_i^2, x_i, h])^T\alpha_i (\dird[x_1^2 - x_i^2, x_i, h]).
\end{align}
Alternately, (\ref{eq:thetermsdegree2firstversion}) may be expressed as
\[\begin{pmatrix} x_i h\\ h x_i \end{pmatrix}^T
\beta_i \begin{pmatrix} x_i h\\ h x_i \end{pmatrix}. \]
If $i > 1$, since $\lap[p,h] = 4(x_1hhx_1 + hx_1x_1h)$ has no terms with degree $2$ in $x_i$,
\begin{equation}
\label{eq:betaCijbi}0 =
\beta_i = 2\sum_{i < j} C_{ij} +  \left(
\begin{array}{cc}
0&1\\
1&0
\end{array}
\right)\left(2\sum_{i>k} C_{ki} \right)
\left(
\begin{array}{cc}
0&1\\
1&0
\end{array}
\right) + 2\alpha_i\left(
\begin{array}{cc}
1&1\\
1&1
\end{array}
\right).
\end{equation}
Each of the terms of the righthand side of (\ref{eq:betaCijbi})
is by assumption positive semidefinite, they must all be zero, that is,
$C_{ij} = 0$ for all $j > i$, $C_{ki} = 0$ for all $k < i$, and $\alpha_i = 0$.
Therefore all of the diagonal entries of $A$ are equal to $0$. Since $A \succeq 0$, this implies that $A = 0$.
This implies that $\lap[p,h] = 0$,
which is a contradiction.\qed
\end{exa}

\begin{definition}A NC polynomial $p \in \RRx$ or $\RRNx$ is \textbf{bounded below}\index{bounded below} if there exists a constant $C \in \RR$ such that $p + C$ is matrix positive.
\end{definition}

\subsubsection{Proof of Theorem \ref{thm:bddbelow}}
\label{subsub:pfOfBddBelow}

The following is the proof Theorem \ref{thm:bddbelow}.

\begin{proof} Suppose for some constant $C$
 that $p+C$ is matrix positive, or equivalently, that it is a finite sum of
squares, i.e. that $p + C$ can be expressed as
\begin{equation}
\label{eq:pPlusCSOS}
 p+ C = \sum_{finite} q_i^Tq_i 
\end{equation}
for some $q_i$.
(see \cite{H02}).
Since $p$ is homogeneous of degree $2d$,
this implies that each $q_i$ must be at most degree $d$ and that at least one
$q_i$ actually is of degree $d$.
For each $q_i$ in (\ref{eq:pPlusCSOS}), let $q_i'$ a homogeneous degree $d$
polynomial such that $\deg(q_i - q_i') < d$.
Then
\begin{align}
 \label{eq:separatePPlusC} p + C = \sum_{finite} (q_i')^T(q_i') + \sum_{finite}
\left([q_i-q_i']^T[q_i-q_i'] + [q_i']^T[q_i-q_i'] + [q_i']^T[q_i-q_i']\right).
\end{align}
Since the only terms on the right-hand side of (\ref{eq:separatePPlusC}) which
are degree $2d$ are those of $\sum_{finite} (q_i')^T(q_i') $, and since $p$ is
homogeneous of degree $2d$ and $C$ is constant,
this implies that $p = \sum_{finite} (q_i')^T(q_i')$.

If $V(x)$ is a vector whose entries form a basis for the set of homogeneous
degree $d$ polynomials, then $q'_i = \alpha_i^T V(x)$ for some scalar vectors
$\alpha_i$ and
\[ p = V(x)^T\left( \sum_{finite} \alpha_i \alpha_i^T \right) V(x).\]
By Lemma \ref{lemma:goodtechnical}, the matrix
\begin{equation}
 \label{eq:theUniqueRepMatrix}
\sum_{finite} \alpha_i \alpha_i^T \succeq 0
\end{equation}
is a unique matrix for the vector $V(x)$ and $p$.

Since $p$ is subharmonic,
by Proposition \ref{thm:mainsub} it may be expressed as
\[p = H(x)^T A H(x), \]
where $H(x)$ is a vector of linearly independent, homogeneous degree $d$,
harmonic polynomials.  Lemma \ref{lemma:goodtechnical} implies that the matrix
$A$ is unique.
 By
(\ref{eq:theUniqueRepMatrix}), $A$ must be positive semidefinite. 
Therefore
\[p = \left(\sqrt{A}H(x) \right)^T\left(\sqrt{A}H(x)\right), \]
is a finite sum of squares of homogeneous degree $d$ harmonic polynomials.
\end{proof}

\subsection{Subharmonic Polynomials in Nonsymmetric Variables}

\begin{definition}
For a $d$-tuple $\alpha = (\alpha_1, \ldots, \alpha_{d}) \in \{1,T\}^d$ define \index{transpose!of a $d$-tuple in $\{1,T\}^d$}
$\alpha^{T} \in \{1,T\}^d$ as
\begin{equation}
\alpha^{T} = (\alpha_{d}T, \alpha_{d-1}T, \ldots, \alpha_1 T),
\end{equation}
where $1T:=T$ and $TT = T^2:=1$.
If $m \in \RRx$ is homogeneous of degree $d$ 
and $\alpha \in \{1,T\}^d$,
then $(m^{\alpha})^T = (m^T)^{\alpha^T}$.
(Recall that $m^{\alpha}$ is defined in (\ref{eq:defOfMAlpha}).)
If $\alpha = \alpha^{T}$ call $\alpha \in \{1,T\}^d$
\textbf{symmetric}\index{symmetric!$d$-tuple in $\{1,T\}^d$}.
\end{definition}

\begin{exa}Let $\alpha = (0,0,T,T)$ so that $\alpha \in \{1,T\}^d$ is symmetric.
The polynomial $(x_1^2 - x_2^2)(x_1^2 - x_2^2) \in \RRx$ is symmetric and
subharmonic.
Also, \[(x_1^2 - x_2^2)(x_1^2 - x_2^2)^{\alpha} = (x_1^2 - x_2^2)(x_1^2 - x_2^2)^T\]
is also symmetric and subharmonic.\qed
\end{exa}

\begin{prop}If $p \in \RRx$ is a homogeneous degree $2d$
polynomial, and $\alpha \in \{1,T\}^d$ is symmetric, then $p^{\alpha} \in \RRNx$ is
subharmonic if and only if $p$ is.
\end{prop}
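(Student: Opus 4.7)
The plan is to reduce the entire proposition to two commutation facts between the $\alpha$-decoration map and the Laplacian. First I would observe that a symmetric $\alpha\in\{1,T\}^{2d}$ decomposes uniquely as a concatenation $\alpha=\beta^T\beta$ with $\beta:=(\alpha_{d+1},\ldots,\alpha_{2d})\in\{1,T\}^d$. Indeed, the symmetry relation $\alpha_i\cdot T=\alpha_{2d+1-i}$ says exactly that the first half of $\alpha$ is the reversal-with-$T$ of the second half, so the first $d$ entries of $\alpha$ form $\beta^T$.

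Next I would establish the commutation identity
\[\lap[p^\alpha,h]=(\lap[p,h])^\alpha,\]
where on the right $\lap[p,h]\in\RRx$ is viewed as a polynomial of total degree $2d$ in the letters $x,h$ (the two $h$'s counting as letters), and $\alpha$ is applied position-wise. This reduces to the case $p=m=x_{b_1}\cdots x_{b_{2d}}$: both sides sum, over pairs $j<k$ with $b_j=b_k$, the monomial obtained from $m$ by substituting $h$ at positions $j$ and $k$. The only difference is that in $\lap[m^\alpha,h]$ the remaining positions still carry the decoration from $\alpha$, while the new $h$'s acquire the decorations $\alpha_j,\alpha_k$—which is precisely what the position-wise $\alpha$-decoration produces.

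For the forward direction, assume $p$ is subharmonic, so Helton's theorem gives $\lap[p,h]=\sum_i Q_i^T Q_i$ in $\RRx$ with each $Q_i$ of degree $d$. Expanding $Q_i=\sum_m A_m m$ and using the known transpose identity $(m^\gamma)^T=(m^T)^{\gamma^T}$ with $\gamma=\beta$, each cross-term decorates as $(m^T n)^\alpha=(m^T)^{\beta^T}n^\beta=(m^\beta)^T n^\beta$. Summing gives $(Q_i^T Q_i)^\alpha=(Q_i^\beta)^T Q_i^\beta$, and combined with the commutation identity,
\[\lap[p^\alpha,h]=(\lap[p,h])^\alpha=\sum_i (Q_i^\beta)^T Q_i^\beta,\]
a sum of hermitian squares in $\RRNx$, so $p^\alpha$ is subharmonic.

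The converse does not even require $\alpha$ symmetric. If $p^\alpha$ is subharmonic, then $\lap[p^\alpha,h]$ evaluates to a positive semidefinite matrix on every tuple in $\RRNx$, in particular on tuples of symmetric matrices $(X_1,\ldots,X_g,H)$ with $X_i^T=X_i$ and $H^T=H$; on such tuples the decoration collapses ($X_i^{a_j}=X_i,\ H^{a_k}=H$), so $\lap[p^\alpha,h]$ and $\lap[p,h]$ evaluate to the same matrix. Hence $\lap[p,h]$ is PSD on every symmetric tuple, which by Helton's theorem makes $p$ subharmonic. The main technical obstacle is the monomial-level verification of the commutation identity $\lap[p^\alpha,h]=(\lap[p,h])^\alpha$; once that is in place, the identity $(Q^TQ)^\alpha=(Q^\beta)^T Q^\beta$ is purely formal bookkeeping on the splitting $\alpha=\beta^T\beta$.
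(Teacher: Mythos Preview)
Your argument is correct, and the forward direction is essentially the paper's argument in unpacked form: both rest on the commutation identity $\lap[p^\alpha,h]=(\lap[p,h])^\alpha$ and the splitting $\alpha=\beta^T\beta$. The paper compresses things by writing $\lap[p,h]=W^TBW$ for a vector $W$ of linearly independent degree-$d$ polynomials, invoking Lemma~\ref{lemma:goodtechnical} to say $B$ is unique, and then observing $\lap[p^\alpha,h]=(W^\beta)^TB(W^\beta)$ so that both subharmonicity statements become the single condition $B\succeq 0$. Your forward direction is this same computation with $B$ factored as $\sum_i\gamma_i\gamma_i^T$ and $Q_i=\gamma_i^TW$. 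Where you genuinely diverge is the converse: the paper leans on uniqueness of $B$ (and implicitly needs to argue that an SOS certificate for $\lap[p^\alpha,h]$ can be taken with factors in $\NS_\beta$, which is not spelled out), whereas your evaluation-on-symmetric-tuples argument sidesteps all of that and works even without $\alpha$ symmetric. The paper's route is tidier in that it handles both directions simultaneously; your route is more elementary and self-contained for the converse. One small point worth making explicit in your write-up: when you assert $\lap[p,h]=\sum_i Q_i^TQ_i$ with $\deg Q_i=d$, you are using that $\lap[p,h]$ is homogeneous of degree $2d$, so the top-degree parts of any SOS certificate already give a homogeneous certificate.
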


\begin{proof}
Suppose $\lap[p, h]$ is equal to
\[W^TBW
\]
for some vector $W$ of linearly
independent  homogeneous degree
$d$ polynomials and for some constant matrix $B$.
By Lemma \ref{lemma:goodtechnical}, the matrix $B$ is uniquely determined,
so $\lap[p, h]$ is a sum of squares if and only if $B\succeq 0$.
Further
\[\lap[p, h]^{\alpha} = \lap[p^{\alpha}, h] = (W^{\beta})^T B (W^{\beta})\]
where
\[\beta = (\alpha_{d+1}, \alpha_{d+2}, \ldots, \alpha_{2d})\]
is the last half of
$\alpha$. Therefore $p^{\alpha}$ is subharmonic if and only if
$B\succeq 0$, or equivalently, if and only if $p$ is subharmonic.
\end{proof}

 \subsection{Properties of Harmonic and Subharmonic Functions}

This subsection aims to extend results about harmonic and subharmonic
functions in commuting variables to NC harmonic and subharmonic polynomials.

Let $(X_1, \ldots, X_g) \in (\RR^{n\times n}_{sym})^g$
denote a $g$-tuple of symmetric matrices.
Each such tuple may be considered alternatively as an element of
$\RR^{\frac{gn(n+1)}{2}}$ with coordinates
$(X_k)_{ij} = x_{ijk}.$  Let $(Z_1, \ldots, Z_g) \in (\RR^{n \times n})^g$
denote a $g$-tuple of matrices.
Each such tuple may be considered alternatively as an element of
$\RR^{gn^2}$ with coordinates
$(Z_k)_{ij} = z_{ijk}.$

\begin{prop}
\label{prop:firstconnection}
Let $p \in \RRx$ and let $q \in \RRNx$.  Fix a dimension $n$.

\begin{enumerate}
\item If $p$ is harmonic, then the function
\[y_1^T p(X) y_2: (\RR^{n\times n}_{sym})^g = \RR^{\frac{gn(n+1)}{2}} \rightarrow \RR\]
is harmonic for each $y_1, y_2 \in \RR^n$.

\item If $p$ is subharmonic, then the function
\[y^T p(X) y: (\RR^{n\times n}_{sym})^g = \RR^{\frac{gn(n+1)}{2}} \rightarrow \RR\] is subharmonic
for each $y \in \RR^n$.

\item If $q$ is harmonic, then the function
\[y_1^T q(Z) y_2: (\RR^{n\times n})^g = \RR^{gn^2} \rightarrow \RR\]
is harmonic for each $y_1, y_2 \in \RR^n$.

\item If $q$ is subharmonic, then the function
\[y^T q(Z) y: (\RR^{n\times n})^g = \RR^{gn^2} \rightarrow \RR\] is subharmonic
for each $y \in \RR^n$.
\end{enumerate}
\end{prop}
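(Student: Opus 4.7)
The plan is to rewrite the ordinary Euclidean Laplacian of the scalar function $f(X)=y_1^T p(X)\,y_2$ as a sum of values of the NC Laplacian $\lap[p,H]$ evaluated at $X$ with various matrix directions $H$, and then read off harmonicity or nonnegativity directly from the hypothesis on $p$.

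First I would fix coordinates. Under the identification $(\RR^{n\times n}_{sym})^g \cong \RR^{gn(n+1)/2}$ given by $x_{ijk}:=(X_k)_{ij}$ for $i\leq j$, the coordinate partial $\partial/\partial x_{ijk}$ acts on a smooth function of $X$ as the directional derivative in the $X_k$-slot along the symmetric matrix $E_{ii}$ when $i=j$ and along $E_{ij}+E_{ji}$ when $i<j$. Writing $D_{H,k}^2$ for the second directional derivative in the $k$-th matrix slot along $H$, summing the coordinate second partials yields
\[
\Delta f(X) \;=\; \sum_{k=1}^g \sum_{i} D_{E_{ii},k}^2 f(X) \;+\; \sum_{k=1}^g \sum_{i<j} D_{E_{ij}+E_{ji},k}^2 f(X).
\]
Since $y_1,y_2$ are constant, each $D_{H,k}^2$ passes through the bilinear form; and by the very definition of the NC Laplacian, $\sum_{k=1}^g D_{H,k}^2 p(X)=\lap[p,H](X)$ as a matrix. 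Therefore
\[
\Delta f(X) \;=\; \sum_{i} y_1^T \lap[p,E_{ii}](X)\,y_2 \;+\; \sum_{i<j} y_1^T \lap[p,E_{ij}+E_{ji}](X)\,y_2.
\]
If $p$ is harmonic then $\lap[p,h]$ is the zero NC polynomial, so every summand vanishes and $f$ is harmonic on $\RR^{gn(n+1)/2}$, proving (1). If $p$ is symmetric subharmonic and $y_1=y_2=y$, then each $\lap[p,H](X)$ with $H\in\{E_{ii},\;E_{ij}+E_{ji}\}$ is positive semidefinite (these are symmetric directions), so each $y^T\lap[p,H](X)\,y\geq 0$ and $\Delta f\geq 0$, proving (2).

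Statements (3) and (4) follow by the identical argument, with three cosmetic changes: the coordinates $z_{ijk}=(Z_k)_{ij}$ run over all pairs $(i,j)$; the matching basis of $\RR^{n\times n}$ is simply the matrix units $\{E_{ij}\}$ with no symmetry imposed on the direction; and the NC derivative $\dird[q,x_k,h]$ evaluated at a matrix tuple $Z$ automatically handles the linked substitution $Z_k^T \mapsto Z_k^T + tH^T$, as discussed in \S\ref{sect:ncvars}. One obtains $\Delta f(Z)=\sum_{i,j} y_1^T \lap[q,E_{ij}](Z)\,y_2$, and the same dichotomy (everything vanishes / every summand nonnegative) gives the conclusion. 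The only genuine content is the coordinate bookkeeping identifying Euclidean partials with NC directional derivatives along matrix units; once that is in place there is no real obstacle, since the compatibility between NC differentiation and ordinary matrix directional differentiation at a matrix tuple is immediate from the definition of $\dird$ and the evaluation rule of \S\ref{sec:openG2}.
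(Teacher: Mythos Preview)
Your proposal is correct and follows essentially the same approach as the paper: identify the Euclidean coordinate partials with NC directional derivatives along the symmetric matrix units $E_{ii}$ and $E_{ij}+E_{ji}$ (the paper calls these $A_{ij}$), rewrite $\Delta f$ as a sum of terms $y_1^T\lap[p,H](X)\,y_2$, and read off the conclusion. Your indexing over $i\le j$ is in fact slightly cleaner than the paper's double sum over all $i,j$; the minor phrase ``symmetric subharmonic'' in part~(2) is unnecessary since subharmonicity alone gives $\lap[p,H](X)\succeq 0$ for symmetric $H$, but the argument is unaffected.
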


\begin{proof}
First consider the symmetric variable case.
A partial derivative
$\displaystyle\frac{\partial p}{\partial x_{ijk}}$
is, in terms of the NC directional derivative notation, equal to
\[\dird[p, x_k, A_{ij}](X)\]
where $A_{ij}$ has a $1$ for the $ij^{th}$ and $ji^{th}$ entry, and $0$ for all other entries. If $i=j$, this means $A_{ij}$ has one lone nonzero entry at $ii$.
Applying $\Delta$ to $p$ gives
\begin{align}
\notag
\Delta\left[y_1^T p y_2\right] &= \sum_{i=1}^n \sum_{j = 1}^n \sum_{k=1}^g \frac{\partial^2}{\partial x_{ijk}^2} y_1^T p y_2
\\
\notag &= \sum_{i=1}^n \sum_{j = 1}^n \sum_{k = 1}^g \dird\left[\dird\left[y_1^T p y_2,  x_k, A_{ij}\right], x_k, A_{ij}\right](X)
 \\
 \notag &=\sum_{i=1}^n \sum_{j = 1}^n y_1^T \lap[p, A_{ij}](X) y_2^T.\end{align}
If $p$ is harmonic, then $\lap[p, A_{ij}](X) = 0$ for each $A_{ij}$.
If $p$ is subharmonic, then $\lap[p, A_{ij}](X) \succeq 0$ for each $A_{ij}$, which implies that
$y^T \lap[p, A_{ij}](X) y \geq 0$ for all vectors $y \in \RR^n$.

In the nonsymmetric variable case, a partial derivative
$\displaystyle\frac{\partial q}{\partial z_{ijk}}$
is, in terms of the NC directional derivative notation, equal to
\[\dird[q, x_k, E_{ij}](Z),\]
where $E_{ij}$ has a $1$ for the $ij^{th}$ entry, and $0$ for all other entries.
The proof follows similarly to the symmetric case.
\end{proof}

\begin{prop}Let $p \in \RRx$, let $q \in \RRNx$, let $y, y_1, y_2 \in \RR^n$ be nonzero vectors, and let
$\Omega \in (\RR^{n\times n}_{sym})^g$ be a connected
domain for some $n$.
\begin{enumerate}
\item Suppose $p$ is harmonic on $\Omega$.
If $y_1^T p(X) y_2$ has
a maximum or a minimum for $X \in (\RR^{n\times n}_{sym})^g$,
then $p$ is constant.

\item Suppose $p$ is subharmonic on $\Omega$. If $y^T p(X) y$ has
a maximum for $X \in (\RR^{n\times n}_{sym})^g$,
then $p$ is constant.

\item Suppose $q(Z)$ is harmonic on $\Omega$.
If $y_1^T q(Z) y_2$ has
a maximum or a minimum for $Z \in (\RR^{n\times n})^g$,
then $q$ is constant.

\item Suppose $q(Z)$ is subharmonic on $\Omega$. If $y^T q(Z) y$ has
a maximum for $Z \in (\RR^{n\times n})^g$,
then $q$ is constant.

\end{enumerate}
\end{prop}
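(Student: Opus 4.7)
The plan is to reduce each of the four statements to the classical strong maximum (and minimum) principle for harmonic and subharmonic functions on a connected Euclidean domain, using Proposition \ref{prop:firstconnection} as the bridge. Fix nonzero vectors $y_1, y_2$ (or $y$) and identify $(\RR^{n\times n}_{sym})^g$ with $\RR^{gn(n+1)/2}$ via the coordinates $x_{ijk}$, and similarly $(\RR^{n\times n})^g$ with $\RR^{gn^2}$ via $z_{ijk}$, viewing $\Omega$ as a connected open subset thereof. The scalar maps $X \mapsto y_1^T p(X) y_2$, $X \mapsto y^T p(X) y$, $Z \mapsto y_1^T q(Z) y_2$, and $Z \mapsto y^T q(Z) y$ are then polynomial functions of the matrix entries, and by Proposition \ref{prop:firstconnection} they are classically harmonic (in cases 1 and 3) or classically subharmonic (in cases 2 and 4) on $\Omega$.

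Now invoke the classical strong maximum principle: a harmonic function on a connected open set that attains its supremum or infimum at an interior point is constant on that set, and a subharmonic function that attains an interior maximum is likewise constant. Applied to the scalar functions above, this immediately yields constancy of $y_1^T p(X) y_2$ on $\Omega$ in the harmonic cases and of $y^T p(X) y$ on $\Omega$ in the subharmonic cases (with the obvious analogues for $q$). Because each of these is a polynomial in the matrix entries and $\Omega$ is open, constancy on $\Omega$ propagates to constancy on the entire ambient space $(\RR^{n\times n}_{sym})^g$ or $(\RR^{n\times n})^g$; this is the intended meaning of ``$p$ (or $q$) is constant.''

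The only real subtlety, and the expected main obstacle, lies in cases 2 and 4, where Proposition \ref{prop:firstconnection} only supplies subharmonicity of the quadratic form $y^T p(X) y$ rather than of a general bilinear form $y_1^T p(X) y_2$. Thus for a general (not necessarily symmetric) subharmonic $p$, the argument above directly pins down only the symmetric part $(p(X) + p(X)^T)/2$ as a matrix-valued function. This is the standard setting however: by Lemma \ref{lemma:symm}, the antisymmetric part $(p - p^T)/2$ of any subharmonic polynomial is automatically harmonic, so one splits $p = \frac{1}{2}(p + p^T) + \frac{1}{2}(p - p^T)$, applies the subharmonic argument to the first summand, and applies the harmonic cases (1) or (3) to the second. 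The nonsymmetric-variable cases (3) and (4) require no new ideas beyond swapping the coordinate identification $x_{ijk} \leftrightarrow z_{ijk}$, since Proposition \ref{prop:firstconnection} treats both settings uniformly.
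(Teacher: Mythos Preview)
Your approach is correct and matches the paper's one-line proof: reduce to the classical strong maximum principle via Proposition~\ref{prop:firstconnection}. Your final paragraph on the symmetric/antisymmetric split goes beyond what the paper does (and beyond what the statement strictly requires, since the conclusion is most naturally read as constancy of the scalar function $y^T p(X) y$ for the fixed $y$ in the hypothesis), but the core argument is identical.
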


\begin{proof}These items are essentially the Maximum Principle
 applied to $y_1^T p y_2$ and $y_1^T q y_2$.
\end{proof}

\begin{prop}Let $\Omega \in (\RR^{n\times n}_{sym})^g$ or $(\RR^{n\times n})^g$ be a bounded domain.
Let $p, q \in \RRx$ or $\RRNx$ respectively be such that $p$ is harmonic and $q$ is subharmonic.
If
\[q(X) \preceq p(X)\]
for all $X \in \partial \Omega$,  then
\[q(X) \preceq p(X) \]
for all $X \in \Omega$.
\end{prop}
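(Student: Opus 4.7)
The plan is to reduce the matrix-valued inequality to the classical scalar maximum principle for subharmonic functions, applied one vector $y$ at a time. The key observation is that Proposition \ref{prop:firstconnection} already converts NC harmonicity/subharmonicity into classical harmonicity/subharmonicity of real-valued functions on a Euclidean space of matrix entries, so the hard work has been done and only a standard scalar argument remains.

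First I would form the NC polynomial $r := q - p$. Since $p$ is harmonic (Laplacian zero) and $q$ is subharmonic (Laplacian matrix positive), $\lap[r,h] = \lap[q,h] - \lap[p,h] = \lap[q,h]$ is matrix positive, so $r$ is NC subharmonic (in $\RRx$ or $\RRNx$, matching the case of the hypothesis). Next, fix an arbitrary $y \in \RR^n$ and define the scalar function
\[
f_y(X) := y^T r(X) y = y^T q(X) y - y^T p(X) y,
\]
viewing $X$ as ranging over either $\RR^{\frac{gn(n+1)}{2}}$ or $\RR^{gn^2}$ depending on the setting. By Proposition \ref{prop:firstconnection}, $f_y$ is classically subharmonic on $\Omega$ (since $y^T p y$ is harmonic and $y^T q y$ is subharmonic, and subharmonic minus harmonic is subharmonic).

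I would then invoke the classical maximum principle: a continuous subharmonic function on a bounded domain attains its supremum on the boundary. Since polynomials are continuous on $\overline{\Omega}$, and by hypothesis $q(X) \preceq p(X)$ for $X \in \partial\Omega$ gives $f_y(X) = y^T(q-p)(X)y \leq 0$ for every $X \in \partial \Omega$, the maximum principle yields $f_y(X) \leq 0$ for every $X \in \Omega$. This holds for every $y \in \RR^n$, so by the definition of the semidefinite order, $r(X) = q(X) - p(X) \preceq 0$, i.e.\ $q(X) \preceq p(X)$ for every $X \in \Omega$, as required.

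The only subtlety is to make sure we are entitled to apply the classical maximum principle on the Euclidean space of matrix entries in both settings. Since $\Omega$ is explicitly bounded and $f_y$ extends continuously (in fact polynomially) to $\overline{\Omega}$, this is immediate in both the symmetric and nonsymmetric cases, so no real obstacle arises; the whole proof is a short bookkeeping reduction to Proposition \ref{prop:firstconnection} plus the scalar maximum principle.
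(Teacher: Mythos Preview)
Your proof is correct and follows essentially the same approach as the paper: reduce to scalar functions $y^T q(X)y$ and $y^T p(X)y$ via Proposition~\ref{prop:firstconnection}, apply the classical maximum principle for subharmonic functions on the Euclidean space of matrix entries, and then let $y$ range over $\RR^n$ to recover the semidefinite inequality. The only cosmetic difference is that you form the difference $r = q - p$ first and apply the maximum principle to $f_y = y^T r y$, whereas the paper compares $y^T q y$ and $y^T p y$ directly; this changes nothing substantive.
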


\begin{proof}For each $y \in \RR^n$, $y^T p y$ is harmonic
and $y^T q y$ is subharmonic.
By assumption
\[y^Tq(X)y \leq y^Tp(X)y\]
for all $X \in \partial \Omega$.
Since $y^T p y$
and $y^T q y$ are simply functions
in commuting variables $x_{ijk}$, by the Maximum Principle,
\begin{equation}
\label{eq:ncharmandsub}
y^Tq(X)y \leq y^Tp(X)y
\end{equation}
for all $X \in \Omega$.
Since (\ref{eq:ncharmandsub})
is true for all $y \in \RR^n$,
\[q(X) \preceq p(X) \]
for all $X \in \Omega$.
\end{proof}

\section{Acknowledgments}

Author was supported by J.W. Helton's National Science
Foundation grants: DMS 07758 and DMS
0757212.  Additional thanks goes to J.W. Helton for
suggesting the topic of the paper and for all his work in helping the author
revise this paper.

The material in this paper is drawn from the author's  Ph.D. Thesis
at
University of California San Diego.

\newpage

\newpage

\setcounter{tocdepth}{4}
\tableofcontents

\newpage
\printindex


\begin{thebibliography}{999}

\bibitem{HMH09}J. W. Helton, D. McAllister, J. Hernandez, Noncommutative Harmonic and
Subharmonic Polynomials, Integral Equations Operator Theory \textbf{61} (2008), no. 1, 77--102

\bibitem{H02}J. W. Helton, "Positive" Noncommutative Polynomials are a Sum of Squares, Annals
of Mathematics \textbf{56} (2002), no. 2, pp. 675--694

\bibitem{CHSY03} Juan F. Camino, J. W. Helton, R .E. Skelton, and Jieping Ye. Matrix inequalities: a
symbolic procedure to determine convexity automatically. Integral Equations Operator
Theory, 46(4):399–454, 2003.

\bibitem{Han97} Frank Hansen. Operator convex functions of several variables. Publ. Res. Inst. Math.
Sci., 33(3):443–463, 1997.

\bibitem{HM98} J. W. Helton and Orlando Merino. Sufficient Conditions for Optimization of Matrix
Functions. CDC, 1998. pp. 1–5.

\bibitem{HM04} J. W. Helton and Scott McCullough. Convex noncommutative polynomials have degree
two or less. Siam J. Matrix Anal. Appl, 25(4):1124–1139, 2004.

\bibitem{HMV06} J. W. Helton, Scott A. McCullough, and Victor Vinnikov. Noncommutative convexity
arises from linear matrix inequalities. J. Funct. Anal., 240(1):105–191, 2006.

\bibitem{HP06} J. W. Helton, and M. Putinar, Positive polynomials in scalar and
matrix variables,
the spectral theorem, and optimization. Operator theory, structured matrices,
and dilations,
229-306, Theta Ser. Adv. Math., 7, Theta, Bucharest, 2007. 47-02


\bibitem{dOHMP}
M. C. de Oliveira, J. W.  Helton, J. William, S. A. McCullough, M. Putinar.
Engineering systems and free semi-algebraic geometry.
Emerging applications of algebraic geometry, 1761, IMA Vol. Math. Appl., 149,
Springer, New York, 2009.

\bibitem{HT92} Roger Howe and Eng-Chye Tan. Nonabelian harmonic analysis.
Universitext. Springer-
Verlag, New York, 1992. Applications of SL(2,R).

\bibitem{HT06} Frank Hansen and Jun Tomiyama. Differential analysis of matrix convex functions.
Linear Algebra and its Applications, 2006.

\bibitem{McA04} Daniel P. McAllaster. Homogeneous harmonic noncommutative polynomials
from degree 3 to 7 (includes a recurrence relation for higher degrees).
http://math.ucsd.edu/˜dmcallas/papers/, July 2004.

\bibitem{P05} Gelu Popescu, Free pluriharmonic majorants and commutant lifting, Journal of Functional Analysis, 255 (2008), 891-939.

\bibitem{P06} Gelu Popescu, Free holomorphic functions on the unit ball of B(H)n , Journal of Functional Analysis, 241 (2006), 268-333.

\bibitem{P08a}   Gelu Popescu, Free pluriharmonic majorants and commutant lifting, Journal of Functional Analysis, 255 (2008), 891-939.

\bibitem{P08b} Gelu Popescu, Free holomorphic functions and interpolation, Mathematische Annalen, 342 (2008), 1-30.

\bibitem{P09} Gelu Popescu, Noncommutative transforms and free pluriharmonic
functions, Advances in Mathematics, 220 (2009), 831-893.

\bibitem{SV06} Dimitri Shlyakhtenko and Dan Voiculescu. Free analysis workshop
summary: American
institute of mathematics. http://www.aimath.org/pastworkshops/freeanalysis.html,
2006.

\bibitem{R94} Bruce Reznick, Homogeneous polynomial solutions to constant coefficient PDE's over fields, Advances in Mathematics, 117 (1996), 179-192.


\bibitem{BB08}
J. A. Ball, V. Bolotnikov.
Interpolation problems
for Schur multipliers on the Drury-Arveson space: from
Nevanlinna-Pick to abstract interpolation problem.
Integral Equations Operator Theory 62 (2008), no. 3, 301349.

\bibitem{BK10}
M. Bresar, I. Klep.
A note on values of noncommutative polynomials.
Proc. Amer. Math. Soc. 138 (2010), no. 7, 23752379.

\bibitem{KV09}
  D. S. Kaliuzhnyi-Verbovetskyi, V. Vinnikov.
  Singularities of rational functions and minimal
  factorizations: the noncommutative and the
  commutative setting.
  Linear Algebra Appl. 430 (2009), no. 4, 869889.


\end{thebibliography}
\end{document}